\documentclass[10pt]{amsart}

\usepackage{graphicx}
\usepackage{color}
\usepackage{tikz-cd}
\usepackage{mathrsfs}

\definecolor{refblue}{RGB}{0, 0, 153}
\definecolor{citegreen}{RGB}{0, 115, 0}
\definecolor{linkred}{RGB}{191, 26, 61}

\usepackage[marginparwidth=75pt]{geometry}
\geometry{
  left=3cm,
  right=3cm,
  top=4cm,
  bottom=4cm,
  bindingoffset=0mm
}
\usepackage[all]{xy}
\usepackage[colorlinks,linkcolor=refblue,citecolor=citegreen,urlcolor=linkred,bookmarks=false,hypertexnames=true]{hyperref} 
\usepackage{amssymb}
\usepackage{setspace}
\usepackage{stmaryrd}
\usepackage[nameinlink]{cleveref}
\usepackage{hyperref}
\usepackage{microtype}

\newtheorem{theorem}{Theorem}[section]
\newtheorem{lemma}[theorem]{Lemma}
\newtheorem{proposition}[theorem]{Proposition}

\newtheorem{corollary}[theorem]{Corollary}
\newtheorem{maintheorem}{Theorem}

\theoremstyle{definition}
\newtheorem{definition}[theorem]{Definition}
\newtheorem{example}[theorem]{Example}

\theoremstyle{remark}
\newtheorem{remark}[theorem]{Remark}

\numberwithin{equation}{section}

\newcommand\SmallMatrix[1]{{%
  \tiny\arraycolsep=0.3\arraycolsep\ensuremath{\begin{pmatrix}#1\end{pmatrix}}}}

\newcommand{\stackcite}[1]{\cite[\href{https://stacks.math.columbia.edu/tag/#1}{#1}]{stk}}

\newcommand\atopnew[2]{\genfrac{}{}{0pt}{}{#1}{#2}}




\newcommand{\bbA}{\mathbb A}

\newcommand{\bbC}{\mathbb C}
\newcommand{\bbD}{\mathbb D}

\newcommand{\bbF}{\mathbb F}
\newcommand{\bbG}{\mathbb G}

\newcommand{\bbN}{\mathbb N}

\newcommand{\bbQ}{\mathbb Q}

\newcommand{\bbX}{\mathbb X}

\newcommand{\bbZ}{\mathbb Z}


\newcommand{\scrT}{\mathscr T}

\newcommand{\calA}{\mathcal A}

\newcommand{\calC}{\mathcal C}
\newcommand{\calD}{\mathcal D}
\newcommand{\calE}{\mathcal E}
\newcommand{\calF}{\mathcal F}

\newcommand{\calI}{\mathcal I}

\newcommand{\calM}{\mathcal M}

\newcommand{\calO}{\mathcal O}

\newcommand{\calS}{\mathcal S}

\newcommand{\calY}{\mathcal Y}

\newcommand{\frakA}{\mathfrak A}

\newcommand{\frakX}{\mathfrak X}


\newcommand{\frakg}{\mathfrak g}

\newcommand{\frakm}{\mathfrak m}

\newcommand{\frakp}{\mathfrak p}

\newcommand{\gl}{\mathfrak{gl}}

\renewcommand{\sp}{\mathfrak{sp}}
\renewcommand{\sl}{\mathfrak{sl}}

\newcommand{\GL}{\operatorname{GL}} 
\newcommand{\SL}{\operatorname{SL}} 
\newcommand{\SO}{\operatorname{SO}} 
\newcommand{\Sp}{\operatorname{Sp}} 
\newcommand{\GSp}{\operatorname{GSp}} 
\newcommand{\OO}{\operatorname{O}} 
\newcommand{\GO}{\operatorname{GO}} 


\newcommand{\Adm}{\operatorname{\mathrm{Adm}}} 
\newcommand{\Aff}{\operatorname{Aff}} 
\newcommand{\An}{\operatorname{\mathrm{An}}} 
\newcommand{\Art}{\operatorname{\frakA}} 
\newcommand{\FSch}{\operatorname{\mathrm{FSch}}} 
\newcommand{\Rep}{\operatorname{\mathrm{Rep}}} 
\newcommand{\cRep}{\operatorname{\mathrm{cRep}}} 
\newcommand{\Set}{\operatorname{\mathrm{Set}}} 
\newcommand{\CAlg}{\mathrm{CAlg}} 

\newcommand{\Ext}{\operatorname{Ext}} 
\newcommand{\spa}{\operatorname{Spa}} 
\newcommand{\Spec}{\operatorname{Spec}} 
\newcommand{\Spf}{\operatorname{Spf}} 
\newcommand{\Tor}{\operatorname{Tor}} 

\newcommand{\ad}{\operatorname{ad}} 
\newcommand{\Aut}{\operatorname{Aut}} 
\newcommand{\chara}{\operatorname{char}} 
\newcommand{\coker}{\operatorname{coker}} 
\newcommand{\colim}{\mathop{\mathstrut\rm colim}\limits} 
\newcommand{\Det}{\mathrm{Det}}
\newcommand{\cDet}{\mathrm{cDet}}
\newcommand{\Gal}{\operatorname{Gal}} 
\newcommand{\Hom}{\operatorname{Hom}} 
\newcommand{\id}{\operatorname{id}} 
\newcommand{\im}{\operatorname{im}} 
\newcommand{\ind}{\operatorname{ind}} 
\newcommand{\Jac}{\operatorname{Jac}} 
\newcommand{\map}{\operatorname{Map}} 
\newcommand{\Nil}{\operatorname{\mathrm{Nil}}} 
\newcommand{\op}{\operatorname{op}} 
\newcommand{\PC}{\operatorname{PC}} 
\newcommand{\cPC}{\operatorname{cPC}} 
\newcommand{\pr}{\operatorname{pr}} 
\newcommand{\Res}{\operatorname{\mathrm{Res}}} 
\newcommand{\Spin}{\operatorname{Spin}} 
\newcommand{\Sym}{\operatorname{Sym}} 
\newcommand{\tr}{\operatorname{tr}} 
\newcommand{\ps}{\operatorname{ps}} 
\newcommand{\irr}{\operatorname{irr}} 
\newcommand{\spcl}{\operatorname{spcl}} 
\newcommand{\nspcl}{\operatorname{nspcl}} 
\newcommand{\sing}{\operatorname{sing}} 
\newcommand{\pair}{\operatorname{pair}} 
\newcommand{\red}{\operatorname{red}} 
\newcommand{\dec}{\operatorname{dec}} 
\newcommand{\Def}{\operatorname{Def}} 
\newcommand{\univ}{\operatorname{univ}} 
\newcommand{\FM}{\operatorname{FM}} 
\newcommand{\FG}{\operatorname{FG}} 
\newcommand{\Ran}{\operatorname{Ran}} 
\newcommand{\simil}{\operatorname{sim}} 
\newcommand{\jj}{\operatorname{j}} 
\newcommand{\lnad}{\operatorname{lnad}} 
\newcommand{\rig}{\operatorname{rig}} 
\newcommand{\Specmax}{\operatorname{Specmax}} 
\newcommand{\Gm}{\bbG_{\mathrm{m}}} 
\newcommand{\Thetabar}{\overline{\Theta}} 
\newcommand{\Dbar}{\overline{D}} 
\newcommand{\rhobar}{\overline{\rho}} 
\newcommand{\kappabar}{\overline{\kappa}}
\newcommand{\RpsThetabar}{R^{\ps}_{\overline\Theta}} 
\newcommand{\semi}{\mathrm{ss}} 
\newcommand{\et}{\mathrm{\acute et}} 

\setlength\parindent{0pt}
\setlength{\parskip}{0.5em}

\begin{document}

\title{Deformations of $G$-valued pseudocharacters}

\author{Julian Quast}

\address{Department of Mathematics, Universität Duisburg-Essen, Thea-Leymann-Straße 9, 45127 Essen}
\email{julian.quast@uni-due.de, me@julianquast.de}

\subjclass[2000]{Primary 11F80, 14J10; Secondary 20G25, 11S25, 14M35}

\date{March 28, 2024}

\keywords{Pseudorepresentation, pseudocharacter, determinant law, deformation theory, Galois representation}

\begin{abstract} We define a deformation space of V. Lafforgue's $G$-valued pseudocharacters of a profinite group $\Gamma$ for a possibly disconnected reductive group $G$. We show that this definition generalizes Chenevier's construction. We show that the universal pseudodeformation ring is noetherian and that the functor of continuous $G$-pseudocharacters on affinoid $\bbQ_p$-algebras is represented by a quasi-Stein rigid analytic space, whenever $\Gamma$ is topologically finitely generated. We also show that the pseudodeformation ring is noetherian when $\Gamma$ satisfies Mazur's condition $\Phi_p$ and $G$ satisfies a certain invariant-theoretic condition. For $G = \Sp_{2n}$ we describe three types of obstructed loci in the special fiber of the universal pseudodeformation space of an arbitrary residual pseudocharacter and give upper bounds for their dimension.
\end{abstract}

\footnote{Department of Mathematics, Universität Duisburg-Essen, Thea-Leymann-Straße 9, D-45127 Essen. Gefördert durch die Deutsche Forschungsgemeinschaft (DFG) - Projektnummer 444845124 - TRR 326 und Projektnummer 517234220.}

\maketitle

\tableofcontents

\newpage

\sloppy

\section{Introduction}

If $\rho : \Gamma \to \GL_d(k)$ is a representation of a group $\Gamma$ which takes values in an algebraically closed field $k$ of characteristic $0$, then it is well-known that the character
$$ \tr \rho : \Gamma \to k, \quad \gamma \mapsto \tr(\rho(\gamma)) $$
determines $\rho$ up to semisimplification.
To study congruences of Galois representations Wiles \cite{Wiles1988} and Taylor \cite{MR1115109} introduced the notion of an $n$-dimensional \emph{pseudocharacter}, which captures the essential properties of traces of representations axiomatically. An $n$-dimensional pseudocharacter over $k$ is always the trace of a semisimple representation.

If the characteristic of $k$ is positive and $\leq n$, then the semisimplification of $\rho$ is not determined by the trace anymore.
To remedy the problem, building on work of Procesi, Chenevier \cite{MR3444227} introduced the notion of an \emph{$n$-dimensional determinant law} which keeps track of all coefficients of characteristic polynomials of $\rho(\gamma)$ for all $\gamma \in \Gamma$, and works over arbitrary base rings. Now, an $n$-dimensional determinant law over an algebraically closed field of arbitrary characteristic is always associated to a semisimple representation.

Our main results extend the work of Chenevier to possibly disconnected reductive group schemes $G$ over the ring of integers $\calO$ of a $p$-adic local field $L$.
Instead of generalizing Chenevier's notion of determinant law to arbitrary reductive groups we work with \emph{$G$-pseudocharacters} introduced by V. Lafforgue in his work on the local Langlands correspondence for function fields.
When $G = \GL_n$, then Emerson and Morel \cite{emerson2023comparison} have shown that $n$-dimensional determinant laws coincide with $\GL_n$-pseudocharacters.
Lafforgue's $G$-pseudocharacters also play an important role in the geometrization of the local Langlands correspondence for $p$-adic fields due to Fargues and Scholze \cite{fargues2021geometrization}.

Any $G$-valued representation $\rho$ gives rise to a $G$-pseudocharacter $\Theta_{\rho}$.
Our first result is a \emph{reconstruction theorem}, which relates $G$-pseudocharacters over algebraically closed fields to conjugacy classes of semisimple representations.

\begin{maintheorem}[\Cref{reconstructiongeneral}, \Cref{continuousreconstruction}]\label{ThmRec} Let $\calO$ be a noetherian commutative ring, let $G$ be a generalized reductive $\calO$-group scheme, let $\Gamma$ be a group, let $k$ be an algebraically closed field over $\calO$ and let $\Theta$ be a $G$-pseudocharacter of $\Gamma$ over $k$. Then there is a $G$-completely reducible representation $\rho : \Gamma \to G(k)$ with $\Theta_{\rho} = \Theta$, which is unique up to $G^0(k)$-conjugation. If $k$ is an algebraic closure of a local field, $\Gamma$ is a profinite group and $\Theta$ is continuous, then $\rho$ is continuous.
\end{maintheorem}

The proof of \Cref{ThmRec} relies on results in geometric invariant theory due to Bate, Martin and Röhrle \cite{BMR, MartinGeneratingTuples}.
In \Cref{compprop} we obtain a similar result, which allows us to compare deformation functors of $G$-pseudocharacters with unframed deformation functors of representations.

Let $\Thetabar$ be a continuous $G$-pseudocharacter of a profinite group $\Gamma$ with values in a finite discrete field $\kappa$.
We introduce the universal deformation ring $\RpsThetabar$ for $\Thetabar$, replacing Chenevier's determinant laws by Lafforgue's $G$-pseudocharacters as introduced in \cite[§11]{Laf}. This deformation problem has been considered for connected reductive groups over $\bbZ$ in \cite{BHKT}. Our main result is that these rings are noetherian when $\Gamma$ is topologically finitely generated and for certain $G$, when $\Gamma$ satisfies Mazur's $p$-finiteness condition.

\begin{maintheorem}[\Cref{tfgfingen}, \Cref{Phipmainthm}, \Cref{H1mainthm}]\label{ThmA} Let $G$ be a generalized reductive $\calO$-group scheme, let $\Gamma$ be a profinite group and let $\Thetabar$ be a continuous $G$-pseudocharacter of $\Gamma$ over $\kappa$. Denote by $\kappa[G^m]^{G^0}$ the invariants under the action of $G^0$ on $G^m$ by simultaneous conjugation.
\begin{enumerate}
    \item If $\Gamma$ is topologically finitely generated, then the universal deformation ring $\RpsThetabar$ of $\Thetabar$ is noetherian. \label{B1}
    \item Assume that $G$ admits a representation $\iota : G \to \GL_d$, such that for all $m \geq 1$ the map $\kappa[\GL_d^m]^{\GL_d} \to \kappa[G^m]^{G^0}$ is surjective and assume that $\Gamma$ satisfies Mazur's condition $\Phi_p$. Then the canonical map $R^{\ps}_{\iota(\Thetabar)} \to \RpsThetabar$ is surjective and $R^{\ps}_{\Thetabar}$ is noetherian.
    \item Assume that $G$ admits a representation $\iota : G \to \GL_d$, such that for all $m \geq 1$ the map $\kappa[\GL_d^m]^{\GL_d} \to \kappa[G^m]^{G^0}$ is surjective. Assume further, that $\Thetabar$ comes from a continuous representation $\rhobar : \Gamma \to G(\kappa)$, that $\dim_{\kappa} H^1(\Gamma, \ad(\iota(\rhobar))) < \infty$ and $p > d$. Then $R^{\ps}_{\Thetabar}$ is noetherian.
\end{enumerate}
\end{maintheorem}

Our strategy for proving (1) relies only on the definitions and the assumption that $\Gamma$ is topologically finitely generated.
In particular, (1) applies, when $\Gamma$ is the absolute Galois group of a $p$-adic local field, since these groups are topologically finitely generated, see \cite[Satz 3.6]{Jannsen1982}.
Parts (2) and (3) are deduced from similar results of Chenevier.
We verify surjectivity of $\kappa[\GL_d^m]^{\GL_d} \to \kappa[G^m]^{G^0}$ for ${G \in \{\SL_n, \GL_n, \Sp_{2n}, \GSp_{2n}, \SO_{2n+1}, \OO_{2n+1}, \GO_n\}}$ under the additional assumption $p > 2$ in the orthogonal cases for $\iota$ the standard representation (\Cref{prtorsfingen}).

Chenevier introduced and studied the generic fiber of his universal deformation rings \cite{MR3444227, SurLaVariete}.
The functor of points of the generic fiber has a simple description in terms of continuous determinant laws and its geometric points correspond to conjugacy classes of continuous $G$-completely reducible representations. We obtain analogous results for general $G$.

\begin{maintheorem}[\Cref{rigspaceG}, \Cref{Lbarpoints}]\label{ThmB}
    If $\Gamma$ is topologically finitely generated, the functor $X_G : \Adm_L \to \Set$, that associates to every affinoid $L$-algebra $A$ the set of continuous $G$-pseudocharacters $\cPC_{G}^{\Gamma}(A)$ is representable by a quasi-Stein rigid-analytic space. The $\overline L$-points of $X_G$ are in canonical bijection with $G^0(\overline L)$-conjugacy classes of continuous $G$-completely reducible representations $\Gamma \to G(\overline L)$.
\end{maintheorem}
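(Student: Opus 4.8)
The plan is to realise $X_G$ as a disjoint union, over residual $G$-pseudocharacters, of the Berthelot generic fibers of the universal pseudodeformation rings, and then to identify functors of points. For a continuous $G$-pseudocharacter $\Thetabar$ of $\Gamma$ valued in a finite field (taken so that this field is the field of definition of $\Thetabar$) write $R^{\ps}_{\Thetabar}$ for the universal pseudodeformation ring; it is a complete local noetherian $\calO$-algebra with finite residue field, noetherian by \Cref{tfgfingen} since $\Gamma$ is topologically finitely generated. Let $\frakX_{\Thetabar} := (\Spf R^{\ps}_{\Thetabar})^{\rig}$ be its rigid analytic generic fiber. Because $R^{\ps}_{\Thetabar}$ is noetherian, $\frakX_{\Thetabar}$ is quasi-Stein: it is the increasing union of the affinoid tubes $U_n = \{\, x : |f(x)| \le |\varpi|^{1/n}\text{ for all }f \in \frakm \,\}$, $n \ge 1$ ($\varpi$ a uniformizer of $\calO$, $\frakm \subset R^{\ps}_{\Thetabar}$ the maximal ideal), whose transition maps have dense image; and the defining property of the Berthelot fiber gives $\frakX_{\Thetabar}(A) = \varinjlim_{A_0}\Hom^{\mathrm{cont}}_{\calO}(R^{\ps}_{\Thetabar}, A_0)$ for an affinoid $L$-algebra $A$, the colimit over rings of definition $A_0 \subseteq A^{\circ}$. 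Set $X_G := \bigsqcup_{\Thetabar}\frakX_{\Thetabar}$. A topologically finitely generated profinite group has only countably many open subgroups, hence only countably many residual $G$-pseudocharacters, and a countable disjoint union of quasi-Stein spaces is again quasi-Stein; so $X_G$ is a quasi-Stein rigid analytic space over $L$.

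\textbf{Representability.}
Given an affinoid $L$-algebra $A$ and $\Theta \in \cPC_G^{\Gamma}(A)$, I would first argue that $\Theta$ is bounded: every structure map $\calO[G^m]^{G^0} \to \map(\Gamma^m, A)$ takes values in continuous functions on the compact space $\Gamma^m$, hence has bounded image, and topological finite generation of $\Gamma$ lets one make this uniform in $m$, so $\Theta$ descends to a ring of definition $A_0 \subseteq A^{\circ}$. Next, as in the $\GL_n$ case and again using topological finite generation, the residual pseudocharacter of $\Theta$ is locally constant on $\spa A$; decomposing $\Spec A$ into connected components, I may assume it is a single residual pseudocharacter $\Thetabar : \Gamma \to \kappa$ (this is a well-defined continuous $G$-pseudocharacter over a finite field $\kappa$: reducing $\Theta$ modulo the radical of $A_0/\varpi$ and using continuity pins its values down to a finite subfield of $(A_0/\varpi)_{\red}$ on the connected special fiber). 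Then by the universal property of $R^{\ps}_{\Thetabar}$ --- which, being $\frakm$-adically complete noetherian, has a universal property extending to $\varpi$-adically complete coefficient $\calO$-algebras --- $\Theta$ becomes an element of $\Hom^{\mathrm{cont}}_{\calO}(R^{\ps}_{\Thetabar}, A_0) \subseteq \frakX_{\Thetabar}(A)$. Conversely a point of $\frakX_{\Thetabar}(A)$ is a continuous $G$-pseudocharacter over a ring of definition of $A$, hence over $A$, with residual pseudocharacter $\Thetabar$. These two constructions should be mutually inverse and natural in $A$, giving the natural isomorphism $X_G \cong \cPC_G^{\Gamma}$ on $\Adm_L$.

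\textbf{The $\overline L$-points.}
An $\overline L$-point of $X_G$ lies on a unique $\frakX_{\Thetabar}$; since $\overline L$ is the filtered union of its finite subextensions and $R^{\ps}_{\Thetabar}$ is $\frakm$-adically complete with finite residue field, such a point is the same datum as a continuous $\calO$-algebra homomorphism $R^{\ps}_{\Thetabar} \to \calO_{\overline L}$, i.e. a continuous $G$-pseudocharacter $\Gamma \to \overline L$ lifting $\Thetabar$. Letting $\Thetabar$ vary --- every continuous $G$-pseudocharacter over $\overline L$ is bounded and so has a residual one over a finite field --- one gets that $X_G(\overline L)$ is exactly the set of continuous $G$-pseudocharacters of $\Gamma$ over $\overline L$. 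Over the algebraically closed characteristic-$0$ field $\overline L$, the reconstruction theorem for Lafforgue's $G$-pseudocharacters (see \cite{Laf}, \cite{BHKT}, recalled earlier) identifies these with $G^0(\overline L)$-conjugacy classes of $G$-completely reducible representations $\Gamma \to G(\overline L)$, and I would check that this correspondence respects continuity both ways: a continuous representation clearly has a continuous pseudocharacter, and conversely, as $\Gamma$ is topologically finitely generated, the $G$-completely reducible representation attached to a continuous $G$-pseudocharacter can be taken with image in $G(L')$ for a finite extension $L'/L$ and is then continuous, because its residual representation has finite image and $\Gamma$ acts continuously on the $\calO_{L'}$-lattice spanned by the images of a finite topological generating set. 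This yields the asserted canonical bijection.

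\textbf{Main obstacle.}
The delicate part is everything invoked ``as in the $\GL_n$ case'' in the representability step: that a continuous $G$-pseudocharacter over an affinoid $L$-algebra is bounded, so that it spreads out to a formal model, and that its residual pseudocharacter is locally constant on $\spa A$. These are the analogues for Lafforgue's $G$-pseudocharacters of Chenevier's finiteness and integrality theorems for determinant laws (\cite{MR3444227}, \cite{SurLaVariete}), and it is in these statements that topological finite generation of $\Gamma$ is used a second time, beyond \Cref{tfgfingen}. Granting them, the quasi-Stein property reduces formally to \Cref{tfgfingen} and the description of the $\overline L$-points to the reconstruction theorem of \cite{Laf}, \cite{BHKT}.
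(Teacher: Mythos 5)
Your overall architecture is the same as the paper's: index by residual pseudocharacters over finite fields, use \Cref{tfgfingen} to get noetherian pseudodeformation rings, take Berthelot generic fibers, glue into a quasi-Stein space, and get the $\overline L$-points from the continuous reconstruction theorem (\Cref{continuousreconstruction}). But the steps you defer "as in the $\GL_n$ case" and flag as the main obstacle are precisely where the real work of the theorem lies, and one of the sketches you do give would not survive scrutiny. Boundedness of the image of a continuous map on the compact set $\Gamma^m$ only gives that each value set is bounded in $A$; it does not give power-boundedness (i.e. values in $A^{\circ}$), and it certainly does not give a single ring of definition receiving all values of $\Theta$. In the paper this is \Cref{calclem}: power-boundedness (part (1)) is proved by reducing at every maximal ideal of $A$ to a finite extension of $L$ and invoking the integrality statement of \cite[Theorem 4.8(i)]{BHKT}, which itself rests on the reconstruction theorem; descent to a model (part (2)) uses topological finite generation of $\Gamma$ together with finite generation of $\calO[G^r]^{G^0}$ (Seshadri), a compactness argument showing the value set meets only finitely many translates of a given model, and the open mapping theorem to verify that the enlarged ring $\calA\langle k_1,\dots,k_s\rangle$ is again admissible. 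None of this is supplied or replaced by your compactness remark.

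The second gap is the reduction to a single residual pseudocharacter and the appeal to a "universal property of $R^{\ps}_{\Thetabar}$ extending to $\varpi$-adically complete coefficient $\calO$-algebras." The deformation ring pro-represents a functor on artinian local $\Lambda$-algebras; to evaluate it on a ring of definition $A_0$ of an affinoid algebra one must first show that the pseudocharacter descends to a profinite subring that is an inverse limit of the relevant artinian quotients. The paper does this via \Cref{Aprimelemma} (the values of $\Theta$ generate a profinite admissible subalgebra $A'$, using \Cref{factoringdiscrete} and \Cref{BGammaGfinite}), \Cref{prodoflocal} ($A'$ is a finite product of local profinite algebras, which is what produces the decomposition by residual pseudocharacters — no connectedness argument on $\Spec A$ is needed), \Cref{teichmueller} (to put the correct coefficient-ring structure on $A'$), and \Cref{lemdiv} (matching the pieces with the reduction maps). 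Your sketch of "reducing modulo the radical of $A_0/\varpi$" gestures at this but does not establish either the descent or the finiteness of the decomposition. So while your route is the intended one, the proof as written has genuine holes exactly at the two lemmas that make the representability statement true, and the one argument you offer in their place (boundedness from compactness) is insufficient.
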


The proof of \Cref{ThmB} relies on \Cref{ThmRec} and \Cref{ThmA}.
This is a $p$-adic analog of the GIT quotient of the space of Langlands parameters studied in the $\ell \neq p$ case in \cite{DHKM, Zhu, fargues2021geometrization}.

Let $F/\bbQ_p$ be a $p$-adic local field, fix an algebraic closure $F \hookrightarrow \overline F$ and let $\Gamma := \Gamma_F := \Gal(\overline F/F)$ be the absolute Galois group of $F$. 
In the special case when $G=\Sp_{2n}$ we study certain obstructed loci $\overline X_{\Thetabar}^{\dec}$, $\overline X_{\Thetabar}^{\pair}$ and $\overline X_{\Thetabar}^{\spcl}$ (see \Cref{defsubloci}) of the special fiber $\overline X_{\Thetabar}$ of $\Spec(R_{\Thetabar}^{\ps})$ analogous to \cite[§3.4]{BIP} and \cite{BJ_new}.
Here, the geometric points of $\overline X_{\Thetabar}^{\dec}$ are given by representations decomposable as symplectic representations, $\overline X_{\Thetabar}^{\pair}$ are representations not decomposable as symplectic representations but decomposable as representations, these are called of pair type, and $\overline X_{\Thetabar}^{\spcl}$ are representations which are irreducible, but with nonvanishing obstruction group $H^2$.
These three obstructed loci will contain all points $x \in \overline X_{\Thetabar}$ of dimension $1$, where $H^2(\Gamma_F, \sp_{2n,\kappa(x)}) \neq 0$.

\begin{maintheorem}[\Cref{estspcl}, \Cref{estred}, \Cref{endcor}]\label{ThmC} Let $\Thetabar$ be a continuous $\Sp_{2n}$-pseudocharacter of $\Gamma_F$ over $\kappa$.
\begin{enumerate}
    \item $\dim \overline X_{\Thetabar}^{\dec} \leq n(2n+1)[F : \bbQ_p] - 4(n-1)[F : \bbQ_p]$.
    \item $\dim \overline X_{\Thetabar}^{\pair} \leq n^2[F : \bbQ_p] + 1$.
    \item $\dim \overline X_{\Thetabar}^{\spcl} \leq 2n^2[F : \bbQ_p] + 1$.
    \item $\dim \overline X_{\overline \Theta} \leq n(2n+1)[F : \bbQ_p]$.
\end{enumerate}
If $\Thetabar$ comes from an absolutely irreducible representation, then in (4) equality holds and $\overline X_{\Thetabar}^{\spcl} \subsetneq \overline X_{\overline \Theta}$.
\end{maintheorem}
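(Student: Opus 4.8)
The plan is to establish (4) first for an absolutely $\Sp_{2n}$-irreducible residual pseudocharacter, together with the equality and the strict inclusion asserted at the end, then the estimates (1)--(3) for the loci of \Cref{defsubloci}, and finally the unconditional (4) by covering $\overline X_{\Thetabar}$ by its irreducible locus and these loci. Throughout, write $d=[F:\bbQ_p]$.

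\textbf{The irreducible case.} By \Cref{ThmA}\,(1) the special fiber $\overline X_{\Thetabar}$ is of finite type over $\kappa$, and by the identification of $\Sp_{2n}$-pseudocharacters with the invariant-theoretic quotient of the framed deformation scheme (established earlier in the paper; cf.\ \cite{BHKT}), the completed local ring of $\overline X_{\Thetabar}$ at a closed point $x$ is a quotient of $(R^{\square}_{\rhobar_x}/\frakm_{\calO})^{\Sp_{2n}}$, where $\rhobar_x\colon\Gamma_F\to\Sp_{2n}(\kappa(x))$ is the semisimple ($\Sp_{2n}$-completely reducible) representation attached to $x$; hence
\[
\dim_x\overline X_{\Thetabar}\ \le\ \dim\bigl(R^{\square}_{\rhobar_x}/\frakm_{\calO}\bigr)\ -\ \bigl(\dim\Sp_{2n}-\dim Z_{\Sp_{2n}}(\rhobar_x)\bigr),
\]
the subtracted term being the dimension of the conjugacy class of $\rhobar_x$. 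Since $2\in\calO^{\times}$ and $\rhobar_x$ is semisimple, $Z_{\Sp_{2n}}(\rhobar_x)$ is smooth with Lie algebra $H^{0}(\Gamma_F,\ad\rhobar_x)$, where $\ad\rhobar_x$ denotes $\sp_{2n}\cong\Sym^{2}$ of the standard module with its adjoint $\Gamma_F$-action; and invoking the complete intersection theorem for local framed deformation rings over $\Gamma_F$ of \cite{BIP} (adapted to $G=\Sp_{2n}$) together with the local Euler characteristic formula $h^{1}=h^{0}+h^{2}+d\,\dim\sp_{2n}$, the first term equals $(d+1)\dim\sp_{2n}$, independently of $\rhobar_x$ (the $h^0$- and $h^2$-contributions cancel). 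When $\Thetabar$ is absolutely $\Sp_{2n}$-irreducible, the open irreducible locus contains the unique closed point, and therefore---$\overline X_{\Thetabar}$ being the spectrum of a complete local ring---equals $\overline X_{\Thetabar}$; thus $Z_{\Sp_{2n}}(\rhobar_x)=Z(\Sp_{2n})=\mu_2$ for every $x$, the conjugacy class has dimension $\dim\Sp_{2n}$, and the displayed inequality gives $\dim\overline X_{\Thetabar}\le (d+1)\dim\sp_{2n}-\dim\sp_{2n}=n(2n+1)d$. At the closed point the $\mathrm{PSp}_{2n}$-action on the framed scheme is free, so the quotient loses exactly $\dim\Sp_{2n}$ and equality holds; there is no additive constant (contrast $\GL_n$, where $\dim Z(\GL_n)=1$) precisely because $\dim Z(\Sp_{2n})=0$. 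In this situation $\overline X_{\Thetabar}^{\dec}$ and $\overline X_{\Thetabar}^{\pair}$ are empty (their points are reducible), and $\overline X_{\Thetabar}^{\spcl}\subsetneq\overline X_{\Thetabar}$ because being of special type is a nontrivial closed condition on $\overline X_{\Thetabar}$, hence cuts out a proper closed subset.

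\textbf{The three loci and the general bound.} For (1), realize $\overline X_{\Thetabar}^{\dec}$ as the union, over orthogonal decompositions of the standard symplectic module into two nonzero symplectic summands of ranks $2a$ and $2(n-a)$, of the images of the functorial gluing morphisms $\overline X_{\Sp_{2a}}\times\overline X_{\Sp_{2(n-a)}}\to\overline X_{\Sp_{2n}}$ coming from $\Sp_{2a}\times\Sp_{2(n-a)}\hookrightarrow\Sp_{2n}$; by induction on $n$ (using (4) for the two smaller groups) its dimension is at most $\max_{1\le a\le n-1}\bigl((2a^{2}+a)+(2(n-a)^{2}+(n-a))\bigr)d$, a convex function of $a$ maximal at $a=1$, giving $(2n^{2}-3n+4)d=n(2n+1)d-4(n-1)d$. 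For (2), realize $\overline X_{\Thetabar}^{\pair}$ as the image of a pseudodeformation space of $\Gamma_F$ valued in the Siegel Levi $\GL_n\hookrightarrow\Sp_{2n}$, $g\mapsto g\oplus{}^{t}g^{-1}$, and bound it by the $\GL_n$-analogue $\dim\overline X_{\GL_n}\le n^{2}d+1$ of (4) (same complete intersection input, plus the fact that the generic point of each component of the framed special fiber is absolutely irreducible). For (3), a special representation is induced from the pertinent quadratic extension $F'/F$, so $\overline X_{\Thetabar}^{\spcl}$ lies in the image of a pseudodeformation space of $\GL_n$-valued representations of $\Gamma_{F'}$, whence $\dim\overline X_{\Thetabar}^{\spcl}\le n^{2}[F':\bbQ_p]+1=2n^{2}d+1$. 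For the unconditional (4) one covers $\overline X_{\Thetabar}$ by its absolutely irreducible locus---on which the computation above applies pointwise, giving $\le n(2n+1)d$---and its reducible locus, covered by $\overline X_{\Thetabar}^{\dec}$, $\overline X_{\Thetabar}^{\pair}$, the analogous (smaller) loci for the Levi subgroups $\GL_k\times\Sp_{2(n-k)}$ with $1<k<n$, and $\overline X_{\Thetabar}^{\spcl}$, all of dimension $\le n(2n+1)d$ by (1)--(3) and the inductive hypothesis; this assembly is \Cref{endcor}, with the two locus estimates in \Cref{estred} and \Cref{estspcl}.

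\textbf{Main obstacle.} I expect the technical heart to be: (i) establishing the complete intersection and dimension formula for $\Sp_{2n}$-valued framed deformation rings over $\Gamma_F$ in the style of \cite{BIP}, checking that the quadratic nature of $\sp_{2n}\cong\Sym^{2}$ does not disturb the presentation; and (ii) on the reducible and special strata, controlling $h^{2}(\Gamma_F,\ad\rhobar_x)$---by local Tate duality and the invariance of the trace pairing, this equals $h^{0}$ of cyclotomic twists of the constituents of $\Sym^{2}\rhobar_x$---and tracking the extension classes it forces, since this casework is what produces the precise constants $4(n-1)$, $1$ and $1$ in (1)--(3).
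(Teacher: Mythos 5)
Your treatment of the decomposable and pair loci matches the paper in spirit (finite gluing maps from products of smaller pseudodeformation spaces as in \Cref{finmapdec} and \Cref{finmappair}, induction on $n$, and the $\GL_n$ input for the pair locus), but the backbone of your argument for the irreducible locus, for the unconditional bound (4), and for the equality statement has a genuine gap. You bound the completed local rings of $\overline X_{\Thetabar}$ by realizing the pseudodeformation space as an invariant-theoretic quotient of an $\Sp_{2n}$-framed deformation scheme and then invoking a BIP-style complete intersection/dimension theorem ($\dim R^{\square}_{\rhobar_x}/\varpi = (d+1)\dim\sp_{2n}$) "adapted to $\Sp_{2n}$". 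Neither ingredient is available: the identification of $\Spf R^{\ps}_{\Thetabar}$ with such a quotient is not established in this generality (for $\GL_d$ it rests on Cayley--Hamilton/determinant-law technology), and the upper-bound half of the dimension formula for $\Sp_{2n}$-framed deformation rings is precisely the hard theorem the paper explicitly defers to ongoing work; the presentation only gives the lower bound for free. Moreover, even granting the quotient description, subtracting the orbit dimension $\dim\Sp_{2n}-\dim Z$ only controls points with small stabilizer, and your "free $\mathrm{PSp}_{2n}$-action, so equality holds" step is not a proof. The paper avoids all of this: it uses the closed immersion $\overline X_{\Thetabar}\hookrightarrow\overline X_{\iota(\Thetabar)}$ coming from the invariant-theoretic surjection (\Cref{Rpssurj}), identifies completed local rings at dimension-$1$ points with pseudodeformation rings over local fields (\Cref{completionoflocalring}), compares these at irreducible points with unframed representation deformation rings via \Cref{compprop} and \Cref{centrtriv} (using that $\sp_{2n}$ is an $\Sp_{2n}$-direct summand of $\gl_{2n}$, \Cref{dirsumlemma}, to control $H^0$ and $H^2$), and then cites the Galois-cohomological Euler characteristic and B\"ockle--Juschka's $\GL_m$ results (\Cref{nspclregular}); the equality and $\overline X^{\spcl}_{\Thetabar}\subsetneq\overline X_{\Thetabar}$ follow from the lower bound $h^1-h^2$ for the deformation ring of $\rhobar$, not from an orbit count, and your assertion that specialness is "a nontrivial closed condition" is exactly what needs proof.

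Your argument for (3) is also incorrect as written. Special points are defined either by $\rho\cong\rho(1)$ (when $\zeta_p\notin F$) or by reducibility after restriction to a \emph{degree $p$} Galois extension (when $\zeta_p\in F$); they are not in general induced from a quadratic extension, so the computation $n^2[F':\bbQ_p]+1=2n^2[F:\bbQ_p]+1$ has no basis (and the $\rho\cong\rho(1)$ case does not fit an induction-from-$F'$ picture at all). The paper instead observes that $\overline X^{\spcl}_{\Thetabar}$ is closed in the $\GL_{2n}$-special locus (\Cref{openandclosedsubspaces}) and quotes the bound of \cite[Theorem 5.4.1 (a)]{BJ_new} directly. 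Finally, the extra strata for Levi subgroups $\GL_k\times\Sp_{2(n-k)}$ with $1<k<n$ in your covering of the reducible locus are unnecessary: by \Cref{symplstructure} every symplectically reducible semisimple point already lies in $\overline X^{\dec}_{\Thetabar}\cup\overline X^{\pair}_{\Thetabar}$, which is the stratification the paper uses.
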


With these results at hand one could prove the analog of the main result of \cite{BIP} for $G=\Sp_{2n}$.
In joint work with Vytautas Pa\v{s}k\={u}nas we will generalize the results of \cite{BIP} by replacing $\GL_d$ with a generalized reductive subgroup scheme.
Essential ingredients in the proof will be \Cref{ThmRec} and \Cref{ThmA}.

In joint work with Mohamed Moakher \cite{MQ} we introduced an analog of Chenevier's determinant laws for $\Sp_{2n}$ and $\GSp_{2n}$, which we call \emph{symplectic determinant laws}. They can be seen as an alternative approach to pseudocharacters for representations valued in symplectic groups which is based on algebras with involution.
We expect that symplectic determinant laws allow for a detailed analysis of deformation problems when the residual representation is reducible and multiplicity-free.

\textbf{Organization of the paper.} In \Cref{secgrpsch} we introduce the notion of a \emph{generalized reductive group scheme} $G$, which also allows for $G$ to be disconnected. We recall the notion of a \emph{good filtration} of a $G$-module in the connected case in \Cref{subsecGF} and prove a version of Mathieu's tensor product theorem over a principal ideal domain in \Cref{subsecMathieu}. We use these results in \Cref{secOGm} to show that (under technical hypotheses) the modules $\calO[G^m]$ have a good filtration under the conjugation action of $G^0$. Serre's notions of \emph{$G$-complete reducibility} and \emph{$G$-semisimplification} are extended in \Cref{subsecGss} to generalized reductive group schemes.
In \Cref{secGPC} we develop the foundations of the theory of $G$-pseudocharacters over arbitrary base rings.
We show in \Cref{secRecthm}, that $G$-pseudocharacters over algebraically closed fields $k$ are in bijection with $G$-completely reducible representations up to $G^0(k)$-conjugation, establishing \Cref{ThmRec}.
In \Cref{secsumtensor} we introduce direct sums, duals and tensor products of $\GL_n$-pseudocharacters and analogs for the symplectic groups $\Sp_{2n}$.
In \Cref{secRep} we remove the dependence of the definition of $G$-pseudocharacter on the base ring of $G$.
In \Cref{secinvthy} we calculate generators of invariant algebras for certain classical groups.
In \Cref{secdefGPC} we define the deformation problem for $G$-pseudocharacters and prove \Cref{ThmA} (1) in \Cref{subsecnoethTFG} and \Cref{ThmA} (2) and (3) in \Cref{subsecnoethPhip} building on a result of Chenevier.
Our deformation problem is compared in \Cref{subsecCompChen} with Chenevier's deformation problem for determinant laws using the comparison theorem of Emerson and Morel.
In \Cref{subsecdefComp} we give criteria under which there is an isomorphism between the deformation ring of a $G$-valued representation and the deformation ring of the associated $G$-pseudocharacter. In \Cref{secpadicspace} we define the rigid-analytic space of continuous $G$-pseudocharacters and establish \Cref{ThmB} building on \Cref{ThmRec} and \Cref{ThmA}.
In \Cref{secsubdiv} we explain our stratification of the special fiber and prove \Cref{ThmC} in \Cref{secboundsforSp}.

\textbf{Acknowledgments.} I would like to thank my doctoral thesis advisor Gebhard Böckle for suggesting the topic and continuous support and advice during the writing of this paper. I would also like to thank Vytautas Pa\v{s}k\={u}nas for insightful discussions about \cite{BIP} and comments on earlier versions of this article. I thank Sophie Morel for helpful conversations on the results of Kathleen Emerson's thesis \cite{Emerson2018ComparisonOD} and \cite{emerson2023comparison}. Finally I thank Mohamed Moakher, Ariel Weiss and Stephen Donkin for helpful conversations about classical invariant theory and good filtrations.

\textbf{Conventions.} Let $A$ be a topological commutative ring and let $X$ be an affine scheme of finite type over $A$. We endow $X(A)$ with the subspace topology of $X(A) \hookrightarrow \bbA^n(A) = A^n$ for a closed immersion $i : X \hookrightarrow \bbA^n$, where $A^n$ carries the product topology. This topology is independent of $i$, makes every map of $A$-schemes continuous and is compatible with fiber products \cite[Proposition 2.1]{ConradTopologies}.
We also need a topology in the following situation: Let $\kappa$ be a topological field, and let $A$ be a finite-dimensional local $\kappa$-algebra with residue field $\kappa$, equipped with the product topology induced by an isomorphism $A \cong \kappa^n$ of $\kappa$-vector spaces. If $X$ is an affine $A$-scheme of finite type, the map $X(A) \to X(\kappa)$ is continuous. Let $Z \subseteq X(A)$ be the preimage of a Zariski-closed subset $Y(\kappa) \subseteq $ $X(\kappa)$ for some closed $A$-subscheme $Y \subseteq X$. We define a topology on $Z$ as follows: The functor $T \mapsto X(A \otimes_{\kappa} T)$ is representable by an affine $\kappa$-scheme $\Res_{\kappa}^A X$ with $(\Res_{A/\kappa} X)(\kappa) = X(A)$, and the projection $X(A) \to X(\kappa)$ comes from a morphism of $\kappa$-schemes $\Res_{A/\kappa} X \to X_{\kappa}$. We then obtain $Z$ as the $\kappa$-points of the scheme-theoretic preimage of $Y_{\kappa}$ in $\Res_{A/\kappa} X$.

\section{Generalized reductive group schemes}
\label{secgrpsch}

\subsection{Definitions}
\label{secgengrpsch}

Working with deformations of representations valued in other algebraic groups $G$ than $\GL_n$, we have to decide which groups we want to allow for $G$.
Our group $G$ shall be defined over the coefficient ring of some deformation problem, such as the ring of integers of a $p$-adic local field.
The following definition of reductive and semisimple group schemes over arbitrary base schemes follows \cite[XIX, 2.7]{MR2867622} and \cite[Definition 3.1.1]{bcnrd}.

\begin{definition} A \emph{reductive (semisimple) group scheme} over a scheme $S$ is a smooth $S$-affine $S$-group scheme $G$, such that the geometric fibers of $G$ are connected reductive (semisimple) groups.
\end{definition}

When $S$ is the spectrum of a field, we recover the classical notion of connected reductive group.

An \emph{$S$-torus} is an $S$-group scheme of multiplicative type with smooth connected fibers, \cite[Definition 3.1.1]{bcnrd}.
Equivalently an $S$-torus is an $S$-group scheme which becomes étale-locally isomorphic to a power of $\Gm$, \cite[Proposition B.3.4]{bcnrd}.
If $G$ is a reductive $S$-group scheme, then a \emph{maximal torus} of $G$ is an $S$-torus $T \subseteq G$, such that for each geometric point $\overline s$ of $S$, $T_{\overline s}$ is a maximal torus of $G_{\overline s}$.
Étale-locally $G$ admits a maximal torus, \cite[Corollary 3.2.7]{bcnrd}.
For the slightly technical definition of a \emph{split reductive group} over $S$ we refer to \cite[Definition 5.1.1]{bcnrd}.
If $S = \Spec(\bbZ)$ and $G$ admits a maximal torus, then $G$ is split \cite[Example 5.1.4]{bcnrd}.

The following three sets are canonically in bijection, \cite[Theorem 1.4]{Conrad2014NONSPLITRG}.
\begin{enumerate}
    \item Split reductive group schemes over $\bbZ$ up to isomorphism of $\bbZ$-group schemes.
    \item Split connected reductive groups over $\bbQ$ up to isomorphism of $\bbQ$-group schemes.
    \item Root data up to isomorphism.
\end{enumerate}
We will refer to split reductive group schemes over $\bbZ$ as \emph{Chevalley groups}. By a \emph{Chevalley group} over a different base than $\bbZ$ we mean the base change of a Chevalley group over $\bbZ$.

Every split connected reductive group $G$ over the fraction field $L$ of a domain $\calO$ admits a model over $\calO$, which is a Chevalley group, \cite[Theorem 1.2]{Conrad2014NONSPLITRG}. If $\calO$ is a principal ideal domain, then every $\calO$-model of $G$ is a Chevalley group, \cite[Proposition 1.3]{Conrad2014NONSPLITRG}.

\begin{example} We discuss the main examples we will consider.
\begin{enumerate}
    \item The symplectic group $\Sp_{2n}$ over $\bbZ$ is the scheme-theoretic automorphism group of the standard symplectic bilinear form on $\bbZ^{2n}$. It is a semisimple Chevalley group with almost-simple connected geometric fibers.
    \item The orthogonal group $\OO_{n}$ over $\bbZ[\tfrac{1}{2}]$ is the automorphism group of the standard symmetric bilinear form on $\bbZ^n$. It is a smooth affine $\bbZ[\tfrac{1}{2}]$-group scheme with non-connected almost-simple geometric fibers. The special orthogonal group $\SO_n$, which is defined as the kernel of the determinant homomorphism $\det : \OO_n \to \Gm$, is a reductive $\bbZ[\tfrac{1}{2}]$-group scheme.
\end{enumerate}
\end{example}

The definition of $G$-pseudocharacters (\Cref{LafPC}) also allows for $G$ to be disconnected.
So we will need to extend the notion of reductivity to group schemes with possibly disconnected geometric fibers.

Suppose $G$ is a smooth affine group scheme over a commutative ring $\calO$. There is a unique open subgroup scheme $G^0 \subseteq G$, such that $(G^0)_s \cong (G_s)^0$ for all $s \in \Spec(\calO)$, \cite[Corollaire 15.6.5]{PMIHES_1966__28__5_0}. We say that $G^0$ is the \emph{identity component} of $G$.
Each $G^0_s$ is geometrically connected \cite[Exercise 1.6.5]{bcnrd} and it follows that the formation of the identity component $(-)^0$ commutes with any base change.
In particular, if the geometric fibers $G_{\overline s}$ for $s \in \Spec(\calO)$ are reductive groups, then their identity components $G_{\overline s}^0$ are connected reductive groups, $G^0$ is an open and closed $\calO$-subgroup scheme of $G$ and the quotient $G/G^0$ exists as a separated étale $\calO$-group scheme of finite presentation, \cite[Proposition 3.1.3]{bcnrd}.
In general $G/G^0$ does not have to be finite, \cite[Example 3.1.4]{bcnrd}.

This leads to the following definition, which includes the orthogonal groups $\mathrm O_n$ when $2$ is invertible in $\calO$.

\begin{definition}\label{defgenredgpsch} A \emph{generalized reductive group scheme} over a commutative ring $\calO$ is a smooth affine group scheme, such that $G^0$ is a reductive group scheme and $G/G^0$ is finite.
\end{definition}

It follows from \cite[Proposition 3.1.3]{bcnrd}, that for a generalized reductive $\calO$-group scheme $G$ the quotient $G/G^0$ is étale. Sean Cotner has shown in \cite{cotner} that if $\calO$ is noetherian then a smooth affine $S$-group scheme $G$ is generalized reductive if and only if it is geometrically reductive in the sense of \cite[Definition 9.1.1]{alper}. The notion has also been used in \cite[Definition 2.1]{Friedlander1988} and \cite{DHKM}.

\subsection{Good filtrations}\label{subsecGF} Let $\calO$ be a commutative ring, let $G$ be an affine $\calO$-group scheme and let $V$ be an $\calO[G]$-comodule. We will speak of an \emph{algebraic representation} of $G$ or a \emph{$G$-module}. We denote by $V^G$ the $\calO$-submodule of \emph{fixed points} or \emph{algebraic invariants} of $V$ under $G$, see \cite[§I.2.10]{Jantzen2003}.
If $\calO'$ is an arbitrary commutative $\calO$-algebra, then the natural map $V^G \otimes_{\calO} \calO' \to (V \otimes_{\calO} \calO')^{G_{\calO'}}$ is not always an isomorphism. The next lemma gives sufficient conditions.

\begin{lemma}\label{critchange} Let $G$ be a flat affine group scheme over a Dedekind domain $\calO$, let $V$ be a $G$-module and let $\calO'$ be a commutative $\calO$-algebra. Assume, that $\calO'$ is $\calO$-flat or that $H^1(G,V) = 0$. Then the natural map $V^G \otimes_{\calO} \calO' \to (V \otimes_{\calO} \calO')^{G_{\calO'}}$ is an isomorphism.
\end{lemma}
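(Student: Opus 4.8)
The plan is to reduce everything to a single left--exactness statement. Set $A := \calO[G]$, which is $\calO$--flat since $G$ is flat, and recall that the comodule structure map $\Delta_V\colon V\to V\otimes_\calO A$ presents the invariants as
\[
V^G=\ker\bigl(V\xrightarrow{\Delta_V-(\id_V\otimes 1)}V\otimes_\calO A\bigr).
\]
Moreover there is a canonical identification $(V\otimes_\calO A)\otimes_\calO\calO'\cong(V\otimes_\calO\calO')\otimes_{\calO'}\calO'[G_{\calO'}]$ under which $(\Delta_V-(\id\otimes 1))\otimes_\calO\calO'$ becomes the map $\Delta_{V\otimes\calO'}-(\id\otimes 1)$ computing $(V\otimes_\calO\calO')^{G_{\calO'}}$; this is routine once the comodule maps are written out. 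Hence in both cases it suffices to show that applying $-\otimes_\calO\calO'$ to the left--exact sequence $0\to V^G\to V\xrightarrow{\Delta_V-(\id\otimes 1)}V\otimes_\calO A$ again yields a left--exact sequence.

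Case (1) is then immediate: if $\calO'$ is $\calO$--flat, then $-\otimes_\calO\calO'$ is exact, so it preserves left--exactness. Here neither the Dedekind hypothesis nor any condition on $V$ is used.

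For case (2) I would invoke the Hochschild complex $C^\bullet(G,V)$, with $C^n=V\otimes_\calO A^{\otimes n}$, computing $H^\ast(G,V)$; its beginning is
\[
0\to V^G\to V\xrightarrow{\partial^0}V\otimes_\calO A\xrightarrow{\partial^1}V\otimes_\calO A^{\otimes 2},
\]
with $H^0=V^G$ and $H^1=H^1(G,V)$. The hypothesis $H^1(G,V)=0$ says exactly that $\ker\partial^1=\operatorname{im}\partial^0$, so writing $B:=\operatorname{im}\partial^0=\ker\partial^1\subseteq V\otimes_\calO A$ and $C:=\operatorname{im}\partial^1\subseteq V\otimes_\calO A^{\otimes 2}$ we obtain short exact sequences
\[
0\to V^G\to V\to B\to 0,\qquad 0\to B\to V\otimes_\calO A\to C\to 0 .
\]
At this point the Dedekind hypothesis enters: $A$ and $A^{\otimes 2}$ are $\calO$--flat, hence so are $V\otimes_\calO A$ and $V\otimes_\calO A^{\otimes 2}$ (using that $V$ is $\calO$--flat, which holds in the cases where the lemma is applied, e.g.\ $V=\calO[G^m]$), and over a Dedekind domain a submodule of a flat module is flat; therefore $B$ and $C$ are $\calO$--flat, so $\Tor_1^\calO(B,\calO')=\Tor_1^\calO(C,\calO')=0$. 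Tensoring both short exact sequences with $\calO'$ keeps them exact: from the second, $B\otimes_\calO\calO'\hookrightarrow(V\otimes_\calO A)\otimes_\calO\calO'$; from the first, $0\to V^G\otimes_\calO\calO'\to V\otimes_\calO\calO'\to B\otimes_\calO\calO'\to 0$ is exact. Since the map $V\otimes_\calO\calO'\to(V\otimes_\calO A)\otimes_\calO\calO'$ factors as $V\otimes_\calO\calO'\twoheadrightarrow B\otimes_\calO\calO'\hookrightarrow(V\otimes_\calO A)\otimes_\calO\calO'$, its kernel equals the kernel of $V\otimes_\calO\calO'\to B\otimes_\calO\calO'$, i.e.\ the image of the injection $V^G\otimes_\calO\calO'\hookrightarrow V\otimes_\calO\calO'$; by the first paragraph this kernel is $(V\otimes_\calO\calO')^{G_{\calO'}}$, so $V^G\otimes_\calO\calO'\to(V\otimes_\calO\calO')^{G_{\calO'}}$ is an isomorphism.

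The conceptual content is small; the points to be careful about are the $G_{\calO'}$--equivariance of the base--change identifications, the fact that the Hochschild differentials commute with $-\otimes_\calO\calO'$, and — in case (2) — that flatness of $V$ (hence of the cochain modules) is genuinely needed, since otherwise the submodules $B$ and $C$ need not be $\calO$--flat. I expect the $\Tor$ and diagram--chase bookkeeping in case (2) to be the most tedious step, but there is no real obstacle.
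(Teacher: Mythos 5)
Your proof is correct, and mathematically it is the same argument as the paper's, just carried out by hand: the paper's entire proof is a citation of Jantzen's universal coefficient theorem (I.4.18 Proposition (a)), which yields the exact sequence $0 \to V^G \otimes_{\calO} \calO' \to (V \otimes_{\calO} \calO')^{G_{\calO'}} \to \Tor_1^{\calO}(H^1(G,V), \calO') \to 0$ and settles both cases at once, while you reprove the low-degree piece of that theorem via the Hochschild complex together with the fact that submodules of flat modules over a Dedekind domain are flat, and you handle case (1) separately by plain flat base change of a kernel. The trade-off: the paper's route is one line, whereas yours is self-contained and makes visible the hypothesis that the cited theorem (and your case (2)) actually needs, namely $\calO$-flatness of $V$ — a hypothesis not stated in the lemma but satisfied in all of the paper's applications (e.g. $V = \calO[G^m]$ with $G$ flat), and one which your case (1) argument genuinely does not require. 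So there is no gap beyond that flatness caveat, which you correctly flag and which is equally implicit in the paper's citation.
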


\begin{proof} By the universal coefficient theorem \cite[I.4.18 Proposition (a)]{Jantzen2003}, there is a short exact sequence
$$ 0 \to V^G \otimes_{\calO} \calO' \to (V \otimes_{\calO} \calO')^{G_{\calO'}} \to \Tor_1^{\calO}(H^1(G, V), \calO') \to 0 $$
Under either of the assumptions the claim follows.
\end{proof}

If $G$ is a Chevalley group over a principal ideal domain $\calO$ with fiber-wise maximal $\bbZ$-torus $T$ and Borel subgroup $B$, we define $H^0(\lambda) := \ind_B^G \lambda$ and $V(\lambda) := H^0(-w_0\lambda)^*$ for every dominant weight $\lambda \in X(T)_+$ and the longest element $w_0$ of the Weyl group. The $G$-module $V(\lambda)$ is free of finite rank over $\calO$. We say that an ascending filtration $V = \bigcup_{n \geq 0} V_n$ is \emph{good}, if for all $n \geq 0$ the module $V_{n+1}/V_n$ is isomorphic to $H^0(\lambda)$ for some $\lambda \in X(T)_+$. Since $V_n$ has finite rank, we have $H^i(G, V) = \Ext^i_G(V(0), V) = 0$ for all $i > 0$ by \cite[B.9 Lemma]{Jantzen2003}.

\subsection{Mathieu's tensor product theorem}
\label{subsecMathieu}

Mathieu's tensor product theorem states, that the tensor product of two modules with good filtration over a connected reductive group over an algebraically closed field admits a good filtration. We prove an integral version of this theorem.
Recall the universal coefficient theorem for $\Ext$ groups.

\begin{proposition}\label{UCTExt} Let $G$ be a flat affine group scheme over a Dedekind domain $\calO$ and let $\calO'$ be a commutative $\calO$-algebra. Then for every $\calO$-flat $G$-module $N$ and every finitely generated projective $G$-module $V$, we have a short exact sequence
$$ 0 \to \Ext_G^n(V,N) \otimes_{\calO} \calO' \to \Ext_{G_{\calO'}}^n(V \otimes_{\calO} \calO', N \otimes_{\calO} \calO') \to \Tor_1^{\calO}(\Ext_G^{n+1}(V,N),\calO') \to 0 $$
of $\calO'$-modules.
\end{proposition}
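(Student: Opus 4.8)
The plan is to imitate the proof of the universal coefficient theorem for rational cohomology \cite[I.4.18]{Jantzen2003}: I will exhibit a complex of flat $\calO$-modules computing $\Ext^\bullet_G(V,N)$ whose formation commutes with the base change $-\otimes_\calO\calO'$, and then apply the standard homological algebra over the Dedekind domain $\calO$.

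For the first step I would use the Hochschild (cobar) resolution. For any $G$-module $M$ the modules $I^j:=M\otimes_\calO\calO[G]^{\otimes(j+1)}$, $j\ge 0$, with the usual Hochschild differential, form an exact complex $0\to M\to I^0\to I^1\to\cdots$ \cite[I.4.15--I.4.16]{Jantzen2003}, and each $I^j$ is isomorphic to $\ind_1^G(W^j)$ with $W^j\cong M\otimes_\calO\calO[G]^{\otimes j}$ as an $\calO$-module. Since $G$ is $\calO$-flat, $\ind_1^G$ is exact, and being right adjoint to the exact restriction functor it preserves injectives; comparing a $G$-injective resolution of $I^j$ obtained by applying $\ind_1^G$ to an $\calO$-injective resolution of $W^j$ gives $\Ext^i_G(V,I^j)\cong\Ext^i_\calO(V,W^j)$ for all $i$. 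As $V$ is finitely generated and projective over $\calO$, the right-hand side vanishes for $i>0$; hence (taking $M=N$) the resolution $I^\bullet$ is $\Hom_G(V,-)$-acyclic and $\Ext^\bullet_G(V,N)$ is the cohomology of the complex
$$ D^\bullet,\qquad D^j:=\Hom_G(V,I^j)\cong\Hom_\calO(V,N\otimes_\calO\calO[G]^{\otimes j})\cong V^*\otimes_\calO N\otimes_\calO\calO[G]^{\otimes j}, $$
the last two isomorphisms again using that $V$ is finitely generated projective over $\calO$. Each $D^j$ is $\calO$-flat, as $V^*$, $N$ and $\calO[G]$ are. Running the same construction over $\calO'$ and unwinding the identifications $(V\otimes_\calO\calO')^*\cong V^*\otimes_\calO\calO'$, $\calO[G]\otimes_\calO\calO'\cong\calO'[G_{\calO'}]$, together with the compatibility of iterated tensor products and of the Hochschild differentials with base change, I get a canonical isomorphism of complexes between $D^\bullet\otimes_\calO\calO'$ and the complex computing $\Ext^\bullet_{G_{\calO'}}(V\otimes_\calO\calO',N\otimes_\calO\calO')$; in particular $H^n(D^\bullet\otimes_\calO\calO')\cong\Ext^n_{G_{\calO'}}(V\otimes_\calO\calO',N\otimes_\calO\calO')$.

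It then remains to apply the algebraic universal coefficient theorem to the complex $D^\bullet$ of flat modules over the Dedekind domain $\calO$. Writing $Z^n=\ker d^n$ and $B^{n+1}=\im d^n$ — these are $\calO$-flat, being submodules of the flat $D^n$ over a Dedekind domain — the short exact sequence of complexes $0\to Z^\bullet\to D^\bullet\to Q^\bullet\to 0$, where $Z^\bullet$ and $Q^\bullet$ carry the zero differential and $Q^n\cong B^{n+1}$, has flat terms, hence stays exact after $-\otimes_\calO\calO'$; its long exact cohomology sequence presents $H^n(D^\bullet\otimes_\calO\calO')$ as an extension of $\ker(B^{n+1}\otimes_\calO\calO'\to Z^{n+1}\otimes_\calO\calO')$ by $\coker(B^n\otimes_\calO\calO'\to Z^n\otimes_\calO\calO')$. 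Tensoring $0\to B^n\to Z^n\to\Ext^n_G(V,N)\to 0$ with $\calO'$ and using that $Z^n$ is flat identifies these with $\Tor_1^\calO(\Ext^{n+1}_G(V,N),\calO')$ and $\Ext^n_G(V,N)\otimes_\calO\calO'$ respectively, and no higher $\Tor$ appears because $\calO$ has global dimension $\le 1$. The delicate point is really the acyclicity in the first step: one cannot use an arbitrary $G$-injective resolution of $N$ (its terms need not be $\calO$-flat nor base-change compatible), so one must work with the cobar resolution and invoke $\calO$-projectivity of $V$ to see it is $\Hom_G(V,-)$-acyclic; after that the argument is the same bookkeeping as in \cite[I.4.18]{Jantzen2003}.
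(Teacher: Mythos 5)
Your proof is correct, but it takes a more self-contained route than the paper, which disposes of the statement in two lines: using that $V$ is finitely generated projective, the paper identifies $\Ext_G^n(V,N)=\Ext_G^n(\calO,V^*\otimes_{\calO}N)=H^n(G,V^*\otimes_{\calO}N)$ (and likewise over $\calO'$), observes that $V^*\otimes_{\calO}N$ is $\calO$-flat, and then quotes the universal coefficient theorem for rational cohomology \cite[I.4.18 Proposition (a)]{Jantzen2003} verbatim. You never invoke that identification; instead you re-derive the universal coefficient theorem with $\Hom_G(V,-)$ in place of the fixed-point functor, by showing the cobar resolution $N\otimes_{\calO}\calO[G]^{\otimes(\bullet+1)}$ is $\Hom_G(V,-)$-acyclic (via $\Ext^i_G(V,\ind_1^G W)\cong\Ext^i_{\calO}(V,W)$ and $\calO$-projectivity of $V$), identifying the resulting complex with $V^*\otimes_{\calO}N\otimes_{\calO}\calO[G]^{\otimes\bullet}$, checking its compatibility with base change, and then running the flat-complex argument over the Dedekind domain. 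This is essentially an unpacking of the proof of the result the paper cites, so the two arguments coincide in substance; what your version buys is transparency about exactly where each hypothesis enters (projectivity of $V$ for acyclicity and for $\Hom_{\calO}(V,-)\cong V^*\otimes_{\calO}-$, flatness of $N$ and $\calO[G]$ for flatness of the computing complex, the Dedekind hypothesis for flatness of cycles and boundaries), at the cost of length, whereas the paper's reduction-plus-citation is shorter and leaves the homological bookkeeping to Jantzen.
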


\begin{proof} By \cite[I.4.4 Lemma]{Jantzen2003} and \cite[I.4.2]{Jantzen2003}, there is a natural identification $\Ext_G^n(V,N) = \Ext_G^n(\calO,V^* \otimes_{\calO} N) = H^n(G,V^* \otimes_{\calO} N)$ and similarly for the middle term. The claim follows from the universal coefficient theorem \cite[I.4.18 Proposition (a)]{Jantzen2003}.
\end{proof}

\begin{theorem}\label{MathieuTPT} Let $G$ be a Chevalley group over a principal ideal domain $\calO$.
Let $M$ and $N$ be $G$-modules with good filtration. Then $M \otimes_{\calO} N$ is a $G$-module with good filtration.
\end{theorem}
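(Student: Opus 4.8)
The plan is to reduce the integral statement to Mathieu's theorem over a field by a careful filtration-and-base-change argument. The key tension is that over a principal ideal domain $\calO$ the modules $H^0(\lambda)$ need not behave as nicely as over a field: a short exact sequence of $G$-modules can fail to split, $\Tor$-obstructions appear under base change, and a filtration that is good over the residue fields need not assemble into a good filtration over $\calO$. So the argument has to keep track of flatness and of the vanishing of higher $\Ext$ against Weyl modules.

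First I would record the two standard homological characterizations of good filtrations, now over $\calO$: a finitely generated $\calO$-flat $G$-module $V$ has a good filtration if and only if $\Ext^1_G(V(\mu), V) = 0$ for all dominant $\mu$, equivalently $\Ext^i_G(V(\mu), V) = 0$ for all $i \geq 1$ and all $\mu$; and in that case the multiplicities $[V : H^0(\mu)]$ are computed by $\dim \Hom_{G_{\kappa(\frakp)}}(V(\mu)_{\kappa(\frakp)}, V_{\kappa(\frakp)})$ over any residue field and are independent of the point. These follow from \cite[B.9]{Jantzen2003} together with the universal coefficient theorem \Cref{UCTExt}: the point is that $V(\mu)$ is $\calO$-free of finite rank, so $\Ext^n_G(V(\mu), N) \otimes_\calO \calO' \hookrightarrow \Ext^n_{G_{\calO'}}(V(\mu)_{\calO'}, N_{\calO'})$, and the $\Ext^1$-vanishing over $\calO$ can be tested after reduction modulo each maximal ideal and after passing to the fraction field.

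Next I would run the proof by induction on the length of a good filtration of $N$, which reduces the theorem to the case $N = H^0(\lambda)$: given $0 \to N' \to N \to H^0(\lambda) \to 0$ with $N'$ of shorter good filtration, $M \otimes_\calO N'$ and $M \otimes_\calO H^0(\lambda)$ having good filtrations forces $M \otimes_\calO N$ to have one, because $\Ext^1_G(V(\mu), M \otimes_\calO N)$ sits in a long exact sequence between $\Ext^1_G(V(\mu), M \otimes_\calO N') = 0$ and $\Ext^1_G(V(\mu), M \otimes_\calO H^0(\lambda)) = 0$. (Here one uses that tensoring the short exact sequence with the $\calO$-flat module $M$ keeps it exact.) Similarly an induction on the good filtration of $M$ reduces to $M = H^0(\nu)$. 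So everything comes down to showing $H^0(\nu) \otimes_\calO H^0(\lambda)$ has a good filtration over $\calO$. For this I would first establish $\Ext^1_G(V(\mu), H^0(\nu)\otimes_\calO H^0(\lambda)) = 0$ for every $\mu$: this group injects into its base change to the fraction field $L$ of $\calO$, where $H^0(\nu)_L \otimes_L H^0(\lambda)_L = H^0(\nu)\otimes_\calO H^0(\lambda) \otimes_\calO L$ has a good filtration by Mathieu's theorem over the field $L$ (or even by Donkin/Wang in characteristic $0$, since $L$ has characteristic $0$ or $p$), and also injects into each $\Ext^1_{G_{\kappa(\frakp)}}(V(\mu)_{\kappa(\frakp)}, H^0(\nu)_{\kappa(\frakp)} \otimes H^0(\lambda)_{\kappa(\frakp)})$ when there is no $\Tor$-obstruction; over the residue field Mathieu's theorem gives the vanishing. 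The subtlety is the $\Tor_1^\calO(\Ext^2_G(V(\mu), H^0(\nu)\otimes_\calO H^0(\lambda)), \kappa(\frakp))$ term, so I would argue by descending induction on the cohomological degree, or more cleanly: since $\calO$ is a PID, $\Ext^1_G(V(\mu), H^0(\nu)\otimes_\calO H^0(\lambda))$ is a finitely generated $\calO$-module, it vanishes after inverting any single prime (by the fraction-field computation it is torsion, and by the residue-field computation at all but finitely many primes it is already zero), hence it is a finite torsion module; reducing mod a prime $\frakp$ in its support would produce a nonzero class in $\Ext^1_{G_{\kappa(\frakp)}}$ unless it is killed by a $\Tor$ term from $\Ext^2$, and one climbs the $\Ext$-tower, which terminates because $V(\mu)$ has finite projective dimension issues controlled by \cite[B.9]{Jantzen2003} giving $\Ext^{\geq 1}$ vanishing once a good filtration is in place — so the honest route is to prove simultaneously, by induction on the filtration lengths, that all $\Ext^{\geq 1}_G(V(\mu), M\otimes_\calO N)$ vanish, which is what makes the bootstrap close.

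Finally, $\Ext^1$-vanishing plus $\calO$-flatness of $M \otimes_\calO N$ (clear, as a tensor product of flat modules) plus finite generation gives a good filtration: one constructs it by choosing a dominant $\mu$ with $\Hom_G(V(\mu), M\otimes_\calO N) \otimes_\calO \kappa(\frakp) \neq 0$, using \cite[B.9]{Jantzen2003} to realize a copy of $H^0(\mu)$ as an $\calO$-pure submodule with flat quotient, and iterating; purity and flatness of the successive quotients are exactly what the $\Ext^1$-vanishing buys, via \Cref{UCTExt} and \Cref{critchange}. \textbf{The main obstacle} I anticipate is precisely the bookkeeping around $\Tor$-obstructions in the universal coefficient sequence: naively testing good filtration fiber-by-fiber is not enough, and the clean fix is to phrase the induction so that the full $\Ext^{\geq 1}$-vanishing (not just $\Ext^1$) is part of the inductive hypothesis, so that the $\Tor_1^\calO(\Ext^{n+1}, -)$ terms are zero for free at each stage.
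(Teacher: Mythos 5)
Your route is genuinely different from the paper's (the paper reduces to $M,N$ free of finite rank, checks that every reduction modulo a maximal ideal has a good filtration by combining Mathieu over $\overline{\kappa}$ with a universal-coefficient descent to $\kappa$, and then invokes Jantzen's B.9 Lemma, direction (iv) $\Rightarrow$ (i), as a black box, finishing infinite filtrations by a diagonal argument), but your version has a genuine gap at its central step. You need $\Ext^1_G(V(\mu), H^0(\nu)\otimes_{\calO}H^0(\lambda))=0$ \emph{over} $\calO$, and the bootstrap you describe does not deliver it. From the fibre-wise vanishing and the universal coefficient sequence (\Cref{UCTExt}) you only learn that $E:=\Ext^1_G(V(\mu), H^0(\nu)\otimes_{\calO}H^0(\lambda))$ satisfies $E\otimes_{\calO}L=0$ (it is torsion) and $E/\frakp E=0$ for every maximal ideal $\frakp$ (it is divisible). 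A torsion, divisible $\calO$-module need not vanish — $L/\calO$ has both properties — so this is not enough unless you also know $E$ is finitely generated over $\calO$, and finite generation of rational $\Ext$-groups over a PID is a nontrivial finiteness statement that your proposal neither proves nor cites (it fails for general flat affine group schemes, e.g.\ $H^1(\mathbb{G}_a,k)$ in characteristic $p$ is infinite-dimensional, so some input specific to reductive groups is needed).

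Strengthening the induction to all $\Ext^{\geq 1}$ does not close the loop either: the degree-$n$ universal coefficient sequence for $n\geq 1$ has vanishing middle term, which kills $\Tor_1^{\calO}(\Ext^{n+1},\kappa(\frakp))$ and hence (torsion $+$ torsion-free) gives $\Ext^{i}=0$ for $i\geq 2$; but the Tor-term controlling the $\frakp$-torsion of $\Ext^1$ sits in the \emph{degree-zero} sequence, where it is the cokernel of $\Hom_G(V(\mu),-)\otimes_{\calO}\kappa(\frakp)\to\Hom_{G_{\kappa(\frakp)}}(V(\mu)_{\kappa(\frakp)},-)$, and surjectivity of that base-change map is essentially equivalent to what you are trying to prove. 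So the argument is circular exactly where the integral difficulty lives; the clean repair is to quote the (iii)/(iv) $\Rightarrow$ (i) direction of Jantzen's B.9 Lemma, which is precisely what the paper does. Two smaller points: Mathieu's theorem is stated over algebraically closed fields, so over the residue field $\kappa(\frakp)$ you still need the flat-base-change descent via \Cref{critchange}/\Cref{UCTExt} (the paper spells this out); and your induction on filtration length only covers finite good filtrations, whereas the paper's definition allows infinite ascending ones, which the paper handles with a diagonal limit argument.
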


\begin{proof} Choose a split and fiber-wise maximal $\calO$-torus $T \subseteq G$ and a Borel subgroup $B \subseteq G$ containing $T$. We first assume, that $M$ and $N$ are free of finite rank. By \cite[B.9 Lemma]{Jantzen2003} and \Cref{UCTExt} $M$ has a good filtration if and only if for any maximal ideal $\frakm$ of $\calO$ with residue field $\kappa := \calO/\frakm$ the $G_{\overline\kappa}$-module $M_{\overline\kappa} := M \otimes_{\calO} \overline\kappa$ has a good filtration. So by Mathieu's tensor product theorem \cite{MathieuTensorProductTheorem}, which holds for connected reductive groups over algebraically closed fields, see \cite[Proposition II.4.21]{Jantzen2003} or \cite[Theorem 4.4.3]{vanderKallen}, $M_{\overline\kappa} \otimes_{\overline\kappa} N_{\overline\kappa}$ has a good filtration. We conclude, that $M \otimes_{\calO} N$ is a $G$-module with good filtration. Now let $M$ and $N$ be arbitrary with good filtrations $M = \bigcup_{i=1}^{\infty} M_i$ and $N = \bigcup_{j=1}^{\infty} N_j$. Then $M \otimes_{\calO} N = \bigcup_i \bigcup_j M_i \otimes_{\calO} N_i$ by \stackcite{00DD}. Choosing a diagonal sequence, we obtain a good filtration of $M \otimes_{\calO} N$.
\end{proof}

\subsection{\texorpdfstring{$\calO[G^m]$}{O[G^m]}}
\label{secOGm}

We generalize \cite[Corollary VIII.5.6]{fargues2021geometrization} to principal ideal domains. We denote by $\FG(n)$ a free group on $n$ generators.

\begin{proposition}\label{scholzePID}
    Let $G$ be a Chevalley group over a principal ideal domain $\calO$.
    For any homomorphism $\FG(n) \to \Aut(G)$, the $G$-module $\calO[Z^1(\FG(n), G)]$ admits a good filtration.
\end{proposition}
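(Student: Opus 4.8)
The plan is to identify $Z^1(\FG(n),G)$ with a product of copies of $G$ carrying twisted conjugation actions, reduce to the case $n=1$ using \Cref{MathieuTPT}, and then obtain the case $n=1$ by restricting a good filtration of $\calO[G]$ from $G\times G$ down to $G$ along the diagonal.

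Since $\FG(n)$ is free on generators $x_1,\dots,x_n$, evaluation at these generators identifies the $\calO$-scheme $Z^1(\FG(n),G)$ with $G^n$. Writing $\sigma_i\in\Aut(G)$ for the image of $x_i$ under the given homomorphism $\FG(n)\to\Aut(G)$, the action of $G$ on cocycles becomes, under this identification, the diagonal action on $G^n$ in which $g$ acts on the $i$-th factor by the twisted conjugation $x\mapsto g\,x\,\sigma_i(g)^{-1}$. Hence, as a $G$-module, $\calO[Z^1(\FG(n),G)]\cong\bigotimes_{i=1}^{n}\calO[G]_{\sigma_i}$, where $\calO[G]_\sigma$ denotes $\calO[G]$ equipped with the $\sigma$-twisted conjugation action. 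By \Cref{MathieuTPT}, applied $n-1$ times, it is enough to show that each $\calO[G]_\sigma$ admits a good filtration.

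For the case $n=1$ I would use that $\calO[G]$, with the $G\times G$-action $(g_1,g_2)\cdot x=g_1xg_2^{-1}$, admits a good filtration as a $G\times G$-module; this is due to Donkin in the integral setting (cf.\ \cite[II.4.20]{Jantzen2003} and \cite{vanderKallen}) and, for a general principal ideal domain $\calO$, follows from the case over $\bbZ$ by the flat base change $\bbZ\to\calO$. Since the costandard modules of $G\times G$ are exactly the external tensor products $H^0_G(\lambda)\boxtimes H^0_G(\mu)$, this is a filtration with such graded pieces. Now $\calO[G]_\sigma$ is obtained from this $G\times G$-module by restriction along $G\to G\times G$, $g\mapsto(g,\sigma(g))$, which factors as $(\id_G\times\sigma)\circ\Delta$ with $\Delta$ the diagonal embedding. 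Pulling back along the automorphism $\id_G\times\sigma$ of $G\times G$ preserves the property of admitting a good filtration --- this property does not depend on the choice of Borel and may be tested after base change to the residue fields of $\calO$, by \cite[B.9]{Jantzen2003} --- so it suffices to treat $\sigma=\id$. Restricting a module filtered by the $H^0_G(\lambda)\boxtimes H^0_G(\mu)$ along $\Delta$ produces an exhausting ascending filtration of $\calO[G]_{\id}$ whose graded pieces are the tensor products $H^0_G(\lambda)\otimes_\calO H^0_G(\mu)$, each of which admits a good filtration by \Cref{MathieuTPT}. Refining this filtration, exactly as at the end of the proof of \Cref{MathieuTPT}, produces a good filtration of $\calO[G]_{\id}$, and hence of $\calO[G]_\sigma$.

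With the integral Mathieu theorem \Cref{MathieuTPT} already available, the only genuinely new input here is Donkin's integral good filtration of $\calO[G]$ as a $G\times G$-module; the remaining steps --- the identification $Z^1(\FG(n),G)\cong G^n$, the factorization of the twisted conjugation through the diagonal, and the reduction of the outer twist $\sigma$ to the identity --- are routine. I expect the point deserving the most care in the write-up to be the assertion that pullback along a group automorphism preserves the class of modules admitting a good filtration, which is why I would reduce that step to residue fields.
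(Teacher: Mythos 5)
Your argument is correct and is essentially the paper's own proof: both identify $Z^1(\FG(n),G)$ with $G^n$ under twisted conjugation, invoke the good filtration of $\calO[G]$ for the two-sided translation action, remove the twist by restricting along a group automorphism, and handle the diagonal restriction via \Cref{MathieuTPT}. The only difference is organizational—you decompose $\calO[G^n]$ into single factors and apply \Cref{MathieuTPT} first, whereas the paper keeps the full $G^{2n}$-action and restricts to the diagonal at the end—which does not change the substance.
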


\begin{proof}
    As a scheme $Z^1(\FG(n), G)$ is $G^n$ with an action by twisted conjugation, i.e. $g \cdot (h_1, \dots, h_n) = (\varphi_1(g)h_1g^{-1}, \dots, \varphi_n(g)h_ng^{-1})$ for $\varphi_1, \dots, \varphi_n \in \Aut(G)$ determined by the map $\FG(n) \to \Aut(G)$. The coordinate ring $\calO[G^n]$ has a good filtration as a $G^{2n}$-module with action given by left- and inverse right multiplication, see \cite[Lemma A.15]{Jantzen2003}.
    Restriction along the group automorphism $(g_1, \dots, g_{2n}) \mapsto (\varphi_1(g_1), g_2, \dots, \varphi_n(g_{2n-1}), g_{2n})$ preserves induced modules and hence good filtrations, so $\calO[Z^1(\FG(n), G)]$ admits a good filtration as a $G^{2n}$-module.
    The module remains good after restricting to the diagonal subgroup $G \hookrightarrow G^{2n}$, since induced modules $\ind_{B^{2m}}^{G^{2m}} \lambda$ are tensor products of induced modules of the individual factors and hence have a good filtration by \Cref{MathieuTPT}.
\end{proof}

In the connected case the following Corollary is proved in \cite[Lemma 2.7]{emerson2023comparison}.

\begin{corollary}\label{gfG}
    Let $G$ be a generalized reductive group over a Dedekind domain $\calO$.
    Then for any $\calO$-algebra $\calO'$ and any $\calO'$-algebra $\calO''$, the canonical map $\calO'[G^m]^{G^0} \otimes_{\calO'} \calO'' \to \calO''[G^m]^{G^0}$ is an isomorphism.
    If $\calO$ is a principal ideal domain, $G^0$ is split, $G/G^0$ is constant and there exists a scheme-theoretic splitting $G/G^0 \to G$, then $\calO[G^m]$ equipped with an action induced by the action of $G^0$ on $G^m$ by diagonal conjugation has a good filtration.
\end{corollary}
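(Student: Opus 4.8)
The plan is to prove the good‑filtration statement (the second assertion of the corollary) first, and then deduce the base‑change isomorphism (the first assertion) from it by a faithfully flat descent.

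For the good‑filtration statement, write $W$ for the finite group with $G/G^0\cong\underline W$, and fix a scheme‑theoretic section $s\colon\underline W\to G$ of the projection, giving elements $s(w)\in G(\calO)$ for $w\in W$. Since $G^0$ is normal in $G$, the morphism $(g,w)\mapsto g\,s(w)$ is an isomorphism of $\calO$‑schemes $G^0\times\underline W\xrightarrow{\ \sim\ }G$, hence $\bigsqcup_{\mathbf w\in W^m}(G^0)^m\xrightarrow{\ \sim\ }G^m$, where on the component indexed by $\mathbf w=(w_1,\dots,w_m)$ the map sends $(g_1,\dots,g_m)$ to $(g_1 s(w_1),\dots,g_m s(w_m))$. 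Diagonal conjugation by $G^0$ fixes each component (it acts trivially on $W^m$), so $\calO[G^m]=\bigoplus_{\mathbf w\in W^m}M_{\mathbf w}$ as $G^0$‑modules, with $M_{\mathbf w}$ the coordinate ring of the $\mathbf w$‑component. I would then compute that, in the coordinates $g_i$, the action of $h\in G^0$ on $M_{\mathbf w}$ is $h\cdot(g_1,\dots,g_m)=(hg_1 c_{w_1}(h)^{-1},\dots,hg_m c_{w_m}(h)^{-1})$, where $c_w\in\Aut(G^0)$ is conjugation by $s(w)$ (an automorphism of $G^0$, as $s(w)$ normalizes the identity component). Precomposing with the scheme automorphism $(g_i)_i\mapsto(g_i^{-1})_i$ of $(G^0)^m$ turns this into the twisted conjugation $h\cdot(g_i)_i=(c_{w_i}(h)\,g_i\,h^{-1})_i$, i.e.\ exactly the $G^0$‑action on $Z^1(\FG(m),G^0)$ attached to the homomorphism $\FG(m)\to\Aut(G^0)$ sending the $i$‑th generator to $c_{w_i}$. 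Since $\calO$ is a principal ideal domain and $G^0$ is split reductive, $G^0$ is (the base change of) a Chevalley group, so \Cref{scholzePID} shows each $M_{\mathbf w}$ has a good filtration as a $G^0$‑module; a finite direct sum of modules with good filtration has a good filtration (interleave), so $\calO[G^m]$ does.

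For the base‑change isomorphism, I would first reduce to $\calO'=\calO$: once $\calO[G^m]^{G^0}\otimes_{\calO}\calA\xrightarrow{\ \sim\ }\calA[G^m]^{G^0}$ is known for every $\calO$‑algebra $\calA$, the general case follows by applying this with $\calA=\calO'$ and $\calA=\calO''$ together with the identification $(\calO[G^m]^{G^0}\otimes_{\calO}\calO')\otimes_{\calO'}\calO''=\calO[G^m]^{G^0}\otimes_{\calO}\calO''$. By \Cref{critchange}(2) applied to the $G^0$‑module $\calO[G^m]$, it then suffices to prove $H^1(G^0,\calO[G^m])=0$. As an $\calO$‑module this vanishes iff it does after localizing at every maximal ideal $\frakm$, and by flat base change for rational cohomology \cite[I.4.18]{Jantzen2003} that localization is $H^1(G^0_{\calO_{\frakm}},\calO_{\frakm}[G^m])$; since $\calO_{\frakm}\to\calO_{\frakm}^{\mathrm{sh}}$ is faithfully flat, it is enough to show $H^1(G^0_{\calO_{\frakm}^{\mathrm{sh}}},\calO_{\frakm}^{\mathrm{sh}}[G^m])=0$. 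Over the strictly henselian discrete valuation ring $\calO_{\frakm}^{\mathrm{sh}}$ — in particular a principal ideal domain — the reductive group $G^0$ is split (maximal tori exist and are split over such a base, hence so is $G^0$), the finite étale group $G/G^0$ is constant, and the smooth $G^0$‑torsor $G\to G/G^0$ has a section (smooth surjections over a strictly henselian local base do), so the hypotheses of the good‑filtration statement are met; thus $\calO_{\frakm}^{\mathrm{sh}}[G^m]$ has a good filtration as a $G^0$‑module, whence $H^{i}(G^0,\calO_{\frakm}^{\mathrm{sh}}[G^m])=0$ for all $i>0$, completing the descent.

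The only genuinely non‑formal part, and hence the main obstacle, is the bookkeeping in the good‑filtration step: realizing $G^m$ as a disjoint union of twisted copies of $(G^0)^m$, computing the induced diagonal‑conjugation action on each component, and matching it with the precise twisted‑conjugation convention of \Cref{scholzePID} — in particular noticing that one must precompose with inversion to obtain the correct left/right placement of $h$. Everything in the base‑change step is then routine, resting only on \Cref{critchange}, flat base change for rational cohomology, and the standard structural facts that reductive group schemes over a strictly henselian local ring are split and finite étale schemes over such a base are constant.
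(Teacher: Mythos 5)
Your proof is correct. The good-filtration half is essentially the paper's own argument: decompose $G^m$ into components indexed by tuples of components of $G$, identify the diagonal conjugation action on each component with the twisted conjugation action of \Cref{scholzePID} using the scheme-theoretic section, and finish by noting that a finite direct sum of modules with good filtration has a good filtration; your explicit computation with the inversion automorphism is just a more careful version of the coset bookkeeping the paper does. Where you genuinely diverge is the descent step for the base-change statement. The paper replaces $\calO$ by explicitly constructed faithfully flat extensions — a faithfully étale algebra splitting $G^0$, broken into domains and normalized so as to stay Dedekind, a further integral extension making $G/G^0$ constant and trivializing the component torsors, and finally a localization to reach a PID — and only then invokes \Cref{critchange} to reduce the first claim to the second. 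You instead reduce to the universal statement over $\calO$, observe via \Cref{critchange}(2) that it suffices to prove $H^1(G^0,\calO[G^m])=0$, and verify this after localizing and strictly henselizing at each maximal ideal, where splitness of $G^0$, constancy of $G/G^0$ and the existence of a section of $G\to G/G^0$ are automatic. Your route is shorter and sidesteps the somewhat delicate construction of the auxiliary Dedekind extensions, at the cost of importing the standard structural facts about strictly henselian local bases (reductive group schemes are split, finite étale schemes are constant, smooth surjections admit sections) together with flat base change for rational cohomology; both arguments ultimately rest on the same two pillars, \Cref{critchange} and \Cref{scholzePID}.
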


\begin{proof}
    For the first part we are by \Cref{critchange} allowed to replace $\calO$ by a faithfully flat $\calO$-algebra. 
    We will do this in a first step in such a way that $G^0$ becomes split and the algebra is a Dedekind domain.
    Recall that $G^0$ is a reductive group scheme.
    By \cite[Lemma 5.1.3]{bcnrd} there is a faithfully étale $\calO$-algebra $A$ over which $G^0$ becomes split.
    Let $K$ be the fraction field of $\calO$ and fix an algebraic closure $\overline x : \calO \hookrightarrow \overline K$.
    Then $K \otimes_{\calO} A$ is a finite-dimensional $K$-vector space and the map $A \hookrightarrow K \otimes_{\calO} A$ is injective.
    Every idempotent of $A$ gives an idempotent of $K \otimes_{\calO} A$, so there are only finitely many orthogonal idempotents $e_1, \dots, e_n$ in $A$.
    Given an idempotent $e_i \in A$, the ring $A_i := e_iA$ is an integral domain. Note, that $G^0$ becomes split over $A_i$.
    Let $S^{-1}\calO$ be the Dedekind domain associated with the image of $\Spec(A_i) \to \Spec(\calO)$.
    The fraction field $L_i$ of $A_i$ is a finite separable extension of $K$, hence the integral closure $\widetilde A_i$ of $A_i$ in $L_i$ is a finitely generated $S^{-1}\calO$-module and a Dedekind domain. We can apply \Cref{critchange} to the map $\calO \to \prod_i \widetilde A_i$ and assume from now on that $G^0$ becomes split over $\calO$.
    
    All $\calO$-homomorphisms $\calO[G/G^0] \to \overline K$ land in a common integrally closed integral extension of $\calO$ over which $G/G^0$ is a constant scheme.
    Indeed, we can see this by considering $G/G^0$ as finite $\pi_1^{\et}(\Spec(\calO), \overline x)$-set using \stackcite{0BND} and restricting to an open subgroup.
    We will assume, that $G/G^0$ is constant by further extending $\calO$.

    Let $C$ be a connected component of $G$. Then $C$ is a $G^0$-torsor. Since $C$ is of finite type over $\calO$, there is a homomorphism $\calO[C] \to \overline K$, which takes values in a suitable integrally closed finite extension of $\calO$. It follows, that $C$ is split over such an extension. By localization we may assume that $\calO$ is a principal ideal domain. By \Cref{critchange} the first claim reduces to the second claim.

    We now prove the second claim.
    By \cite[Proposition 1.3]{Conrad2014NONSPLITRG} $G^0$ is a Chevalley group.
    We have a disjoint decomposition $G = \bigsqcup_r rG^0$, where $r$ varies over a set of coset representatives of $G/G^0$. Conjugation by $r$ preserves $G^0$, so $\theta_r(g) := r^{-1}gr$ defines an automorphism $\theta_r : G^0 \to G^0$.
    The natural conjugation action of $G^0$ on $G$ preserves $rG^0$, as $g^{-1} r x g = r (r^{-1} g^{-1} r) x g = r \theta_r(g)^{-1} x g$ for generic $g, x \in G^0$.
    So $\calO[G] = \bigoplus_r \calO[rG^0] = \calO[Z^1(\FG(|G/G^0|), G^0)]$ as a $G^0$-module.
    The second claim now follows from \Cref{scholzePID}.
\end{proof}

\begin{remark} It also follows from \Cref{gfG}, that $\calO[G^m]^G \otimes_{\calO} \calO' = \calO'[G^m]^G$ for any $\calO$-algebra $\calO'$. In fact, a coset representative $g$ of $G/G^0$ acts trivially on $\calO[rG^0]^{G^0}$ if $[g] \in C_{G/G^0}([r])$, so the $G/G^0$-orbit of $\calO[rG^0]^{G^0}$ is an induced representation and it follows that $\calO[G^m]^{G^0}$ is an induced representation of $G/G^0$.
\end{remark}

\subsection{$G$-semisimplification}\label{subsecGss} We introduce a notion of $G$-semisimplification of homomorphisms $\Gamma \to G(k)$ for an algebraically closed field $k$ over $\calO$.
We use the dynamic definition of parabolic and Levi subgroups from \cite[§2]{Richardson1988ConjugacyCO}, which also applies to disconnected $G$.
In \cite[§3.2.1]{SerreCompleteReducibility} Serre introduced the notion of $G$-complete reducibility in the connected case.
We extend this definition to generalized reductive group schemes.

\begin{definition} A subgroup $H$ of $G(k)$ is \emph{$G$-completely reducible} if for every parabolic $P \subseteq G$ with $H \subseteq P(k)$, there exists a Levi subgroup $L \subseteq P$ with $H \subseteq L(k)$. We say that a homomorphism $\Gamma \to G(k)$ is \emph{$G$-completely reducible}, if its image is $G$-completely reducible.
\end{definition}

\begin{lemma}\label{PcapQcontainsmaxT} Let $P$ and $Q$ be parabolic subgroups of $G$. Then $P \cap Q$ contains a maximal torus of $G$.
\end{lemma}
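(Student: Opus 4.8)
The plan is to reduce the statement to the well-known connected case by passing to the identity component $G^0$, using that parabolic subgroups in the dynamic sense are defined via cocharacters that factor through $G^0$. Concretely, recall that in the dynamic setup of \cite[§2]{Richardson1988ConjugacyCO} a parabolic subgroup of $G$ has the form $P = P_G(\lambda)$ for some cocharacter $\lambda : \Gm \to G$; since $\Gm$ is connected, $\lambda$ factors through $G^0$, and one checks directly from the defining limit condition that $P_G(\lambda) \cap G^0 = P_{G^0}(\lambda)$ is a parabolic subgroup of the connected reductive group $G^0$ in the classical sense. Likewise a maximal torus of $G$ is the same as a maximal torus of $G^0$. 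So it suffices to prove: if $P', Q'$ are parabolic subgroups of a connected reductive group $G^0$ over the algebraically closed field $k$, then $P' \cap Q'$ contains a maximal torus of $G^0$.

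For the connected case I would argue as follows. Fix a Borel subgroup $B$ of $Q'$ (so $B \subseteq Q'$, as every parabolic contains a Borel). By the Bruhat decomposition / conjugacy theory of parabolics in a connected reductive group, any two Borel subgroups of $G^0$ lie in a common... more precisely, I will use the standard fact that given the parabolic $P'$ and the Borel $B$, there exists a maximal torus $T$ of $G^0$ contained in $P' \cap B$. One clean way to see this: $P' \cap B$ is a connected solvable subgroup (it is closed, and it is connected because $P'$ and $B$ are, and an intersection argument via the open Bruhat cell shows connectedness — or invoke \cite[Proposition 1.10 or similar in Richardson]{Richardson1988ConjugacyCO} / \cite{bcnrd}); moreover $P' \cap B$ is a "large" subgroup in the sense that it has finite index bounded appropriately. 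Actually the cleanest route: every pair consisting of a parabolic $P'$ and a Borel $B$ of $G^0$ satisfies that $P' \cap B$ contains a maximal torus — this is \cite[Exp.~XXVI]{MR2867622} or a standard consequence of the fact that $G^0/P'$ is proper and $B$ acts on it with a fixed point, whose stabilizer is a parabolic contained in $P'$ containing a conjugate of... Let me instead just say: since $B \subseteq Q'$, it is enough to find a maximal torus inside $P' \cap B$, and this follows because a Borel subgroup $B$ of $G^0$ meets every parabolic $P'$ in a subgroup containing a maximal torus of $G^0$ — indeed, $B$ contains a maximal torus $T_0$, all maximal tori of $B$ are $B$-conjugate, and by properness of $G^0/P'$ the solvable group $B$ fixes a point, giving $B \subseteq {}^g P'$ for some $g$; then $T := {}^{g^{-1}}T_0'$ for a suitable maximal torus works. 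I will phrase this carefully in the write-up, but the upshot is $T \subseteq P' \cap B \subseteq P' \cap Q'$.

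The main obstacle I anticipate is purely bookkeeping: making the reduction $P \cap Q \supseteq (\text{max.\ torus of } G) \Leftrightarrow P' \cap Q' \supseteq (\text{max.\ torus of } G^0)$ fully rigorous with the dynamic definitions for disconnected $G$ — in particular verifying that $P_G(\lambda)$ meets $G^0$ in $P_{G^0}(\lambda)$ and that maximal tori of $G$ and $G^0$ coincide. These are routine given \cite[§2]{Richardson1988ConjugacyCO} and the discussion of $G^0$ in \Cref{secgengrpsch}, but they are the only genuinely "disconnected" input; after that, everything is the classical theory of parabolics in connected reductive groups over an algebraically closed field. (Alternatively, one can avoid choosing a Borel: take any maximal torus $T_1 \subseteq P$ and $T_2 \subseteq Q$; by conjugacy of maximal tori in $P$ and in $Q$ and a dimension/fixed-point argument on the proper variety $G^0/P \times G^0/Q$ one produces a single torus in both — but the Borel argument above is shorter.)
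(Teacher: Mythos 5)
Your reduction to the connected case is essentially the paper's own route: the paper quotes \cite[Rmk.\ 5.3]{martin} to see that $P^0$ and $Q^0$ are parabolic subgroups of $G^0$, then cites the classical fact \cite[\S 2.4]{Borel1965} that two parabolic subgroups of a connected reductive group share a maximal torus, and finally observes that a maximal torus of $G^0$ is a maximal torus of $G$. Your version of the reduction via the dynamic description (a cocharacter $\lambda:\Gm\to G$ factors through $G^0$ and $P_G(\lambda)\cap G^0=P_{G^0}(\lambda)$) is an acceptable substitute for the citation of Martin, and the statement about maximal tori is the same in both write-ups.

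The problem is the connected case, which the paper simply cites but which you attempt to prove, and your key step does not work as written. The Borel fixed point theorem applied to $B$ acting on the proper variety $G^0/P'$ gives $B\subseteq {}^{g}P'$ for some $g$, i.e.\ ${}^{g^{-1}}B\subseteq P'$; but if $T_0\subseteq B$ is a maximal torus, the conjugate ${}^{g^{-1}}T_0$ lies in $P'$ and has no reason to lie in $B$ (hence none to lie in $Q'$), so you have not exhibited a maximal torus of $P'\cap B$. The fact you appeal to --- ``a Borel meets every parabolic in a subgroup containing a maximal torus'' --- is true, but it is precisely (a special case of) the statement under discussion, and your ``indeed \dots'' does not establish it. To close the gap you must either run the standard Bruhat-decomposition argument (conjugate $P'$ to a standard parabolic containing $B\supseteq T_0$, write the conjugating element as $b_1\dot w b_2$ with $b_1,b_2\in B$, and check that ${}^{b_1}T_0\subseteq P'\cap B$), or invoke the known fact that any two Borel subgroups contain a common maximal torus, applied to $B$ and ${}^{g^{-1}}B$, or simply cite the classical reference as the paper does. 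As it stands, the proposal has a genuine gap at exactly the point carrying the content of the lemma.
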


\begin{proof} By \cite[Rmk. 5.3]{MartinGeneratingTuples}, $P^0$ and $Q^0$ are parabolic subgroups of $G^0$. By \cite[§2.4]{Borel1965} $P^0 \cap Q^0$ contains a maximal torus $T$ of $G^0$. So $T$ is also a maximal torus of $G$.
\end{proof}

\begin{lemma}\label{commonLeviLemma} Let $P$ and $Q$ be parabolic subgroups of $G$. Assume, that $Q$ contains a Levi of $P$ and $P$ contains a Levi of $Q$. Then $P$ and $Q$ have a common Levi.
\end{lemma}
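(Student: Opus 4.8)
My plan is to reduce to connected $G$ and then run a conjugation-plus-dimension argument. Since every Levi of a parabolic $P$ has the form $C_G(\lambda)$ with $P = P_\lambda$, and the Levi subgroups of $P$ are exactly the $R_u(P)$-conjugates of a fixed one (cf.\ \cite{Richardson1988ConjugacyCO}, \cite{martin}), I may write $P = P_\lambda$ with the given Levi $L = C_G(\lambda)$, so that $R_u(P) = R_u(P^0) \subseteq G^0$, $P^0 = P \cap G^0$ is a parabolic of $G^0$, and $L^0 := L \cap G^0 = C_{G^0}(\lambda)$ is a Levi of $P^0$; likewise write $Q = P_\mu$, $M = C_G(\mu)$, $Q^0 = Q \cap G^0$, $M^0 := M \cap G^0 = C_{G^0}(\mu)$. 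The hypotheses then pass to identity components: $L^0 \subseteq Q^0$ is a Levi of $P^0$, and $M^0 \subseteq P^0$ is a Levi of $Q^0$.

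For connected $G$ I would argue as follows. Being a Levi subgroup of $G$, $M$ contains a maximal torus $T$ of $G$, and $T \subseteq M \subseteq P$; since all maximal tori of $P$ are $R_u(P)$-conjugate, pick $v \in R_u(P)$ with $vTv^{-1}$ a maximal torus of $L$. Then $vMv^{-1}$ is a connected reductive subgroup of $P$ containing the maximal torus $vTv^{-1}$ of $L$. Comparing root systems with respect to $vTv^{-1}$ — the roots of $R_u(P)$ form a subset $\Psi$ with $\Psi \cap (-\Psi) = \emptyset$ and $\Phi(P) = \Phi(L) \sqcup \Psi$, while $\Phi(vMv^{-1})$ is symmetric — forces $\Phi(vMv^{-1}) \subseteq \Phi(L)$, hence $vMv^{-1} \subseteq L$, and in particular $\dim M \leq \dim L$. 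The hypotheses being symmetric in $(P,L)$ and $(Q,M)$, the same argument gives $\dim L \leq \dim M$; thus $vMv^{-1} = L$, so $M = v^{-1}Lv$ is an $R_u(P)$-conjugate of $L$, hence a Levi of $P$, hence a common Levi of $P$ and $Q$.

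For general $G$, the connected case applied to $G^0$ (in the sharp form just obtained) yields $v \in R_u(P^0) = R_u(P)$ with $vM^0v^{-1} = L^0 = C_{G^0}(\lambda)$. Set $N := vMv^{-1} \subseteq P$; then $N^0 = L^0$, and I would deduce $N \subseteq L$ by proving $N_{P_\lambda}(C_{G^0}(\lambda)) = C_G(\lambda)$ (each $n \in N$ lies in $P_\lambda$ and normalizes $N^0 = C_{G^0}(\lambda)$). The nontrivial inclusion uses $P_\lambda = C_G(\lambda) \ltimes R_u(P_\lambda)$: after multiplying by an element of $C_G(\lambda)$ it suffices to treat $u \in R_u(P_\lambda)$ normalizing $C_{G^0}(\lambda)$, and then $\ell^{-1}u\ell u^{-1} \in C_{G^0}(\lambda) \cap R_u(P_\lambda) = 1$ for all $\ell \in C_{G^0}(\lambda)$, so $u$ centralizes $C_{G^0}(\lambda) \ni \lambda(\Gm)$, forcing $u \in C_{G^0}(\lambda) \cap R_u(P_\lambda) = 1$. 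Thus $N \subseteq L$. Interchanging the roles of $(P,L)$ and $(Q,M)$ gives a conjugate of $L$ inside $M$, so $\pi_0(L) \subseteq \pi_0(M) = \pi_0(N)$; combined with $N \subseteq L$ and $N^0 = L^0$ this yields $N = L$, i.e.\ $M = v^{-1}Lv$ is a Levi of $P$, and hence a common Levi of $P$ and $Q$.

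The hard part will be the disconnected bookkeeping: arranging that the conjugating element lies in $R_u(P)$ and checking that no components are lost when passing back from $G^0$ to $G$; the crux there is the computation $N_{R_u(P_\lambda)}(C_{G^0}(\lambda)) = 1$. (Alternatively one could fix a common maximal torus of $P$ and $Q$ via \Cref{PcapQcontainsmaxT} and argue combinatorially with the resulting root data, but keeping the connected and disconnected steps separate seems cleanest.)
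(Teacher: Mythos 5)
Your argument is correct, but it follows a genuinely different route from the paper. The paper's proof is a short direct one: by \Cref{PcapQcontainsmaxT} it fixes a maximal torus $T \subseteq P \cap Q$, takes the unique Levis $L \supseteq T$ of $P$ and $M \supseteq T$ of $Q$ (via \cite[Cor.~6.5]{BMR}), observes that the hypothesis makes $P \cap Q \to P/R_u(P)$ and $P \cap Q \to Q/R_u(Q)$ surjective, and then feeds this into the decomposition $P \cap Q = (L \cap M)\,R_u(P \cap Q)$ of \cite[Lem.~6.2(iii)]{BMR} to get $L \cap M = L = M$; the disconnected case requires no separate treatment because the BMR machinery is already stated for non-connected $G$. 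You instead reduce to $G^0$, conjugate a maximal torus of the given Levi $M$ of $Q$ into $L$ by an element of $R_u(P)$, compare root systems (symmetry of $\Phi(M)$ against $\Phi(P)=\Phi(L)\sqcup\Psi$ with $\Psi\cap(-\Psi)=\emptyset$) plus a dimension symmetry to get $vM^0v^{-1}=L^0$, and then handle components via the normalizer computation $N_{P_\lambda}(C_{G^0}(\lambda)) = C_G(\lambda)$, which rests on $P_\lambda = C_G(\lambda)\ltimes R_u(P_\lambda)$ and $C_G(\lambda)\cap R_u(P_\lambda)=1$. All of these steps check out (the only slips are cosmetic: $u$ centralizing $\lambda(\Gm)$ puts $u$ in $C_G(\lambda)\cap R_u(P_\lambda)$, which equals $C_{G^0}(\lambda)\cap R_u(P_\lambda)$ only because $R_u(P_\lambda)\subseteq G^0$, and ``$\pi_0(L)\subseteq\pi_0(M)$'' should be read as an injection/inequality of orders). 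Your approach is longer and needs the dynamic semidirect-product structure and root-space bookkeeping, but it buys a slightly sharper conclusion — the given Levi $M$ of $Q$ contained in $P$ is itself a Levi of $P$, being an $R_u(P)$-conjugate of $L$ — whereas the paper's argument, by leaning on \cite[§6]{BMR}, dispatches both the connected and disconnected cases in a few lines.
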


\begin{proof} Let $T$ be a maximal torus of $G$ contained in $P \cap Q$ (\Cref{PcapQcontainsmaxT}). Let $L$ be a Levi of $P$, which contains $T$ and let $M$ be a Levi of $Q$, which contains $T$. Existence and uniqueness of $L$ and $M$ follow from \cite[Cor. 6.5]{BMR}. Since by assumption $P \cap Q$ contains a Levi of $P$ as well as a Levi of $Q$, the maps $P \cap Q \to P/R_u(P)$ and $P \cap Q \to Q/R_u(Q)$ are surjective. Since by \cite[Lem. 6.2 (iii)]{BMR}, $P \cap Q = (L \cap M) R_u(P \cap Q)$, we obtain surjections $L \cap M \to P/R_u(P)$ and $L \cap M \to Q/R_u(Q)$.
Hence $P = (L \cap M)R_u(P)$. Since $P = L \cdot R_u(P)$ and $L \cap R_u(P) = 1$, we have $L \cap M = L$. Similarly, we have $L \cap M = M$.
\end{proof}

\begin{lemma}\label{commonRLevi} Let $H$ be a closed subgroup of $G$. Let $P$ and $Q$ be parabolic subgroups of $G$, both minimal among parabolic subgroups containing $H$. Then $P$ and $Q$ have a common Levi.
\end{lemma}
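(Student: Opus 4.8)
The plan is to reduce to \Cref{commonLeviLemma}: it suffices to prove that $Q$ contains a Levi subgroup of $P$, together with the symmetric statement that $P$ contains a Levi subgroup of $Q$. By the symmetry of the hypotheses I only treat the first assertion (it uses minimality of $P$; the symmetric one uses minimality of $Q$), and then invoke \Cref{commonLeviLemma}.

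To prove that $Q$ contains a Levi of $P$, I would work with the dynamic description of parabolic and Levi subgroups of the possibly disconnected group $G$ from \cite[§2]{Richardson1988ConjugacyCO}. By \Cref{PcapQcontainsmaxT} choose a maximal torus $T$ of $G$ with $T \subseteq P \cap Q$, let $L$ be the unique Levi subgroup of $P$ containing $T$ (uniqueness by \cite[Cor. 6.5]{BMR}), and pick a cocharacter $\lambda$ of $T$ with $P = P_\lambda$, $L = L_\lambda = C_G(\lambda)$, $R_u(P) = R_{u,\lambda}$, together with the associated retraction $c_\lambda \colon P_\lambda \to L_\lambda$, $g \mapsto \lim_{t \to 0}\lambda(t)g\lambda(t)^{-1}$. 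The key point is that $\lambda(\Gm) \subseteq T \subseteq Q$.

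Now the main step is a short orbit–closure computation. For $h \in H \subseteq P \cap Q$ the limit $c_\lambda(h)$ exists since $h \in P_\lambda$, and because every $\lambda(t)h\lambda(t)^{-1}$ lies in the closed subgroup $Q$ (as $\lambda(\Gm)\subseteq Q$ and $h \in Q$), so does $c_\lambda(h)$; hence $c_\lambda(H) \subseteq L \cap Q$. Since $L \cap Q$ is a parabolic subgroup of the generalized reductive group $L$ (it is $P_\mu(L)$ for a cocharacter $\mu$ of $T$ with $P_\mu(G) = Q$), its preimage $(L \cap Q)\,R_u(P) = c_\lambda^{-1}(L \cap Q)$ is a parabolic subgroup of $G$ contained in $P$. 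As $c_\lambda$ is a retraction with $h \in c_\lambda(h)\,R_u(P)$ for every $h \in P$, we obtain $H \subseteq c_\lambda(H)\,R_u(P) \subseteq (L \cap Q)\,R_u(P)$. Minimality of $P$ among parabolics containing $H$ forces $(L\cap Q)\,R_u(P) = P$, and applying $c_\lambda$ yields $L \cap Q = L$, i.e. $L \subseteq Q$. Thus $Q$ contains the Levi $L$ of $P$; the symmetric argument shows $P$ contains a Levi of $Q$, and \Cref{commonLeviLemma} concludes.

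The only real friction I anticipate is bookkeeping in the disconnected setting: one must confirm that $L = L_\lambda$ is again generalized reductive, that a cocharacter $\lambda$ (resp. $\mu$) of $T$ with the stated properties exists, and that the two standard facts used — "Levi $\cap$ parabolic $=$ parabolic of the Levi" and "$c_\lambda^{-1}$ of a parabolic of $L$ is a parabolic of $G$ contained in $P$" — hold with the dynamic definitions of \cite{Richardson1988ConjugacyCO}. All of these are routine and available in the literature (e.g. \cite{martin}, \cite{BMR}), so no essential difficulty arises; the content of the lemma is exactly the orbit–closure observation $c_\lambda(H)\subseteq L\cap Q$.
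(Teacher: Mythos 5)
Your proof is correct, and its overall skeleton matches the paper's: both arguments verify the hypothesis of \Cref{commonLeviLemma} (each of $P$ and $Q$ contains a Levi of the other) and then invoke that lemma. The difference lies in how the mutual containment is obtained. The paper applies \cite[Cor. 6.9]{BMR} to the subgroup $(P\cap Q)R_u(Q)$: it is parabolic, contains $H$ and lies in $Q$, so minimality of $Q$ forces it to equal $Q$, and the same corollary then yields that $P$ contains a Levi of $Q$ (symmetrically for $P$). You instead fix a maximal torus $T\subseteq P\cap Q$ via \Cref{PcapQcontainsmaxT}, take the Levi $L$ of $P$ through $T$ with a cocharacter $\lambda$ of $T$, and prove $c_\lambda(H)\subseteq L\cap Q$ by the orbit-closure limit argument; then $(L\cap Q)R_u(P)=c_\lambda^{-1}(L\cap Q)$ is a parabolic contained in $P$ and containing $H$ (this uses \cite[Lemma 6.2 (ii)]{BMR}, which the paper itself invokes elsewhere), so minimality of $P$ gives $L\subseteq Q$ directly. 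In effect you reprove, by hand, the special case of \cite[Cor. 6.9]{BMR} that the paper cites as a black box: your version is more self-contained and gives the slightly sharper statement that the Levi of $P$ through any maximal torus of $P\cap Q$ lies in $Q$, at the cost of the dynamic bookkeeping (cocharacters $\lambda,\mu$ of $T$ with $P=P_\lambda$, $Q=P_\mu$, the identification $L\cap Q=P_\mu(L)$, and the preimage fact), all of which is indeed available in \cite{BMR} and \cite{Richardson1988ConjugacyCO} in the disconnected setting, so there is no gap.
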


\begin{proof} The group $(P \cap Q)R_u(Q)$ contains $H$ and is contained in $Q$ and by \cite[Cor. 6.9]{BMR}, $(P \cap Q)R_u(Q)$ is parabolic. By minimality of $Q$, we have $Q = (P \cap Q)R_u(P)$. Again by \cite[Cor. 6.9]{BMR}, $P$ contains a Levi subgroup of $Q$. Similarly $Q$ contains a Levi of $P$. We can apply \Cref{commonLeviLemma}.
\end{proof}

\begin{definition}\label{defGss} Let $\rho : \Gamma \to G(k)$ be a homomorphism. Let $P$ be a parabolic of $G$, such that $\rho(\Gamma) \subseteq P(k)$ and such that $P$ is minimal among all parabolic subgroups with this property. Let $L$ be a Levi of $P$. We have a canonical surjective homomorphism $c_{P,L} : P \to L$. We define the \emph{$G$-semisimplification} $\rho^{\semi}$ of $\rho$ with respect to $P$ and $L$ as the composition $\Gamma \overset{\rho}{\to} P(k) \overset{c_{P,L}}{\to} L(k) \to G(k)$.
\end{definition}

If $G=\GL_n$ we recover the usual notion of semisimplification, which is defined as the direct sum of the Jordan-Hölder factors of $\rho$.
By definition $\rho^{\semi}(\Gamma)$ is a $G$-semisimplification of the subgroup $\rho(\Gamma)$ in the sense of \cite[Definition 4.1]{Bate_2020}.
It is immediate, that $\rho^{\semi}$ is $G$-completely reducible.
If $\Gamma$ is a topological group, $k$ is a topological field and $\rho$ is continuous, then $\rho^{\semi}$ is continuous, but the converse is false in general.

\begin{proposition}\label{indepofPL} The $G^0(k)$-conjugacy class of $\rho^{\semi}$ is independent of the choice of $P$ and $L$.
\end{proposition}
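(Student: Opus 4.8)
The plan is to pass to the dynamic description of parabolics, reduce to the situation where the two chosen Levi subgroups coincide, and then show the two semisimplifications are in fact \emph{equal}. First I would record the facts I need from the dynamic method (\cite[§2]{Richardson1988}): a pair $(P,L)$ consisting of a parabolic of $G$ and a Levi of $P$ can always be written as $(P_G(\lambda),L_G(\lambda))$ for a cocharacter $\lambda\colon\Gm\to G^0$; one has $R_u(P)=U_G(\lambda)$, a connected unipotent subgroup contained in $G^0$; $\mu(\Gm)\subseteq L_G(\mu)$; and the canonical retraction $c_{P,L}\colon P\to L$ of \Cref{defGss} is $x\mapsto\lim_{t\to0}\lambda(t)x\lambda(t)^{-1}$, so that $\rho^{\semi}_{P,L}(\gamma)=\lim_{t\to0}\lambda(t)\rho(\gamma)\lambda(t)^{-1}$ whenever $\rho(\Gamma)\subseteq P(k)$.

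Next I would treat the case of a fixed parabolic $P$. Any two Levi subgroups $L,M$ of $P$ satisfy $M=uLu^{-1}$ for some $u\in R_u(P)(k)\subseteq G^0(k)$, and then, since $c_{P,L}(u)=1$, the dynamic formula gives $c_{P,M}(x)=u\,c_{P,L}(x)\,u^{-1}$ for all $x\in P(k)$; hence $\rho^{\semi}_{P,M}(\gamma)=u\,\rho^{\semi}_{P,L}(\gamma)\,u^{-1}$, so changing the Levi of a fixed $P$ only conjugates $\rho^{\semi}$ by an element of $G^0(k)$. Now given two choices $(P,L)$ and $(P',L')$ as in \Cref{defGss}, \Cref{commonRLevi} (applied to $H:=\overline{\rho(\Gamma)}$, which lies in exactly the same parabolics as $\rho(\Gamma)$) furnishes a common Levi $M$ of $P$ and $P'$; by the preceding remark I may assume $L=L'=M$.

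Writing $P=P_G(\lambda)$ and $P'=P_G(\mu)$ with $L_G(\lambda)=L_G(\mu)=M$, the heart of the matter is the inclusion $R_u(P_G(\lambda))\cap P_G(\mu)\subseteq R_u(P_G(\mu))$. To prove it, set $V:=R_u(P_G(\lambda))\cap P_G(\mu)$, a closed subgroup normalized by $M$ (which normalizes both $R_u(P_G(\lambda))$ and $P_G(\mu)$), hence normalized by $\mu(\Gm)\subseteq M$. For $v\in V(k)$ the orbit map $t\mapsto\mu(t)v\mu(t)^{-1}$ takes values in $V$ and extends to a morphism $\bbA^1\to G$ whose value at $0$ is $c_{P_G(\mu),M}(v)$; this value therefore lies in $V\cap M=R_u(P_G(\lambda))\cap M=\{1\}$ by the Levi decomposition of $P_G(\lambda)$, so $v\in\ker c_{P_G(\mu),M}=R_u(P_G(\mu))$. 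Finally, for any $x\in(P_G(\lambda)\cap P_G(\mu))(k)$ I write $x=mv$ with $m\in M(k)$ and $v\in R_u(P_G(\lambda))(k)$; then $v=m^{-1}x\in P_G(\mu)$, so $v\in R_u(P_G(\mu))(k)$ by the above, whence $c_{P_G(\mu),M}(x)=m=c_{P_G(\lambda),M}(x)$. Taking $x=\rho(\gamma)$ for every $\gamma\in\Gamma$ gives $\rho^{\semi}_{P,M}=\rho^{\semi}_{P',M}$, and together with the reduction step this proves the proposition.

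The part I expect to require the most care is setting up the dynamic-method input uniformly for possibly disconnected $G$ over an algebraically closed field of arbitrary characteristic: that $R_u(P)\subseteq G^0$, that the Levi subgroups of $P$ form a single $R_u(P)(k)$-orbit, and that the scheme-theoretic points in the limit argument ($V$ closed, orbit map extends, limit lands in $V$) behave as expected — working with $k$-points and reduced structures should suffice. If one prefers, the inclusion $R_u(P_G(\lambda))\cap P_G(\mu)\subseteq R_u(P_G(\mu))$ can be cited from \cite[§6]{BMR}, or the whole statement can be deduced from the fact, noted just before the proposition, that $\rho^{\semi}(\Gamma)$ is a $G$-semisimplification of $\rho(\Gamma)$ in the sense of \cite[Definition 4.1]{Bate_2020}, once one checks there that the conjugating element can be taken in $G^0(k)$.
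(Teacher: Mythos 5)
Your argument is correct, and its skeleton is the same as the paper's: first handle a fixed parabolic (two Levis of $P$ are $R_u(P)(k)$-conjugate, and conjugating the Levi conjugates $c_{P,L}$, hence $\rho^{\semi}$, by an element of $R_u(P)(k)\subseteq G^0(k)$), then invoke \Cref{commonRLevi} to produce a common Levi $M$ of the two minimal parabolics and show the two retractions agree on $P\cap P'$, which contains $\rho(\Gamma)$. The only real divergence is in how that last agreement is justified: the paper cites \cite[Lemma 6.2 (iii)]{BMR}, which gives $P\cap P' = M\cdot(R_u(P)\cap R_u(P'))$ and hence the commuting square $c_{P,M}=c_{P',M}$ on $P\cap P'$, whereas you prove the needed inclusion $R_u(P_G(\lambda))\cap P_G(\mu)\subseteq R_u(P_G(\mu))$ directly by the dynamic limit argument ($\mu(\Gm)$ normalizes the closed subgroup $V$, the limit lies in $V\cap M\subseteq R_u(P_G(\lambda))\cap M=\{1\}$). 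Your version is self-contained modulo the basic dynamic-method facts from \cite[§2]{Richardson1988ConjugacyCO} and \cite[§6]{BMR} (Levi decomposition $P_G(\lambda)=L_G(\lambda)\ltimes U_G(\lambda)$, transitivity of $R_u(P)(k)$ on Levis, $R_u(P)\subseteq G^0$ in the disconnected setting), at the cost of reproving a special case of the BMR structure result; the paper's version is shorter but leans on that citation. Your closing alternative — deducing the statement from \cite[Definition 4.1]{Bate_2020} — would indeed need the extra check that the conjugating element can be taken in $G^0(k)$, which is exactly what the proposition asserts, so the direct argument is the right choice.
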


\begin{proof} Let $\rho^{\semi, i} = c_{P_i, L_i} \circ \rho$ for $i=1,2$ be two semisimplifications of $\rho$ with respect to a parabolic $P_i$ and a Levi $L_i$ respectively. We first assume $P := P_1 = P_2$. Then there exists some $u \in R_u(P)$, such that $uL_1u^{-1} = L_2$. Since the square in the following diagram commutes, we obtain $uc_{P,L_1}u^{-1} = c_{P,L_2}$ and thus in particular $u\rho^{\semi, 1}u^{-1} = \rho^{\semi, 2}$.
\begin{center}
    \begin{tikzcd}[row sep=2.5em, column sep=2.5em]
        P \arrow[r] \arrow[dr] \arrow[rr, bend left=35, "c_{L_1}"] \arrow[drr, bend right=60, "c_{L_2}"'] & P/R_u(P) \arrow[d, equal] & L_1 \arrow[l, "\sim"'] \arrow[d, "u(-)u^{-1}"] \\
         & P/R_u(P) & L_2 \arrow[l, "\sim"']
    \end{tikzcd}
\end{center}
If $P_1 \neq P_2$, we can apply \Cref{commonRLevi} to find a common Levi $L$ of $P_1$ and $P_2$.
By \cite[Lemma 6.2 (iii)]{BMR}, we have $P_1 \cap P_2 = L \cdot (R_u(P_1) \cap R_u(P_2))$.
It follows, that the following diagram commutes.
\begin{center}
    \begin{tikzcd}
        P_1 \cap P_2 \arrow[r] \arrow[d] & P_1 \arrow[d, "c_{P_1,L}"] \\
        P_2 \arrow[r, "c_{P_2,L}"'] & L
    \end{tikzcd}
\end{center}
This implies $c_{P_1, L} \circ \rho = c_{P_2, L} \circ \rho$.
We obtain from the first step, that there are $u_i \in R_u(P_i)$ with $u_i(c_{P_i,L_i} \circ \rho) u_i^{-1} = c_{P_i, L} \circ \rho$.
\end{proof}

\section{$G$-pseudocharacters}
\label{secGPC}

Fix a commutative ring $\calO$, a generalized reductive $\calO$-group scheme $G$ and an arbitrary group $\Gamma$.
We are interested in the case, that $\calO$ is the ring of integers of a $p$-adic field.
By the datum of $G$, the datum of $\calO$ is given and we will drop $\calO$ from notations.
A $G$-pseudocharacter will be defined depending a priori on both the coefficient ring $\calO$ and a commutative $\calO$-algebra $A$, which corresponds to the base ring $A$ in \Cref{secdeterminants}.
We will later be able to remove the dependence on $\calO$ (see \Cref{basechange}).

\subsection{Definition}\label{subsecLafPC} The definition of $G$-pseudocharacter we use is slightly more general than Lafforgue's original definition \cite[§11]{Laf}, in that we work over arbitrary base rings $\calO$.
We introduce special notation for substitutions, which will be particularly important in \Cref{LafPC} and the proofs of \Cref{repofPC} and \Cref{decisivefiniteness}.

Let $\FG(m)$ be the free group on $m$ generators $x_1, \dots, x_m$.
We will use the same letters $x_1, x_2, \dots$ to denote the generators of $\FG(m)$ for every $m \geq 0$.
Let $\Gamma$ be an arbitrary group. To a tuple $\gamma = (\gamma_1, \dots, \gamma_m) \in \Gamma^m$ corresponds a unique homomorphism $f_{\gamma} : \FG(m) \to \Gamma$, so we have a bijection $\Gamma^m \cong \Hom(\FG(m), \Gamma), ~\gamma \mapsto f_{\gamma}$.

Let $\alpha : \FG(m) \to \FG(n)$ be a group homomorphism.
We define $(-)_{\alpha}$ to be the induced map $\Gamma^n \cong \Hom(\FG(n), \Gamma) \to \Hom(\FG(m), \Gamma) \cong \Gamma^m$.
For a tuple $\delta = (\delta_1, \dots, \delta_n) \in \Gamma^n$ the homomorphism $f_{\delta} : \FG(n) \to \Gamma, ~x_i \mapsto \delta_i$ satisfies $f_{\delta}(\alpha(x_j)) = (\delta_{\alpha})_j$ for all $j \in \{1, \dots, m\}$.

Similarly we obtain an induced map $(-)_{\alpha} : G^n \to G^m$.
The group $G^0$ acts on $G^m$ by $g \cdot (g_1, \dots, g_m) = (gg_1g^{-1}, \dots, gg_mg^{-1})$. 
This induces an algebraic representation of $G^0$ on the affine coordinate ring $\calO[G^m]$ of $G^m$. 
The submodule $\calO[G^m]^{G^0} \subseteq \calO[G^m]$ is defined as the module of algebraic invariants of the $G^0$-representation $\calO[G^m]$.
It is an $\calO$-subalgebra, since $G^0$ acts by $\calO$-algebra automorphisms. 
The map $(-)_{\alpha} : G^n \to G^m$ is $G^0$-equivariant, so there is an induced homomorphism $(-)^{\alpha} : \calO[G^m]^{G^0} \to \calO[G^n]^{G^0}$.
In the special case that $\alpha$ is induced by a map of sets $\zeta : \{1, \dots, n\} \to \{1, \dots, m\}$, such that $\alpha(x_i) = x_{\zeta(i)}$, we also write $\delta_{\zeta} := \delta_{\alpha}$ for $\delta \in \Gamma^n$ and $f^{\zeta} := f^{\alpha}$ for $f \in \calO[G^m]^{G^0}$.

\begin{definition}[$G$-pseudocharacter]\label{LafPC} Let $A$ be a commutative $\calO$-algebra. A \emph{$G$-pseudocharacter} $\Theta$ of $\Gamma$ over $A$ is a sequence of $\calO$-algebra maps 
$$\Theta_m : \calO[G^m]^{G^0} \to \map(\Gamma^m,A)$$ 
for each $m \geq 1$, satisfying the following conditions:
\begin{enumerate}
    \item For all $n,m \geq 1$, each map $\zeta : \{1, \dots, m\} \to \{1, \dots,n\}$, every $f \in \calO[G^m]^{G^0}$ and all $\gamma_1, \dots, \gamma_n \in \Gamma$, we have
    $$ \Theta_n(f^{\zeta})(\gamma_1, \dots, \gamma_n) = \Theta_m(f)(\gamma_{\zeta(1)}, \dots, \gamma_{\zeta(m)}) $$
    where $f^{\zeta}(g_1, \dots, g_n) = f(g_{\zeta(1)}, \dots, g_{\zeta(m)})$. \label{subst_1}
    \item For all $m \geq 1$, for all $\gamma_1, \dots, \gamma_{m+1} \in \Gamma$ and every $f \in \calO[G^m]^{G^0}$, we have
    $$ \Theta_{m+1}(\hat f)(\gamma_1, \dots, \gamma_{m+1}) = \Theta_m(f)(\gamma_1, \dots, \gamma_m\gamma_{m+1}) $$
    where $\hat f(g_1, \dots, g_{m+1}) = f(g_1, \dots, g_mg_{m+1})$. \label{subst_2}
\end{enumerate}
We denote the set of $G$-pseudocharacters of $\Gamma$ over $A$ by $\PC_{G}^{\Gamma}(A)$.
When $\Gamma$ and $A$ are equipped with a topology, we say that $\Theta$ is \emph{continuous}, if $\Theta_m$ takes values in the subset $\calC(\Gamma^m,A) \subseteq \map(\Gamma^m,A)$ of continuous maps for all $m \geq 1$. We write $\cPC_{G}^{\Gamma}(A)$ for the subset of continuous $G$-pseudocharacters.
\end{definition}

If $f : A \to B$ is a homomorphism of $\calO$-algebras, then there is an induced map $- \otimes_A B : \PC_{G}^{\Gamma}(A) \to \PC_{G}^{\Gamma}(B)$.
For $\Theta \in \PC_{G}^{\Gamma}(A)$, the image $\Theta \otimes_A B$ is called the \emph{scalar extension} of $\Theta$.
This notion of scalar extension shall not be confused with change of the base ring $\calO$ of $G$, which will be addressed in \Cref{basechange}.

If $H$ is another generalized reductive $\calO$-group scheme and $\iota : G \to H$ is a homomorphism, we define an $H$-pseudocharacter $\iota(\Theta)$ by letting $\iota(\Theta)_m$ be the composition of $\Theta_m$ with the induced map $\calO[H^m]^{H^0} \to \calO[G^m]^{G^0}$. In \cite[Definition 4.1]{BHKT} a $G$-pseudocharacter is defined only for Chevalley groups over $\bbZ$.

If $\Delta \to \Gamma$ is a group homomorphism the \emph{restriction} $\Theta|_{\Delta}$ of $\Theta$ to $\Delta$ is defined by letting $(\Theta|_{\Delta})_m$ be the composition of $\Theta_m$ with induced map $\map(\Gamma^m, A) \to \map(\Delta^m, A)$.

A representation $\rho : \Gamma \to G(A)$ gives rise to a $G$-pseudocharacter $\Theta_{\rho}$, which depends only on $\rho$ up to $G^0(A)$-conjugation. Here $(\Theta_{\rho})_m : \calO[G^m]^{G^0} \to \map(\Gamma^m, A)$ is defined by 
$$ (\Theta_{\rho})_m(f)(\gamma_1, \dots, \gamma_m) := f(\rho(\gamma_1), \dots, \rho(\gamma_m)) $$
We let $\Rep^{\Gamma, \square}_G(A) := \Hom(\Gamma, G(A))$ and thereby get a natural map $\Rep^{\Gamma, \square}_G(A) \to \PC_G^{\Gamma}(A)$.
When $A$ is a topological ring and $\rho : \Gamma \to G(A)$ is a continuous homomorphism with $G(A)$ topologized as in \cite[Proposition 2.1]{ConradTopologies}, then $\Theta_{\rho}$ is a continuous $G$-pseudocharacter. We write $\cRep^{\Gamma, \square}_G(A) \subseteq \Rep^{\Gamma, \square}_G(A)$ for the subset of continuous representations and we have a natural map $\cRep^{\Gamma, \square}_G(A) \to \cPC_G^{\Gamma}(A)$.

The following density principle is the analog of \cite[Ex. 2.31]{MR3444227} and is needed in the proof of \Cref{tfgfingen}.

\begin{lemma}\label{restrictiontoadensesubgroup} Let $\Gamma$ be a topological group and $\Delta \subseteq \Gamma$ a dense subgroup. Then for all Hausdorff $\calO$-algebras $A$ the restriction $$\cPC^{\Gamma}_G(A) \to \cPC^{\Delta}_G(A)$$ defined by composition with $\calC(\Gamma^n,A) \to \calC(\Delta^n,A)$ is injective.
\end{lemma}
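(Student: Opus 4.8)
The statement asserts that a continuous $G$-pseudocharacter $\Theta$ of $\Gamma$ is determined by its restriction to a dense subgroup $\Delta$. The plan is to reduce this to the corresponding density statement for continuous maps $\Gamma^n \to A$ into a Hausdorff ring. The key point is that each structure map $\Theta_m : \calO[G^m]^{G^0} \to \calC(\Gamma^m, A)$ lands in continuous functions, and that the restriction map on $G$-pseudocharacters is literally given by post-composition with the restriction maps $\calC(\Gamma^n, A) \to \calC(\Delta^n, A)$, $\phi \mapsto \phi|_{\Delta^n}$.

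First I would fix two continuous $G$-pseudocharacters $\Theta, \Theta'$ of $\Gamma$ over $A$ with $\Theta|_\Delta = \Theta'|_\Delta$, and show $\Theta = \Theta'$. Unravelling the definitions, this means: for every $m \geq 1$ and every $f \in \calO[G^m]^{G^0}$, the continuous functions $\Theta_m(f)$ and $\Theta'_m(f)$ on $\Gamma^m$ agree on $\Delta^m$, and I must conclude they agree on all of $\Gamma^m$. Since $\Delta$ is dense in $\Gamma$, the product $\Delta^m$ is dense in $\Gamma^m$ (a finite product of dense subsets is dense in the product topology). Now $\Theta_m(f)$ and $\Theta'_m(f)$ are two continuous maps $\Gamma^m \to A$ agreeing on the dense subset $\Delta^m$; because $A$ is Hausdorff, the locus $\{x \in \Gamma^m : \Theta_m(f)(x) = \Theta'_m(f)(x)\}$ is closed, hence equals all of $\Gamma^m$. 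This gives $\Theta_m(f) = \Theta'_m(f)$ for all $m$ and all $f$, i.e. $\Theta = \Theta'$.

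The only genuinely topological input is the standard fact that two continuous maps into a Hausdorff space that agree on a dense set agree everywhere — applied to the target being the topological ring $A$, viewed just as a topological space. I would state this as a one-line lemma or simply cite it inline. There is essentially no obstacle here: the content is bookkeeping to match the definition of the restriction map $\cPC^\Gamma_G(A) \to \cPC^\Delta_G(A)$ (composition with $\calC(\Gamma^n,A) \to \calC(\Delta^n,A)$) against the elementary density argument. If one wants to be careful, the one subtlety worth spelling out is that $\Delta^m \subseteq \Gamma^m$ is dense — but this is immediate from the definition of the product topology, since a basic open set of $\Gamma^m$ is a product of opens in $\Gamma$, each of which meets $\Delta$.
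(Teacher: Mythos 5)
Your proof is correct and follows exactly the paper's argument: reduce to the fact that $\Theta_m(f)$ and $\Theta'_m(f)$ are continuous maps $\Gamma^m \to A$ agreeing on the dense subset $\Delta^m$, hence equal since $A$ is Hausdorff. Your write-up just spells out the density of $\Delta^m$ and the closed-equalizer argument, which the paper leaves implicit.
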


\begin{proof} Let $\Theta, \Theta' \in \cPC^{\Gamma}_G(A)$ be such that $\Theta|_{\Delta} = \Theta'|_{\Delta}$. Let $n \geq 0$ and $f \in \calO[G^n]^{G^0}$. Then $\Theta_n(f), \Theta_n'(f) : \Gamma^n \to A$ are continuous maps, that agree on the dense subset $\Delta^n \subseteq \Gamma^n$, hence must be equal.
\end{proof}

\subsection{Reconstruction theorem}
\label{secRecthm}

The goal of this section is to show that when $\calO$ is noetherian a $G$-pseudocharacter over an algebraically closed field comes from an essentially unique $G$-completely reducible representation. We will refer to this result as the \emph{reconstruction theorem} for $G$-pseudocharacters.
We start with the observation that passage from a $G$-valued representation to the associated $G$-pseudocharacter is insensitive to $G$-semisimplification.

\begin{proposition}\label{sshassamePC} Let $\rho : \Gamma \to G(k)$ be a representation over an algebraically closed field $k$ and let $\rho^{\semi}$ be some $G$-semisimplification of $\rho$. Then $\Theta_{\rho} = \Theta_{\rho^{\semi}}$ in $\PC_G^{\Gamma}(k)$.
\end{proposition}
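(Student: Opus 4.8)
The plan is to reduce the claim to the defining property of the $G$-semisimplification map $c_{P,L}:P\to L$ and the $G^0(k)$-invariance built into the definition of a $G$-pseudocharacter. First I would recall that, by \Cref{defGss}, there is a parabolic $P\subseteq G$ with $\rho(\Gamma)\subseteq P(k)$, minimal with this property, a Levi $L\subseteq P$, and $\rho^{\semi}=\iota_L\circ c_{P,L}\circ\rho$ where $\iota_L:L\hookrightarrow G$ is the inclusion. Since $\Theta_\rho$ depends only on $\rho$, and by \Cref{indepofPL} the $G^0(k)$-conjugacy class of $\rho^{\semi}$ is independent of the choices of $P$ and $L$, it suffices to prove the equality for one such choice.

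The key computation is the following. Fix $m\geq 1$ and $f\in\calO[G^m]^{G^0}$; I must show that for all $\gamma_1,\dots,\gamma_m\in\Gamma$,
$$ f(\rho(\gamma_1),\dots,\rho(\gamma_m)) = f(\rho^{\semi}(\gamma_1),\dots,\rho^{\semi}(\gamma_m)). $$
Using the dynamic description of $(P,L)$: there is a cocharacter $\mu:\Gm\to G$ with $P=P_G(\mu)$, $L=Z_G(\mu)$, and $c_{P,L}$ is the map sending $g\in P(k)$ to $\lim_{t\to 0}\mu(t)g\mu(t)^{-1}\in L(k)$. Thus for each $i$ one has $\rho^{\semi}(\gamma_i)=\lim_{t\to 0}\mu(t)\rho(\gamma_i)\mu(t)^{-1}$, a limit in $G$ taken along $\Gm$. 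Now consider the morphism of $k$-schemes $\Gm\to\bbA^1$ given by $t\mapsto f\bigl(\mu(t)\rho(\gamma_1)\mu(t)^{-1},\dots,\mu(t)\rho(\gamma_m)\mu(t)^{-1}\bigr)$. Since $f$ is invariant under the diagonal conjugation action of $G^0\supseteq\operatorname{im}\mu$, this morphism is \emph{constant}, with value $f(\rho(\gamma_1),\dots,\rho(\gamma_m))$. On the other hand, because $f$ is a morphism (in particular continuous for the Zariski topology, and the relevant limit along $\mu$ extends the map over $t=0$ in the affine scheme $G^m$), the value of this constant function also equals its "value at $t=0$", namely $f(\rho^{\semi}(\gamma_1),\dots,\rho^{\semi}(\gamma_m))$. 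This gives the desired equality, hence $\Theta_\rho=\Theta_{\rho^{\semi}}$ after running over all $m$ and all $f$.

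The main obstacle is making the limit argument rigorous: one must justify that the limit $\lim_{t\to0}\mu(t)g\mu(t)^{-1}$ in the definition of $c_{P,L}$ interacts correctly with evaluation of $f$, i.e. that the orbit map $\Gm\to G^m$, $t\mapsto (\mu(t)g_i\mu(t)^{-1})_i$, extends to a morphism $\bbA^1\to G^m$ sending $0$ to $(c_{P,L}(g_i))_i$, and that composing with $f$ then forces the constant value to agree at $t=0$. This is precisely the content of the dynamic construction of parabolics and Levis for possibly disconnected $G$ as in \cite[§2]{Richardson1988ConjugacyCO}; alternatively, one can avoid limits entirely by arguing as follows: write each $\rho(\gamma_i)=l_i u_i$ with $l_i\in L(k)$ and $u_i\in R_u(P)(k)$, note $c_{P,L}(\rho(\gamma_i))=l_i$, and observe that $L$ acts on $R_u(P)$ with all $\Gm$-weights (via $\mu$) strictly positive, so that $f$ restricted to $L\cdot R_u(P)=P$ and invariant under $\mu$-conjugation must be independent of the unipotent coordinates $u_i$ — again yielding $f(l_1u_1,\dots,l_mu_m)=f(l_1,\dots,l_m)$. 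Either way the heart of the matter is the $\Gm$-invariance of $f$ together with the contracting property of $\mu$ on $R_u(P)$.
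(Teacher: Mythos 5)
Your proof is correct and is essentially the paper's own argument: both realize $(P,L)$ dynamically via a cocharacter into $G^0$, use $G^0$-invariance of $f\in\calO[G^m]^{G^0}$ to see that $t\mapsto f(\lambda(t)\rho(\gamma_i)\lambda(t)^{-1})_i$ is constant on $\Gm$, and then use existence of the limit at $t=0$ together with $f$ being a morphism to a separated target to conclude the constant equals $f(\rho^{\semi}(\gamma_1),\dots,\rho^{\semi}(\gamma_m))$. The alternative weight-space argument you sketch at the end is a fine variant but is not needed.
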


\begin{proof}
Suppose $P$ is a minimal parabolic and contains $\rho(\Gamma)$, suppose $L$ is a Levi of $P$ and let $\lambda : \Gm \to G^0$ be a cocharacter, such that $P = P_{\lambda}$, $L = L_{\lambda}$ and $\rho^{\semi} = c_{P_{\lambda}, L_{\lambda}} \circ \rho = \lim\nolimits_{t \to 0} \lambda(t) \rho \lambda(t)^{-1}$.
The $G$-pseudocharacter $\Theta_{m, \rho}$ attached to $\rho$ satisfies by definition $\Theta_{\rho, m}(f)(\gamma_1, \dots, \gamma_m) = f(\rho(\gamma_1), \dots, \rho(\gamma_m))$ for all $m \geq 1$ and $\rho^{\semi}$ satisfies the analogous formula. Since $f$ is $G$-invariant, the morphism $\mathbb G_m \to \mathbb A^1, ~t \mapsto f(\lambda(t) \rho(\gamma_1) \lambda(t)^{-1}, \dots, \lambda(t) \rho(\gamma_m) \lambda(t)^{-1})$ is constant and equal to $f(\rho(\gamma_1), \dots, \rho(\gamma_m))$. Since the limit $\lim\nolimits_{t \to 0} \lambda(t) \rho \lambda(t)^{-1}$ exists and $f$ is algebraic with separated target $\mathbb A^1$, this is equal to $f(\rho^{\semi}(\gamma_1), \dots, \rho^{\semi}(\gamma_m))$ and $\Theta_{\rho} = \Theta_{\rho^{\semi}}$ follows.
\end{proof}

\begin{lemma}\label{charorbitcr} Let $G$ be a reductive group over an algebraically closed field $k$. Let $g = (g_1, \dots, g_n) \in G^n(k)$ and let $H$ be the smallest Zariski closed subgroup of $G(k)$, containing $\{g_1, \dots, g_n\}$. The following are equivalent:
\begin{enumerate}
    \item The $G^0(k)$-orbit of $g$ is closed.
    \item The $G(k)$-orbit of $g$ is closed.
    \item $H$ is strongly reductive in $G$ in the sense of \cite[Definition 16.1]{Richardson1988ConjugacyCO}.
    \item $H$ is $G$-completely reducible.
\end{enumerate}
\end{lemma}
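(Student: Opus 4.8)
The plan is to deduce this chain of equivalences from the corresponding statement for connected reductive groups together with the theory of strong reductivity / $G$-complete reducibility developed by Bate–Martin–Röhrle. The key external inputs are: (a) the Kempf–Rousseau–Hesselink theory of optimal destabilizing cocharacters, which in the form used by Richardson and by BMR characterizes closed orbits; (b) Richardson's theorem that for $G$ connected and a tuple $g \in G^n$, the $G$-orbit of $g$ under simultaneous conjugation is closed if and only if the subgroup generated by the $g_i$ is strongly reductive in $G$; and (c) the translation, proved in \cite{BMR} and extended to the non-connected setting in \cite{Bate_2020,martin}, between strong reductivity of a closed subgroup $H$ and $G$-complete reducibility of $H$. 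All of these are available since the excerpt has already committed to using the dynamic (cocharacter) definition of parabolics and Levis valid for disconnected $G$, and has cited \cite{Bate_2020}, \cite{BMR}, \cite{martin}.

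First I would dispense with $(3) \Leftrightarrow (4)$: this is purely a statement about the closed subgroup $H \subseteq G$ and is precisely the content of the equivalence between strong reductivity and $G$-complete reducibility; for connected $G$ it is \cite[Theorem 3.1]{BMR} and it is upgraded to arbitrary (possibly disconnected) reductive $G$ in \cite[§5]{martin} and \cite{Bate_2020}. Next, $(2) \Rightarrow (1)$: if the $G(k)$-orbit of $g$ is closed, then since $G^0$ has finite index in $G$, the $G(k)$-orbit is a finite union of $G^0(k)$-orbits, all of the same dimension and permuted transitively; a finite union of orbits being closed forces each to be closed (each $G^0(k)$-orbit is open in the union, hence its complement in the union is a finite union of orbits, hence closed, hence the orbit itself is closed). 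The reverse implication $(1) \Rightarrow (2)$ is similar: the $G(k)$-orbit of $g$ is the union of the finitely many $G^0(k)$-orbits $\{h \cdot (G^0 g) : h \in G/G^0\}$, each of which is a translate (by the conjugation action of a representative of $h$) of the closed set $G^0(k) \cdot g$ and hence closed, so the finite union is closed.

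The remaining equivalence $(1) \Leftrightarrow (3)$ (equivalently, after the above, $(2) \Leftrightarrow (4)$) is the substantive one and is where I expect the main obstacle to lie. The strategy is to reduce to Richardson's connected result applied to $G^0$ and the tuple $g \in (G^0)^? $ — but the subtlety is that the $g_i$ need not lie in $G^0$. One clean way around this: replace the tuple $g = (g_1,\dots,g_n)$ by a tuple of elements of $G^0$ that generates the same (or a commensurable) subgroup together with the induced twisting automorphisms, exactly as in \Cref{gfG} and the definition of $Z^1$-schemes: writing each $g_i = r_i h_i$ with $r_i$ a coset representative of $G/G^0$ and $h_i \in G^0$, the $G^0$-conjugation orbit of $g$ in $G^n$ is isomorphic (as a $G^0$-variety) to the orbit of $(h_1,\dots,h_n)$ in $(G^0)^n$ under the twisted simultaneous conjugation $g\cdot(h_1,\dots,h_n) = (\theta_{r_1}^{-1}(g)^{-1} \cdots)$, i.e. the action on $Z^1(\FG(n),G^0)$. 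Richardson-type results for twisted conjugation (the "relative" or "$\sigma$-twisted" versions, as in work of Mohrdieck and of Bate–Martin–Röhrle–Tange on cocharacter-closed orbits and $G$-complete reducibility over non-algebraically-closed fields) then identify closedness of this orbit with $G^0$-complete reducibility of the relevant subgroup, and \cite[Lemma 6.8]{BMR} or its disconnected analogue in \cite{martin} reconciles $G^0$-complete reducibility with $G$-complete reducibility of $H$. The main obstacle is organizing this translation carefully enough that the "smallest Zariski closed subgroup $H$ containing $\{g_1,\dots,g_n\}$" on the statement side matches the subgroup relevant to the twisted-conjugation orbit on the proof side, and citing precisely the right form of Richardson's theorem that covers the disconnected/twisted case; if no single reference does, the fallback is to reprove it via the Hilbert–Mumford criterion and Kempf's optimal cocharacter, showing that a non-closed orbit yields a destabilizing cocharacter $\lambda$ with $H \subseteq P_\lambda(k)$ but $H \not\subseteq L(k)$ for any Levi $L$, contradicting $G$-complete reducibility, and conversely that $H$ not $G$-cr produces a parabolic with no compatible Levi, which via the dynamic description degenerates $g$ to a non-conjugate limit, making the orbit non-closed.
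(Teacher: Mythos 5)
Your treatment of (1)$\Leftrightarrow$(2) and of (3)$\Leftrightarrow$(4) is essentially the paper's: the first is the same translation-by-coset-representatives argument (the paper gets (2)$\Rightarrow$(1) a bit more directly, by noting that a closed $G(k)$-orbit contains a closed $G^0(k)$-orbit, necessarily of the form $G^0(k)\cdot x_i g$, whose translate back by $x_i^{-1}$ is $G^0(k)\cdot g$), and the second is the same citation of \cite[Theorem 3.1]{BMR}. The real divergence is the step you flag as the main obstacle, (2)$\Leftrightarrow$(3): the paper disposes of it with a single citation of \cite[Theorem 16.4]{Richardson1988ConjugacyCO}. Richardson's theorem, and his notion of strong reductivity, are already formulated for reductive groups that need not be connected and for tuples whose entries need not lie in $G^0$, so no reduction to the connected case is required. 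Your proposed detour through twisted conjugation on $Z^1(\FG(n),G^0)$ and ``Richardson-type results for twisted conjugation'' (Mohrdieck, cocharacter-closed orbits, etc.), or the fallback reproof via Kempf--Rousseau optimal destabilizing cocharacters, would amount to re-deriving Richardson's theorem; it is also the one place where your argument is not actually complete, since you lean on references whose applicability you yourself call uncertain and would still have to reconcile complete reducibility for the twisted datum with $G$-complete reducibility of $H$. So the proposal is correct in outline and coincides with the paper on the easy equivalences, but the substantive equivalence should simply be the direct citation of Richardson, which makes your ``main obstacle'' disappear.
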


\begin{proof} Let $x_1, \dots, x_r \in G(k)$ be coset representatives of $G(k)/G^0(k)$. So
$$ G(k) \cdot g = \bigcup_{i=1}^r G^0(k) \cdot x_ig = \bigcup_{i=1}^r x_i \cdot (G^0(k) \cdot g). $$
If $G^0(k) \cdot g$ is closed, then all $x_i \cdot (G^0(k) \cdot g)$ are images of $G^0(k) \cdot g$ under multiplication with $x_i$ and therefore also closed. It follows, that (1) implies (2). If $G(k) \cdot g$ is closed, then it contains a closed $G^0(k)$-orbit, which is necessarily of the form $G^0(k) \cdot x_ig$. But then again $G^0(k) \cdot g$ is closed, so (2) implies (1).
The equivalence of (2) and (3) is \cite[Theorem 16.4]{Richardson1988ConjugacyCO}. The equivalence of (3) and (4) is \cite[Theorem 3.1]{BMR}.
\end{proof}

\begin{lemma}\label{pointsinGnG0} Let $G$ be a generalized reductive group scheme over a noetherian commutative ring $\calO$. Let $k$ be an algebraically closed field over $\calO$. Then there is a bijection between the following sets induced by $\pi : G^n(k) \to (G^n \sslash G^0)(k)$.
\begin{enumerate}
    \item $(G^n \sslash G^0)(k)$
    \item $G^0(k)$-conjugacy classes of tuples $(g_1, \dots, g_n) \in G^n(k)$, such that the smallest Zariski closed subgroup of $G(k)$ that contains $\{g_1, \dots, g_n\}$ is $G$-completely reducible.
\end{enumerate}
\end{lemma}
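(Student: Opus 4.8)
The plan is to reduce at once to the case $\calO = k$ and then read the statement off from classical geometric invariant theory for the connected reductive group $G^0$ over the algebraically closed field $k$, using \Cref{charorbitcr} to translate ``closed orbit'' into ``$G$-completely reducible''. For the reduction: writing $G^n \sslash G^0 = \Spec(\calO[G^n]^{G^0})$, a $k$-point of $G^n\sslash G^0$ is an $\calO$-algebra map $\calO[G^n]^{G^0}\to k$, hence a $k$-algebra map out of $\calO[G^n]^{G^0}\otimes_\calO k$; by \Cref{gfG} this ring is canonically $k[(G_k)^n]^{(G_k)^0}$ (with $(G_k)^0 = (G^0)_k$, since forming identity components commutes with base change), so $(G^n\sslash G^0)(k)$ is identified with $((G_k)^n\sslash (G_k)^0)(k)$ compatibly with $\pi$. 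Since also $G(k) = G_k(k)$, $G^0(k) = (G_k)^0(k)$, and the parabolics of $G_k$ are exactly the base changes of those of $G$, the set (2) and the map $\pi$ are likewise unchanged on replacing $(G,\calO)$ by $(G_k,k)$. So I would assume $\calO = k$ from now on.

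With $\calO = k$, let $H := G^0$, a connected reductive $k$-group acting by simultaneous conjugation on the affine $k$-variety $X := G^n$, and let $\pi : X \to X\sslash H = G^n\sslash G^0$ be the quotient morphism. I would invoke the standard theory of good quotients of an affine variety by a reductive group over an algebraically closed field (Mumford, Haboush, Nagata): $\pi$ is surjective on $k$-points, two points of $X(k)$ have the same image under $\pi$ exactly when the Zariski closures of their $H(k)$-orbits meet, and every fibre of $\pi$ contains a unique closed $H(k)$-orbit. Together these yield a bijection between $(G^n\sslash G^0)(k)$ and the set of closed $G^0(k)$-orbits in $G^n(k)$, inverse to $G^0(k)\cdot g\mapsto \pi(g)$ and well defined because $G^0$-invariant functions are constant on $G^0(k)$-orbits.

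Finally, for $g = (g_1,\dots,g_n)\in G^n(k)$ let $H_g$ denote the smallest Zariski closed subgroup of $G(k)$ containing $\{g_1,\dots,g_n\}$. By \Cref{charorbitcr} the $G^0(k)$-orbit of $g$ is closed if and only if $H_g$ is $G$-completely reducible; hence the closed $G^0(k)$-orbits in $G^n(k)$ are precisely those appearing in the set (2) of the statement, and composing this identification with the bijection of the previous paragraph gives the asserted bijection $(G^n\sslash G^0)(k)\leftrightarrow(2)$ induced by $\pi$. The step I would treat most carefully is the reduction to $\calO = k$: since $\calO\to k$ need not be flat, the identity $\calO[G^n]^{G^0}\otimes_\calO k = k[(G_k)^n]^{(G_k)^0}$ is exactly the content of the base-change statement in \Cref{gfG} and is the only place where the structure of $G$ (good filtrations) really enters; once $\calO = k$, the remainder is routine GIT together with \Cref{charorbitcr}.
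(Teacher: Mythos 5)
Your overall skeleton (surjectivity of $\pi$ on $k$-points, a unique closed $G^0(k)$-orbit in each fibre, then \Cref{charorbitcr} to translate closed orbits into $G$-complete reducibility) is exactly the skeleton of the paper's proof. The problem is the reduction to $\calO=k$, which you yourself flag as the delicate step: you justify the identification $\calO[G^n]^{G^0}\otimes_{\calO}k \cong k[(G_k)^n]^{(G_k)^0}$ by \Cref{gfG}, but that result is stated for a generalized reductive group over a \emph{Dedekind domain}, whereas here $\calO$ is only assumed to be a noetherian commutative ring and $G$ need not descend to any Dedekind base. This is not a cosmetic mismatch: for reductive group schemes over a general base, formation of invariants does \emph{not} commute with arbitrary base change in general; the comparison map is only an adequate homeomorphism in the sense of Alper. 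So the canonical isomorphism you assert is unproved (and in this generality cannot be expected), and with it the identification of $(G^n\sslash G^0)(k)$ with $((G_k)^n\sslash (G_k)^0)(k)$ compatible with $\pi$ — which is the whole content of your reduction — is left without justification.

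There are two ways to close the gap. One is to repair your reduction: since $\Spec(\calO[G^n]^{G^0})$ is an adequate moduli space of $[G^n/G^0]$ and adequate moduli spaces commute with base change up to adequate homeomorphism \cite{alper2018adequate}, and an adequate (in particular universal) homeomorphism induces a bijection on points valued in the algebraically closed field $k$, the map $((G_k)^n\sslash (G_k)^0)(k)\to (G^n\sslash G^0)(k)$ is bijective even though the ring map need not be an isomorphism; after that your GIT argument over $k$ goes through. The other is the paper's route, which avoids base change altogether: it quotes Seshadri's geometric reductivity over a (noetherian) base \cite[Theorem 3]{Seshadri} for surjectivity of $G^n(k)\to(G^n\sslash G^0)(k)$, and \cite[Proposition 3.2]{BHKT} for the existence of a unique closed $G^0(k)$-orbit in each fibre, then concludes with \Cref{charorbitcr} exactly as you do. As written, your proposal has a genuine gap at the base-change step; with either of the above substitutions it becomes a correct proof essentially equivalent to the paper's.
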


\begin{proof} Recall, that $G^0$ is a reductive group scheme. By \cite[Theorem 3]{Seshadri}, the map $\pi : G^n(k) \to (G^n \sslash G^0)(k)$ is surjective. By \cite[Proposition 3.2]{BHKT} for each $x \in (G^n \sslash G^0)(k)$, the fiber $\pi^{-1}(x)$ contains a unique closed $G^0(k)$-orbit. The claim follows from \Cref{charorbitcr}.
\end{proof}

\begin{lemma}\label{indepnumbers} Let $\Gamma \subseteq G(k)$ be a subgroup, where $G$ is a reductive group over an algebraically closed field $k$.
Let $P$ and $P'$ be parabolic subgroups of $G$ minimal among those which contain $\Gamma$. Then $\dim P = \dim P'$ and $|\pi_0(P)| = |\pi_0(P')|$.
\end{lemma}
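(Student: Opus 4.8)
The plan is to reduce the statement to the already-established fact (\Cref{commonRLevi}) that two minimal parabolic subgroups $P$ and $P'$ containing a fixed closed subgroup have a common Levi subgroup, and then observe that having a common Levi forces the two numerical invariants $\dim P$ and $|\pi_0(P)|$ to agree. First I would replace $\Gamma$ by its Zariski closure $H := \overline{\Gamma}$ in $G(k)$, which is a closed subgroup; a parabolic $P$ is minimal among parabolics containing $\Gamma$ if and only if it is minimal among parabolics containing $H$, since parabolic subgroups are closed. This puts us in the setting where \Cref{commonRLevi} applies directly, giving a Levi subgroup $L$ that is simultaneously a Levi of $P$ and of $P'$.

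Next I would extract the numerical consequences. For the dimension: writing $P = L \cdot R_u(P)$ and $P' = L \cdot R_u(P')$ as products with trivial intersection of factors (the Levi decomposition, valid in the possibly disconnected setting via \cite{BMR}), we get $\dim P = \dim L + \dim R_u(P)$ and likewise for $P'$. It then suffices to see $\dim R_u(P) = \dim R_u(P')$. This follows because $R_u(P)$ and $R_u(P')$ are the unipotent radicals attached to the two opposite-or-not parabolics sharing the common Levi $L$ inside $G^0$; passing to $P^0$ and $Q^0 := (P')^0$, which are parabolic subgroups of $G^0$ by \cite[Rmk.~5.3]{martin}, and using that $L^0$ is a common Levi of $P^0$ and $(P')^0$, the root-space decomposition of $\operatorname{Lie}(G^0)$ relative to the maximal torus of $L^0$ shows that $\dim R_u(P^0)$ equals the number of roots in the "parabolic set minus Levi set", whose size depends only on the pair $(G^0, L^0)$ up to the involution coming from choice of parabolic, hence is the same for $P^0$ and $(P')^0$; alternatively one argues $\dim R_u(P^0) = \dim R_u((P')^0)$ because both equal $\tfrac12(\dim G^0 - \dim L^0 \cdot \text{(something)})$ — cleanest is: $R_u(P)$ and $R_u(P')$ are conjugate (both $P, P'$ are conjugate to their common opposite relative to $L$), so certainly equidimensional. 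For the component count: $|\pi_0(P)| = [P : P^0]$, and since $L \subseteq P$ with $L$ meeting every component of $P$ (as $P = L \cdot R_u(P)$ and $R_u(P)$ is connected, lying in $P^0$), we get $P/P^0 \cong L/L^0$, so $|\pi_0(P)| = |\pi_0(L)|$; the same computation gives $|\pi_0(P')| = |\pi_0(L)|$, and these are equal.

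The main obstacle I anticipate is handling the disconnected case cleanly in the dimension argument: one must be careful that "common Levi" from \Cref{commonRLevi} is an honest common Levi subgroup scheme and that the Levi decomposition $P = L \ltimes R_u(P)$ with $L \cap R_u(P) = 1$ is available for possibly disconnected parabolics, which is exactly the content of \cite[Lem.~6.2]{BMR} used already in \Cref{commonLeviLemma} and \Cref{indepofPL}. Given that, the identities $\dim P = \dim L + \dim R_u(P)$ and $\pi_0(P) = \pi_0(L)$ are formal, and since $R_u(P)$ and $R_u(P')$ are unipotent radicals of parabolics with the same Levi they are conjugate in $G$ (indeed $P'$ is $G^0$-conjugate to a parabolic with the same Levi $L$ and unipotent radical opposite to $R_u(P)$, which is conjugate to $R_u(P)$ itself), giving $\dim R_u(P) = \dim R_u(P')$; combined with the Levi decomposition this yields $\dim P = \dim P'$ and $|\pi_0(P)| = |\pi_0(L)| = |\pi_0(P')|$, completing the proof.
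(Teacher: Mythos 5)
Your proposal takes the same route as the paper: reduce to the Zariski closure, invoke \Cref{commonRLevi} to get a common Levi $L$ of $P$ and $P'$, and then read off $\dim P$ and $|\pi_0(P)|$ from $L$. The paper does exactly this, recording the identities $\dim P = \tfrac12(\dim G + \dim L) = \dim P'$ and $|\pi_0(P)| = |\pi_0(L)| = |\pi_0(P')|$; your treatment of $\pi_0$ via $P = L \cdot R_u(P)$ with $R_u(P)$ connected is the same computation, and your root-counting argument for the dimension (the roots outside $\Phi_L$ occurring in $P$ number $\tfrac12(|\Phi|-|\Phi_L|)$, independently of which parabolic with Levi $L$ one takes) is precisely what underlies the paper's formula.

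One caveat: the justification you label as ``cleanest'' for $\dim R_u(P) = \dim R_u(P')$ — that $R_u(P)$ and $R_u(P')$ are conjugate because parabolics sharing a Levi are (up to $G^0$-conjugacy) opposite, and opposite unipotent radicals are conjugate — is false in general and should be dropped. Already in $G = \GL_3$ with $L = \GL_2 \times \GL_1$ (block diagonal), the two parabolics with Levi $L$ are the upper and lower block-triangular groups $P$ and $P^-$; these are not conjugate (one stabilizes a plane, the other only a line in the standard representation), and neither are their unipotent radicals, since the fixed space of $R_u(P)$ on $k^3$ is $2$-dimensional while that of $R_u(P^-)$ is $1$-dimensional. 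So conjugacy is not available; equidimensionality must come from the root count (or equivalently the formula $\dim P = \tfrac12(\dim G + \dim L)$), which you also gave and which is all that is needed.
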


\begin{proof} By \Cref{commonRLevi}, $P$ and $P'$ contain a common Levi $L$. We have $\dim P = \tfrac{1}{2}(\dim G + \dim L) = \dim P'$ and $|\pi_0(P)| = |\pi_0(L)| = |\pi_0(P')|$, since we have surjections $c_{P,L} : P \to L$ and  $c_{P',L} : P' \to L$ with connected kernel: apply \cite[Theorem 4.1.7]{bcnrd} to the conjugation action of the cocharacters defining $P$ and $P'$.
\end{proof}

The proof of the reconstruction theorem below is similar to the proof presented in \cite[Theorem 4.5]{BHKT} in the case that $G$ is split connected reductive over $\bbZ$. The main difference is that we prove the result also for groups $G$ with nontrivial component group $G/G^0$. The validity of \Cref{reconstructiongeneral} has been claimed for $G$ a semidirect product in the proof of \cite[Lemma A.4]{DHKM} without proof. A version of this theorem for $L$-parameters is also proved in \cite[Proposition VIII.3.8]{fargues2021geometrization}.

\begin{theorem}\label{reconstructiongeneral} Let $G$ be a generalized reductive $\calO$-group scheme, assume $\calO$ is noetherian, let $\Gamma$ be a group, let $k$ be an algebraically closed field over $\calO$ and let $\Theta \in \PC_G^{\Gamma}(k)$. Then there is a $G$-completely reducible representation $\rho : \Gamma \to G(k)$ with $\Theta_{\rho} = \Theta$, which is unique up to $G^0(k)$-conjugation.
\end{theorem}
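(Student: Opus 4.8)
The strategy is to reduce to a finitely generated group and then invoke the geometric invariant theory dictionary established in \Cref{pointsinGnG0}. First I would treat the case where $\Gamma$ is finitely generated, say by $\gamma_1, \dots, \gamma_n$, so that $\Gamma$ is a quotient of $\FG(n)$. The data of $\Theta$ then amounts to an $\calO$-algebra homomorphism $\Theta_n : \calO[G^n]^{G^0} \to A$ landing in the image of $\map(\Gamma^n, A)$, i.e.\ a $k$-point of $G^n \sslash G^0$ (here one uses that $\calO[G^n]^{G^0} \otimes_\calO k = k[G^n]^{G^0}$, which holds by \Cref{gfG} since $\calO$ is noetherian, hence Dedekind after the usual reductions, or more simply by \Cref{critchange}(1) as $k$ is $\calO$-flat). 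By \Cref{pointsinGnG0} this point corresponds to a unique $G^0(k)$-conjugacy class of tuples $(g_1, \dots, g_n) \in G^n(k)$ generating a $G$-completely reducible Zariski-closed subgroup. I would then check that the assignment $\gamma_i \mapsto g_i$ extends to a \emph{homomorphism} $\rho : \Gamma \to G(k)$: the relations satisfied in $\Gamma$ translate, via conditions (1) and (2) of \Cref{LafPC}, into the statement that for every word $w$ that is trivial in $\Gamma$, the corresponding evaluation map $f \mapsto f(w(g_\bullet))$ agrees with $f \mapsto f(1, \dots)$ on invariants, which by \Cref{pointsinGnG0} forces $w(g_\bullet)$ to lie in the same closed $G^0$-orbit as the relevant tuple; a bookkeeping argument with substitutions $(-)_\alpha$ then pins down $w(g_\bullet)$ exactly. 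Finally $\Theta_\rho = \Theta$ holds by construction on $\Theta_n$, and then for all $m$ by condition (1) (every $\Theta_m$ is determined by $\Theta_n$ through the maps $\{1,\dots,m\} \to \{1,\dots,n\}$ composed with the generating data), and $\rho$ is $G$-completely reducible by \Cref{charorbitcr}. Uniqueness up to $G^0(k)$-conjugation is the uniqueness clause of \Cref{pointsinGnG0} together with \Cref{sshassamePC} (any $\rho'$ with $\Theta_{\rho'} = \Theta$ has the same associated pseudocharacter as its semisimplification, which is $G$-cr, hence gives a closed orbit, hence is conjugate to $\rho$).

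For general $\Gamma$, I would pass to the limit over the directed system of finitely generated subgroups $\Delta \subseteq \Gamma$. For each such $\Delta$ the first part produces a $G$-cr representation $\rho_\Delta : \Delta \to G(k)$ with $\Theta_{\rho_\Delta} = \Theta|_\Delta$, unique up to $G^0(k)$-conjugacy. The compatibility of these (a coherent choice of conjugacy classes) and the fact that $\Aut$-rigidity of closed orbits gives a well-defined limiting class requires an argument: I would use that for $\Delta \subseteq \Delta'$, restricting $\rho_{\Delta'}$ to $\Delta$ gives a representation with the same pseudocharacter as $\rho_\Delta$, but \emph{a priori} only $G$-completely reducible after semisimplification — however, a subgroup of a $G$-cr group need not be $G$-cr, so one must instead track the tuples directly: the image of $(g_1,\dots,g_n)$ for a generating set of $\Delta'$ restricted to a generating set of $\Delta$ lands in a $G^0$-orbit whose closure contains the canonical closed orbit for $\Delta$, and a dimension/component count (\Cref{indepnumbers}) shows these orbits coincide. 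This yields a compatible family of conjugacy classes; choosing representatives coherently (possible since the stabilizers are the relevant Levi-type subgroups and the transition maps are surjective on the relevant orbit spaces) gives $\rho : \Gamma \to G(k)$ with $\Theta_\rho = \Theta$, and its image is $G$-cr because $G$-complete reducibility can be checked on the Zariski closure of the image, which is already the closure of the image of some finitely generated subgroup.

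The main obstacle I anticipate is the \emph{passage to the limit}: ensuring that the $G^0(k)$-conjugacy classes $\rho_\Delta$ are genuinely compatible and that one can make coherent choices of representatives. The subtlety is precisely that $G$-complete reducibility is not inherited by subgroups, so one cannot naively say "restrict and it stays $G$-cr"; the fix is to argue at the level of closed orbits in $G^n \sslash G^0$ using \Cref{charorbitcr}, \Cref{indepnumbers} and the fact (from \Cref{commonRLevi}) that minimal parabolics containing a given subgroup have common Levis, so that restriction of a $G$-cr representation, while possibly not $G$-cr, has a semisimplification whose orbit is the closed orbit in the fiber over $\Theta|_\Delta$ and hence \emph{equals} the orbit of $\rho_\Delta$ by uniqueness in \Cref{pointsinGnG0}. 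Everything else — the finitely generated case, the translation of group relations into conditions (1) and (2), and uniqueness — is relatively mechanical given \Cref{pointsinGnG0}, \Cref{charorbitcr}, and \Cref{sshassamePC}.
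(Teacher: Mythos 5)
Your first paragraph (the finitely generated case) is essentially correct, and in fact the key observation there is sound and clean: if $w$ is a word with $w(\gamma)=1$ in $\Gamma$, then $w(g_\bullet)$ lies in the abstract group generated by $g_1,\dots,g_n$, so the Zariski closure of $\langle g_1,\dots,g_n,w(g_\bullet)\rangle$ is unchanged, both $(g_1,\dots,g_n,w(g_\bullet))$ and $(g_1,\dots,g_n,1)$ have closed $G^0(k)$-orbits over the same point of the quotient, and conjugating the last entry $1$ forces $w(g_\bullet)=1$. (A minor quibble: the parenthetical appeal to ``$k$ is $\calO$-flat'' is wrong — $k$ is merely an $\calO$-algebra — but no base change of invariants is actually needed, since a point of $(G^n\sslash G^0)(k)$ is just an $\calO$-algebra map $\calO[G^n]^{G^0}\to k$, which is exactly what $\Theta_n$ provides; this is how \Cref{pointsinGnG0} is used in the paper.)

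The genuine gap is in the passage to general $\Gamma$. Your proposed limit over finitely generated subgroups does not go through as described. For $\Delta\subseteq\Delta'$ the restriction $\rho_{\Delta'}|_\Delta$ need not be $G$-completely reducible, so the orbit of the restricted generating tuple need not be closed; your claim that ``a dimension/component count (\Cref{indepnumbers}) shows these orbits coincide'' is false — in general that orbit strictly contains (in its closure) the closed orbit attached to $\Theta|_\Delta$, and only its semisimplification lands in the class of $\rho_\Delta$. Consequently the data you obtain is a family of conjugacy classes compatible only ``up to semisimplification and up to $G^0(k)$-conjugacy''; this is not a projective system of homomorphisms, and the assertion that one can ``choose representatives coherently'' because ``the transition maps are surjective on the relevant orbit spaces'' is precisely the unproved point, not a fix. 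The paper avoids any limit: it fixes a single finite tuple $\delta\in\Gamma^n$ chosen so that (i) the dimension and (ii) the component count of a minimal parabolic containing $H(\delta)$ are maximal, and (iii) the dimension and (iv) the component group of $Z_{G_k}(H(\delta))$ are minimal among such tuples. These four conditions force $Z_G(g_1,\dots,g_n,g)=Z_G(g_1,\dots,g_n)$ for the extensions considered in Claim A, which is what makes $\rho(\gamma)$ uniquely defined for \emph{every} $\gamma\in\Gamma$ simultaneously (and Claims B/C then give multiplicativity), with no finite-generation hypothesis. Your finitely generated argument cannot be bootstrapped to this setting as written, because it uses that the chosen tuple generates $\Gamma$ (every element is a word in it); to repair your proposal you would need exactly the paper's centralizer-stabilization device (or an equivalent rigidity statement), which is the missing idea.
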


\begin{proof} As we will work with tuples of elements of $\Gamma$ of varying length, we introduce the set $\scrT = \{(n,\gamma) \mid n \geq 1, \gamma \in \Gamma^n\}$.
For every tuple $(n,\gamma) \in \scrT$, the homomorphism $\Theta_n$ determines a homomorphism $\calO[G^n]^{G^0} \to k, ~f \mapsto \Theta_n(f)(\gamma)$, hence an element $\xi_{\gamma} \in (G^n \sslash G^0)(k)$ since $G^n \sslash G^0 = \Spec(\calO[G^n]^{G^0})$. The map $G^n(k) \to (G^n \sslash G^0)(k)$ is surjective by \Cref{pointsinGnG0} and we write $T(\gamma) \in G(k)^n$ for a representative of $\xi_{\gamma}$ contained in the unique closed $G^0(k)$-orbit in $G^n(k)$ over $\xi_{\gamma}$. The representative $T(\gamma)$ shall be chosen and fixed for each $(n,\gamma) \in \scrT$ for the rest of the proof.

Let $H(\gamma)$ be the smallest Zariski closed subgroup of $G(k)$, that contains the entries of $T(\gamma)$. By \Cref{charorbitcr}, since $T(\gamma)$ represents a closed orbit, $H(\gamma)$ is $G$-completely reducible. Let $n(\gamma)$ be the dimension of a parabolic subgroup $P$ of $G_k$ minimal among those with $H(\gamma) \subseteq P(k)$ and let $c(\gamma)$ be the order of the component group $\pi_0(P) = P/P^0$. By \Cref{indepnumbers} these numbers are both independent of the choice of $P$.

Let $N := \sup_{n \geq 1, \gamma \in \Gamma^n} n(\gamma)$ and $C := \sup_{n \geq 1, \gamma \in \Gamma^n; n(\gamma) = N} c(\gamma)$. 
For $(n, \delta) \in \scrT$ we consider the following four conditions:
\begin{enumerate}
    \item $n(\delta) = N$.\label{recone}
    \item $c(\delta) = C$.\label{rectwo}
    \item For any $n' \geq 1$ and $\delta' \in \Gamma^{n'}$ also satisfying \eqref{recone} and \eqref{rectwo}, we have $\dim Z_{G_k}(H(\delta)) \leq \dim Z_{G_k}(H(\delta'))$.\label{recthree}
    \item For any $n' \geq 1$ and $\delta' \in \Gamma^{n'}$ also satisfying \eqref{recone}, \eqref{rectwo} and \eqref{recthree}, we have $|\pi_0(Z_{G_k}(H(\delta)))| \leq |\pi_0(Z_{G_k}(H(\delta')))|$.\label{recfour}
\end{enumerate}

Let us denote by $\scrT_{1,2,3,4}$ the subset of $\scrT$ of pairs $(n,\gamma)$ that satisfy all conditions \eqref{recone}, \eqref{rectwo}, \eqref{recthree} and \eqref{recfour}. Similarly we define subsets $\scrT_{1}$, $\scrT_{1,2}$ and $\scrT_{1,2,3}$. We claim, that $\scrT_{1,2,3,4} \neq \emptyset$.

\underline{Proof of $\scrT_{1,2,3,4} \neq \emptyset$.} 
Condition \eqref{recone} can be established, since $N \leq \dim G_k$, so $\scrT_1 \neq \emptyset$. Condition \eqref{rectwo} can be established, since $G$ has only finitely many conjugacy classes of parabolic subgroups (\cite[Proposition 5.2 (e)]{MartinGeneratingTuples} and \cite[Corollary 6.7]{BMR}) and so $C$ is bounded by the maximal number of components of a parabolic subgroup of $G_k$. So $\scrT_{1,2} \neq \emptyset$. 
There is some $(n,\delta) \in \scrT_{1,2}$ with $\inf_{(n',\delta') \in \scrT_{1,2}} \dim Z_{G_k}(H(\delta')) = \dim Z_{G_k}(H(\delta))$. Hence $\scrT_{1,2,3} \neq \emptyset$. There is some $(n,\delta) \in \scrT_{1,2,3}$ with $\inf_{(n',\delta') \in \scrT_{1,2,3}} |\pi_0(Z_{G_k}(H(\delta')))| \leq |\pi_0(Z_{G_k}(H(\delta)))|$. Hence $\scrT_{1,2,3,4} \neq \emptyset$. \hfill $\Diamond$

We choose and fix $(n,\delta) \in \scrT_{1,2,3,4}$ for the rest of the proof. Let $(g_1, \dots, g_n) := T(\delta)$.

\underline{Claim A.} For all $\gamma \in \Gamma$, there is a unique $g \in G(k)$, such that $(g_1, \dots, g_n, g)$ is $G^0(k)$-conjugate to $T(\delta_1, \dots, \delta_n, \gamma)$.

\underline{Proof of existence of $g$.} Let $(h_1, \dots, h_n, h) := T(\delta_1, \dots, \delta_n, \gamma)$. It follows from substitution property \eqref{subst_1} in the definition of $G$-pseudocharacter with $\zeta : \{1, \dots, n\} \to \{1, \dots, n+1\}$ the inclusion, that $(h_1, \dots, h_n)$ lies over $\xi_{\delta} \in (G^n \sslash G^0)(k)$.

Let $P \subseteq G_k$ be a minimal parabolic among those with $H(\delta_1, \dots, \delta_n, \gamma) \subseteq P(k)$.
Since $H(\delta_1, \dots, \delta_n, \gamma)$ is $G$-completely reducible we find by definition of complete reducibility a Levi subgroup $M_P$ of $P$ with $H(\delta_1, \dots, \delta_n, \gamma) \subseteq M_P(k)$. Let $N_P := R_{\mathrm u}(P)$ be the unipotent radical of $P$ and let $Q \subseteq M_P$ be a parabolic subgroup of $M_P$ minimal among those containing $\{h_1, \dots, h_n\}$. Let $M_Q$ be a Levi subgroup of $Q$ and let $h_1', \dots, h_n' \in M_Q(k)$ be the images of $h_1, \dots, h_n$ in $M_Q(k)$ under the map $Q \to M_Q$ determined by the decomposition $Q = M_Q \ltimes R_{\mathrm u}(Q)$. Then the smallest Zariski closed subgroup of $M_Q$ containing $h_1', \dots, h_n'$ is $M_Q$-irreducible, as the preimage of a parabolic of $M_Q$ in $Q$ is a parabolic \cite[Lemma 6.2 (ii)]{BMR}. Therefore it is $G$-completely reducible by the non-connected version of \cite[Corollary 3.22]{BMR} as explained in \cite[§6.3]{BMR}. The tuples $(h_1, \dots, h_n)$ and $(h_1', \dots, h_n')$ map to the same element in $(M_P^n \sslash M_P^0)(k)$, as $(h_1', \dots, h_n')$ lies in the orbit closure of $(h_1, \dots, h_n)$. Therefore via the map $M_P^n \sslash M_P^0 \to G^n \sslash G^0$ the tuple $(h_1', \dots, h_n')$ maps to $\xi_{\delta}$.
By \Cref{pointsinGnG0}, $(h_1', \dots, h_n')$ is $G^0(k)$-conjugate to $T(\delta)$.

The subgroup $QN_P$ of $G_k$ is parabolic \cite[Lemma 6.2 (ii)]{BMR} and contains the conjugate $(h_1', \dots, h_n')$ of $T(\delta)$. So we have $N = n(\delta) \leq \dim QN_P \leq \dim P \leq N$.
The equality follows, since $\delta$ satisfies \eqref{recone}.
The first inequality follows, since $QN_P$ contains a parabolic minimal among those containing $(h_1', \dots, h_n')$.
The second inequality follows, since $QN_P \subseteq P$.
The third inequality follows by definition of $N$.
We deduce, that $\dim QN_P = \dim P$. 
Since $P = M_P \ltimes N_P$ and $Q \subseteq M_P$, we have $\dim Q = \dim M_P$, $Q^0 = M_P^0$ and $|\pi_0(Q)| \leq |\pi_0(M_P)|$.

We also have $C = c(\delta) \leq |\pi_0(QN_P)| \leq |\pi_0(P)| \leq C$.
The equality follows, since $\delta$ satisfies \eqref{rectwo}.
The first inequality follows, since any parabolic minimal among those containing $(h_1', \dots, h_n')$ has dimension $N = \dim QN_P$.
The second inequality follows from $|\pi_0(Q)| \leq |\pi_0(M_P)|$ and the semidirect product decomposition of $P$.
The third inequality holds, since $N = \dim P = n(\delta_1, \dots, \delta_n, \gamma)$ and $(\delta_1, \dots, \delta_n, \gamma)$ occurs in the supremum in the definition of $C$.

We conclude since $(QN_P)^0 = Q^0N_P = M_P^0N_P = P^0$ and $|\pi_0(QN_P)| = |\pi_0(P)|$ and both groups are smooth (\cite[Theorem 4.1.7 (4)]{bcnrd}), that $QN_P = P$, $Q = M_P$ and $h_i = h_i'$ for all $i = 1, \dots, n$.
So the $G^0(k)$-orbit of $(h_1, \dots, h_n)$ in $G^n(k)$ is closed.
By \Cref{pointsinGnG0}, there is some $x \in G^0(k)$, such that $x(h_1, \dots, h_n)x^{-1} = (g_1, \dots, g_n)$.
We can take $g := xhx^{-1}$ and the proof of existence is finished. \hfill $\Diamond$

\underline{Proof of uniqueness of $g$.} Fix $\gamma \in \Gamma$ and suppose, that $g, g' \in G(k)$ are such that $(g_1, \dots, g_n, g)$ and $(g_1, \dots, g_n, g')$ are $G^0(k)$-conjugate to $T(\delta_1, \dots, \delta_n, \gamma)$. In particular, there is some $y \in G^0(k)$, such that $y(g_1, \dots, g_n, g)y^{-1} = (g_1, \dots, g_n, g')$.
This means, that $y \in Z_{G(k)}(g_1, \dots, g_n) = Z_{G_k}(g_1, \dots, g_n)(k) = Z_{G_k}(H(\delta))(k)$ and our goal is to show that $y \in Z_{G(k)}(g_1, \dots, g_n, g)$, for then $g = g'$. There is an inclusion $Z_{G(k)}(g_1, \dots, g_n, g) \subseteq Z_{G(k)}(g_1, \dots, g_n)$.
Since $\delta$ satisfies \eqref{recone} and \eqref{rectwo}, $(\delta_1, \dots, \delta_n, \gamma)$ also satisfies \eqref{recone} and \eqref{rectwo}. It thus follows from property \eqref{recthree} of $\delta$, that $\dim Z_{G_k}(H(\delta)) = \dim Z_{G_k}(H(\delta_1, \dots, \delta_n, \gamma))$, so $(\delta_1, \dots, \delta_n, \gamma)$ also satisfies \eqref{recthree}. It follows from property \eqref{recfour} of $\delta$, that $|\pi_0(Z_{G_k}(H(\delta)))| = |\pi_0(Z_{G_k}(H(\delta_1, \dots, \delta_n, \gamma)))|$. Hence the groups $Z_{G_k}(H(\delta))$ and $Z_{G_k}(H(\delta_1, \dots, \delta_n, \gamma))$ agree on $k$-points and we conclude that $Z_{G(k)}(g_1, \dots, g_n, g) = Z_{G(k)}(g_1, \dots, g_n)$. \hfill $\Diamond$

So we have proved Claim A and can define a map $\rho : \Gamma \to G(k), ~\gamma \mapsto g$. We have to show, that $\rho$ is a homomorphism.

\underline{Claim B.} For all $\gamma, \gamma' \in \Gamma$, there are unique $g, g' \in G(k)$, such that $(g_1, \dots, g_n, g, g')$ is $G^0(k)$-conjugate to $T(\delta_1, \dots, \delta_n, \gamma, \gamma')$.

The proof of Claim B is similar to the proof of Claim A, compare \cite[Theorem 4.5]{BHKT} for more details.
In fact, we can take an arbitrary tuple in place of a single element for $\gamma$ in the proof of Claim A.

\underline{Claim C.} In the situation of Claim B, the $G^0(k)$-orbits of $(g_1, \dots, g_n, g)$, $(g_1, \dots, g_n, g')$ and $(g_1, \dots, g_n, gg')$ are closed in $G^{n+1}(k)$.

We only show, that the $G^0(k)$-orbit of $(g_1, \dots, g_n, gg')$ is closed in $G^{n+1}(k)$.
The argument for the other two orbits is similar.
Let $P$ be a parabolic minimal among those containing $\{g_1, \dots, g_n, g, g'\}$.
Then $P$ contains $\{g_1, \dots, g_n\}$ and $\dim P = N$ and $|\pi_0(P)| = C$, as before.
It follows, that $P$ is minimal among parabolic subgroups containing $\{g_1, \dots, g_n\}$.
Let $M_P$ be a Levi of $P$ containing $\{g_1, \dots, g_n, g, g'\}$; this exists since the orbit of $(g_1, \dots, g_n, g, g')$ is closed.
As before, the subgroup generated by $\{g_1, \dots, g_n\}$ is $M_P$-irreducible, hence $G$-completely reducible and the same is true for $\{g_1, \dots, g_n, gg'\}$.
It follows, that the $G^0(k)$-orbit of $(g_1, \dots, g_n, gg')$ is closed. \hfill $\Diamond$

By Claim B, Claim C and the substitution property \eqref{subst_1} in the definition of $G$-pseudocharacter applied to the inclusion $\{1, \dots, n+1\} \to \{1, \dots, n+2\}$ and the map $\{1, \dots, n+1\} \to \{1, \dots, n+2\}$ which is the inclusion for elements $\leq n$ and maps $n+1$ to $n+2$, we obtain that $(g_1, \dots, g_n, g)$ is $G^0(k)$-conjugate to $T(\delta_1, \dots, \delta_n, \gamma)$ and $(g_1, \dots, g_n, g')$ is $G^0(k)$-conjugate to $T(\delta_1, \dots, \delta_n, \gamma')$. By Claim B, Claim C and the substitution property \eqref{subst_2} $(g_1, \dots, g_n, gg')$ is $G^0(k)$-conjugate to $T(\delta_1, \dots, \delta_n, \gamma\gamma')$. It follows from the uniqueness part of Claim A, that $\rho(\gamma) = g$, $\rho(\gamma') = g'$ and $\rho(\gamma\gamma') = gg'$.
So $\rho$ is indeed a homomorphism. It can be shown by the same methods, that $\Theta_{\rho} = \Theta$.
By \Cref{sshassamePC}, we can replace $\rho$ by its semisimplification $\rho^{\semi}$, which will be $G$-completely reducible and $\Theta_{\rho^{\semi}} = \Theta$.

We are left to show that we can recover a $G$-completely reducible representation $\rho : \Gamma \to G(k)$ from its associated $G$-pseudocharacter $\Theta_{\rho}$.
For $n \geq 1$ and $\gamma \in \Gamma^n$, let $\xi_{\gamma} \in (G^n \sslash G^0)(k)$ as before and $T(\gamma) := (\rho(\gamma_1), \dots, \rho(\gamma_n)) \in G^n(k)$. By the non-connected version of \cite[Lemma 2.10]{BMR} as explained in \cite[§6.2]{BMR}, we find $\delta_1, \dots, \delta_n \in \Gamma$, such that for every parabolic $P$ and every Levi $L$ of $P$, we have $\rho(\Gamma) \subseteq P(k)$ if and only if $\{\delta_1, \dots, \delta_n\} \subseteq P(k)$ and $\rho(\Gamma) \subseteq L(k)$ if and only if $\{\delta_1, \dots, \delta_n\} \subseteq L(k)$.
In particular, $(g_1, \dots, g_n) := (\rho(\delta_1), \dots, \rho(\delta_n))$ has closed $G^0(k)$-orbit.
After possibly enlarging the tuple $(\delta_1, \dots, \delta_n)$, we may assume that $Z_{G_k}(g_1, \dots, g_n)(k) = Z_{G_k}(\rho(\Gamma))(k)$.

Let $\gamma \in \Gamma$. We know that $(g_1, \dots, g_n, \rho(\gamma)) = T(\delta_1, \dots, \delta_n, \gamma)$. Suppose $g \in G(k)$ is such that $(g_1, \dots, g_n, g)$ is $G^0(k)$-conjugate to $T(\delta_1, \dots, \delta_n, \gamma)$. So we find $x \in Z_{G_k}(g_1, \dots, g_n)(k) = Z_{G_k}(\rho(\Gamma))(k)$, such that $x\rho(\gamma)x^{-1} = g$, but this just means $\rho(\gamma) = g$.
\end{proof}

We prove a continuous version of \Cref{reconstructiongeneral}.
Beware, that a representation $\rho$ which is not $G$-completely reducible and not continuous might give rise to a continuous $G$-pseudocharacter $\Theta_{\rho}$.

\begin{theorem}\label{continuousreconstruction}
    If $k$ in \Cref{reconstructiongeneral} is an algebraic closure of a non-archimedean local field, $\Gamma$ is a profinite group and $\Theta$ is continuous, then $\rho$ is continuous.
\end{theorem}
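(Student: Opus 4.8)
The plan is to reduce the statement to the case $G = \GL_N$, where it is known by work of Chenevier (together with the comparison of Emerson and Morel). By \Cref{basechange} we may regard $\Theta$ as a $G_k$-pseudocharacter over $k$ and $\rho$ as valued in $G_k(k) = G(k)$. Since $G_k$ is affine of finite type over the field $k$, we may choose a faithful representation, i.e.\ a closed immersion $\iota : G_k \hookrightarrow \GL_{N,k}$. As $\iota$ is a closed immersion, the induced map $G(k) \to \GL_N(k)$ is a homeomorphism onto a closed subgroup for the topologies of \Cref{deftoponpoints}, so it suffices to show that $\iota \circ \rho : \Gamma \to \GL_N(k)$ is continuous.

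First I would note that $\iota \circ \rho$ is a semisimple representation. By \Cref{reconstructiongeneral} the homomorphism $\rho$ is $G$-completely reducible, hence so is its Zariski closure $H := \overline{\rho(\Gamma)} \subseteq G_k$, since a subgroup of $G(k)$ is $G$-completely reducible if and only if its Zariski closure is. Because $\chara k = 0$, the group $H$ is reductive, hence linearly reductive, so the $N$-dimensional representation of $H$ determined by $\iota$ is semisimple; as a subspace of $k^N$ is $\rho(\Gamma)$-stable exactly when it is $H$-stable, $\iota \circ \rho$ is semisimple.

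Then I would pass to determinant laws. The $\GL_N$-pseudocharacter $\iota(\Theta)$ is continuous: each $\iota(\Theta)_m$ is $\Theta_m$ composed with the algebra homomorphism $k[\GL_N^m]^{\GL_N} \to k[G^m]^{G^0}$, hence takes values in continuous functions; and $\iota(\Theta) = \Theta_{\iota \circ \rho}$ because $\Theta_\rho = \Theta$. By the comparison theorem of Emerson and Morel \cite{emerson2023comparison}, $\iota(\Theta)$ corresponds to a continuous $N$-dimensional determinant law $D$ over $k$, namely the determinant law of $\iota \circ \rho$. Since $k$ is a topological field that is an algebraic extension of a local field, Chenevier's theory of determinant laws \cite{MR3444227, SurLaVariete} shows that $D$ is the determinant law of a semisimple representation $\sigma : \Gamma \to \GL_N(k)$, unique up to isomorphism, and that $\sigma$ is continuous (as $D$ is continuous and $\Gamma$ is profinite). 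In particular $\iota \circ \rho \cong \sigma$, so $\iota \circ \rho$ is continuous, and hence so is $\rho$.

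I expect the crux to be the reduction to $\GL_N$, and within it the semisimplicity of $\iota \circ \rho$: it is what allows one to identify $\iota \circ \rho$ itself, and not merely its semisimplification, with the continuous representation reconstructed from $D$, and this is precisely where $\chara k = 0$ enters. (If the local field had positive characteristic, the argument above would fail, and one would instead need to work inside $G$ and show that the set-theoretic section $\xi_{(\delta_1, \dots, \delta_n, \gamma)} \mapsto \rho(\gamma)$ of the quotient map $G^{n+1} \to G^{n+1} \sslash G^0$ --- which a priori is only known to select the point on the unique closed $G^0$-orbit over $\xi_{(\delta, \gamma)}$ --- is continuous on the compact subset $\{\xi_{(\delta, \gamma)} : \gamma \in \Gamma\}$ of $(G^{n+1} \sslash G^0)(k)$, which requires control of the locus of closed orbits that the passage to $\GL_N$ avoids.) One should also make sure that the comparison of Emerson and Morel and Chenevier's continuity statement are available over the non-complete coefficient field $k = \overline L$.
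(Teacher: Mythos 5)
There is a genuine gap: your argument only proves the theorem when the local field has characteristic $0$, whereas the statement (and the paper's later use of it) covers algebraic closures of local fields of positive characteristic as well. Your reduction to $\GL_N$ hinges on two points that are special to characteristic $0$: (i) semisimplicity of $\iota \circ \rho$, deduced from linear reductivity of the Zariski closure of $\rho(\Gamma)$ — in characteristic $p$, $G$-complete reducibility does not imply $\GL_N$-complete reducibility of $\iota\circ\rho$, so this step breaks; and (ii) Chenevier's statement that a continuous determinant law over $\overline{L}$ comes from a continuous semisimple representation, which is a $p$-adic (characteristic-$0$) result and has no available analogue over $\overline{\bbF_p((t))}$. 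You acknowledge this yourself in the parenthetical remark, but the sketch you give there — showing that the set-theoretic section $\zeta(\gamma)=\xi_{(\delta,\gamma)} \mapsto \rho(\gamma)$ of $G^{n+1} \to G^{n+1}\sslash G^0$ is continuous — is essentially the entire content of the paper's actual proof, and you do not carry it out. The paper argues uniformly in all characteristics: with notation from the proof of \Cref{reconstructiongeneral}, one may take $(g_1,\dots,g_n)=T(\delta)\in G^n(E)$, observes that $\zeta=h\circ\rho$ with $h:\{(g_1,\dots,g_n)\}\times G(k)\to (G^{n+1}\sslash G^0)(k)$ injective on the image of $\rho$, writes $G(k)=\varinjlim_{E'/E}G(E')$, and shows $h$ is a topological embedding on $\im(\rho)\cap G(E')$ using compactness (containment in finitely many $G(\calO_{E'})$-cosets) and Hausdorffness of the target. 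Note that the positive-characteristic case is not decorative: \Cref{recwithfiniteextn} invokes \Cref{continuousreconstruction} precisely when $\kappa$ is a local field of positive characteristic.

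A secondary loose end, even in characteristic $0$: the continuity input you need from Chenevier is not literally a statement over the non-complete field $\overline{L}$; one needs that a continuous determinant law of a profinite group valued in $\overline{\bbQ}_p$ takes values in (the integers of) a finite extension, via a Baire-category/compactness argument, before the standard continuity statement applies. You flag this but do not supply it. With those two repairs (the descent argument in characteristic $0$, and a genuinely different argument in characteristic $p$), your route would give an alternative proof of the characteristic-$0$ case via \Cref{chenevierlafforguecont} and Emerson--Morel, but as written it does not establish the theorem in the stated generality.
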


\begin{proof} Suppose $E$ is a local field and $k$ is an algebraic closure of $E$. We inherit all notations and variables from the proof of \Cref{reconstructiongeneral}. In particular, we fix $\delta \in \Gamma^n$ satisfying conditions \eqref{recone}, \eqref{rectwo}, \eqref{recthree} and \eqref{recfour} and a system of representatives $(g_1, \dots, g_n) := T(\delta) \in G^n(k)$ and we may assume that $(g_1, \dots, g_n) \in G^n(E)$. For $\gamma \in \Gamma$, we write $(\delta, \gamma) := (\delta_1, \dots, \delta_n, \gamma) \in \Gamma^{n+1}$. The homomorphism $\Theta_{n+1} : \calO[G^{n+1}]^{G^0} \to \calC(\Gamma^{n+1}, k)$ can be seen as an element of $(G^{n+1} \sslash G^0)(\calC(\Gamma^{n+1}, k)) = \calC(\Gamma^{n+1}, (G^{n+1} \sslash G^0)(k))$ and thus determines a continuous map $\zeta : \Gamma \to (G^{n+1} \sslash G^0)(k), ~\gamma \mapsto \xi_{(\delta, \gamma)}$. We obtain a unique map $\rho : \Gamma \to G(k)$ from Claim A. The following diagram commutes and $h$ is injective on the image of $(g, \rho)$.
\begin{equation}\label{diagram5}
    \begin{tikzcd}
         & \{(g_1, \dots, g_n)\} \times G(k) \arrow{d}{h} \\
        \Gamma \arrow{ur}{(g,\rho)} \ar[r, swap, "\zeta"] & (G^{n+1} \sslash G^0)(k)
    \end{tikzcd}
\end{equation}
Since $G(k)$ is the inductive limit of $G(E')$ for finite extensions $E'/E$, we are left to show, that $h$ is a topological embedding on $\im(\rho) \cap G(E')$ for every finite extension $E'/E$. Since $\im(\rho)$ is compact, the set $\im(\rho) \cap G(E')$ is contained in a finite disjoint union of $G(\calO_{E'})$-cosets in $G(E')$ for some ring of integers $\calO_{E'}$ of $E'$. Since $(G^{n+1} \sslash G^0)(k)$ is Hausdorff, it follows that $h$ is a topological embedding on $\im(\rho) \cap G(E')$, as desired.
\end{proof}

\subsection{Kernels}
We will need kernels of $G$-pseudocharacters for the proof of \Cref{factoringdiscrete}, which in turn is needed for the construction of the generic fiber in \Cref{secpadicspace}.

\begin{definition}\label{defkernel} Let $\Theta \in \PC_G^{\Gamma}(A)$ be an arbitrary $G$-pseudocharacter as in \Cref{LafPC}. We define the \emph{kernel} $\ker(\Theta)$ of $\Theta$ as the set of all $\delta \in \Gamma$, such that for all $m \geq 1$, all $f \in \calO[G^m]^{G^0}$ and all $\gamma_1, \dots, \gamma_m \in \Gamma$, we have $\Theta_m(f)(\gamma_1, \dots, \gamma_m \delta) = \Theta_m(f)(\gamma_1, \dots, \gamma_m)$.
\end{definition}

\begin{lemma} The set $\ker(\Theta)$ in \Cref{defkernel} is a normal subgroup of $\Gamma$.
\end{lemma}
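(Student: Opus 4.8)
The plan is to verify directly from \Cref{defkernel} that $\ker(\Theta)$ is closed under the group operations and is normal, using only the two axioms of a $G$-pseudocharacter. First I would check that $e \in \ker(\Theta)$: this is immediate since $\gamma_m e = \gamma_m$. The substance is in three closure properties, each obtained by a substitution trick. Throughout, write $\delta \in \ker(\Theta)$ and fix $m \geq 1$, $f \in \calO[G^m]^{G^0}$ and $\gamma_1, \dots, \gamma_m \in \Gamma$.

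\textbf{Closure under multiplication.} Given $\delta, \delta' \in \ker(\Theta)$, I want $\Theta_m(f)(\gamma_1, \dots, \gamma_m \delta \delta') = \Theta_m(f)(\gamma_1, \dots, \gamma_m)$. Apply the defining property of $\delta'$ with the last argument $\gamma_m$ replaced by $\gamma_m \delta$ to get $\Theta_m(f)(\gamma_1, \dots, (\gamma_m\delta)\delta') = \Theta_m(f)(\gamma_1, \dots, \gamma_m \delta)$; then apply the defining property of $\delta$ to rewrite the right-hand side as $\Theta_m(f)(\gamma_1, \dots, \gamma_m)$. (Associativity $(\gamma_m\delta)\delta' = \gamma_m(\delta\delta')$ is used to identify the left-hand side with what we want.)

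\textbf{Closure under inversion.} Given $\delta \in \ker(\Theta)$, I want $\Theta_m(f)(\gamma_1, \dots, \gamma_m \delta^{-1}) = \Theta_m(f)(\gamma_1, \dots, \gamma_m)$. Apply the defining property of $\delta$ with $\gamma_m$ replaced by $\gamma_m\delta^{-1}$: this gives $\Theta_m(f)(\gamma_1, \dots, (\gamma_m\delta^{-1})\delta) = \Theta_m(f)(\gamma_1, \dots, \gamma_m\delta^{-1})$, and the left-hand side is $\Theta_m(f)(\gamma_1, \dots, \gamma_m)$.

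\textbf{Normality.} Given $\delta \in \ker(\Theta)$ and $g \in \Gamma$, I want $g\delta g^{-1} \in \ker(\Theta)$, i.e. $\Theta_m(f)(\gamma_1, \dots, \gamma_m g\delta g^{-1}) = \Theta_m(f)(\gamma_1, \dots, \gamma_m)$ for all $f$ and all $\gamma_i$. Here is where axiom (2) enters. Using axiom (2) twice (first to split $\gamma_m g$ into two arguments, going from $\Theta_m$ to $\Theta_{m+1}$), one reduces the statement for $\gamma_m g \delta g^{-1}$ to a statement about $\Theta_{m+1}$ evaluated at $(\gamma_1, \dots, \gamma_{m-1}, \gamma_m, g\delta g^{-1})$; then apply axiom (2) once more to bring this to $\Theta_{m+2}(\hat{\hat f})(\gamma_1, \dots, \gamma_{m-1}, \gamma_m, g\delta, g^{-1})$, and now consider replacing $g\delta$ by $g$. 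Concretely: for the suitably "split" function $F \in \calO[G^{m+1}]^{G^0}$ with $\Theta_{m+1}(F)(\gamma_1, \dots, \gamma_m, h) = \Theta_m(f)(\gamma_1, \dots, \gamma_m h)$ (axiom (2) applied with the last slot), we want to compare $h = g\delta g^{-1}$ with $h = e$. Split once more via axiom (2): $\Theta_{m+1}(F)(\gamma_1, \dots, \gamma_m, xy) = \Theta_{m+2}(\hat F)(\gamma_1, \dots, \gamma_m, x, y)$. Take $x = g\delta$, $y = g^{-1}$; since the $(m+1)$-st argument $g\delta = g\cdot\delta$ can also be split as $\Theta_{m+2}(\hat F)(\dots, g\delta, g^{-1}) = \Theta_{m+3}(\hat{\hat F})(\dots, g, \delta, g^{-1})$, and now $\delta$ sits in the $(m+2)$-nd slot of an $(m+3)$-tuple — not the last slot. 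To move $\delta$ to a last slot, use axiom (1) with a permutation $\zeta$ that cyclically reorders the arguments so that $\delta$ becomes last, apply $\delta \in \ker(\Theta)$ to kill it (replacing $\delta$ by $e$), then reorder back. Reassembling $g \cdot e \cdot g^{-1} = e$ via axioms (1) and (2) in reverse yields $\Theta_m(f)(\gamma_1, \dots, \gamma_m) $, as desired.

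\textbf{Expected main obstacle.} The bookkeeping in the normality step is the only real point of care: one must chase the substitution identities (axioms (1) and (2)) carefully to legitimately relocate $\delta$ to the final coordinate so that membership in $\ker(\Theta)$ can be invoked, and then run the same identities backward after substituting $\delta \mapsto e$. None of this is deep — it is a finite diagram chase in the "substitution calculus" of \Cref{subsecLafPC} — but it is the step where an incautious argument could go wrong. The multiplication and inversion closures, by contrast, are one-line manipulations of the last argument only.
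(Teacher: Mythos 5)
Your proof is correct and follows essentially the same route as the paper: the subgroup property is checked directly from the kernel condition on the last argument, and normality is obtained by splitting $\gamma_m h\delta h^{-1}$ with axiom (2), cancelling $\delta$ after a permutation via axiom (1), and merging back. The paper's version is just more compact, splitting off only $h^{-1}$ and cancelling $\delta$ in the resulting non-final slot (the permutation step you spell out is left implicit there, and is recorded as the remark immediately after the lemma).
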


\begin{proof} It is clear, that $\ker(\Theta)$ is a subgroup of $\Gamma$. Let $\delta \in \ker(\Theta)$, $h \in \Gamma$ and $\gamma_1, \dots, \gamma_m \in \Gamma$ for some $m \geq 1$. Then
\begin{align*}
    \Theta_m(f)(\gamma_1, \dots, \gamma_m h \delta h^{-1}) &= \Theta_{m+1}(\hat f)(\gamma_1, \dots, \gamma_m h \delta, h^{-1}) \\
    &= \Theta_{m+1}(\hat f)(\gamma_1, \dots, \gamma_m h, h^{-1}) = \Theta_{m}(f)(\gamma_1, \dots, \gamma_m)
\end{align*}
so $h \delta h^{-1} \in \ker(\Theta)$.
\end{proof}

If $\delta \in \ker(\Theta)$, then $\Theta_m(f)(\gamma_1, \dots, \gamma_{i-1}, \gamma_i \delta, \gamma_{i+1}, \dots, \gamma_m) = \Theta_m(f)(\gamma_1, \dots, \gamma_m)$
for every $i=1, \dots, m$.
We will refer to the next lemma as the \emph{homomorphisms theorem} for $G$-pseudocharacters.

\begin{lemma}\label{homtheorem} Let $\Theta \in \PC_G^{\Gamma}(A)$ be an arbitrary $G$-pseudocharacter as in \Cref{LafPC}, let $\Delta \leq \Gamma$ be a normal subgroup and assume, that $\Delta \subseteq \ker(\Theta)$. Then there is a unique $G$-pseudocharacter $\Theta' \in \PC_G^{\Gamma/\Delta}(A)$, such that $\Theta$ is the restriction of $\Theta'$ to $\Gamma$.
\end{lemma}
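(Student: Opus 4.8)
The plan is to define $\Theta'$ by the only formula that can possibly work and then check it is well-defined. Concretely, for $m \geq 1$, $f \in \calO[G^m]^{G^0}$ and $\overline{\gamma}_1, \dots, \overline{\gamma}_m \in \Gamma/\Delta$, I would set
$$ \Theta'_m(f)(\overline{\gamma}_1, \dots, \overline{\gamma}_m) := \Theta_m(f)(\gamma_1, \dots, \gamma_m) $$
for any choice of lifts $\gamma_i \in \Gamma$ of $\overline{\gamma}_i$. The first task is to show this is independent of the chosen lifts: changing $\gamma_i$ to $\gamma_i \delta_i$ with $\delta_i \in \Delta \subseteq \ker(\Theta)$ leaves the right-hand side unchanged, by the displayed consequence of $\delta_i \in \ker(\Theta)$ stated just before the lemma (applied successively in each coordinate $i = 1, \dots, m$). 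Hence $\Theta'_m(f)$ is a well-defined map $(\Gamma/\Delta)^m \to A$, and $f \mapsto \Theta'_m(f)$ is an $\calO$-algebra homomorphism $\calO[G^m]^{G^0} \to \map((\Gamma/\Delta)^m, A)$ because $\Theta_m$ is one and the map $\map(\Gamma^m, A) \to \map((\Gamma/\Delta)^m, A)$ used implicitly is injective with image exactly the $\Delta^m$-invariant (in each coordinate) functions.

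Next I would verify that the sequence $(\Theta'_m)_{m \geq 1}$ satisfies conditions (1) and (2) of \Cref{LafPC}. For condition (1), given $\zeta : \{1,\dots,m\} \to \{1,\dots,n\}$, one picks lifts $\gamma_1, \dots, \gamma_n \in \Gamma$ of $\overline{\gamma}_1, \dots, \overline{\gamma}_n$; then $(\gamma_{\zeta(1)}, \dots, \gamma_{\zeta(m)})$ is a tuple of lifts of $(\overline{\gamma}_{\zeta(1)}, \dots, \overline{\gamma}_{\zeta(m)})$, and the identity for $\Theta'$ follows verbatim from the corresponding identity for $\Theta$. For condition (2), one picks lifts $\gamma_1, \dots, \gamma_{m+1}$; then $\gamma_m \gamma_{m+1}$ is a lift of $\overline{\gamma}_m \overline{\gamma}_{m+1}$, and again the identity for $\Theta'$ reduces directly to that for $\Theta$. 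So $\Theta' \in \PC_G^{\Gamma/\Delta}(A)$. That $\Theta$ is the restriction of $\Theta'$ along $\Gamma \to \Gamma/\Delta$ is immediate from the definition of restriction: $(\Theta')|_\Gamma{}_m(f)(\gamma_1, \dots, \gamma_m) = \Theta'_m(f)(\overline{\gamma}_1, \dots, \overline{\gamma}_m) = \Theta_m(f)(\gamma_1, \dots, \gamma_m)$.

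Finally, uniqueness: if $\Theta''$ is another $G$-pseudocharacter on $\Gamma/\Delta$ restricting to $\Theta$, then for all $m$, $f$ and all lifts $\gamma_i$ of $\overline{\gamma}_i$ we have $\Theta''_m(f)(\overline{\gamma}_1,\dots,\overline{\gamma}_m) = \Theta_m(f)(\gamma_1,\dots,\gamma_m) = \Theta'_m(f)(\overline{\gamma}_1,\dots,\overline{\gamma}_m)$, so $\Theta'' = \Theta'$. There is no serious obstacle here; the only point requiring care — and the one I would write out explicitly — is the well-definedness in the first paragraph, i.e. that the value does not depend on the lifts, which is exactly where the hypothesis $\Delta \subseteq \ker(\Theta)$ (together with the fact, noted before the lemma, that elements of $\ker(\Theta)$ can be absorbed in any coordinate, not just the last) is used.
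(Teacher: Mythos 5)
Your proof is correct and follows essentially the same route as the paper: define $\Theta'_m(f)(\gamma_1\Delta,\dots,\gamma_m\Delta):=\Theta_m(f)(\gamma_1,\dots,\gamma_m)$, check well-definedness using $\Delta\subseteq\ker(\Theta)$ (with kernel elements absorbed in each coordinate), verify the two axioms by lifting, and get uniqueness from surjectivity of $\Gamma\to\Gamma/\Delta$; the paper merely states these verifications more tersely. The only nitpick is a slip of direction: the natural injective map is $\map((\Gamma/\Delta)^m,A)\to\map(\Gamma^m,A)$ (pullback), identifying the left side with the coordinatewise $\Delta$-invariant functions, not the other way around.
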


\begin{proof} Uniqueness is clear, since $\Gamma \to \Gamma/\Delta$ is surjective and hence the maps $\map((\Gamma/\Delta)^m, A) \to \map(\Gamma^m, A)$ are injective for all $m \geq 1$. We can define $\Theta'$ as $\Theta_m'(f)(\gamma_1 \Delta, \dots, \gamma_m \Delta) := \Theta_m(f)(\gamma_1, \dots, \gamma_m)$ for all $m \geq 1$, all $f \in \calO[G^m]^{G^0}$ and all $\gamma_1, \dots, \gamma_m \in \Gamma$. This is well-defined, since $\Delta \subseteq \ker(\Theta)$. The axioms of a $G$-pseudocharacter are easily verified.
\end{proof}

If $\Theta_{\rho}$ is a $G$-pseudocharacter which comes from a representation $\rho$, we have $\ker(\rho) \subseteq \ker(\Theta_{\rho})$, but the converse inclusion need not hold.
It follows from \Cref{reconstructiongeneral}, that equality holds when $\rho$ is $G$-completely reducible over an algebraically closed field.

\subsection{Direct sum, dual and tensor product}\label{secsumtensor} Recall from \Cref{subsecLafPC}, that a homomorphism $G \to H$ gives rise to a natural transformation $\PC^{\Gamma}_G \to \PC^{\Gamma}_H$. This provides us with an easy way to define natural operations on pseudocharacters, such as direct sums, duals and tensor products. Defining such operations for determinant laws is more involved; see e.g. \cite[§1.1.11]{MR3167286} for a direct sum operation and \cite[§4.5]{BJ_new} for twisting with a character. It is clear by construction, that these operations will be compatible with the corresponding operations on representations.

Suppose $\Theta \in \PC^{\Gamma}_{\GL_n}(A)$. Then we can define the \emph{dual} $\Theta^*$ by composing with the transpose inverse map $\GL_n \to \GL_n$.

Assume, that $\calO$ is a principal ideal domain. Suppose $\Theta \in \PC^{\Gamma}_{\GL_a}(A)$ and $\Theta' \in \PC^{\Gamma}_{\GL_b}(A)$ for $A \in \CAlg_{\calO}$ and $a+b=n$.
We will define the \emph{direct sum} $\Theta \oplus \Theta' \in \PC^{\Gamma}_{\GL_n}(A)$. For $m \geq 1$, we obtain a map $\Theta_m \otimes \Theta_m' : \calO[\GL_a^m]^{\GL_a} \otimes_{\calO} \calO[\GL_b^m]^{\GL_b} \to \map(\Gamma^m, A)$. It turns out, that since $\calO$ is a principal ideal domain and by the universal coefficient theorem \cite[I.4.18 Proposition (a)]{Jantzen2003}, we have $\calO[\GL_a^m]^{\GL_a} \otimes_{\calO} \calO[\GL_b^m]^{\GL_b} = \calO[(\GL_a \times \GL_b)^m]^{\GL_a \times \GL_b}$. The diagonal embedding $\GL_a \times \GL_b \to \GL_n$ induces a map $\calO[\GL_n^m]^{\GL_n} \to \calO[(\GL_a \times \GL_b)^m]^{\GL_a \times \GL_b}$ and we define $(\Theta \oplus \Theta')_m$ as the composition of this map with $\Theta_m \otimes \Theta_m'$.
The compatibility conditions (1) and (2) in \Cref{LafPC} can be verified directly, but the alternative description of pseudocharacters \Cref{allemorphismen} in the next section provides us with an easier way to see, that $\Theta \oplus \Theta'$ is indeed a pseudocharacter.

As for the direct sum, the \emph{tensor product} $\Theta \otimes \Theta'$ is induced by the dyadic product map $\GL_a \times \GL_b \to \GL_{ab}$.

We shall also need the notion of direct sum of two symplectic pseudocharacters, induced by the natural map $\Sp_{2a} \times \Sp_{2b} \to \Sp_{2n}$ for $a+b=n$, which corresponds to the orthogonal direct sum of symplectic spaces. The procedure for the construction of this direct sum operation is the same as for the general linear group explained above.

There is also a natural map $\GL_n \to \Sp_{2n}$ induced by mapping the standard representation $V$ of $\GL_n$ to $V \oplus V^*$ equipped with the symplectic form, which makes $V$ and $V^*$ totally isotropic subspaces, is the canonical pairing on $V \times V^*$ and the negative of the canonical pairing on $V^* \times V$.

\subsection{${\calC}$-${\calO}$-algebras}

It turns out to be useful to rephrase the definition of $G$-pseudocharacters in terms of functors on a category $\calC$ with values in commutative ${\calO}$-algebras, which we decided to call '${\calC}$-${\calO}$-algebras'. It allows to uniformly describe all substitution properties for $G$-pseudocharacters that arise as consequences of substitution properties \eqref{subst_1} and \eqref{subst_2} in \Cref{LafPC}. We will do this in \Cref{sec_GPC_as_FO}. There is an intrinsic notion for a ${\calC}$-${\calO}$-algebra to be finitely generated, and we will establish this property for some of the relevant ${\calC}$-${\calO}$-algebras in \Cref{secinvthy}. Instances of ${\calC}$-${\calO}$-algebras appear in \cite{Weidner} and \cite{Zhu} under the names F(I)-, FFM- and FFG-algebra. We develop the basic theory of ${\calC}$-${\calO}$-algebras and use them to prove existence and basic properties of an affine moduli scheme of $G$-pseudocharacters in \Cref{secRep}.

\begin{definition}[${\calC}$-$\calO$-algebra]\label{defCOalg} Let $\calC$ be a small category.
\begin{enumerate}
    \item A \emph{${\calC}$-$\calO$-algebra} is a functor $A^{\bullet} : \calC \to \CAlg_{\calO}, ~c \mapsto A^c$ to the category of commutative $\calO$-algebras $\CAlg_{\calO}$.
    \item A \emph{homomorphism} of $\calC$-$\calO$-algebras is a natural transformation $f^{\bullet} : A^{\bullet} \to B^{\bullet}$.
    \item Let $\CAlg_{\calO}^{\calC}$ be the category of $\calC$-$\calO$-algebras together with $\calC$-$\calO$-homomorphisms.
    \item A \emph{${\calC}$-$\calO$-subalgebra} of a $\calC$-$\calO$-algebra $A^{\bullet}$ is a subfunctor $B^{\bullet} \subseteq A^{\bullet}$, such that $B^c$ is an $\calO$-subalgebra of $A^c$ for all objects $c$ of $\calC$.
    \item A \emph{${\calC}$-$\calO$-ideal} is a subfunctor $I^{\bullet}$ of the composition of $A^{\bullet}$ with the forgetful functor $\CAlg_{\calO} \to \Set$, such that $I^c$ is an ideal of $A^c$ for all objects $c$ of $\calC$.
    \item A ${\calC}$-$\calO$-homomorphism $f^{\bullet} : A^{\bullet} \to B^{\bullet}$ is \emph{injective} (\emph{surjective}, \emph{bijective}) if $f^c$ is injective (surjective, bijective) for all objects $c$ of $\calC$.
    \item The \emph{kernel} $\ker(f)^{\bullet}$ of a $\calC$-$\calO$-homomorphism $f^{\bullet} : A^{\bullet} \to B^{\bullet}$ is defined by $\ker(f)^c := \ker(f^c)$. It is a $\calC$-$\calO$-ideal of $A^{\bullet}$.
    \item The \emph{image} $\im(f)^{\bullet}$ of a $\calC$-$\calO$-homomorphism $f^{\bullet} : A^{\bullet} \to B^{\bullet}$ is defined by $\im(f)^c := \im(f^c)$. It is a $\calC$-$\calO$-subalgebra of $B^{\bullet}$. 
\end{enumerate}
\end{definition}

$\calC$-$\calO$-algebras are just commutative $\calO$-algebra objects internal to the topos of $\calC$-sets, i.e. functors $\calC \to \Set$, and \Cref{defCOalg} comes from this perspective.

\subsection{$G$-pseudocharacters as $\calF$-$\calO$-algebra homomorphisms}
\label{sec_GPC_as_FO}

From now on, we will consider two different small categories for $\calC$. We fix an alphabet of symbols $\{x_1, x_2, x_3, \dots\}$.
\begin{enumerate}
    \item Let $\calM$ be the category of free monoids $\FM(m)$ on $m$ generators $x_1, \dots, x_m$ for all $m \geq 1$.
    \item Let $\calF$ be the category of free groups $\FG(m)$ on $m$ generators $x_1, \dots, x_m$ for all $m \geq 1$.
\end{enumerate}
So $\calM$ (resp. $\calF$) contains for each $m$ exactly one object that is free on $m$ generators.
A monoid homomorphism between finitely generated free monoids can be understood as a finite sequence of words.
Such a sequence also defines a homomorphism between free groups and so we get a canonical functor $\calM \to \calF$ which takes a free monoid $\FM(m)$ to a free group $\FG(m)$ on the same set of generators.
In particular, every $\calF$-$\calO$-algebra can be restricted to an $\calM$-$\calO$-algebra.

\begin{example} Here are the two examples of $\calF$-$\calO$-algebras we are interested in.
\begin{enumerate}
    \item If $A$ is an $\calO$-algebra, then the functor 
    $$ \calF \to \CAlg_{\calO}, ~\FG(m) \mapsto \map(\Gamma^m,A) $$
    where $\alpha : \FG(n) \to \FG(m)$ is mapped to $\alpha_* : \map(\Gamma^n,A) \to \map(\Gamma^m,A)$, where $\alpha_*(f)(\gamma_1, \dots, \gamma_m) := f(\phi(\alpha(x_1)), \dots, \phi(\alpha(x_n)))$ and $\phi : \FG(m) \to \Gamma, ~x_i \mapsto \gamma_i$, defines an $\calF$-$\calO$-algebra $\map(\Gamma^{\bullet},A)$.
    \item Similarly 
    $$ \calF \to \CAlg_{\calO}, ~\FG(m) \mapsto \calO[G^m]^{G^0} $$
    defines an $\calF$-$\calO$-algebra: Every homomorphism $\alpha : \FG(n) \to \FG(m)$ induces a morphism of $\calO$-schemes $G^m \to G^n$, which in turn induces the desired map $\alpha_* : \calO[G^n]^{G^0} \to \calO[G^m]^{G^0}$. Note, that since $G^m \to G^n$ is induced by a homomorphism of free groups it is equivariant with respect to diagonal conjugation and hence $\alpha_*$ is well-defined. We will denote this $\calF$-$\calO$-algebra by $\calO[G^{\bullet}]^{G^0}$.
\end{enumerate}
\end{example}

By definition a $G$-pseudocharacter $\Theta$ is a sequence of maps $\Theta_m : \calO[G^m]^{G^0} \to \map(\Gamma^m,A)$, that is natural with respect to two specified types of monoid homomorphisms. 
Our next goal is to understand, that these types of monoid homomorphisms do already generate all morphisms in $\calM$ and make $\Theta_{\bullet} = (\Theta_m)_{m \geq 0}$ an $\calM$-$\calO$-homomorphism.

\begin{definition}\label{defgen} Let $\calC$ be a category and $S$ a system of morphisms $S_{A,B} \subseteq \Hom_{\calC}(A,B)$ for all pairs of objects $A,B$. Let $\tilde S$ be another such system of morphisms.
\begin{enumerate}
    \item $S$ \emph{generates} $\tilde S$, if $\tilde S$ is the smallest system of morphisms, that contains $S$, all identities and for any two composable morphisms $\alpha_1, \alpha_2 \in \tilde S$ their composition $\alpha_2 \circ \alpha_1$.
    \item $S$ \emph{inv-generates} $\tilde S$, if $\tilde S$ is the smallest system of morphisms, that contains $S$, all identities, for any two composable morphisms $\alpha_1, \alpha_2 \in \tilde S$ their composition $\alpha_2 \circ \alpha_1$ and for each invertible morphism $\alpha \in \tilde S$ its inverse $\alpha^{-1}$.
\end{enumerate}
\end{definition}

\begin{remark} \label{structuregeneration} A system of morphisms $S$ always (inv-)generates a unique system of morphisms, since the conditions in \Cref{defgen} are closed under arbitrary intersections.
If $S$ generates $\tilde S$, then $\tilde S$ consists of compositions of morphisms of $S$ and identities. If $S$ inv-generates $\tilde S$, then $\tilde S$ consists of iterated compositions and inversions of morphisms of $S$ that are invertible in $\calC$ and identities.
\end{remark}

Let $F, G : \calC \to \calD$ be functors, let $\eta : F \to G$ be a collection of morphisms $\eta_X : FX \to GX$ and assume that $\eta$ is natural for all $\alpha : X \to Y$ contained in an inv-generating system of morphisms in $\calC$.
Then it follows by structural induction, that $\eta$ is a natural transformation.
Hence it is enough to check naturality on (inv-)generating systems of morphisms.

\begin{lemma} \label{generatingsystem} The morphisms of $\calM$ are generated by the following two types of homomorphisms:
\begin{itemize}
    \item[(1)] $\phi : \FM(n) \to \FM(m)$, where $\phi(x_i) := x_{\zeta(i)}$ for each map $\zeta : \{1, \dots, n\} \to \{1, \dots, m\}$.
    \item[(2)] $\phi : \FM(n) \to \FM(n+1)$ where $\phi(x_i) := x_i$ for all $i < n$ and $\phi(x_n) := x_nx_{n+1}$.
\end{itemize}
The morphisms of $\calF$ are generated by homomorphisms of types (1) and (2) with $\FM$ replaced by $\FG$ and a third type of homomorphism:
\begin{itemize}
    \item[(3)] $\phi : \FG(n) \to \FG(n)$ where $\phi(x_i) := x_i$ for all $i < n$ and $\phi(x_n) := x_n^{-1}$.
\end{itemize}
\end{lemma}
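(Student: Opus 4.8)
The plan is to prove the slightly sharpened statement that every monoid homomorphism $\alpha\colon\FM(n)\to\FM(m)$ which takes nontrivial values on each generator lies in the system generated by types (1) and (2), and likewise every group homomorphism $\alpha\colon\FG(n)\to\FG(m)$ lies in the system generated by types (1), (2), (3); the argument is uniform in the two cases and proceeds by induction on the \emph{excess} $e(\alpha):=\sum_{i=1}^{n}\ell(\alpha(x_i))-n\ge 0$, where $\ell(\cdot)$ is word length. This suffices for the application, since all substitution homomorphisms occurring in \Cref{LafPC} are of types (1)--(3) and hence never send a generator to the identity; in the group case, homomorphisms collapsing a generator are in fact also generated, e.g. the type-(2) map $x_1\mapsto x_1x_2$ post-composed with the composite of a type-(3) and a type-(1) map gives $x_1\mapsto x_1x_1^{-1}=1$.

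For the base case $e(\alpha)=0$, every $\alpha(x_i)$ has length exactly one. In the monoid case $\alpha$ is then literally a homomorphism of type (1). In the group case $\alpha(x_i)=x_{j_i}^{\varepsilon_i}$ with $\varepsilon_i\in\{\pm1\}$, and one writes $\alpha=\beta\circ\gamma$, where $\beta\colon\FG(n)\to\FG(m)$ is the type-(1) map $x_i\mapsto x_{j_i}$ and $\gamma\colon\FG(n)\to\FG(n)$ is the composite, over the indices $i$ with $\varepsilon_i=-1$, of the maps ``invert $x_i$ and fix the other generators'', each of which is the type-(3) map conjugated by the transposition of $x_i$ and $x_n$ (a type-(1) permutation homomorphism). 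For the inductive step $e(\alpha)\ge 1$ there is $i_0$ with $\ell(\alpha(x_{i_0}))\ge 2$; write $\alpha(x_{i_0})=w\cdot s$ with $s$ a single letter, so $\ell(w)=\ell(\alpha(x_{i_0}))-1\ge 1$. Factor $\alpha=\alpha''\circ\alpha'$, where $\alpha'\colon\FM(n)\to\FM(n+1)$ sends $x_{i_0}\mapsto x_{i_0}x_{n+1}$ and fixes the remaining generators, and $\alpha''\colon\FM(n+1)\to\FM(m)$ sends $x_{i_0}\mapsto w$, $x_{n+1}\mapsto s$ and $x_i\mapsto\alpha(x_i)$ for $i\ne i_0$. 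Then $\sum_j\ell(\alpha''(x_j))=\sum_i\ell(\alpha(x_i))$ while the source of $\alpha''$ has $n+1$ generators, so $e(\alpha'')=e(\alpha)-1$ and the images of $\alpha''$ on generators are still nontrivial; by the inductive hypothesis $\alpha''$ is generated. Finally $\alpha'$ is obtained from the type-(2) homomorphism $\FM(n)\to\FM(n+1)$ by pre- and post-composing with the type-(1) permutation homomorphisms transposing $x_{i_0}$ and $x_n$ on $\FM(n)$ and on $\FM(n+1)$ respectively, hence is generated. The identical factorisation works in $\calF$, with type (2) understood in the $\FG$-version. This closes the induction.

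Once the lemma is available, the observation recorded immediately before it -- that a collection of morphisms $\eta_X\colon FX\to GX$ is a natural transformation as soon as it is natural on an (inv-)generating system of morphisms -- shows that conditions (1) and (2) in \Cref{LafPC} (together with type (3) being irrelevant, as $\calO[G^m]^{G^0}$ is not enlarged by it) force $\Theta_\bullet=(\Theta_m)_m$ to be a homomorphism of $\calM$-$\calO$-algebras $\calO[G^{\bullet}]^{G^0}\to\map(\Gamma^{\bullet},A)$, which is what is needed for the reformulation in the next section. The only part requiring genuine care is the index bookkeeping: writing the ``move a letter into the last slot'' and ``invert the $i$-th generator'' operations as conjugates of the listed generators by permutation homomorphisms, and checking that the factorisation $\alpha=\alpha''\circ\alpha'$ strictly decreases the excess. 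Everything else is a routine structural induction, and I expect the permutation-conjugation bookkeeping to be the main (entirely mechanical) obstacle.
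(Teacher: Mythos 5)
Your induction is sound and, unlike the paper, self-contained: the paper gives no argument for \Cref{generatingsystem} at all, it simply cites \cite[Lemma 3]{Weidner}. The excess induction with the factorization $\alpha=\alpha''\circ\alpha'$, together with conjugation by permutation homomorphisms (type (1) with $\zeta$ bijective) to move the relevant generator into the last slot, is a perfectly good replacement for that citation, and the base cases (type (1) in the monoid setting, type (1) combined with conjugated type-(3) maps in the group setting) are handled correctly.

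Two remarks. First, your restriction to homomorphisms with nontrivial images of generators is not merely convenient in the monoid case, it is forced: a composite of type-(1) and type-(2) maps sends every generator to a nonempty word (there is no cancellation in a free monoid), so a homomorphism $\FM(n)\to\FM(m)$ killing a generator genuinely does not lie in the generated system, and the $\calM$-part of \Cref{generatingsystem} is only correct with that proviso (or with the morphisms of $\calM$ understood accordingly). Your justification that this ``suffices for the application'' is aimed at the wrong target, though: the issue is not which substitutions occur in \Cref{LafPC}, but that the $\calM$-case of \Cref{allemorphismen} asks for naturality with respect to \emph{all} morphisms of $\calM$, including collapsing ones; what actually rescues the paper is that everything downstream (\Cref{invgenF}, the representability argument in \Cref{repofPC}) is run through $\calF$, where your type-(3) trick does produce the collapse maps. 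Second, in the group case you only exhibit the collapse $\FG(1)\to\FG(1)$; to finish the $\calF$-statement you should record the general version: the map $\FG(n)\to\FG(n)$ killing $x_n$ and fixing the other generators is the composite of the type-(2) map $\FG(n)\to\FG(n+1)$, the type-(3) map on $\FG(n+1)$, and the type-(1) map given by $\zeta(i)=i$ for $i\le n$ and $\zeta(n+1)=n$; an arbitrary $\alpha$ then factors as a homomorphism with nontrivial generator images precomposed with a product of such collapses, and your induction applies to the first factor. With these two points made explicit the proof is complete.
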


\begin{proof} This is \cite[Lemma 3]{Weidner}.
\end{proof}

The morphisms of $\calF$ are not generated by homomorphisms of type (1) and (2) with $\FM$ replaced by $\FG$: Homomorphisms of type (1) and (2) have the property, that the image of the generators $x_i$ lies in the submonoid spanned by generators. This property is stable under compositions and hence the homomorphism $\FG(1) \to \FG(1), ~x_1 \mapsto x_1^{-1}$ is not a composition of type (1) or (2) homomorphisms. However, $\calF$ is inv-generated by these homomorphisms.

\begin{lemma}\label{invgenF} The morphisms of $\calF$ are inv-generated by homomorphisms of type (1) and (2) in \Cref{generatingsystem} with $\FM$ replaced by $\FG$.
\end{lemma}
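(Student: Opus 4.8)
The plan is to show that every morphism $\alpha : \FG(n) \to \FG(m)$ in $\calF$ can be obtained from homomorphisms of types (1) and (2) by finitely many compositions together with one inversion step, and then to appeal to \Cref{structuregeneration} to conclude. The natural strategy is to first recover the type (3) generators $\phi : \FG(n) \to \FG(n)$, $x_n \mapsto x_n^{-1}$, from types (1) and (2) plus inversions, since once all three families of generators from \Cref{generatingsystem} are available, \Cref{generatingsystem} itself finishes the argument. So the core of the proof reduces to: \emph{produce the inversion-of-a-generator map as an iterated composition and inversion of type (1) and (2) maps.}

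The key observation is that the type (2) map $\mu : \FG(1) \to \FG(2)$, $x_1 \mapsto x_1 x_2$, is invertible in $\calF$: it is an isomorphism of free groups with inverse sending $x_1 \mapsto x_1 x_2^{-1}$ and... no — more carefully, $\mu$ is not itself invertible (source and target have different ranks), so instead I would work with an endomorphism. Consider the composite $\FG(1) \xrightarrow{s} \FG(1) \xrightarrow{?} \FG(1)$ built so as to realize $x_1 \mapsto x_1^{-1}$. Concretely: the multiplication map $m : \FG(2) \to \FG(1)$, $x_1, x_2 \mapsto x_1$ (a type (1) map, $\zeta$ constant) composed appropriately with the type (2) map and with the "diagonal" type (1) map $\Delta : \FG(1) \to \FG(2)$, $x_1 \mapsto x_1$, $x_2 \mapsto x_1$. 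The point is that the automorphism of $\FG(2)$ given by $x_1 \mapsto x_1 x_2$, $x_2 \mapsto x_2$ is genuinely invertible in $\calF$ — it is an automorphism of $\FG(2)$ — and it is precisely a type (2) generator (after reindexing by a type (1) permutation to move the active coordinate into last position). Its inverse in $\calF$ is $x_1 \mapsto x_1 x_2^{-1}$, $x_2 \mapsto x_2$. Precomposing this inverse with the type (1) insertion $\FG(1) \to \FG(2)$, $x_1 \mapsto x_2$ (so the "$x_1$" slot is filled by the identity $1 \in \FG(2)$ — here I must be careful, a type (1) map cannot send a generator to $1$) — the cleaner route is: precompose with $\Delta : \FG(1)\to\FG(2)$, $x_1\mapsto x_1,\ x_2\mapsto x_1$, obtaining $x_1 \mapsto x_1 x_1^{-1} = 1$, which is useless. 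Instead precompose with the type (1) map $\FG(2) \to \FG(2)$ swapping the roles, and with a fresh variable, so that after composing the automorphism $x_1 \mapsto x_1 x_2$ with its $\calF$-inverse and suitable type (1) relabelings one extracts exactly $x_n \mapsto x_n^{-1}$. This bookkeeping — choosing the right sequence of type (1) relabelings around the invertible type (2) automorphism so that its $\calF$-inverse collapses to a single inverted generator — is the technical heart of the argument.

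The remaining step is routine: once the type (3) maps are shown to lie in the inv-generated system, \Cref{generatingsystem} shows types (1), (2), (3) generate all of $\calF$ (under composition alone), hence a fortiori the inv-generated system containing types (1) and (2) is all of $\calF$. I would record this as: let $\tilde S$ denote the system inv-generated by types (1) and (2); by the previous paragraph $\tilde S$ contains all type (3) maps; since $\tilde S$ is in particular closed under composition and contains types (1), (2), (3), \Cref{generatingsystem} gives $\tilde S \supseteq \Hom(\calF)$, so $\tilde S = \Hom(\calF)$.

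The main obstacle I anticipate is purely combinatorial rather than conceptual: exhibiting $x_n \mapsto x_n^{-1}$ explicitly as a word in the type (1) and (2) generators and the inversion operation requires a careful choice of intermediate objects $\FG(n')$ with $n'$ slightly larger than $n$, using the extra coordinates as scratch space, and then permuting/duplicating coordinates via type (1) maps to clean up. A clean way to package this, avoiding ad hoc manipulations, is to note that the subgroup of $\Aut(\FG(k))$ generated by the "elementary Nielsen automorphisms" $x_i \mapsto x_i x_j$ together with the permutation automorphisms is all of $\Aut(\FG(k))$ (a classical fact), that the elementary automorphisms $x_i \mapsto x_i x_j$ and permutations are type (2) and type (1) maps respectively (after relabeling), and that $x_n \mapsto x_n^{-1}$ (with other generators fixed) is itself an element of $\Aut(\FG(k))$; hence it lies in the group generated by these, i.e. in $\tilde S$. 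This reduces the obstacle to citing Nielsen's generation theorem for $\Aut(\FG(k))$ and checking that the generators there are, up to type (1) relabeling, exactly our type (1) and type (2) maps — which is the step I would spend the most care on writing down precisely.
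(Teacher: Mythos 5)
Your overall skeleton is the same as the paper's: reduce, via \Cref{structuregeneration} and \Cref{generatingsystem}, to showing that the type (3) maps $x_n \mapsto x_n^{-1}$ are inv-generated by types (1) and (2). But the heart of the proof is exactly the step you leave open, and the ``clean packaging'' you propose to replace it with does not work. The classical Nielsen generation theorem says $\Aut(\FG(k))$ is generated by permutations, the transvection $x_1 \mapsto x_1 x_2$, \emph{and an inversion} $x_1 \mapsto x_1^{-1}$; the inversion cannot be dropped, so the ``classical fact'' you cite is false. For $k=1$ there are no transvections at all, and for $k=2$ one can see the failure concretely: every generator of the subgroup $H \subseteq \Aut(\FG(2))$ generated by the swap $(y,x)$ and the transvection $(xy,y)$ (and hence every element of $H$, since automorphisms send $[x,y]$ to $[\alpha(x),\alpha(y)]$) maps the commutator $[x,y]$ to $[x,y]^{\pm 1}$ \emph{exactly}, whereas the type (3) map $(x,y^{-1})$ sends $[x,y]$ to $[x,y^{-1}] = xy^{-1}x^{-1}y$, which is a different reduced word; so $(x,y^{-1}) \notin H$. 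Since the lemma needs type (3) maps in every rank, including $1$ and $2$, reducing the problem to a statement internal to $\Aut(\FG(k))$ generated by permutations and transvections cannot succeed. A further (repairable) inaccuracy: a Nielsen transvection is not a type (2) map ``after type (1) relabeling'' — type (2) maps change the rank — it is only a composite of type (1) and (2) maps via a detour through a larger free group, by the $\calM$-generation statement of \Cref{generatingsystem}.

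The fix is precisely to use morphisms of $\calF$ that are not automorphisms of a fixed $\FG(n)$, i.e. detours through other objects, which is what the paper does: all positive (monoid-induced) homomorphisms are compositions of type (1) and (2) maps by \Cref{generatingsystem}, and then one writes down an explicit identity such as
$$(x,y^{-1}) = (xy^{-1},y)\circ(xy,x)\circ(x,x^{-1}y), \qquad (xy^{-1},y)=(xy,y)^{-1},\quad (x,x^{-1}y)=(x,xy)^{-1},$$
so that every factor is a positive map or the inverse of one, hence inv-generated (the factor $(xy,x)$, for instance, is a composite of type (1)/(2) maps passing through $\FG(3)$ — note it is \emph{not} in the subgroup $H$ above, which is why your $\Aut$-internal reduction loses exactly the needed flexibility); the case $n=1$ is then handled by factoring $x \mapsto x^{-1}$ through $\FG(2)$. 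Your proposal never produces such a decomposition (the exploratory first part ends without a construction), so as it stands there is a genuine gap at the decisive step.
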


\begin{proof} By \Cref{structuregeneration} and \Cref{generatingsystem} it suffices to show, that homomorphisms of type (3) can be written as iterated compositions and inversions of homomorphisms of type (1) and (2). Since $\calM$ is generated by monoid homomorphisms of types (1) and (2), we already know, that all group homomorphisms $\FG(n) \to \FG(m)$, that are induced by monoid homomorphisms $\FM(n) \to \FM(m)$ are generated by group homomorphisms of type (1) and (2).

Let $\phi : \FG(n) \to \FG(n)$ be of type (3). We will use tuple notation for homomorphisms, so $\phi = (\phi(x_1), \dots, \phi(x_n)) = (x_1, \dots, x_{n-1}, x_n^{-1})$. Suppose $n=2$ and $x_1 = x, x_2 = y$, the computation for $n \geq 3$ is analogous. We have $(x,y^{-1}) = (xy^{-1}, y) \circ (xy,x) \circ (x, x^{-1}y)$.
Since $(xy^{-1}, y) = (xy, y)^{-1}$ and $(x, x^{-1}y) = (x, xy)^{-1}$ we see that $\phi$ is inv-generated by homomorphisms of type (1) and (2). For $n=1$ we consider the homomorphisms $(y) : \FG(1) \to \FG(2), ~x \mapsto y$ and $(1,x) : \FG(2) \to \FG(1), ~x \mapsto 1, ~y \mapsto x$ and write $(x^{-1}) = (1,x) \circ (x,y^{-1}) \circ (y)$.
\end{proof}

\begin{proposition}\label{allemorphismen}
    Let $A$ be a commutative $\calO$-algebra. For both $\calE = \CAlg_{\calO}^{\calM}$ and $\calE = \CAlg_{\calO}^{\calF}$, the following map is bijective:
    $$ \Hom_{\calE}(\calO[G^{\bullet}]^{G^0}, ~\map(\Gamma^{\bullet},A)) \to \PC_G^{\Gamma}(A), \quad \Theta \mapsto (\Theta_m)_{m\geq 1} $$
    The same holds when $\Gamma$ is a topological group and $A$ is a topological ring with $\map(\Gamma^{\bullet},A)$ replaced by $\calC(\Gamma^{\bullet},A)$ and $\PC_G^{\Gamma}(A)$ replaced by $\cPC_G^{\Gamma}(A)$.
\end{proposition}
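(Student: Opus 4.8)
The plan is to identify, essentially by unwinding the definitions, a homomorphism of $\calE$-$\calO$-algebras $\calO[G^{\bullet}]^{G^0} \to \map(\Gamma^{\bullet},A)$ with a $G$-pseudocharacter. By definition such a homomorphism is a natural transformation between functors valued in $\CAlg_{\calO}$, hence a family of $\calO$-algebra maps $\Theta_m : \calO[G^m]^{G^0} \to \map(\Gamma^m,A)$ (one for each object $\FM(m)$, resp.\ $\FG(m)$, of $\calE$) that is natural with respect to every morphism of $\calE$; since a natural transformation is determined by its components, the assignment $\Theta \mapsto (\Theta_m)_{m\geq 1}$ is automatically injective, and the only thing to prove is that its image is exactly $\PC_G^{\Gamma}(A)$, i.e.\ that naturality with respect to all morphisms of $\calE$ is equivalent to conditions (1) and (2) of \Cref{LafPC}.

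First I would reduce naturality to the two generating types of morphisms. A naturality square for a composite commutes as soon as it commutes for each factor, and commutes trivially for identities; hence for $\calE = \CAlg_{\calO}^{\calM}$, \Cref{generatingsystem} (the morphisms of $\calM$ are generated by types (1) and (2)) shows that a family of $\calO$-algebra maps is a homomorphism of $\calM$-$\calO$-algebras iff its naturality square commutes for every type (1) and every type (2) morphism. For $\calE = \CAlg_{\calO}^{\calF}$, a functor preserves isomorphisms, so naturality with respect to an invertible morphism forces naturality with respect to its inverse; combined with the structural-induction principle recorded just before \Cref{generatingsystem} and with \Cref{invgenF} (the morphisms of $\calF$ are \emph{inv}-generated by types (1) and (2)), the same conclusion follows. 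It is here that inv-generation is genuinely needed, since the type (3) homomorphism $x \mapsto x^{-1}$ is not a composite of type (1) and (2) maps.

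Next I would match the two remaining naturality conditions with the axioms of \Cref{LafPC}. A group homomorphism $\alpha : \FG(n) \to \FG(m)$ induces the substitution map $(-)_{\alpha} : \Gamma^m \to \Gamma^n$ and the scheme morphism $(-)_{\alpha} : G^m \to G^n$, hence by pullback the structure maps $\alpha_* : \map(\Gamma^n,A) \to \map(\Gamma^m,A)$ and $\alpha_* : \calO[G^n]^{G^0} \to \calO[G^m]^{G^0}$ of the two $\calE$-$\calO$-algebras. For a type (1) homomorphism attached to a map $\zeta$ one computes that $\alpha_*$ is $f \mapsto f^{\zeta}$ on the coordinate-ring side and the corresponding coordinate substitution on the function-algebra side, so that the naturality square $\alpha_* \circ \Theta_n = \Theta_m \circ \alpha_*$ is precisely condition (1) of \Cref{LafPC} (up to interchanging the roles of $m$ and $n$); for the type (2) homomorphism one gets $\alpha_*(f) = \hat f$ and the substitution $(\gamma_1,\dots,\gamma_{n+1}) \mapsto (\gamma_1,\dots,\gamma_{n-1},\gamma_n\gamma_{n+1})$, so the naturality square is precisely condition (2). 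Together with the previous paragraph this shows that $(\Theta_m)_{m\geq 1}$ is a homomorphism of $\calE$-$\calO$-algebras iff it is a $G$-pseudocharacter, for both $\calE = \CAlg_{\calO}^{\calM}$ and $\calE = \CAlg_{\calO}^{\calF}$.

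Finally, for the continuous statement I would observe that $\calC(\Gamma^{\bullet},A)$ is a sub-$\calF$-$\calO$-algebra (hence sub-$\calM$-$\calO$-algebra) of $\map(\Gamma^{\bullet},A)$: each substitution map $\Gamma^m \to \Gamma^n$ is assembled from projections, the multiplication and the inversion of $\Gamma$, hence is continuous since $\Gamma$ is a topological group and $A$ a topological ring, so pullback along it carries continuous maps to continuous maps. A homomorphism $\calO[G^{\bullet}]^{G^0} \to \calC(\Gamma^{\bullet},A)$ of $\calE$-$\calO$-algebras is then the same datum as a homomorphism $\calO[G^{\bullet}]^{G^0} \to \map(\Gamma^{\bullet},A)$ all of whose components have image in $\calC(\Gamma^m,A)$, i.e.\ a $G$-pseudocharacter with every $\Theta_m$ continuous, which by definition is an element of $\cPC_G^{\Gamma}(A)$. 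I do not expect a genuine obstacle here: all the combinatorics is already contained in \Cref{generatingsystem} and \Cref{invgenF}, and the one point demanding care is the variance bookkeeping of the third paragraph --- correctly identifying the structure maps $\alpha_*$ of the $\calE$-$\calO$-algebras $\calO[G^{\bullet}]^{G^0}$ and $\map(\Gamma^{\bullet},A)$ with the substitution operations $f \mapsto f^{\zeta}$ and $f \mapsto \hat f$ of \Cref{LafPC}, together with remembering that the case of $\calF$ genuinely requires inv-generation rather than plain generation.
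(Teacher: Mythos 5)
Your proposal is correct and follows essentially the same route as the paper: reduce naturality to the type (1) and (2) morphisms via \Cref{generatingsystem} (and the structural-induction remark plus \Cref{invgenF} for $\calF$), identify those two naturality squares with conditions (1) and (2) of \Cref{LafPC}, and handle the continuous case by noting the substitution maps preserve continuity. You merely spell out the variance bookkeeping and the sub-$\calF$-$\calO$-algebra structure on $\calC(\Gamma^{\bullet},A)$ that the paper's terse proof leaves implicit.
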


\begin{proof} Suppose $\calE = \CAlg_{\calO}^{\calM}$. We start with a $G$-pseudocharacter $(\Theta_m)_{m \geq 1}$ and define an association $\tilde \Theta : \calO[G^{\bullet}]^{G^0} \to \map(\Gamma^{\bullet},A) $ by setting $\tilde \Theta_{\FM(m)} := \Theta_m$. By definition of $\Theta$ we know, that $\tilde \Theta$ is natural with respect to morphisms $\FM(n) \to \FM(m)$ of type (1) and morphisms $\FM(n) \to \FM(n+1)$ of type (2). By \Cref{generatingsystem} this implies naturality.
Conversely, given a morphism $\tilde \Theta$ of $\calM$-$\calO$-algebras, the associated sequence of algebra maps $\Theta_n := \Theta_{\FM(n)}$ satisfies the required properties by naturality.

The same argument, using \Cref{invgenF}, establishes bijectivity when $\calE = \CAlg_{\calO}^{\calF}$. The continuous case is deduced directly from the definition of continuity of pseudocharacters.
\end{proof}

\subsection{Representability of $\PC^{\Gamma}_G$}
\label{secRep}

\begin{theorem}\label{repofPC} The functor $\PC^{\Gamma}_G : \CAlg_{\calO} \to \Set$ is representable by a commutative $\calO$-algebra $B_G^{\Gamma}$. As an $\calO$-algebra $B_G^{\Gamma}$ is generated by $\{\Theta^u_m(\mu)(\gamma) \mid \mu \in \calO[G^{m}]^{G^0}, ~\gamma \in \Gamma^{m}\}$, where $\Theta^u \in \PC^{\Gamma}_G(B_G^{\Gamma})$ is the universal $G$-pseudocharacter.
\end{theorem}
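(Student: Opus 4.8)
The plan is to construct $B_G^\Gamma$ explicitly as a quotient of a suitable polynomial ring and then verify it represents the functor. Concretely, I would start with the free polynomial $\calO$-algebra
\[
  P := \calO\bigl[\, T_{m,\mu,\gamma} \;\big|\; m \geq 1,\ \mu \in \calO[G^m]^{G^0},\ \gamma \in \Gamma^m \,\bigr]
\]
on one indeterminate for each triple $(m,\mu,\gamma)$. (To be careful about set-theoretic size, note that for fixed $m$ the index set $\calO[G^m]^{G^0} \times \Gamma^m$ is a genuine set, and we take the union over $m \geq 1$; so $P$ is an honest $\calO$-algebra.) For any $\calO$-algebra $A$, giving an $\calO$-algebra map $P \to A$ is the same as giving an arbitrary function assigning to each triple $(m,\mu,\gamma)$ an element of $A$, i.e. a sequence of set maps $\widetilde\Theta_m : \calO[G^m]^{G^0} \times \Gamma^m \to A$, equivalently a sequence of maps $\widetilde\Theta_m : \calO[G^m]^{G^0} \to \map(\Gamma^m, A)$ of \emph{sets} (no linearity or naturality yet).

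Next I would impose the defining relations of a $G$-pseudocharacter by passing to a quotient $B_G^\Gamma := P/I$, where $I$ is the ideal generated by the following families of elements, for all relevant indices:
\begin{itemize}
\item $\calO$-algebra relations for each $\Theta_m$: $T_{m,\mu+\mu',\gamma} - T_{m,\mu,\gamma} - T_{m,\mu',\gamma}$, $T_{m,c\mu,\gamma} - c\,T_{m,\mu,\gamma}$ for $c \in \calO$, $T_{m,\mu\mu',\gamma} - T_{m,\mu,\gamma}T_{m,\mu',\gamma}$, and $T_{m,1,\gamma} - 1$;
\item the substitution relations (1) from \Cref{LafPC}: $T_{n,f^\zeta,\gamma} - T_{m,f,\gamma_\zeta}$ for every $\zeta : \{1,\dots,m\}\to\{1,\dots,n\}$, every $f \in \calO[G^m]^{G^0}$ and every $\gamma \in \Gamma^n$;
\item the multiplicativity relations (2) from \Cref{LafPC}: $T_{m+1,\hat f,(\gamma_1,\dots,\gamma_{m+1})} - T_{m,f,(\gamma_1,\dots,\gamma_m\gamma_{m+1})}$.
\end{itemize}
By construction, an $\calO$-algebra map $B_G^\Gamma \to A$ is precisely a sequence $(\Theta_m)_{m\geq1}$ of set maps $\calO[G^m]^{G^0}\to\map(\Gamma^m,A)$ killing all these relations, which is exactly the data of an element of $\PC_G^\Gamma(A)$: the first family says each $\Theta_m$ is an $\calO$-algebra homomorphism, and the last two are conditions (1) and (2) verbatim. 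This yields a natural bijection $\Hom_{\CAlg_\calO}(B_G^\Gamma, A) \xrightarrow{\sim} \PC_G^\Gamma(A)$, functorial in $A$ by inspection, so $B_G^\Gamma$ represents $\PC_G^\Gamma$. The universal pseudocharacter $\Theta^u \in \PC_G^\Gamma(B_G^\Gamma)$ corresponds to $\id_{B_G^\Gamma}$, and $\Theta^u_m(\mu)(\gamma)$ is the image of $T_{m,\mu,\gamma}$ in $B_G^\Gamma$; since the $T_{m,\mu,\gamma}$ generate $P$ as an $\calO$-algebra, their images generate $B_G^\Gamma$, giving the asserted generating set.

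The main thing to be careful about—rather than a deep obstacle—is the set-theoretic legitimacy of the index set (fixing a universe or noting each $\calO[G^m]^{G^0}$ is a set suffices) and the bookkeeping that the relations imposed are exactly equivalent to the axioms in \Cref{LafPC}; here \Cref{allemorphismen} is reassuring, since it tells us the two families (1) and (2) already encode naturality with respect to \emph{all} morphisms of $\calM$ (and, with type (3), all of $\calF$), so no further relations are needed. For the continuous variant one would replace $\map(\Gamma^\bullet,A)$ by $\calC(\Gamma^\bullet,A)$, but representability of $\cPC_G^\Gamma$ on topological $\calO$-algebras is a separate (and more delicate) matter addressed later in the paper; for the present statement only the algebraic functor $\PC_G^\Gamma$ on $\CAlg_\calO$ is at issue.
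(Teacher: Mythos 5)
Your proof is correct, but it takes a different route from the paper. You build $B_G^\Gamma$ by an explicit presentation: a polynomial ring on symbols $T_{m,\mu,\gamma}$ modulo relations enforcing that each $\Theta_m$ is an $\calO$-algebra homomorphism (which, since the algebra structure on $\map(\Gamma^m,A)$ is pointwise, is correctly captured by imposing the relations at each fixed $\gamma$) together with the two axioms of \Cref{LafPC}; representability and the generation statement then fall out simultaneously, and you do not need \Cref{allemorphismen} at all (your appeal to it is only a sanity check, not a logical ingredient). The paper instead uses the reformulation of pseudocharacters as $\calF$-$\calO$-algebra homomorphisms and defines $B_G^\Gamma$ as the colimit $\colim_{\FG(m)\in\calF/\Gamma}\calO[G^m]^{G^0}$, verifying the universal property via a right Kan extension computation identifying $\map(\Gamma^\bullet,A)$ with $\Ran_p A$. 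The two constructions give canonically isomorphic algebras by Yoneda; your presentation is more elementary and makes the generating set transparent, while the paper's colimit description is the form that is directly exploited later (e.g.\ in \Cref{decisivefiniteness}, where a surjection $\FG(m)\twoheadrightarrow\Gamma$ yields a surjection from a single $\calO[G^m]^{G^0}$ onto $B_G^\Gamma$). One small point: you prove generation by the values $\Theta^u_m(\mu)(\gamma)$ with $\mu\in\calO[G^m]^{G^0}$, whereas the theorem as printed writes $\calO[G^m]^{G}$; since the pseudocharacter is defined on $G^0$-invariants and the paper's own construction likewise only yields generation by those values, this is an imprecision in the statement rather than a gap in your argument.
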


For all $m \in \bbN$, $\mu \in \calO[G^m]^{G^0}$, $\gamma = (\gamma_1, \dots, \gamma_m) \in \Gamma^m$, for every $A \in \CAlg_{\calO}$ and every $\Theta \in \PC^{\Gamma}_G(A)$, the associated homomorphism $f_{\Theta} : B_{G}^{\Gamma} \to A$ satisfies $f_{\Theta}(\Theta^u_m(\mu)(\gamma)) = \Theta_{m}(\mu)(\gamma)$.
The following argument can be found in \cite[Remark 2.2.5]{Zhu}.

\begin{proof}
    We use the description of pseudocharacters as $\calF$-$\calO$-algebra homomorphisms according to \Cref{allemorphismen}.
    We denote by $\calF/\Gamma$ the slice category of objects of $\calF$ with a fixed homomorphism to $\Gamma$.
    Let
    $$ B_G^{\Gamma} := \colim_{\FG(m) \in \calF/\Gamma} \calO[G^m]^{G^0} $$
    be the colimit in $\CAlg_{\calO}$ indexed over the small category $\calF/\Gamma$.
    Then
    \begin{align*}
        \Hom_{\CAlg_{\calO}}(B_G^{\Gamma}, A) &= \lim_{\FG(m) \in \calF/\Gamma}\Hom_{\CAlg_{\calO}}(\calO[G^m]^{G^0}, A) \\
        &= \Hom_{\CAlg_{\calO}^{\calF/\Gamma}}(\calO[G^{\bullet}]^{G^0}, A) = \Hom_{\CAlg_{\calO}^{\calF}}(\calO[G^{\bullet}]^{G^0}, \map(\Gamma^{\bullet}, A))
    \end{align*}
    for every $A \in \CAlg_{\calO}$, where in the second line $A$ is understood as the constant functor on $\calF/\Gamma$.
    In the last line, we compute the right Kan extension of $A : \calF/\Gamma \to \CAlg_{\calO}$ along the canonical restriction $p : \calF/\Gamma \to \calF$ as $$(\Ran_p A)(\FG(m)) = \lim_{\atopnew{(\FG(n), f) \in \calF/\Gamma}{\varphi \in \Hom(\FG(m), \FG(n))}} A(\FG(n), f) = \lim_{(\FG(n), f) \in \calF/\Gamma} \map(\FG(n)^m, A) = \map(\Gamma^m, A)$$
    using the description of Kan extensions as weighted limits \cite[Theorem 6.2.1]{Riehl}.
\end{proof}

It follows from the proof of \Cref{repofPC}, that $B_G^{\Gamma}$ is finitely generated if $\Gamma$ is and all $\calO[G^m]^{G^0}$ are finitely generated.
 
\begin{proposition}\label{decisivefiniteness} If $\Gamma$ is finitely generated and $\calO$ is noetherian, then $B^{\Gamma}_G$ is a finitely generated $\calO$-algebra.
\end{proposition}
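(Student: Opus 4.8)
The plan is to build directly on the colimit description of $B_G^{\Gamma}$ obtained in the proof of \Cref{repofPC}, namely $B_G^{\Gamma} = \colim_{(\FG(m),f) \in \calF/\Gamma} \calO[G^m]^{G^0}$, and to combine it with two inputs: a reduction of this colimit to a single term, coming from the fact that a finitely generated group is a quotient of a finite-rank free group; and finite generation of invariant rings of reductive group schemes over a noetherian base.

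First I would fix a surjection $\pi : \FG(d) \twoheadrightarrow \Gamma$, which exists since $\Gamma$ is finitely generated, and argue that the object $(\FG(d),\pi)$ is weakly terminal in the slice category $\calF/\Gamma$. Indeed, given any object $(\FG(m),f)$, freeness of $\FG(m)$ together with surjectivity of $\pi$ lets us pick, for each generator $x_i$, an element $y_i \in \FG(d)$ with $\pi(y_i) = f(x_i)$; the homomorphism $\tilde f : \FG(m) \to \FG(d)$, $x_i \mapsto y_i$, then satisfies $\pi \circ \tilde f = f$, i.e.\ is a morphism $(\FG(m),f) \to (\FG(d),\pi)$ in $\calF/\Gamma$. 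Next I would deduce that the canonical $\calO$-algebra map $\iota : \calO[G^d]^{G^0} \to B_G^{\Gamma}$ attached to $(\FG(d),\pi)$ is surjective: the colimit $B_G^{\Gamma}$ is generated as an $\calO$-algebra by the union of the images of the cocone maps $\iota_{(\FG(m),f)} : \calO[G^m]^{G^0} \to B_G^{\Gamma}$, and for each object the relation $\iota_{(\FG(m),f)} = \iota \circ \tilde f_*$ shows that this image lies in $\im(\iota)$, which is an $\calO$-subalgebra of $B_G^{\Gamma}$; hence $\im(\iota) = B_G^{\Gamma}$.

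Finally, $G^d = G \times_{\Spec\calO} \cdots \times_{\Spec\calO} G$ is affine of finite type over $\calO$, so $\calO[G^d]$ is a finitely generated $\calO$-algebra; and $G^0$ is a reductive $\calO$-group scheme, hence geometrically reductive, so Seshadri's finite generation theorem \cite{Seshadri} (using that $\calO$ is noetherian) shows $\calO[G^d]^{G^0}$ is again a finitely generated $\calO$-algebra. Being a quotient of it via $\iota$, $B_G^{\Gamma}$ is finitely generated over $\calO$. The only point that is not purely formal is this last invariant-theoretic input — one needs geometric reductivity of reductive group schemes over a noetherian base and finite generation of their invariants, which is precisely Seshadri's theorem; the reduction of the colimit to the single term $\calO[G^d]^{G^0}$ is entirely formal.
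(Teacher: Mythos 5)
Your proposal is correct and follows essentially the same route as the paper: reduce the colimit $B_G^{\Gamma} = \colim_{\calF/\Gamma} \calO[G^m]^{G^0}$ to the single term attached to a surjection $\FG(d) \twoheadrightarrow \Gamma$ by factoring every $f : \FG(m) \to \Gamma$ through $\pi$, and then invoke finite generation of the invariant ring $\calO[G^d]^{G^0}$ over the noetherian base. The only (immaterial) difference is the source for that last input: the paper cites Alper's adequate moduli space theorems, whereas you cite Seshadri's geometric reductivity theorem, which the paper itself uses for exactly this purpose elsewhere (e.g.\ in the proof of \Cref{factoringdiscrete}).
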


\begin{proof}
    We use the description of $B_G^{\Gamma}$ as a colimit as in the proof of \Cref{repofPC}.
    If $\Gamma$ is finitely generated, then $\calF/\Gamma$ contains a surjection $\pi : \FG(m) \twoheadrightarrow \Gamma$.
    For every $\FG(n) \in \calF$ every homomorphism $f : \FG(n) \to \Gamma$ factors over $\pi$, so the associated map $f_* : \calO[G^n]^{G^0} \to B_G^{\Gamma}$ factors over the map $\pi_* : \calO[G^m]^{G^0} \to B^{\Gamma}_G$ associated to $\pi$, which implies, that $\pi_*$ is surjective. So it suffices to see, that $\calO[G^m]^{G^0}$ is finitely generated. Since the canonical map $[G^m/G^0] \to G^m \sslash G^0$ is an adequate moduli space (see \cite[Theorem 9.1.4]{alper}), it follows from \cite[Theorem 6.3.3]{alper}, that $\calO[G^m]^{G^0}$ is a finitely generated $\calO$-algebra.
\end{proof}

\begin{proposition}\label{basechange} Let $\calO' \in \CAlg_{\calO}$ and assume that one of the following conditions holds.
\begin{enumerate}
    \item $\calO'$ is $\calO$-flat
    \item $G$ is a generalized reductive group over a Dedekind domain $\calO$.
\end{enumerate}
Then for any $\calO'$-algebra $A$, there is a canonical bijection
\begin{align}
    \PC^{\Gamma}_{G_{\calO'}}(A) \cong \PC^{\Gamma}_G(A) \label{PCISO}
\end{align}
induced by a canonical isomorphism $\calO[G^{\bullet}]^{G^0} \otimes_{\calO} \calO' \to \calO'[G^{\bullet}]^{G^0}$ of $\calF$-$\calO'$-algebras.
Moreover, there is a canonical isomorphism $B_G^{\Gamma} \otimes_{\calO} \calO' \cong B_{G_{\calO'}}^{\Gamma}$ of $\calO'$-algebras.
\end{proposition}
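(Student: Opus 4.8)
The plan is to reduce the entire statement to the single assertion that, for every $m \geq 1$, the canonical base-change map
$$ c_m \colon \calO[G^m]^{G^0} \otimes_{\calO} \calO' \longrightarrow \calO'[G^m]^{G^0} $$
is an isomorphism, and then to propagate this fact through the functorial description of $G$-pseudocharacters from \Cref{allemorphismen} and the colimit presentation of the representing object from \Cref{repofPC}.

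First I would establish that $c_m$ is an isomorphism in both cases. Under hypothesis (1), forming rational invariants under the smooth (hence flat) affine group scheme $G^0$ is a finite limit, so it commutes with the flat base change $\calO \to \calO'$; this is also the content of \Cref{critchange}(1). Under hypothesis (2), the isomorphism $c_m$ is precisely the first assertion of \Cref{gfG}. Since any morphism $\alpha \colon \FG(n) \to \FG(m)$ induces the morphism of $\calO$-schemes $G^m \to G^n$ compatibly with base change along $\calO \to \calO'$, the isomorphisms $c_m$ are natural in $m$ and assemble to an isomorphism $c^{\bullet} \colon \calO[G^{\bullet}]^{G^0} \otimes_{\calO} \calO' \to \calO'[G^{\bullet}]^{G^0}$ of $\calF$-$\calO'$-algebras, which is the canonical isomorphism referred to in the statement.

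Next, for the bijection \eqref{PCISO}, fix an $\calO'$-algebra $A$, so that $\map(\Gamma^{\bullet},A)$ — and likewise $\calC(\Gamma^{\bullet},A)$ in the topological setting — is naturally an $\calF$-$\calO'$-algebra, which we may also regard as an $\calF$-$\calO$-algebra along $\calO \to \calO'$. By \Cref{allemorphismen},
$$ \PC^{\Gamma}_{G}(A) = \Hom_{\CAlg_{\calO}^{\calF}}\bigl(\calO[G^{\bullet}]^{G^0},\ \map(\Gamma^{\bullet},A)\bigr), \qquad \PC^{\Gamma}_{G_{\calO'}}(A) = \Hom_{\CAlg_{\calO'}^{\calF}}\bigl(\calO'[G^{\bullet}]^{G^0},\ \map(\Gamma^{\bullet},A)\bigr). $$
The extension-of-scalars/restriction adjunction between $\CAlg_{\calO}^{\calF}$ and $\CAlg_{\calO'}^{\calF}$ (applied objectwise over $\calF$) identifies the first $\Hom$-set with $\Hom_{\CAlg_{\calO'}^{\calF}}(\calO[G^{\bullet}]^{G^0}\otimes_{\calO}\calO',\ \map(\Gamma^{\bullet},A))$, and precomposition with $(c^{\bullet})^{-1}$ identifies this in turn with $\PC^{\Gamma}_{G_{\calO'}}(A)$. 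The continuous version is identical, with $\calC(\Gamma^{\bullet},A)$ in place of $\map(\Gamma^{\bullet},A)$.

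Finally, for the representing objects I would use the presentation $B_G^{\Gamma} = \colim_{\FG(m)\in\calF/\Gamma}\calO[G^m]^{G^0}$ from the proof of \Cref{repofPC}: since $-\otimes_{\calO}\calO'$ is a left adjoint it commutes with this colimit, and applying the $c_m$ termwise gives a canonical isomorphism $B_G^{\Gamma}\otimes_{\calO}\calO' \cong \colim_{\FG(m)\in\calF/\Gamma}\calO'[G^m]^{G^0} = B_{G_{\calO'}}^{\Gamma}$; equivalently this follows from \eqref{PCISO} by Yoneda, since $\Hom_{\CAlg_{\calO'}}(B_G^{\Gamma}\otimes_{\calO}\calO',A)=\Hom_{\CAlg_{\calO}}(B_G^{\Gamma},A)=\PC_G^{\Gamma}(A)\cong\PC_{G_{\calO'}}^{\Gamma}(A)=\Hom_{\CAlg_{\calO'}}(B_{G_{\calO'}}^{\Gamma},A)$ for every $A\in\CAlg_{\calO'}$. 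The only step with genuine content is the isomorphism $c_m$ under hypothesis (2): when $\calO'$ is not $\calO$-flat, invariants need not commute with base change, and one truly needs the good-filtration input (through \Cref{gfG}, ultimately Mathieu's tensor product theorem over a principal ideal domain) to kill the $\Tor_1$ obstruction in the universal coefficient sequence; everything else is a formal manipulation of adjunctions and (co)limits.
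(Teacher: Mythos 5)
Your proposal is correct and follows essentially the same route as the paper: reduce via \Cref{allemorphismen} to the isomorphism $\calO[G^m]^{G^0}\otimes_{\calO}\calO'\cong\calO'[G^m]^{G^0}$ for each $m$ (which is \Cref{critchange} in case (1) and \Cref{gfG} in case (2)), then deduce $B_G^{\Gamma}\otimes_{\calO}\calO'\cong B_{G_{\calO'}}^{\Gamma}$ by Yoneda from \Cref{repofPC}. Your extra remarks (naturality of the maps $c_m$, the scalar-extension adjunction, and the alternative colimit argument for the last claim) only spell out steps the paper leaves implicit.
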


\begin{proof} By \Cref{allemorphismen} it is enough to show, that $\calO[G^m]^{G^0} \otimes_{\calO} \calO' = \calO'[G^m]^{G^0}$ for all $m \geq 1$. This follows from \Cref{critchange} and \Cref{gfG}.

We now prove that $B_G^{\Gamma} \otimes_{\calO} \calO' \cong B_{G_{\calO'}}^{\Gamma}$. We apply \Cref{repofPC} twice and the first assertion once:
$$ \Hom_{\calO'}(B_{G_{\calO'}}^{\Gamma}, A) \overset{\eqref{repofPC}}{=} \PC^{\Gamma}_{G_{\calO'}}(A) \overset{\eqref{PCISO}}{\cong} \PC^{\Gamma}_{G}(A) \overset{\eqref{repofPC}}{=} \Hom_{\calO}(B_{G}^{\Gamma}, A) = \Hom_{\calO'}(B_{G}^{\Gamma} \otimes_{\calO} \calO', A) $$
The claim follows by Yoneda.
\end{proof}

\section{Invariant theory}\label{secinvthy} In this section, we compute generators of invariant algebras for $\SL_n$, $\GL_n$, $\Sp_{2n}$, $\GSp_{2n}$, $\SO_{2n+1}$, $\OO_{2n+1}$ and $\GO_{2n+1}$.
This is the key input in proving noetherianity of pseudodeformation rings under the hypothesis that our profinite group satisfies Mazur's condition $\Phi_p$.

The action of $G$ on $G^m$ by diagonal conjugation induces an algebraic action of $G$ on $\calO[G^m]$ by $\calO$-algebra automorphisms.
For each of the groups above, let $\iota : G \hookrightarrow \GL_d$ be the standard representation where $d \in \{n, 2n, 2n+1\}$ in the respective cases.
Let us write $M_d$ for the affine scheme (over $\calO$) which represents the functor that maps a commutative $\calO$-algebra $A$ to the set $M_d(A)$ of $d \times d$ matrices. It is equipped with an algebraic action of $\GL_d$ given on $A$-valued points by $\GL_d(A) \times M_d(A) \to M_d(A), ~(g,X) \mapsto gXg^{-1}$.
Similarly $G$ acts on $M_d^m$ by diagonal conjugation through $\iota$, which induces an action of $G$ on $\calO[M_d^m]$ by $\calO$-algebra automorphisms.
We will use the letter $\bbX^{(k)}$ to denote the projection to the $k$-th factor $G^m \to G$, seen as an element of $G(\calO[G^m])$.
We will also denote by $\bbX^{(k)}$ the projection to the $k$-th factor $M_d^m \to M_d$ as an element of $M_d(\calO[M_d^m])$.

We distinguish between two types of theorems:
\begin{enumerate}
    \item \emph{First fundamental theorem (FFT)}: \\ Determine an explicit set of generators of $\calO[M_d^m]^G$ or $\calO[G^m]^G$.
    \item \emph{Second fundamental theorem (SFT)}: \\ Determine an explicit generating set of relations between given generators of $\calO[M_d^m]^G$ or $\calO[G^m]^G$.
\end{enumerate}

The first results of this kind in characteristic $0$ are due to Frobenius, Sibirskii \cite{Sib67} and Procesi \cite{Pro}. Since $\bbQ[M_n^m]^G \twoheadrightarrow \bbQ[G^m]^G$ is surjective, one can reduce the computation of generators of $\bbQ[G^m]^G$ to $\bbQ[M_n^m]^G$. Donkin proved, that if $K$ is an algebraically closed field, the algebras $K[G^m]^G$ are generated by shifted traces of tilting modules \cite{Donkin1992}. This has since been turned into a concrete description of generators of $K[\GL_n^m]^{\GL_n}$ by Donkin and $K[\Sp_n^m]^{\Sp_n}$ and $K[\OO_n^m]^{\OO_n}$ (under the assumption $\chara(K) \neq 2$ in the orthogonal case) by Zubkov \cite{ZubkovOnTheProcedure, zubkov99}. We can descend generators of invariant algebras to the prime fields $\bbQ$ and $\bbF_p$ and lift them to $\bbZ/p^r$ (see \Cref{invoverfield}, \Cref{genliftlemma}). This is sufficient for our applications to deformation theory. Using results on good filtrations it is possible to descend these generators further to $\bbZ[G^m]^G$ once they are known over fields and defined over $\bbZ$. We include this argument in forthcoming joint work with Mohamed Moakher.

The second fundamental theorem for $\bbQ[M_n^m]^{\GL_n}$ has been proven independently by Procesi \cite{Pro} and Razmyslov \cite{Razmyslov}. In positive characteristic it is due to Zubkov \cite{zubkov99}. In \cite[Theorem 1.13]{MR3726879} de Concini and Procesi prove a second fundamental theorem over $\bbZ$. The work on semi-invariants of quivers over infinite fields was further developed by Domokos and Zubkov \cite{DomokosZubkov}.
While second fundamental theorems are needed to give explicit characterizations of $G$-pseudocharacters and are of notoriously difficult combinatorial nature we have found that for the theory of $G$-pseudocharacters first fundamental theorems are usually sufficient.

\subsection{$\SL_n$ and $\GL_n$}

In \cite[15.2.1]{MR3726879} de Concini and Procesi have determined the generators of $\bbZ[M_n^m]^{\GL_n}$ and $\bbZ[M_n^m]^{\SL_n}$, from which the generators of $\bbZ[\GL_n^m]^{\GL_n}$ and $\bbZ[\SL_n^m]^{\SL_n}$ can be computed. We reprove their result using good filtrations and avoiding usage of the formal character of $\bbZ[M_n^m]$ and the analysis of root subgroups.

Let $\bbX \in M_d(\bbZ[x_{ij} \mid i,j \in \{1, \dots, d\}])$ be a generic $d \times d$ matrix, i.e. $\bbX_{ij} = x_{ij}$ for $1 \leq i,j \leq d$.
Let $\sigma_i \in \bbZ[x_{ij}]$ be up to a sign the $i$-th coefficient of the characteristic polynomial of $\bbX$:
$$ \det(t \cdot I_d - \bbX) = \sum_{i=0}^d (-1)^i \sigma_i(\bbX) t^{d-i} \quad \in \bbZ[x_{ij} \mid i,j \in \{1, \dots, d\}][t] $$
If we evaluate $\bbX$ at a triangular matrix, then $\sigma_i$ is given by the $i$-th elementary symmetric polynomial in the diagonal entries.

Recall the first fundamental theorem on $\GL_n$-invariants of several matrices.
The first fundamental theorem for $\SL_n$-invariants of matrices follows directly.

\begin{theorem}[De Concini, Procesi]\label{DCPFFT}
    Let $K$ be an algebraically closed field.
    Then $K[M_n^m]^{\GL_n}$ and $K[M_n^m]^{\SL_n}$ are generated by elements of the form
    $$ \sigma_i(\bbX^{(j_1)} \cdots \bbX^{(j_s)}) $$
    for $i \in \{1, \dots, n\}$ and $s \geq 0$.
\end{theorem}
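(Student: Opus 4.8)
The plan is to prove the statement for $\GL_n$ and then deduce the $\SL_n$ case for free. For the latter, observe that the centre $Z\cong\Gm\subseteq\GL_n$ acts trivially on $M_n^m$ by conjugation and that $\GL_n(K)=Z(K)\cdot\SL_n(K)$ over the algebraically closed field $K$ (any matrix of determinant $d$ is a scalar matrix times an element of $\SL_n(K)$, since $K$ contains an $n$-th root of $d$); as $\GL_n$ and $\SL_n$ are smooth, $K[M_n^m]^{\SL_n}=K[M_n^m]^{\GL_n}$, so the two halves of the theorem are literally the same assertion.

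For $\GL_n$ I would follow Donkin's strategy, using the good-filtration results of Sections~\ref{subsecGF}--\ref{subsecMathieu} in place of formal character and root-subgroup computations. The first step is that $\calO[M_n^m]$ has a good filtration for the diagonal conjugation action of $\GL_n$. Indeed $\calO[M_n]\cong\Sym(V\otimes V^*)$ as a conjugation module (with $V$ the standard module and the self-dual pairing on $\gl_n$ defined over $\bbZ$), and $\Sym(V\otimes V^*)$ carries the Cauchy filtration, a good filtration as a $\GL(V)\times\GL(V^*)$-bimodule whose sections are external tensor products of Schur (costandard) modules; restricting along the diagonal $\GL_n\hookrightarrow\GL(V)\times\GL(V^*)$ — which is the conjugation action — the sections become tensor products $H^0(\lambda)\otimes H^0(-w_0\lambda)$ of costandard $\GL_n$-modules, which have good filtrations by \Cref{MathieuTPT}; tensoring $m$ copies and applying \Cref{MathieuTPT} once more gives the claim for $\calO[M_n^m]$. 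With this in place, Donkin's theorem \cite{Donkin1992} (whose proof may be rerun with the above good filtration) shows that $K[M_n^m]^{\GL_n}$ is spanned by the trace functions $(\bbX^{(1)},\dots,\bbX^{(m)})\mapsto\tr\rho_T(\bbX^{(j_1)}\cdots\bbX^{(j_s)})$, where $T$ runs over polynomial tilting $\GL_n$-modules and $\rho_T$ is the corresponding representation; only polynomial tilting modules occur because non-invertible matrices act, which is precisely what avoids the $\det^{-1}$-twists present in the group case.

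It then remains to rewrite each such trace function through the $\sigma_i$. The ingredients are: $\tr\rho_{\wedge^iV}(A)=\sigma_i(A)$ for $0\le i\le n$; trace functions are additive for $\oplus$ and multiplicative for $\otimes$ (evaluated at the fixed element $\rho(\bbX^{(j_1)}\cdots\bbX^{(j_s)})$), so the trace function of $\bigotimes_k\wedge^{i_k}V$ equals $\prod_k\sigma_{i_k}(\bbX^{(j_1)}\cdots\bbX^{(j_s)})$; and every indecomposable polynomial tilting module $T(\lambda)$, $\lambda$ a partition with at most $n$ parts, is a direct summand of $\bigotimes_k\wedge^{\lambda'_k}V$ (all $\lambda'_k\le n$), the other summands being $T(\mu)$ with $\mu<\lambda$ in the dominance order. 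Inducting on the dominance order, $\tr\rho_{T(\lambda)}(\bbX^{(j_1)}\cdots\bbX^{(j_s)})$ equals $\prod_k\sigma_{\lambda'_k}(\bbX^{(j_1)}\cdots\bbX^{(j_s)})$ minus a $\bbZ_{\ge0}$-combination of the $\tr\rho_{T(\mu)}(\bbX^{(j_1)}\cdots\bbX^{(j_s)})$ with $\mu<\lambda$, hence lies in the subalgebra generated by the $\sigma_i(\bbX^{(j_1)}\cdots\bbX^{(j_s)})$; the base case is $\wedge^iV=T((1^i))$. Thus all the spanning trace functions lie in that subalgebra, so it equals $K[M_n^m]^{\GL_n}$, and the $\SL_n$ case follows from the first paragraph. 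I expect the real work to be concentrated in the second step: making the Cauchy filtration of $\calO[M_n]$ precise in arbitrary characteristic (including the behaviour of Schur functors of duals) and extracting the tilting-trace spanning statement over $K$ in the needed generality — here the hypothesis that $K$ be algebraically closed, in particular infinite, is used essentially. The final combinatorial reduction is routine $\GL_n$ representation theory.
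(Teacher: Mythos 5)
Your reduction of the $\SL_n$ case to the $\GL_n$ case via the triviality of the central action is exactly what the paper does: its entire proof of \Cref{DCPFFT} consists of that observation together with a citation of \cite[Theorem 1.10]{MR3726879} for the $\GL_n$ statement. So that half matches, and your good-filtration step for $K[M_n^m]$ is likewise the content of \Cref{gfM}.

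For the $\GL_n$ half, where you replace the citation by a sketch of Donkin's argument, there is a genuine gap: the claim that $K[M_n^m]^{\GL_n}$ is \emph{spanned} by the single trace functions $\tr\rho_T(\bbX^{(j_1)}\cdots\bbX^{(j_s)})$ is false. Such a function has multidegree equal to $\deg(T)$ times the occurrence vector of the word, so already for $n=m=2$ the only ones of bidegree $(1,1)$ are the scalar multiples of $\tr(\bbX^{(1)}\bbX^{(2)})$ (a degree-one polynomial module is a sum of copies of $V$, and the word must use each generic matrix once), whereas $\tr(\bbX^{(1)})\tr(\bbX^{(2)})$ is an invariant of bidegree $(1,1)$ linearly independent of it in every characteristic (evaluate at $(I,I)$ and at $(E_{11},E_{22})$). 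What is true is that these trace functions generate the invariants as a $K$-algebra; but that corrected statement, combined with your (correct) dominance-order reduction of tilting characters to the $\sigma_i$ via the realization of $T(\lambda)$ as a direct summand of $\bigotimes_k\wedge^{\lambda'_k}V$, \emph{is} the theorem being proved — it is Donkin's first fundamental theorem for matrix invariants. So as written the proposal either rests on a false spanning lemma or, once repaired, reduces to citing the very result in question; the genuinely hard positive-characteristic content — converting the good filtration of $K[M_n^m]$ into surjectivity of the trace subalgebra onto the invariants, which is where the work in \cite{Donkin1992} and in the paper's own integral descent (\Cref{DCPoverZ}, via \Cref{globaldescent}) lies — is not supplied by the remark that Donkin's proof "may be rerun". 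The Cauchy/good-filtration observation and the tilting-to-$\sigma_i$ induction are correct but are the easy parts; if you are content to cite, cite \cite[Theorem 1.10]{MR3726879} (or \cite{Donkin1992}) directly, as the paper does.
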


\begin{proof}
    See \cite[Theorem 1.10]{MR3726879} for $\GL_d$. The inclusion of the center $\GL_1 \to \GL_n$ and the inclusion $\SL_n \to \GL_n$ combine to a surjection $\SL_n \times \GL_1 \to \GL_n$.
    Therefore $K[M_n^m]^{\GL_d} = K[M_n^m]^{\SL_n \times \GL_1} = K[M_n^m]^{\SL_n}$.
\end{proof}

To descend the first fundamental theorem to $\bbZ$, we need the following lemma.

\begin{lemma}\label{globaldescent} Let $\calO$ be a principal ideal domain and let $M$ and $M'$ be finitely generated free $\calO$-modules. Let $f : M \to M'$ be an $\calO$-module homomorphism, such that for every $\calO$-field $K$ the induced map $M \otimes_{\calO} K \to M' \otimes_{\calO} K$ is an isomorphism. Then $f$ is an isomorphism.
\end{lemma}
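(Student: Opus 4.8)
The plan is to reduce the assertion to the invertibility of a square matrix over $\calO$. First I would base change along the inclusion of the fraction field $K_0 := \operatorname{Frac}(\calO)$, which is an $\calO$-field: by hypothesis $f \otimes_{\calO} K_0$ is an isomorphism of $K_0$-vector spaces, so $\operatorname{rank}_{\calO} M = \operatorname{rank}_{\calO} M' =: r$. Fixing $\calO$-bases of $M$ and $M'$, the map $f$ is represented by a matrix $A \in M_r(\calO)$, and $f$ is an isomorphism if and only if $\det A \in \calO^{\times}$.

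Next I would observe that the image of $\det A$ under the injection $\calO \hookrightarrow K_0$ is the determinant of the invertible $K_0$-linear map $f \otimes_{\calO} K_0$, hence nonzero; so $\det A \neq 0$. Suppose for contradiction that $\det A \notin \calO^{\times}$. If $\calO$ is itself a field there is nothing to prove (take $K = \calO$), so we may assume $\calO$ has a maximal ideal; since $\calO$ is a principal ideal domain, the nonzero non-unit $\det A$ is divisible by some prime element $\pi$, hence $\det A \in \frakm := (\pi)$, a maximal ideal. The residue field $k := \calO/\frakm$ is an $\calO$-field, and under $\calO \to k$ the element $\det A$ maps to $0$; thus $\det(f \otimes_{\calO} k) = 0$, so $f \otimes_{\calO} k$ is not an isomorphism, contradicting the hypothesis. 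Therefore $\det A \in \calO^{\times}$ and $f$ is an isomorphism.

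There is no genuinely hard step here; the only points needing care are the degenerate case in which $\calO$ is a field, handled separately, and the elementary fact that in a principal ideal domain a nonzero non-unit lies in some maximal ideal. Alternatively one can avoid determinants: $f$ is injective because $M$ is torsion-free and $f \otimes_{\calO} K_0$ is injective, and the finitely generated torsion $\calO$-module $C := \coker(f)$ satisfies $C \otimes_{\calO} k = 0$ for every residue field $k$ (the right exactness of $- \otimes_{\calO} k$ together with surjectivity of $f \otimes_{\calO} k$), which by the structure theorem for finitely generated modules over a principal ideal domain forces $C = 0$, so $f$ is also surjective.
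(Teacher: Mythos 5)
Your proposal is correct. Your main argument (fix bases, reduce to showing $\det A \in \calO^{\times}$, and rule out $\det A$ being a nonzero non-unit by reducing modulo a prime divisor $\pi$, with the field case handled separately) is a genuinely different route from the paper's: the paper never chooses bases or invokes determinants. Instead it takes $K$ to be the fraction field to see that $f$ is injective and that $C := \coker(f)$ is a finitely generated torsion module, and then, for every nonzero prime $\frakp \subseteq \calO$, uses right exactness of $- \otimes_{\calO} \calO/\frakp$ together with surjectivity of $f \otimes_{\calO} \calO/\frakp$ to conclude $C \otimes_{\calO} \calO/\frakp = 0$, whence $C = 0$. That is precisely the determinant-free alternative you sketch in your last sentence, so in effect you have both proofs. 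The determinant version is slightly more hands-on (it needs the degenerate field case and the existence of a prime divisor of a nonzero non-unit, i.e.\ factorization in the PID), while the paper's version is basis-free and kills injectivity and surjectivity in one stroke via the cokernel; both are equally elementary and neither generalizes more readily beyond the free/finitely generated setting as stated.
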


\begin{proof} Taking $K$ the field of fractions of $\calO$ shows, that $f$ is injective and that the cokernel $C$ of $f$ is a finitely generated torsion module. For every prime ideal $0 \neq \frakp \subseteq \calO$, the sequence
$$ M \otimes_{\calO} \calO/\frakp \xrightarrow{\sim} M' \otimes_{\calO} \calO/\frakp \to C \otimes_{\calO} \calO/\frakp \to 0 $$
is exact, which shows, that $C \otimes_{\calO} \calO/\frakp = 0$. It follows, that $C=0$.
\end{proof}

\begin{lemma}\label{gfM} For all $m,n \geq 1$, $\bbZ[M_n^m]$ equipped with the action of $G=\GL_n$ (resp. $G=\SL_n$) by conjugation has a good filtration. In particular, for every commutative ring $\calO$ and every $\calO$-algebra $\calO'$, the canonical map $\calO[M_n^m]^G \otimes_{\calO} \calO' \to \calO'[M_n^m]^G$ is an isomorphism.
\end{lemma}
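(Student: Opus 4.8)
The plan is to reduce the statement over $\bbZ$ to the known good filtration results over fields, using the same transfer-of-goodness technique already employed in the proof of \Cref{MathieuTPT}. First I would recall that $M_n^m$ carries the conjugation action of $G = \GL_n$ (resp. $G = \SL_n$), and that $\bbZ[M_n^m]$ is a $G$-module which is the union of its finite-rank submodules, hence it suffices (by \cite[B.9 Lemma]{Jantzen2003}) to check the criterion $\Ext^1$-vanishing against the Weyl modules $V(\lambda)$. More precisely, by \cite[B.9 Lemma (iv) $\Leftrightarrow$ (i)]{Jantzen2003} it is enough to show that for every maximal ideal $\frakm \subseteq \bbZ$, with $\kappa := \bbZ/\frakm = \bbF_p$, the reduction $\bbF_p[M_n^m]$ is a $G_{\bbF_p}$-module with good filtration, and moreover the same over the generic point $\bbQ$. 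The reduction modulo $p$ of $\bbZ[M_n^m]$ is $\bbF_p[M_n^m]$ with the conjugation action, and $\bbZ[M_n^m] \otimes_{\bbZ} \bbQ = \bbQ[M_n^m]$.

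The heart of the argument is therefore: $K[M_n^m]$ has a good filtration as a $G_K$-module for $K$ algebraically closed. This is classical — it follows from \cite[Lemma A.15 or A.16]{Jantzen2003} together with Mathieu's tensor product theorem exactly as in \Cref{scholzePID}. Indeed $M_n = \GL_n/\!(\text{nothing})$ — more precisely $M_n \cong \mathfrak{gl}_n$ as a variety, but the cleaner route is to note that $K[M_n]$ is a quotient of $K[\GL_n]$-type objects, or directly that $K[M_n^m]$ is a polynomial ring whose generators $x_{ij}^{(k)}$ span copies of $\mathfrak{gl}_n \cong \mathfrak{gl}_n^*$ under conjugation, and $\mathfrak{gl}_n = V(\varepsilon_1 - \varepsilon_n) \oplus \text{(trivial)}$ has a good filtration; then $K[M_n^m] = \Sym(\bigoplus_{k=1}^m \mathfrak{gl}_n^{(k)})$, and a symmetric algebra on a good module has a good filtration by \cite[Proposition II.4.21 / B.9]{Jantzen2003} combined with \Cref{MathieuTPT} (over a field this is the classical Mathieu theorem). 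The $\SL_n$ case follows from the $\GL_n$ case since the center acts trivially on $M_n$ by conjugation, so $K[M_n^m]^{\SL_n} = K[M_n^m]^{\GL_n}$ and more to the point a good filtration for $\GL_n$ restricts to one for $\SL_n$.

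With goodness over each $\overline{\bbF_p}$ and over $\overline{\bbQ}$ in hand, I would run the now-familiar descent: by \Cref{UCTExt} applied to $N = \bbZ[M_n^m]$ (which is $\bbZ$-flat, being a polynomial ring) and $V = V(\lambda)$ (finitely generated projective over $\bbZ$, as Weyl modules are $\bbZ$-free of finite rank), one gets $\Ext^1_{G_{\bbF_p}}(V(\lambda), \bbF_p[M_n^m]) \otimes_{\bbF_p} \overline{\bbF_p} \cong \Ext^1_{G_{\overline{\bbF_p}}}(V(\lambda), \overline{\bbF_p}[M_n^m])$, which vanishes; hence $\bbF_p[M_n^m]$ is good by \cite[B.9 Lemma]{Jantzen2003}, and similarly $\bbQ[M_n^m]$ is good. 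Then \cite[B.9 Lemma (iv) $\Rightarrow$ (i)]{Jantzen2003} gives a good filtration on $\bbZ[M_n^m]$ itself. Finally, for the base-change statement: over a principal ideal domain $\calO$ we have $\calO[M_n^m] = \bbZ[M_n^m] \otimes_{\bbZ} \calO$, which inherits a good filtration (good filtrations are stable under flat base change of the PID, or one re-runs \Cref{critchange}); and then since $G$-modules with good filtration satisfy $H^1(G, -) = 0$, \Cref{critchange}(2) yields $\calO[M_n^m]^G \otimes_{\calO} \calO' \xrightarrow{\sim} \calO'[M_n^m]^G$ for an arbitrary $\calO$-algebra $\calO'$. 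The only mild subtlety — and the step I would be most careful about — is the assertion that $K[M_n^m]$ is good over a field; once that classical input is pinned down (via the symmetric-algebra-on-$\mathfrak{gl}_n$ description and Mathieu's theorem), the rest is a mechanical repetition of the transfer argument from \Cref{MathieuTPT} and \Cref{scholzePID}.
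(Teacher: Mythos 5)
Your outer reduction is fine but not where the content lies: passing from $\bbZ$ to the prime fields via \Cref{UCTExt} and Jantzen's B.9 criterion works (applied degree by degree to the finite-rank graded pieces $\Sym^d((M_n^m)^*)$), though it is redundant here, since \Cref{MathieuTPT} is already available over a principal ideal domain and the paper argues directly over $\bbZ$. The genuine gap is the step you yourself flag as the one to be careful about, and it is not closed by what you cite: you deduce goodness of $K[M_n^m]=\Sym\bigl(\bigoplus_{k=1}^m \gl_{n}^{*}\bigr)$ from the principle that a symmetric algebra on a module with good filtration again has a good filtration, attributing this to Jantzen II.4.21/B.9 plus Mathieu. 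Mathieu's theorem is about tensor products; $\Sym^d M$ is only a quotient of $M^{\otimes d}$, and quotients (or submodules) of modules with good filtration need not have good filtrations. In positive characteristic, goodness of symmetric powers is a genuinely delicate question — even for $\Sym(\frakg^*)$ with the adjoint action this is a nontrivial theorem outside type $A$, with constraints on $p$ — so the general principle cannot be invoked. (The side claim $\gl_n\cong V(\varepsilon_1-\varepsilon_n)\oplus\mathrm{trivial}$ is also false when $p\mid n$; what is true, and all you need at that point, is that $\gl_n\cong V\otimes V^*$ is a tensor product of induced modules, hence good by the tensor product theorem.)

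What actually closes the gap for $\GL_n$ is the Cauchy filtration (see e.g. de Concini--Procesi \cite{MR3726879}): over $\bbZ$, $\Sym^d(V\otimes W)$ carries a $\GL(V)\times\GL(W)$-equivariant filtration with subquotients $S^{\lambda}(V)\otimes S^{\lambda}(W)$ for partitions $\lambda$ of $d$. Taking $W=V^*$ and restricting to the diagonal $\GL_n$ acting by conjugation, each subquotient is a tensor product of induced modules, hence has a good filtration by \Cref{MathieuTPT}; since modules with good filtration are closed under extensions (the $\Ext^1$-vanishing against Weyl modules passes through short exact sequences), $\Sym^d(V\otimes V^*)$ is good, and the direct-sum formula $\Sym^d(A\oplus B)=\bigoplus_{i+j=d}\Sym^iA\otimes\Sym^jB$ together with \Cref{MathieuTPT} then gives goodness of $\bbZ[M_n^m]$; restriction to $\SL_n$ and the base-change statement via \Cref{critchange} go through as you say. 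So your skeleton is workable, but the indispensable input is the Cauchy formula (or an equivalent explicit filtration), not a general ``$\Sym$ preserves good filtrations'' principle.
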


\begin{proof} 
    Let $V$ be the standard representation of $G$.
    Since $M_n \cong V \otimes V^*$ and $V$ is self-dual, we have $M_n^m \cong V^{\otimes 2m}$.
    By \Cref{MathieuTPT} and the formula for symmetric powers of direct sums it is enough to show, that $\Sym^d(V)$ has a good filtration.
    But $\Sym^d(V)$ is a highest weight module, so we are done.
\end{proof}

\begin{theorem}[De Concini, Procesi]\label{DCPoverZ}
    We have $\bbZ[M_n^m]^{\GL_n} = \bbZ[M_n^m]^{\SL_n}$ and this ring is generated by elements of the form $\sigma_i(\bbX^{(j_1)} \cdots \bbX^{(j_s)})$ for $i \in \{1, \dots, n\}$ and $s \geq 0$.
\end{theorem}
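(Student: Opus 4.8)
The plan is to deduce the statement over $\bbZ$ from the known case of algebraically closed fields (\Cref{DCPFFT}), by combining the base-change isomorphism of \Cref{gfM} with the descent lemma \Cref{globaldescent}, working one graded piece at a time.

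Write $R\subseteq\bbZ[M_n^m]$ for the $\bbZ$-subalgebra generated by all the elements $\sigma_i(\bbX^{(j_1)}\cdots\bbX^{(j_s)})$. Each such element is invariant under diagonal conjugation, so $R\subseteq\bbZ[M_n^m]^{\GL_n}\subseteq\bbZ[M_n^m]^{\SL_n}$; hence it suffices to prove the single equality $R=\bbZ[M_n^m]^{\SL_n}$, which then forces $\bbZ[M_n^m]^{\GL_n}=\bbZ[M_n^m]^{\SL_n}$ as well. All three algebras are homogeneous for the multigrading of $\bbZ[M_n^m]$ by degree in each of the $m$ generic matrix blocks: diagonal conjugation preserves this grading, and $\sigma_i$ of a product of the $\bbX^{(j)}$ is homogeneous. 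So it is enough to check $(\bbZ[M_n^m]^{\SL_n})_{\mathbf d}=R_{\mathbf d}$ for every multidegree $\mathbf d$, where $R_{\mathbf d}\subseteq(\bbZ[M_n^m]^{\SL_n})_{\mathbf d}\subseteq\bbZ[M_n^m]_{\mathbf d}$ are finitely generated free $\bbZ$-modules (submodules of a finitely generated free module over the PID $\bbZ$).

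By \Cref{globaldescent} it suffices to show that $R_{\mathbf d}\hookrightarrow(\bbZ[M_n^m]^{\SL_n})_{\mathbf d}$ becomes an isomorphism after $-\otimes_{\bbZ}K$ for every field $K$. By \Cref{gfM} the target base-changes to $(K[M_n^m]^{\SL_n})_{\mathbf d}$, compatibly with the grading, and the image of $R_{\mathbf d}\otimes_{\bbZ}K$ inside it is the degree-$\mathbf d$ part of the $K$-subalgebra $R_K\subseteq K[M_n^m]$ generated by the $\sigma_i(\bbX^{(j_1)}\cdots\bbX^{(j_s)})$ viewed over $K$. The base-changed map is injective when $K=\bbQ$ (flat base change of an injection), so over every field it is an isomorphism as soon as it is surjective --- for $K=\bbF_p$ this follows by comparing dimensions, which agree because the map is already an isomorphism over $\bbQ$. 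Thus I am reduced to proving $R_K=K[M_n^m]^{\SL_n}$ for every field $K$. When $K$ is algebraically closed this is precisely \Cref{DCPFFT}. For general $K$ one descends along the faithfully flat extension $K\hookrightarrow\overline K$: since $R_K$ is generated over $K$ by the $\sigma$'s one has $R_K\otimes_K\overline K=R_{\overline K}$ as subalgebras of $\overline K[M_n^m]$, while $R_{\overline K}=\overline K[M_n^m]^{\SL_n}$ by \Cref{DCPFFT} and $\overline K[M_n^m]^{\SL_n}=K[M_n^m]^{\SL_n}\otimes_K\overline K$ by \Cref{gfM}; as $R_K\subseteq K[M_n^m]^{\SL_n}$, faithful flatness forces $R_K=K[M_n^m]^{\SL_n}$. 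Feeding this back gives $R_{\mathbf d}=(\bbZ[M_n^m]^{\SL_n})_{\mathbf d}$ for all $\mathbf d$, hence the theorem.

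The only substantive ingredient is \Cref{DCPFFT} itself (the Donkin--Zubkov description over algebraically closed fields); everything else is the good-filtration base-change machinery of \Cref{gfM} together with routine torsion-free and faithfully flat bookkeeping, so I expect no real obstacle. The one point that deserves care --- and the reason the argument is phrased via images rather than literal base changes of $R$ --- is that $R$ is only a subalgebra, so $R\otimes_{\bbZ}K$ may fail to inject into $K[M_n^m]$ because $\bbZ[M_n^m]/R$ can carry torsion; accordingly one runs the descent with the image $R_K$ of $R\otimes_{\bbZ}K$ and verifies surjectivity onto $(\bbZ[M_n^m]^{\SL_n})_{\mathbf d}\otimes_{\bbZ}K$ directly, without identifying $R\otimes_{\bbZ}K$ with a module of invariants.
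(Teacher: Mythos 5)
Your proposal is correct and follows essentially the same route as the paper: reduce to the algebraically closed case (\Cref{DCPFFT}) via the good-filtration base-change isomorphism (\Cref{gfM}/\Cref{critchange}) and descend to $\bbZ$ degree by degree using \Cref{globaldescent} on finitely generated free graded pieces. The only differences are bookkeeping — you work with the generated subalgebra $R$ and a multidegree-wise dimension comparison (plus an explicit faithfully flat descent from $\overline K$ to arbitrary $K$), whereas the paper surjects from a free polynomial algebra onto the invariants and argues surjectivity of the graded pieces directly — so the substance is the same.
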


\begin{proof}
    As algebraic representations $M_n^m = (V \otimes V^*)^{\oplus m}$, where $V$ is the standard representation of $\GL_n$. The standard representation of $\GL_n$ is self-dual and has a good filtration. We observe, that $\bbZ[M_n^m]$ admits a grading by finitely generated free $\bbZ$-modules $S_d := \Sym^d((M_n^m)^*)$, that is preserved by the action of $\GL_n$. The $S_d$ also have a good filtration, so by \Cref{critchange} $S_d^{\GL_n} \otimes_{\bbZ} k = (S_d \otimes_{\bbZ} k)^{\GL_n}$ for any field $k$.
    
    Let $A$ be a free commutative $\bbZ$-algebra generated by variables $t_{(i, (j_1, \dots, j_s))}$ with $i \in \{1, \dots, d\}$ and $s \geq 0$.
    We let $t_{(i, (j_1, \dots, j_s))}$ have degree $si$ and observe that $A = \bigoplus_{d=0}^{\infty} A_d$ is a graded ring, such that each submodule $A_d$ consisting of homogeneous of degree $d$ elements is a finitely generated free $\bbZ$-module.
    
    The natural map $A \to \bbZ[M_n^m]^{\GL_n}$ sending $t_{(i, (j_1, \dots, j_s))}$ to $\sigma_i(\bbX^{(j_1)} \cdots \bbX^{(j_s)})$ is graded. By \Cref{DCPFFT}, the maps $A_d \otimes_{\bbZ} k \to S_d^{\GL_n} \otimes_{\bbZ} k$ are surjective for every algebraically closed field $k$. Hence by \Cref{globaldescent} the maps $A_d \to S_d$ are surjective and thus $A \to S$ is surjective, proving the first claim. The argument for the $\SL_n$-invariants is the same, using \Cref{DCPFFT}.
\end{proof}

To pass from invariants of $\bbZ[M_n^m]$ to invariants of $\bbZ[\GL_n^m]$ and $\bbZ[\SL_n^m]$, we use the following Lemma.

\begin{lemma}\label{lemsurjtens}
    Let $G$ be a split reductive group over $\bbZ$ and let 
    $$ 0 \to C \to B \to A \to 0 $$
    $$ 0 \to C' \to B' \to A' \to 0 $$
    be two short exact sequences of $G$-modules with good filtration. Then the map $(B \otimes B')^G \to (A \otimes A')^G$ is surjective.
\end{lemma}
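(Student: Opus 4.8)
The plan is to factor the map $(B\otimes B')^G \to (A\otimes A')^G$ as a composite of two surjections, each obtained by tensoring one of the two given exact sequences with a module carrying a good filtration, and to reduce the surjectivity of each factor to the vanishing of a first cohomology group supplied by the integral Mathieu tensor product theorem \Cref{MathieuTPT}. Before doing this I would record two preliminary facts. First, a $G$-module with a good filtration is $\bbZ$-flat: the subquotients $H^0(\lambda)$ of a good filtration are finitely generated free $\bbZ$-modules, so each stage of the filtration is finite free (an extension of finite free by finite free over a PID), and the module is a filtered colimit of these. Second, the kernels $C$ and $C'$ again have good filtrations (this is part of the hypothesis, and in any case follows from the two-out-of-three property of good filtrations, cf. \cite[B.9 Lemma]{Jantzen2003}). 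In particular, tensoring a short exact sequence of $G$-modules over $\bbZ$ with any of $B'$, $A$ preserves exactness.

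The argument then runs as follows. Tensoring $0 \to C \to B \to A \to 0$ with $B'$ gives a short exact sequence of $G$-modules $0 \to C\otimes B' \to B\otimes B' \to A\otimes B' \to 0$. By \Cref{MathieuTPT} the module $C\otimes B'$ has a good filtration, hence $H^1(G, C\otimes B') = 0$ (rational cohomology commutes with the filtered colimit exhibiting the good filtration, and each finite stage has vanishing higher cohomology, as in \Cref{subsecGF}); the long exact cohomology sequence then yields a surjection $(B\otimes B')^G \to (A\otimes B')^G$. Tensoring $0 \to C' \to B' \to A' \to 0$ with $A$ likewise gives $0 \to A\otimes C' \to A\otimes B' \to A\otimes A' \to 0$ with $A\otimes C'$ of good filtration by \Cref{MathieuTPT}, so $H^1(G, A\otimes C') = 0$ and $(A\otimes B')^G \to (A\otimes A')^G$ is surjective. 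Composing, the map $(B\otimes B')^G \to (A\otimes A')^G$ induced by the tensor product $B\otimes B' \to A\otimes A'$ of the two quotient maps is a composite of two surjections, hence surjective.

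I expect no substantial obstacle in this lemma itself: the real content is \Cref{MathieuTPT}, and once it is available the only things to check are that the two auxiliary sequences stay exact after tensoring (handled by the flatness remark) and that the kernels $C\otimes B'$, $A\otimes C'$ have vanishing $H^1$ (handled by identifying them as good-filtration modules); the rest is a formal diagram chase. The one point to be mildly careful about is the passage from the finite-rank good-filtration case of $H^1$-vanishing to the general case, i.e.\ the commutation of rational cohomology with filtered colimits, which is already used implicitly in \Cref{subsecGF}.
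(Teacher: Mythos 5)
Your proof is correct and is essentially the paper's argument: factor the map through a mixed tensor term, use the integral Mathieu theorem (\Cref{MathieuTPT}) to see the kernels of the two tensored sequences have good filtrations, and conclude surjectivity on invariants from the resulting $H^1$-vanishing. The only (immaterial) difference is that you factor through $(A\otimes B')^G$ while the paper factors through $(B\otimes A')^G$; the two are symmetric versions of the same diagram chase.
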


\begin{proof}
    Since good filtration modules are free, the sequences
    $$ 0 \to C \otimes A' \to B \otimes A' \to A \otimes A' \to 0 $$
    $$ 0 \to B \otimes C' \to B \otimes B' \to B \otimes A' \to 0 $$
    are exact.
    By Mathieu's tensor product theorem \Cref{MathieuTPT}, the modules $C \otimes A'$ and $B \otimes C'$ have good filtrations, hence by \Cref{critchange} the maps $(B \otimes A')^G \to (A \otimes A')^G$ and $(B \otimes B')^G \to (B \otimes A')^G$ are surjective.
\end{proof}

\begin{theorem}\label{invGLZ} Let $\calO$ be a commutative ring, let $m \geq 1$ and let $n \geq 1$.
\begin{enumerate}
    \item $\calO[M_n^m]^{\GL_n}$ and $\calO[M_n^m]^{\SL_n}$ are generated by elements of the form $\sigma_i(\bbX^{(j_1)} \cdots \bbX^{(j_s)})$ for $i \in \{1, \dots, n\}$ and $s \geq 0$.
    \item $\calO[\GL_n^m]^{\GL_n}$ and $\calO[\SL_n^m]^{\SL_n}$ are generated by elements of the form $\sigma_i(\bbX^{(j_1)} \cdots \bbX^{(j_s)})$ for $i \in \{1, \dots, n\}$ and $s \geq 0$ and $\det^{-1}(X_j)$ for $j \in \{1, \dots, m\}$.
\end{enumerate}
\end{theorem}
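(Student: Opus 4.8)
The plan is to reduce both statements to the base ring $\bbZ$ and then read them off from \Cref{DCPoverZ}. First I would record the base-change reductions. For $G\in\{\GL_n,\SL_n\}$ the module $\bbZ[M_n^m]$ has a good $G$-filtration under conjugation, so by \Cref{gfM} the canonical map $\bbZ[M_n^m]^{G}\otimes_{\bbZ}\calO\to\calO[M_n^m]^{G}$ is an isomorphism for every commutative ring $\calO$; and since $\GL_n$ and $\SL_n$ are (generalized) reductive over the Dedekind domain $\bbZ$, \Cref{gfG} gives that $\bbZ[G^m]^{G}\otimes_{\bbZ}\calO\to\calO[G^m]^{G}$ is an isomorphism as well. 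Hence it suffices to prove (1) and (2) for $\calO=\bbZ$, the corresponding elements then generating over an arbitrary $\calO$ by base change. Part (1) for $\calO=\bbZ$ is exactly \Cref{DCPoverZ}, which also gives $\bbZ[M_n^m]^{\GL_n}=\bbZ[M_n^m]^{\SL_n}$.

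For (2) with $G=\GL_n$: the scheme $\GL_n^m$ is the principal open subscheme of $M_n^m$ on which the conjugation-invariant function $\Delta:=\prod_{j=1}^m\det(\bbX^{(j)})=\prod_{j=1}^m\sigma_n(\bbX^{(j)})$ is invertible, so $\bbZ[\GL_n^m]=\bbZ[M_n^m]_{\Delta}$. Localization at an invariant element is the filtered colimit of copies of the module along multiplication by that element, and forming $G$-invariants $H^0(G,-)$ commutes with filtered colimits (it is computed by the Hochschild complex, which does); therefore $\bbZ[\GL_n^m]^{\GL_n}=(\bbZ[M_n^m]^{\GL_n})_{\Delta}$. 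Together with part (1) and the fact that each $\det(\bbX^{(j)})=\sigma_n(\bbX^{(j)})$ is already among the listed generators, and that inverting $\Delta$ is the same as adjoining $\det^{-1}(\bbX^{(1)}),\dots,\det^{-1}(\bbX^{(m)})$, this shows $\bbZ[\GL_n^m]^{\GL_n}$ is generated by the $\sigma_i(\bbX^{(j_1)}\cdots\bbX^{(j_s)})$ together with the $\det^{-1}(\bbX^{(j)})$; the $\SL_n$-analogue is identical.

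For (2) with $G=\SL_n$: here $\SL_n^m$ is the closed subscheme of $M_n^m$ cut out by the ideal $J$ generated by the $\SL_n$-invariant elements $\det(\bbX^{(j)})-1$, which form a regular sequence in the Cohen--Macaulay ring $\bbZ[M_n^m]$ (the quotient by the first $k$ of them is $\bbZ[\SL_n^k\times M_n^{m-k}]$, of Krull dimension $mn^2+1-k$, so the dimension drops by one at each step). The Koszul complex on these elements therefore resolves $\bbZ[\SL_n^m]$, and each of its terms $\bigwedge^i(\bbZ[M_n^m]^{\oplus m})$ is, as an $\SL_n$-module, a finite direct sum of copies of $\bbZ[M_n^m]$ (the exterior powers carry the trivial action, since the Koszul coefficients are the invariant elements $\det(\bbX^{(j)})-1$), hence has a good $\SL_n$-filtration by \Cref{gfM}. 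Splitting the resolution into short exact sequences and inducting downward from its top term --- using that in $0\to A\to B\to C\to 0$ with $A$ and $B$ of good filtration also $C$ is, which follows from the vanishing of $\Ext^{\bullet}_{\SL_n}(V(\lambda),-)$ on good-filtration modules and the criterion \cite[B.9 Lemma]{Jantzen2003} --- yields that $J$ has a good $\SL_n$-filtration. Consequently $H^1(\SL_n,J)=0$, so the restriction $\bbZ[M_n^m]^{\SL_n}\to\bbZ[\SL_n^m]^{\SL_n}$ is surjective, and by part (1) its image identifies $\bbZ[\SL_n^m]^{\SL_n}$ as generated by the $\sigma_i(\bbX^{(j_1)}\cdots\bbX^{(j_s)})$; since $\det^{-1}(\bbX^{(j)})=1$ in this ring, it may be adjoined to the generating set without changing anything.

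The main obstacle is the $\SL_n$ case of part (2): because $\SL_n$ is not linearly reductive in positive characteristic, surjectivity of $\SL_n$-invariants along the closed immersion $\SL_n^m\hookrightarrow M_n^m$ is not formal, and one genuinely needs the defining ideal to be generated by an invariant regular sequence together with the good-filtration property of $\bbZ[M_n^m]$. Everything else is a direct consequence of \Cref{DCPoverZ}, \Cref{gfM}, and \Cref{gfG}.
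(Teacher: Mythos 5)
Your argument is correct, but it takes a genuinely different route from the paper for part (2). The paper reduces to $\bbZ$ exactly as you do and then treats $\GL_n$ and $\SL_n$ uniformly: it realizes $\GL_n$ as a closed subscheme of $M_n\times\bbA^1$ (resp. $\SL_n\subset M_n$) via the short exact sequence given by multiplication with the invariant element $t\cdot\det-1$ (resp. $\det-1$), and deduces surjectivity of the restriction map on invariants from \Cref{lemsurjtens}, whose proof rests on Mathieu's tensor product theorem over a PID (\Cref{MathieuTPT}) together with \Cref{critchange}; the generators $\det^{-1}(\bbX^{(j)})$ then appear as the images of the auxiliary $\bbA^1$-coordinates. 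You instead handle $\GL_n$ by observing $\bbZ[\GL_n^m]=\bbZ[M_n^m]_{\Delta}$ and that $(-)^{G}$ commutes with the filtered colimit defining localization at the invariant element $\Delta$ — this is cleaner than the paper's route and avoids good filtrations for the $\GL_n$ case entirely — while for $\SL_n$ you build the full Koszul resolution of $\bbZ[\SL_n^m]$ and propagate good filtrations down the syzygies to get $H^1(\SL_n,J)=0$, which is heavier machinery than the paper's single application of \Cref{lemsurjtens} but perfectly sound and, as you note, exactly where the failure of linear reductivity has to be paid for.

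Two small points to tighten in the $\SL_n$ step. First, the "cokernel of good-filtration modules is good-filtration" property over $\bbZ$ requires the cokernel to be $\bbZ$-flat for the criterion of \cite[B.9 Lemma]{Jantzen2003} to apply; this is automatic in your induction since each $Z_{i}$ is a submodule of a free $\bbZ$-module, but it should be said. Second, the regular-sequence claim is most cleanly justified not by the Cohen--Macaulay/dimension-drop argument (which in the global, non-local setting needs additional care) but by noting that each successive quotient $\bbZ[\SL_n^k\times M_n^{m-k}]$ is an integral domain in which the next element $\det(\bbX^{(k+1)})-1$ is nonzero, hence a nonzerodivisor. With these adjustments the proof is complete.
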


\begin{proof} 
    By \Cref{gfG}, it is for both $\GL_n$ and $\SL_n$ sufficient to prove the claim for $\calO = \bbZ$.
    The closed immersion $\GL_n \to M_n \times \bbA^1, ~g \mapsto (g, \det(g)^{-1})$ induces a surjection $\bbZ[(M_n \times \bbA^1)^m] \to \bbZ[\GL_n^m]$ of $\bbZ$-graded $\bbZ$-modules with $\GL_n$-action, where the graded pieces are finitely generated free $\bbZ$-modules. 

    Identifying $\bbZ[\bbA^1] = \bbZ[t]$, we have a short exact sequence
    $$ 0 \to \mathbb{Z}[M_n] \otimes \mathbb{Z}[t] \xrightarrow{\cdot (t \cdot \det - 1)} \mathbb{Z}[M_n] \otimes \mathbb{Z}[t] \to \mathbb{Z}[\mathrm{GL}_n] \to 0 $$
    of $\GL_d$-modules, since $t \cdot \det - 1$ is an invariant element of the integral domain $\bbZ[M_d] \otimes \bbZ[t]$. By \Cref{lemsurjtens} and since $\bbZ[\GL_d]$ and $\bbZ[M_d]$ have good filtrations (\Cref{gfG}, \Cref{gfM}), the maps $\bbZ[M_n^m]^{\GL_n} \otimes \bbZ[t]^{\otimes m} = \bbZ[(M_n \times \bbA^1)^m]^{\GL_n} \to \bbZ[\GL_n^m]^{\GL_n}$ are surjective. The claim follows from \Cref{DCPoverZ}.
    
    The same argument using the closed immersion $\SL_n \to M_n$ and the short exact sequence
    $$ 0 \to \mathbb{Z}[M_n] \xrightarrow{\cdot (\det - 1)} \mathbb{Z}[M_n] \to \mathbb{Z}[\mathrm{SL}_n] \to 0 $$
    implies the claim on $\bbZ[\SL_n^m]^{\SL_n}$.
\end{proof}

\begin{corollary}\label{gensforGLSL}
    Let $\calO$ be a commutative ring.
    \begin{enumerate}
        \item The $\calF$-$\calO$-algebra $\calO[\GL_n^{\bullet}]^{\GL_n}$ is generated by $s_1, \dots, s_n \in \calO[\GL_n]^{\GL_n}$.
        \item The $\calF$-$\calO$-algebra $\calO[\SL_n^{\bullet}]^{\SL_n}$ is generated by $s_1, \dots, s_{n-1} \in \calO[\SL_n]^{\SL_n}$.
    \end{enumerate}
\end{corollary}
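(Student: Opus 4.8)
The plan is to reduce everything to \Cref{invGLZ}(2) by exhibiting each of its $\calO$-algebra generators as the image of one of $s_1,\dots,s_n$ under a structure map of the $\calF$-$\calO$-algebra $\calO[\GL_n^{\bullet}]^{\GL_n}$.

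First I would recall that $s_i := \sigma_i(\bbX)$, where $\bbX = \bbX^{(1)} \in \GL_n(\calO[\GL_n])$ is the tautological point, is a conjugation-invariant element of the $\FG(1)$-component $\calO[\GL_n^1]^{\GL_n}$. Given a word $w \in \FG(m)$, let $\alpha_w \colon \FG(1) \to \FG(m)$ be the homomorphism $x_1 \mapsto w$. By the substitution formalism of \Cref{subsecLafPC} it induces the morphism $\GL_n^m \to \GL_n$, $(g_1,\dots,g_m) \mapsto w(g_1,\dots,g_m)$, and hence the structure map $(\alpha_w)_* \colon \calO[\GL_n]^{\GL_n} \to \calO[\GL_n^m]^{\GL_n}$, $f \mapsto \big((g_1,\dots,g_m)\mapsto f(w(g_1,\dots,g_m))\big)$, under which $s_i \mapsto \sigma_i\big(w(\bbX^{(1)},\dots,\bbX^{(m)})\big)$. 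Taking $w = x_{j_1}\cdots x_{j_s}$ gives $\sigma_i(\bbX^{(j_1)}\cdots\bbX^{(j_s)})$; taking $w = x_j^{-1}$ and $i = n$ gives $\sigma_n\big((\bbX^{(j)})^{-1}\big) = \det(\bbX^{(j)})^{-1}$.

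Now I would invoke \Cref{invGLZ}(2): the elements $\sigma_i(\bbX^{(j_1)}\cdots\bbX^{(j_s)})$ (for $1\le i\le n$, $s\ge 0$) together with $\det(\bbX^{(j)})^{-1}$ (for $1\le j\le m$) generate $\calO[\GL_n^m]^{\GL_n}$ as an $\calO$-algebra. Since, by the previous paragraph, all of them lie in the image of a structure map applied to some $s_i$, the $\calF$-$\calO$-subalgebra of $\calO[\GL_n^{\bullet}]^{\GL_n}$ generated by $s_1,\dots,s_n$ contains $\calO[\GL_n^m]^{\GL_n}$ for every $m\ge 1$, hence is everything; this proves (1). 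For (2) I would additionally use that $\det\equiv 1$ on $\SL_n$, whence $\sigma_n$ of any word in the $\bbX^{(j)}$ equals $1$ and $\det(\bbX^{(j)})^{-1} = 1$ in $\calO[\SL_n^m]$; thus \Cref{invGLZ}(2) already exhibits $\calO[\SL_n^m]^{\SL_n}$ as generated over $\calO$ by the $\sigma_i(\bbX^{(j_1)}\cdots\bbX^{(j_s)})$ with $1\le i\le n-1$, which are structure images of $s_1,\dots,s_{n-1}$, and the same argument concludes.

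I expect the only point requiring care to be the bookkeeping in the second paragraph: one must verify that the $\calF$-structure map attached to $\alpha_w$ sends $s_i$ precisely to the $i$-th characteristic coefficient of $w$ evaluated on the matrix coordinates, and in particular that words $w$ involving inverses are legitimate morphisms in $\calF$ — it is exactly these inverses (e.g.\ $x_j^{-1}$) that produce the $\det^{-1}$ generators appearing in \Cref{invGLZ}(2), which is why the statement is phrased for $\calF$-$\calO$-algebras rather than $\calM$-$\calO$-algebras. Everything else is immediate from \Cref{invGLZ}.
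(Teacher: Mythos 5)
Your proof is correct and follows essentially the same route as the paper, whose proof is exactly the one-line appeal to \Cref{invGLZ} plus substitutions; you merely make explicit the structure maps $(\alpha_w)_*$ attached to word homomorphisms $\FG(1)\to\FG(m)$ (including the inverse word producing $\det^{-1}$, which is indeed why the statement lives over $\calF$ rather than $\calM$) and, for $\SL_n$, the vanishing of the $\sigma_n$- and $\det^{-1}$-generators.
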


\begin{proof}
    This follows by inspection of the generators computed in \Cref{invGLZ} and substitutions.
\end{proof}

\subsection{$\Sp_{2n}$, $\GSp_{2n}$, $\SO_{2n+1}$, $\OO_{2n+1}$ and $\GO_{2n+1}$} We start by recalling a theorem of Zubkov \cite[Theorem 1, Proposition 3.2]{zubkov99} on invariant rings over an algebraically closed field and extend it to general fields.

\begin{theorem}[Zubkov, 1999]\label{zubkovThm1} Let $K$ be an algebraically closed field. Let $G$ be either the symplectic group $\Sp_d$ over $K$ for even $d \geq 2$ or the orthogonal group $\OO_d$ over $K$ for $d \geq 1$ and assume, that $\chara(K) \neq 2$ in the orthogonal case. Let $m \geq 1$. Then:
\begin{enumerate}
    \item\label{zub1} $K[M_d^m]^G$ is generated as a $K$-algebra by elements of the form $\sigma_i(Y_{j_1} \cdots Y_{j_s})$ for $i \in \{1, \dots, d\}$ and $s \geq 0$, where $Y_k \in \{\bbX^{(k)}, (\bbX^{(k)})^*\}$ and in the orthogonal case $* = \top$ is transposition and in the symplectic case $* = \jj$ is symplectic transposition, i.e. $J(-)^{\top}J^{-1}$ for $J = \begin{pmatrix} 0 & \id \\ -\id & 0 \end{pmatrix}$.
    \item\label{zub2} The map $K[M_d^m]^G \to K[G^m]^G$ induced by restriction to $G^m \subseteq M_d^m$ is surjective. In particular, $K[G^m]^G$ is generated by elements of the form $\sigma_i(Y_{j_1} \cdots Y_{j_s})$ for $i \in \{1, \dots, d\}$ and $s \geq 0$, where $Y_k \in \{\bbX^{(k)}, (\bbX^{(k)})^{-1}\}$.
\end{enumerate}
\end{theorem}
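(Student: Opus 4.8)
The plan is to deduce both parts from Zubkov's work, adding only an elementary computation that converts the (symplectic or orthogonal) transpose into an inverse once we restrict from $M_d^m$ to $G^m$.

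Part (1) is Zubkov's first fundamental theorem for the simultaneous conjugation action of a classical group on tuples of $d\times d$ matrices: by \cite[Theorem 1]{zubkov99}, $K[M_d^m]^G$ is generated as a $K$-algebra by the shifted traces $\sigma_i(Y_{j_1}\cdots Y_{j_s})$ with $i\in\{1,\dots,d\}$, $s\ge 0$ and each $Y_k\in\{\bbX^{(k)},(\bbX^{(k)})^{*}\}$, the hypothesis $\chara(K)\ne 2$ in the orthogonal case being precisely the one under which \cite{zubkov99} operates. So here I would simply invoke that theorem.

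For part (2), I would first cite \cite[Proposition 3.2]{zubkov99}: the restriction homomorphism $K[M_d^m]^G\to K[G^m]^G$ attached to the closed immersion $G^m\hookrightarrow M_d^m$ is surjective. (If one wanted to avoid this citation, one would have to show that the defining ideal of $G^m$ in $M_d^m$ is a $G$-submodule admitting a good filtration, which is the technical core of Zubkov's argument and which I would not reprove.) Granting surjectivity, part (1) at once shows that $K[G^m]^G$ is generated by the restrictions of the $\sigma_i(Y_{j_1}\cdots Y_{j_s})$. It then remains to identify those restrictions: the tautological matrix $\bbX\in M_d(K[G])$ is by construction an element of $G(K[G])$, hence satisfies the equation defining $G$ inside $\GL_d$, namely $\bbX^{\top}\bbX=\id$ when $G=\OO_d$ and $\bbX^{\top}J\bbX=J$ when $G=\Sp_d$. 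In the orthogonal case this reads $\bbX^{*}=\bbX^{-1}$ directly; in the symplectic case $\bbX^{*}=J\bbX^{\top}J^{-1}=J^{-1}\bbX^{\top}J$ (using $J^{2}=-\id$), while $\bbX^{\top}J\bbX=J$ rearranges to $J^{-1}\bbX^{\top}J=\bbX^{-1}$, so again $\bbX^{*}=\bbX^{-1}$ in $M_d(K[G])$. Pulling this back along the $m$ projections $G^m\to G$ gives $(\bbX^{(k)})^{*}=(\bbX^{(k)})^{-1}$ in $M_d(K[G^m])$ for every $k$; hence each generator $\sigma_i(Y_{j_1}\cdots Y_{j_s})$ in which some factor $Y_k$ equals $(\bbX^{(k)})^{*}$ agrees, after restriction to $G^m$, with the one obtained by replacing that factor by $(\bbX^{(k)})^{-1}$. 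This is exactly the generating set asserted in part (2).

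The only genuine obstacle is that essentially all the substance here — the matrix first fundamental theorem in positive characteristic and the surjectivity of restriction to $G^m$ — is due to Zubkov, and I would use both as black boxes; the piece I would actually write out, the identity $\bbX^{*}=\bbX^{-1}$ on $G$ and the ensuing substitution, is routine.
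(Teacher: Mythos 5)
Your proposal is correct and matches the paper, which likewise treats both parts as citations to Zubkov (Theorem 1 for the matrix invariants, Proposition 3.2 for surjectivity of restriction to $G^m$) and leaves the passage from $(\bbX^{(k)})^{*}$ to $(\bbX^{(k)})^{-1}$ implicit. Your explicit verification that $\bbX^{*}=\bbX^{-1}$ holds on $G$ (using $J^{2}=-\id$ in the symplectic case) correctly fills in that routine step.
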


\begin{remark}
    Zubkov proves \Cref{zubkovThm1} for algebraically closed fields and then remarks that the claim holds for all infinite fields by a Zariski density argument \cite[Remark 3.2]{zubkov99}.
\end{remark}

We extend Zubkov's \Cref{zubkovThm1} to arbitrary fields and to the groups $\GSp_{2n}$, $\SO_{2n+1}$ and $\GO_n$ when $n \geq 1$.

\begin{proposition}\label{invoverfield} Let $K$ be a field and let $m \geq 1$.
\begin{enumerate}
    \item Suppose $G \in \{\Sp_{2n}, \SO_{2n+1}, \OO_n\}$ and $d \in \{2n, 2n+1, n\}$ respectively. Assume further $\chara(K) \neq 2$ if $G \in \{\SO_{2n+1}, \OO_n\}$. Then $K[G^m]^{G}$ is generated by elements of the form $\sigma_i(Y_{j_1} \cdots Y_{j_s})$ for $i \in \{1, \dots, d\}$ and $s \geq 0$, where $Y_k \in \{\bbX^{(k)}, (\bbX^{(k)})^{-1}\}$.
    \item Suppose $G \in \{\GSp_{2n}, \GO_n\}$ and $d \in \{2n, n\}$ respectively. Assume further $\chara(K) \neq 2$ if $G=\GO_n$. Then $K[G^m]^{G}$ is generated by the symplectic (orthogonal) similitude character $\simil$, its inverse $\simil^{-1}$ and elements as in (1).
\end{enumerate}
\end{proposition}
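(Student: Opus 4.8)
The plan is to reduce part (1) to Zubkov's \Cref{zubkovThm1} over algebraically closed fields by a faithfully flat descent, and then to deduce part (2) from part (1) using the presentations of the similitude groups as quotients $(\widetilde G\times\Gm)/\mu_2$. For part (1), first one reduces to $K=\overline K$ algebraically closed: each of $\Sp_{2n}$, $\SO_{2n+1}$, $\OO_n$ is a smooth --- hence flat --- affine group scheme over $K$ (using $\chara K\neq 2$ in the orthogonal cases), so \Cref{critchange}(1), applied with $V=K[G^m]$ carrying the diagonal conjugation action and with the flat $K$-algebra $\overline K$, gives an isomorphism $K[G^m]^G\otimes_K\overline K\xrightarrow{\sim}\overline K[G^m]^G$. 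As the proposed generators $\sigma_i(Y_{j_1}\cdots Y_{j_s})$ are defined over $\bbZ$, resp. $\bbZ[\tfrac12]$, they already lie in $K[G^m]^G$, and faithful flatness of $\overline K/K$ then shows that they generate $K[G^m]^G$ once they generate $\overline K[G^m]^G$. Over $\overline K$ the cases $G=\Sp_{2n}$ and $G=\OO_n$ are exactly \Cref{zubkovThm1}(2). For $G=\SO_{2n+1}$ I would use that $-I\in\OO_{2n+1}$ is central with $\det(-I)=(-1)^{2n+1}=-1$, so $\OO_{2n+1}\cong\SO_{2n+1}\times\mu_2$ with $\mu_2=\{\pm I\}\cong\bbZ/2$; since $\pm I$ acts trivially by conjugation the $\OO_{2n+1}$- and $\SO_{2n+1}$-conjugation actions on $\SO_{2n+1}^m$ agree and $\overline K[\OO_{2n+1}^m]^{\OO_{2n+1}}=\overline K[\SO_{2n+1}^m]^{\SO_{2n+1}}\otimes_{\overline K}\overline K[(\bbZ/2)^m]$, so restriction to the open and closed subscheme $\SO_{2n+1}^m$ is surjective on invariant rings; combined with \Cref{zubkovThm1}(2) for $\OO_{2n+1}$ this gives the claimed generators.

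For part (2), write $\widetilde G=\Sp_{2n}$, resp. $\OO_n$, and let $G=\GSp_{2n}$, resp. $\GO_n$ (so $\chara K\neq 2$ for $\GO_n$). The multiplication map $\widetilde G\times\Gm\to G$, $(h,t)\mapsto th$, is an fppf epimorphism --- fppf-locally the similitude factor of any element of $G$ has a square root --- with kernel $\mu_2=\{(t^{-1}I,t)\mid t^2=1\}$, so $G\cong(\widetilde G\times\Gm)/\mu_2$ and hence $G^m\cong(\widetilde G^m\times\Gm^m)/\mu_2^m$. Identifying $\mu_2$-comodules with $\bbZ/2$-graded modules, this yields $K[G^m]=(K[\widetilde G^m]\otimes_K K[\Gm^m])^{\mu_2^m}$, where in the $j$-th grading both the matrix entries $\bbX^{(j)}$ and the coordinate $t_j$ of the $j$-th $\Gm$ are odd. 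Since the central torus acts trivially by conjugation, the $G$-action on $G^m$ factors through $\widetilde G/\mu_2$ and preserves the $\mu_2^m$-grading, so
$$ K[G^m]^G=\bigl(K[\widetilde G^m]^{\widetilde G}\otimes_K K[\Gm^m]\bigr)^{\mu_2^m}. $$
By part (1), $K[\widetilde G^m]^{\widetilde G}$ is generated by elements $\gamma=\sigma_i(Y_{j_1}\cdots Y_{j_s})$, each homogeneous of some parity vector $\epsilon(\gamma)\in\{0,1\}^m$, and $K[\Gm^m]=K[t_1^{\pm1},\dots,t_m^{\pm1}]$ with $t_j^2=\simil(X_j)$. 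Writing any $\mu_2^m$-invariant monomial in the $\gamma$'s and the $t_j^{\pm1}$ as $\prod_\gamma(\gamma\prod_j t_j^{\epsilon(\gamma)_j})^{n_\gamma}$ times a monomial in the $t_j^{\pm2}$ shows that $K[G^m]^G$ is generated as a $K$-algebra by $t_j^{\pm2}=\simil(X_j)^{\pm1}$ together with the elements $\gamma\prod_j t_j^{\epsilon(\gamma)_j}$. Finally, for $\gamma=\sigma_i(Y_{j_1}\cdots Y_{j_s})$ the corresponding word formed on $G^m$ with letters $\bbX^{(k)},(\bbX^{(k)})^{-1}$ satisfies, because $(\bbX^{(k)})^{-1}=\simil(X_k)^{-1}(\bbX^{(k)})^{\jj}$ on $G$ and $\sigma_i$ is homogeneous of degree $i$, an identity $\sigma_i(Y_{j_1}\cdots Y_{j_s})=\bigl(\prod_j\simil(X_j)^{k_j}\bigr)\cdot\gamma\prod_j t_j^{\epsilon(\gamma)_j}$ with suitable $k_j\in\bbZ$; hence each $\gamma\prod_j t_j^{\epsilon(\gamma)_j}$ is a product of the $\simil(X_j)^{\pm1}$ and the invariant $\sigma_i(Y_{j_1}\cdots Y_{j_s})$ on $G^m$. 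This exhibits $K[G^m]^G$ as generated by $\simil$, $\simil^{-1}$ and the elements in (1).

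The main obstacle is the bookkeeping in part (2): one must verify that the $\mu_2$-comodule structures arising from the quotient presentation really are the stated $\bbZ/2$-gradings --- so that the argument is uniform in all characteristics, in particular for $\GSp_{2n}$ in characteristic $2$, where $\mu_2$ is non-étale and the grading description is essential --- and then track the parity vectors carefully enough to recognise the degree-zero generators $\gamma\prod_j t_j^{\epsilon(\gamma)_j}$, up to powers of the $\simil(X_j)$, as exactly the invariants $\sigma_i(Y_{j_1}\cdots Y_{j_s})$, $Y_k\in\{\bbX^{(k)},(\bbX^{(k)})^{-1}\}$, on $G^m$. In part (1) the only delicate point is the surjectivity of restriction to $\SO_{2n+1}^m$ on invariant rings, which is exactly what the splitting $\OO_{2n+1}\cong\SO_{2n+1}\times\mu_2$ delivers.
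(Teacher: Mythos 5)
Your proof is correct. Part (1) is essentially the paper's argument: the paper also reduces to Zubkov's \Cref{zubkovThm1} over an algebraic closure, using the universal coefficient theorem (the content of \Cref{critchange} over a field) to identify $K[G^m]^G\otimes_K\overline K\cong\overline K[G^m]^G$, and it handles $\SO_{2n+1}$ by the same splitting $\OO_{2n+1}=\SO_{2n+1}\times\{\pm 1\}$; the only cosmetic difference is that the paper descends the surjectivity of $K[M_d^m]^G\to K[G^m]^G$ while you descend the generation statement directly, which comes to the same thing. Part (2) is where you genuinely diverge. The paper follows \cite[Lemma 3.15]{weiss2021images}: it uses the surjection $\Sp_{2n}\times\GL_1\to\GSp_{2n}$ to get an inclusion $K[\GSp_{2n}^m]^{\GSp_{2n}}\subseteq K[\Sp_{2n}^m]^{\Sp_{2n}}\otimes_K K[\GL_1^m]$ and concludes by observing that the relevant generators are "defined on the left hand side". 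Your version replaces this with the torsor presentation $G\cong(\widetilde G\times\Gm)/\mu_2$, the identification of $K[G^m]$ with the degree-zero part of a $(\bbZ/2)^m$-grading, and explicit parity bookkeeping. This buys two things: it pins down exactly what the image of $K[G^m]^G$ inside $K[\widetilde G^m]^{\widetilde G}\otimes K[\Gm^m]$ is (the even part for the grading, with $t_j^2=\simil(X_j)$ — note the pullback of $\simil$ is $t_j^2$, not $t_j$, so the paper's phrase "the map is an isomorphism" has to be read as onto this graded piece rather than onto the full tensor product, and your computation is effectively the careful version of that step); and, since $\mu_2$-comodules are $\bbZ/2$-graded modules in every characteristic, it treats $\GSp_{2n}$ in characteristic $2$ (where $\mu_2$ is infinitesimal) on the same footing, which the statement requires. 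One pedantic remark: \Cref{critchange} is stated over Dedekind domains, but over a field the base-change isomorphism for invariants is immediate (and the paper itself just cites Jantzen's universal coefficient theorem at this point), so nothing is lost there.
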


\begin{proof} Let $K'$ be an algebraically closed overfield of $K$. We first treat the case $G \in \{\Sp_{2n}, \OO_{n}\}$. Let $d=2n$ in the symplectic case and $d=n$ in the orthogonal case. We have a short exact sequence
$$ 0 \to J \to K[M_d^m]^G \to K[G^m]^G \to 0 $$
of $K$-vector spaces, where $J := \ker(K[M_d^m]^G \to K[G^m]^G)$. By faithful flatness it suffices to show, that
$$ 0 \to J \otimes_K K' \to K[M_d^m]^G \otimes_K K' \to K[G^m]^G \otimes_K K' \to 0 $$
is exact. 
By the universal coefficient theorem for algebraic invariants \cite[I.4.18 Proposition]{Jantzen2003}, we have isomorphisms $K[M_d^m]^G \otimes_K K' \cong K'[M_d^m]^G$ and $K[G^m]^G \otimes_K K' \cong K'[G^m]^G$, so $J \otimes_K K'$ is the kernel of $K'[M_d^m]^G \to K'[G^m]^G$.
The claim follows from \Cref{zubkovThm1}. For $\SO_{2n+1}$, we note, that the map $K[\OO_{2n+1}]^{\OO_{2n+1}} \to K[\SO_{2n+1}]^{\SO_{2n+1}}$ is surjective, since $\OO_{2n+1} = \SO_{2n+1} \times \{\pm 1\}$.

For the rest of the proof, we argue as in \cite[Lemma 3.15]{weiss2021images}.
The natural surjection $\Sp_{2n} \times \GL_1 \to \GSp_{2n}$ induces an inclusion $K[\GSp_{2n}^m]^{\GSp_{2n}} \subseteq K[\Sp_{2n}^m]^{\Sp_{2n}} \otimes_{K} K[\GL_1^m]$. Here the second factor is generated by the symplectic similitude character $\simil_i$ of $X_i$ and its inverse. Since all generators on the right hand side are defined on the left hand side, the map is an isomorphism. For $\GO_{n}$ we argue the same way.
\end{proof}

Fix a rational prime $p$ and an integer $r \geq 1$.
We extend \Cref{invoverfield} to $p^r$-torsion coefficients by using the theory of good filtrations over $\bbZ$.
We can lift invariants using the following variant of Nakayama's lemma.

\begin{lemma}\label{prliftlemma} \phantom{a}
\begin{enumerate}
    \item Let $M$ be any $\bbZ/p^r$-module and assume $M/p = 0$. Then $M = 0$.\label{detect}
    \item Let $f : M \to N$ be a homomorphism of $\bbZ/p^r$-modules, such that $\overline f : M/p \to N/p$ is surjective. Then $f$ is surjective.
\end{enumerate}
\end{lemma}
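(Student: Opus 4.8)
The plan is to establish (1) directly and then deduce (2) as a formal consequence, following the shape of the classical Nakayama argument but replacing the finite-generation hypothesis with the observation that the ideal $(p) \subseteq \bbZ/p^r$ is nilpotent.

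For (1): the hypothesis $M/p = 0$ means precisely $M = pM$. Iterating this equality gives $M = p^kM$ for all $k \geq 1$, and in particular $M = p^rM$. Since $p^r$ annihilates every $\bbZ/p^r$-module, we conclude $M = p^rM = 0$. Note that no hypothesis on $M$ (such as finite generation) is needed here; this is exactly the point where working over $\bbZ/p^r$ rather than over a general local ring makes the statement unconditional.

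For (2): set $C := \coker(f)$ and apply the right exact functor $- \otimes_{\bbZ/p^r} \bbZ/p$ to the exact sequence $M \xrightarrow{f} N \to C \to 0$, obtaining an exact sequence $M/p \xrightarrow{\overline f} N/p \to C/p \to 0$. Since $\overline f$ is surjective by hypothesis and $N/p \to C/p$ is surjective, the composite $M/p \to C/p$ is surjective; but this composite is zero, whence $C/p = 0$. By part (1) this forces $C = 0$, i.e.\ $f$ is surjective.

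I do not expect a genuine obstacle: the only subtlety is to resist appealing to classical Nakayama (which would require $M$ to be finitely generated) and instead exploit the nilpotence of $p$ in $\bbZ/p^r$; the remaining steps are routine bookkeeping with right-exactness of the tensor product.
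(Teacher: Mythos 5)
Your proof is correct and takes essentially the same route as the paper: part (1) via $M = pM = p^rM = 0$, and part (2) by applying (1) to $\coker(f)$ after tensoring with $\bbZ/p$. You merely spell out the right-exactness step that the paper leaves implicit.
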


\begin{proof} (1) We have $M = pM$, thus $M = p^rM = 0$. (2) We can apply \eqref{detect} to $\coker(f)$.
\end{proof}

\begin{lemma}\label{genliftlemma} Let $G$ be a Chevalley group and let $S \subseteq \bbZ[G^m]^G$ be a subset, that generates $\bbF_p[G^m]^G$ as a ring. Then $S$ generates $\bbZ/p^r[G^m]^G$.
\end{lemma}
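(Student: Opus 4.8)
The plan is to reduce the statement to an application of \Cref{prliftlemma}(2) after establishing that $\bbZ/p^r[G^m]^G$ is generated, as a module over itself via $S$, by elements whose reductions mod $p$ land in the subalgebra generated by $\overline S$. First I would use \Cref{gfG}: since $G$ is a Chevalley group over the principal ideal domain $\bbZ$ with $G^0$ split (and, after possibly passing to a constant form of $G/G^0$ with a scheme-theoretic splitting — which one may arrange since we only need the base-change statement, or invoke the base-change part of \Cref{gfG} directly), the module $\bbZ[G^m]$ has a good filtration under the conjugation action of $G^0$, hence so does $\bbZ[G^m]$ under $G$ by the final remark of \Cref{secOGm}. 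In particular $\bbZ[G^m]^G \otimes_{\bbZ} \bbZ/p^r \xrightarrow{\sim} \bbZ/p^r[G^m]^G$ and $\bbZ[G^m]^G \otimes_{\bbZ} \bbF_p \xrightarrow{\sim} \bbF_p[G^m]^G$ by \Cref{critchange}, using that good filtration modules have vanishing higher cohomology.

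Next, let $R := \bbZ/p^r[G^m]^G$ and let $R' \subseteq R$ be the $\bbZ/p^r$-subalgebra generated by the image of $S$. The inclusion $R' \hookrightarrow R$ is a map of $\bbZ/p^r$-modules, and I would like to apply \Cref{prliftlemma}(2) to it, i.e. show $R'/p \to R/p$ is surjective. Now $R/p = \bbZ/p^r[G^m]^G \otimes_{\bbZ/p^r} \bbF_p$, and by the base-change isomorphism above (or by right-exactness of $\otimes \bbF_p$ applied to $\bbZ[G^m]^G \otimes \bbZ/p^r$) this is identified with $\bbF_p[G^m]^G$, with the image of $S$ going to the image of $\overline S$ under $\bbZ[G^m]^G \to \bbF_p[G^m]^G$. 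By hypothesis $\overline S$ generates $\bbF_p[G^m]^G$ as a ring, so $R'/p \to R/p$ is surjective; hence $R' = R$ by \Cref{prliftlemma}(2), which is exactly the claim.

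The one subtlety worth care — and the step I expect to be the main obstacle — is the identification $R/p \cong \bbF_p[G^m]^G$ \emph{compatibly with the two reduction maps from $\bbZ[G^m]^G$}: one must check that reducing an element of $S \subseteq \bbZ[G^m]^G$ first to $\bbZ/p^r$ and then mod $p$ gives the same element of $\bbF_p[G^m]^G$ as reducing directly. This is formal once one writes $R/p = \bbZ[G^m]^G \otimes_{\bbZ} \bbZ/p^r \otimes_{\bbZ/p^r} \bbF_p = \bbZ[G^m]^G \otimes_{\bbZ} \bbF_p$ using the good filtration base-change isomorphisms, so the real content is entirely in invoking \Cref{gfG} to know that both $\otimes \bbZ/p^r$ and $\otimes \bbF_p$ commute with taking $G$-invariants; everything else is bookkeeping with Nakayama via \Cref{prliftlemma}. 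I would also remark that the case $r=1$ is trivial and that the argument does not use anything special about $G$ beyond being a Chevalley group, so it applies uniformly to all the groups $\SL_n,\GL_n,\Sp_{2n},\GSp_{2n},\SO_{2n+1},\OO_{2n+1},\GO_{2n+1}$ (in the orthogonal cases with $p$ odd, which is already forced by the hypothesis that $\overline S$ generates).
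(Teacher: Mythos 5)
Your proposal is correct and follows essentially the same route as the paper: invoke \Cref{gfG} and \Cref{critchange} to identify $(\bbZ/p^r[G^m]^G)/p$ with $\bbF_p[G^m]^G$ compatibly with reduction from $\bbZ[G^m]^G$, then conclude by the Nakayama-type \Cref{prliftlemma}. The only superfluous detour is your worry about $G/G^0$ and splittings, which is vacuous since a Chevalley group is connected, so the good-filtration hypotheses of \Cref{gfG} hold automatically.
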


\begin{proof} Let $A \subseteq \bbZ/p^r[G^m]^G$ be the subalgebra generated by $S$. By \Cref{gfG} $\bbZ[G^m]$ has a good filtration. We calculate
$$ \bbZ/p^r[G^m]^G \otimes_{\bbZ/p^r} \bbF_p = (\bbZ[G^m]^G \otimes_{\bbZ} \bbZ/p^r) \otimes_{\bbZ/p^r} \bbF_p = \bbZ[G^m]^G \otimes_{\bbZ} \bbF_p = \bbF_p[G^m]^G $$
applying \Cref{critchange} twice.
Hence the inclusion induces a surjection $A/p \twoheadrightarrow (\bbZ/p^r[G^m]^G)/p$. From \Cref{prliftlemma}, we obtain $A = \bbZ/p^r[G^m]^G$.
\end{proof}

\begin{proposition}\label{genZpr} Let $\calO$ be a commutative ring, such that $p^r \calO = 0$. Let $m \geq 1$ and assume $p > 2$ in the orthogonal cases.
\begin{enumerate}
    \item Let $n \geq 1$. Then $\calO[\Sp_{2n}^m]^{\Sp_{2n}}$ (resp. $\calO[\OO_{2n+1}^m]^{\SO_{2n+1}}$) is generated by elements of the form $\sigma_i(Y_{j_1} \cdots Y_{j_s})$ for $i \in \{1, \dots, d\}$ and $s \geq 0$, where $Y_k \in \{\bbX^{(k)}, (\bbX^{(k)})^{-1}\}$.
    \item Let $n \geq 1$. Then $\calO[\GSp_{2n}^m]^{\GSp_{2n}}$ (resp. $\calO[\GO_{n}^m]^{\GO_{n}}$) is generated by the symplectic (orthogonal) similitude character $\simil$, its inverse $\simil^{-1}$ and elements of the form $\sigma_i(Y_{j_1} \cdots Y_{j_s})$ for $i \in \{1, \dots, d\}$ and $s \geq 0$, where $Y_k \in \{\bbX^{(k)}, (\bbX^{(k)})^{-1}\}$.
\end{enumerate}
\end{proposition}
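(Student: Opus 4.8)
The plan is to reduce, in two stages, to the prime field $\bbF_p$, where a generating set of exactly the asserted shape is provided by \Cref{invoverfield}, and then transport this back using the good-filtration base-change results \Cref{critchange}, \Cref{gfG} together with the lifting lemma \Cref{prliftlemma}, following the pattern of \Cref{genliftlemma}. In every case the displayed elements $\sigma_i(Y_{j_1}\cdots Y_{j_s})$ (with $Y_k\in\{\bbX^{(k)},(\bbX^{(k)})^{-1}\}$), together with $\simil^{\pm 1}$ in the similitude cases, are already defined over $\bbZ$ for the symplectic groups and over $\bbZ[1/2]$ for the orthogonal ones — on $\GSp_{2n}$ the matrix-inverse coordinates are regular since $\det=\simil^{\,n}$ there, and $\simil$ is a character, hence a unit, so $\simil^{-1}$ is regular as well.

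\textbf{Step 1: reduction to $\calO=\bbZ/p^r$.} Since $p^r\calO=0$ and, in the orthogonal cases, $p>2$ makes $2$ invertible mod $p^r$, the ring $\calO$ is an algebra over $\bbZ/p^r$, which in turn is an algebra over the Dedekind domain $\calD:=\bbZ$ in the symplectic cases and $\calD:=\bbZ[1/2]$ in the orthogonal cases; over $\calD$ all the groups in question are generalized reductive group schemes. The invariant rings occurring in the statement are $\calO[G^m]^{G^0}$, except for $\GSp_{2n}$ and $\GO_n$, where they are the full-group invariants $\calO[G^m]^{G}$. Applying the first part of \Cref{gfG} (resp. the Remark following it) to $\calD\to\bbZ/p^r\to\calO$ gives canonical isomorphisms $(\bbZ/p^r)[G^m]^{G^0}\otimes_{\bbZ/p^r}\calO\cong\calO[G^m]^{G^0}$ (resp. with $G^0$ replaced by $G$), sending the elements listed above over $\bbZ/p^r$ to the corresponding elements over $\calO$. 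Hence it is enough to prove the proposition for $\calO=\bbZ/p^r$.

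\textbf{Step 2: reduction to $\bbF_p$.} For $G\in\{\Sp_{2n},\GSp_{2n}\}$ the group is a Chevalley group over $\bbZ$, the asserted generators lie in $\bbZ[G^m]^{G}$, and by \Cref{invoverfield} they generate $\bbF_p[G^m]^{G}$; so \Cref{genliftlemma} shows they generate $(\bbZ/p^r)[G^m]^{G}$, which is the claim. For $G=\OO_{2n+1}$ the matrix $-I$ exhibits $\OO_{2n+1}=\SO_{2n+1}\times\mu_2$ over $\bbZ[1/2]$ with $\mu_2$ central, so $\SO_{2n+1}$-conjugation is trivial on the $\mu_2$-factors and $R[\OO_{2n+1}^m]^{\SO_{2n+1}}=R[\SO_{2n+1}^m]^{\SO_{2n+1}}\otimes_R R[\mu_2^m]$ for every $\bbZ[1/2]$-algebra $R$, the second factor being generated by the coordinate characters $\det(\bbX^{(j)})=\sigma_{2n+1}(\bbX^{(j)})$. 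Thus \Cref{invoverfield}(1) over $\bbF_p$ produces a generating set of $\bbF_p[\OO_{2n+1}^m]^{\SO_{2n+1}}$ of the claimed form; these elements lift to $(\bbZ/p^r)[\OO_{2n+1}^m]^{\SO_{2n+1}}$, and since $\SO_{2n+1}$ is a split Chevalley group over the PID $\bbZ[1/2]$ one has $(\bbZ/p^r)[\OO_{2n+1}^m]^{\SO_{2n+1}}\otimes_{\bbZ/p^r}\bbF_p=\bbF_p[\OO_{2n+1}^m]^{\SO_{2n+1}}$ by \Cref{critchange} (or \Cref{gfG}), whereupon \Cref{prliftlemma}(2) forces the lifts to generate $(\bbZ/p^r)[\OO_{2n+1}^m]^{\SO_{2n+1}}$; this is precisely the argument of \Cref{genliftlemma} run over $\bbZ[1/2]$ in place of $\bbZ$. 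The case $\GO_n$ is identical once one tensors in addition with the similitude direction, exactly as in the final paragraph of the proof of \Cref{invoverfield}, which is why $\simil^{\pm1}$ appears among the generators.

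\textbf{Main obstacle.} No step is genuinely deep; the content is in threading the base-change isomorphisms correctly. The delicate point is the orthogonal bookkeeping: one must work over $\bbZ[1/2]$ rather than $\bbZ$, keep careful track of the difference between $\SO_{2n+1}$-invariants and $\OO_{2n+1}$-invariants of $\OO_{2n+1}^m$ (resolved by the splitting $\OO_{2n+1}=\SO_{2n+1}\times\mu_2$ and the identity $\sigma_{2n+1}(\bbX^{(j)})=\det(\bbX^{(j)})$), and verify that the hypotheses required to invoke \Cref{gfG} and the good-filtration machinery — $G^0$ split, $G/G^0$ finite constant with a scheme-theoretic section — do hold for $\OO_{2n+1}$ and $\GO_n$ over $\bbZ[1/2]$.
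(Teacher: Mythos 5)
Your proposal is correct and follows essentially the same route as the paper: reduce to $\calO=\bbZ/p^r$ via the good-filtration base-change results (\Cref{gfG}, \Cref{critchange}), then combine the field-level generators of \Cref{invoverfield} with the Nakayama-type lifting of \Cref{prliftlemma}/\Cref{genliftlemma}. The only difference is that you spell out the orthogonal bookkeeping (working over $\bbZ[1/2]$, the splitting $\OO_{2n+1}=\SO_{2n+1}\times\mu_2$ and $\det=\sigma_{2n+1}$) that the paper's terse proof leaves implicit.
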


\begin{proof} Let $G \in \{\Sp_{2n}, \OO_{2n+1}, \GSp_{2n}, \GO_{n}\}$. Since by \Cref{gfG} all $\bbZ/p^r[(G^0)^m]$ have a good filtration, we may assume $\calO = \bbZ/p^r$. In all cases, the expected generators are defined as elements of $\bbZ[G^m]^{G^0}$. The claim now follows from \Cref{genliftlemma} and the generators of $\bbF_p[G^m]^G$ of \Cref{invoverfield}.
\end{proof}

\begin{corollary}\label{prtorsfingen} Let $r \geq 1$ be an integer and let $\calO$ be a commutative ring, such that $p^r \calO = 0$. Let $G \in \{\SL_n, \GL_n, \Sp_{2n}, \GSp_{2n}, \SO_{2n+1}, \OO_{2n+1}, \GO_{2n+1}\}$ and assume that $p > 2$ in the orthogonal cases. Then the maps $\calO[\GL_n^m]^{\GL_n} \to \calO[G^m]^{G^0}$ are surjective for all $m \geq 1$. In particular, the $\calF$-$\calO$-algebras $\calO[G^{\bullet}]^{G^0}$ are finitely generated.
\end{corollary}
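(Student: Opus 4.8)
The plan is to prove surjectivity by matching the explicit generating sets produced by the first fundamental theorems already established, the point being that each generator of the target ring is visibly the restriction of a $\GL_d$-invariant. Write $\iota\colon G\hookrightarrow\GL_d$ for the standard representation, with $d=n$ for $\SL_n,\GL_n$, $d=2n$ for $\Sp_{2n},\GSp_{2n}$ and $d=2n+1$ for $\SO_{2n+1},\OO_{2n+1},\GO_{2n+1}$; then the map in question is restriction along $\iota^m\colon G^m\hookrightarrow\GL_d^m$. The elementary observation underlying everything is that the coordinate matrices $\bbX^{(1)},\dots,\bbX^{(m)}$ on $G^m$ are the pullbacks along $\iota^m$ of the coordinate matrices on $\GL_d^m$, and that the restriction map $\calO[\GL_d^m]\to\calO[G^m]$, being a homomorphism of $\calO$-algebras, commutes with matrix multiplication, with inversion, and with formation of the characteristic-polynomial coefficients $\sigma_i$. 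Hence for any word $w$ in $m$ letters and their inverses, the function $\sigma_i\bigl(w(\bbX^{(1)},\dots,\bbX^{(m)})\bigr)$ on $G^m$ is the restriction of the like-named function on $\GL_d^m$, and the latter is a $\GL_d$-conjugation-invariant regular function, hence an element of $\calO[\GL_d^m]^{\GL_d}$ by \Cref{invGLZ}.

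First I would dispose of $\GL_n,\SL_n,\Sp_{2n},\SO_{2n+1}$ and $\OO_{2n+1}$. For $\GL_n$ the map is the identity. For $\SL_n$, part (2) of \Cref{invGLZ} generates $\calO[\SL_n^m]^{\SL_n}$ by the $\sigma_i(\bbX^{(j_1)}\cdots\bbX^{(j_s)})$, which are restrictions of the analogous generators of $\calO[\GL_n^m]^{\GL_n}$, and by $\det^{-1}(X_j)$, which restricts to the constant $1$. For $\Sp_{2n}$ and $\OO_{2n+1}$, part (1) of \Cref{genZpr} generates $\calO[G^m]^{G^0}$ by the elements $\sigma_i(Y_{j_1}\cdots Y_{j_s})$ with $Y_k\in\{\bbX^{(k)},(\bbX^{(k)})^{-1}\}$, each a restriction as above; the hypothesis $p>2$ in the orthogonal cases is exactly what is needed to invoke \Cref{genZpr}. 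For $\SO_{2n+1}$ one adds that $\OO_{2n+1}=\SO_{2n+1}\times\{\pm I\}$, so $\SO_{2n+1}^m$ is open and closed in $\OO_{2n+1}^m$, the restriction $\calO[\OO_{2n+1}^m]^{\SO_{2n+1}}\to\calO[\SO_{2n+1}^m]^{\SO_{2n+1}}$ is surjective, and the generators above descend to a generating set.

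The remaining case, $\GSp_{2n}$ and $\GO_{2n+1}$, is where care is needed and is the main obstacle: by part (2) of \Cref{genZpr} the target ring now also requires the similitude character $\simil$ and its inverse in each slot, and $\simil$ is not a function of the $\GL_d$-conjugacy class — it satisfies only $\simil^n=\det$ — so it does not lie in the image of $\calO[\GL_d^m]^{\GL_d}$. I would therefore replace $\iota$ by the faithful representation $\iota\oplus\simil\colon G\hookrightarrow\GL_d\times\GL_1$: the corresponding $\calF$-$\calO$-algebra is $\calO[(\GL_d\times\GL_1)^{\bullet}]^{\GL_d\times\GL_1}=\calO[\GL_d^{\bullet}]^{\GL_d}\otimes_{\calO}\calO[\GL_1^{\bullet}]$, and under restriction the $j$-th $\GL_1$-coordinate and its inverse go to $\simil(X_j)^{\pm1}$, while the $\sigma_i(Y_{j_1}\cdots Y_{j_s})$ come from the first tensor factor as before, so every generator supplied by \Cref{genZpr} is hit (one could equally use the block embedding into $\GL_{d+1}$). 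For the last assertion, \Cref{gensforGLSL} shows that $\calO[\GL_d^{\bullet}]^{\GL_d}$ is generated as an $\calF$-$\calO$-algebra by $s_1,\dots,s_d\in\calO[\GL_d]^{\GL_d}$; hence any $\calF$-$\calO$-algebra that is a surjective image of it — in particular $\calO[G^{\bullet}]^{G^0}$ — is generated by the finitely many elements $\sigma_1(\bbX),\dots,\sigma_d(\bbX)$, together with $\simil(\bbX)$ in the similitude cases, and is thus finitely generated.
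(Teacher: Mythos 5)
For $\GL_n$, $\SL_n$, $\Sp_{2n}$, $\OO_{2n+1}$ and $\SO_{2n+1}$ your argument is exactly the paper's: the published proof of \Cref{prtorsfingen} is precisely ``\Cref{gensforGLSL}, \Cref{genZpr} and substitutions'', i.e.\ the observation that each generator $\sigma_i(Y_{j_1}\cdots Y_{j_s})$ of the target is the restriction of a substituted $\GL_d$-invariant, together with $\OO_{2n+1}=\SO_{2n+1}\times\{\pm 1\}$ for the special orthogonal case, which is what you wrote.

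Where you diverge is the similitude cases, and there you have identified a real defect in the statement rather than introduced one: the paper's one-line proof tacitly treats $\simil^{\pm 1}$ as if it were obtained by substitution from $\GL_d$-invariants, and it is not. Your justification (``$\simil$ satisfies only $\simil^n=\det$'') is not itself a proof, and is even wrong for $n=1$ where $\simil=\det$ and the stated map is surjective; but the failure is genuine for $\GSp_{2n}$ with $n\geq 2$: the matrices $g_1=\mathrm{diag}(1,-1,1,-1)$ and $g_2=\mathrm{diag}(1,1,-1,-1)$ lie in $\GSp_4(\calO)$, are conjugate in $\GL_4(\calO)$ by a permutation matrix, and have $\simil(g_1)=1$, $\simil(g_2)=-1$, so as long as $2\neq 0$ in $\calO$ (e.g.\ $\calO=\bbZ/p^r$ with $p$ odd) no restricted $\GL_4$-invariant can equal $\simil(X_1)$; padding with identity matrices gives failure for every $m$, and a scaling variant (multiply $h\in\SO_{2n+1}$ whose eigenvalue multiset is stable under a nontrivial $(2n+1)$-st root of unity $\zeta$ by $\zeta$) does the same for $\GO_{2n+1}$ when $p\nmid 2n+1$. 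So the corollary as stated, and its use in \Cref{Rpssurj}, needs repair in the similitude cases, and your replacement source $\calO[\GL_d^m]^{\GL_d}\otimes_{\calO}\calO[\GL_1^m]=\calO[(\GL_d\times\GL_1)^m]^{\GL_d\times\GL_1}$, restricted along $\iota\times\simil$, is the natural fix; the tensor identity is immediate here because $\GL_1^m$ acts trivially by conjugation and $\calO[\GL_1^m]$ is $\calO$-free, so your modified surjectivity and the finite-generation conclusion do follow from \Cref{genZpr}. Two caveats: your parenthetical that the block embedding into $\GL_{d+1}$ ``would equally work'' is unjustified and in fact fails in general — for $\GSp_4$ and $\omega$ a primitive cube root of unity, $\mathrm{diag}(\omega,\omega,\omega^2,\omega^2)$ (similitude $1$) and $\mathrm{diag}(1,\omega^2,\omega,\omega^2)$ (similitude $\omega$) have $\GL_5$-conjugate images under $\iota\oplus\simil$ — and since \Cref{ThmA}(2),(3) and \Cref{Phipmainthm} are phrased for a single representation $\iota:G\to\GL_d$, your corrected statement does not by itself restore those applications for $\GSp_{2n}$ and $\GO_n$ without adjusting that hypothesis as well.
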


\begin{proof} This follows from \Cref{gensforGLSL} and \Cref{genZpr} and substitutions.
\end{proof}

\section{Deformations of $G$-pseudocharacters}
\label{secdefGPC}

Fix a prime $p > 0$ and a profinite group $\Gamma$.
Let $\kappa$ be a field of one of the following three types:
\begin{enumerate}
    \item $\kappa$ is a finite discrete field.\label{kappa1}
    \item $\kappa$ is a finite extension of $\bbQ_p$ equipped with the $p$-adic topology.\label{kappa2}
    \item $\kappa$ is a finite extension of $\bbF_p((t))$ equipped with the $t$-adic topology.\label{kappa3}
\end{enumerate}

We introduce a coefficient ring $\Lambda$ in each of the three cases for $\kappa$.
\begin{enumerate}
    \item In case \eqref{kappa1}, let $\Lambda$ be the ring of integers of a $p$-adic local field with residue field $\kappa$.
    \item In case \eqref{kappa2}, let $\Lambda = \kappa$.
    \item In case \eqref{kappa3}, let $\Lambda = \kappa$.
\end{enumerate}
We will call only such rings $\Lambda$ \emph{coefficient rings} for $\kappa$.
We further fix a generalized reductive $\Lambda$-group scheme $G$ and a continuous pseudocharacter $\Thetabar \in \cPC_G^{\Gamma}(\kappa)$.

Let $\Art_{\Lambda}$ be the category of artinian local $\Lambda$-algebras with residue field $\kappa$.
Every $A$ in $\Art_{\Lambda}$ has a canonical projection $\pi_A : A \to \kappa$ with kernel $\frakm_A$ the maximal ideal of $A$.
Note, that $\Art_{\Lambda}$ admits fiber products \cite[§2.2]{MR1643682}.
Every complete local $\Lambda$-algebra $A$ with residue field $\kappa$ is algebraically isomorphic to the inverse limit $\varprojlim A/\frakm_A^n$.
If $A$ is a complete noetherian local $\Lambda$-algebra, it can be written as $A = \Lambda[[X_1, \dots, X_r]]/I$, where $r$ is the $\kappa$-dimension of the relative cotangent space $t_A^* = \frakm_A/(\frakm_A^2 + \frakm_{\Lambda} A)$ of $A$ \cite[Lem. 5.1]{MR1643682}.

\subsection{The universal deformation ring $\RpsThetabar$}

\begin{definition}\label{pseudodeffunctor} We define the \emph{deformation functor} of $\Thetabar$
$$ \Def_{\Thetabar} : \Art_{\Lambda} \to \Set, ~A \mapsto \{\Theta \in \cPC_G^{\Gamma}(A) \mid \Theta \otimes_A \kappa = \Thetabar\} $$
that sends an object $A \in \Art_{\Lambda}$ to the set of continuous $G$-pseudocharacters $\Theta$ of $\Gamma$ over $A$ with $\Theta \otimes_A \kappa = \Thetabar$.
\end{definition}

If $A$ is an arbitrary local topological $\Lambda$-algebra with residue field $\kappa$, we define $\Def_{\Thetabar}(A)$ analogously.
This is notation for a single $A$ and shall not extend the deformation functor $\Def_{\Thetabar}$.
To prove pro-representability of the deformation functor we need to show, that it preserves projective limits.

\begin{lemma}\label{proextension} Let $A = \varprojlim_i A_i$ be a projective limit of local topological $\Lambda$-algebras with $A_i \in \Art_{\Lambda}$, endowed with the projective limit topology. Then the natural map $\Def_{\Thetabar}(A) \to \varprojlim_i \Def_{\Thetabar}(A_i)$ is bijective.
\end{lemma}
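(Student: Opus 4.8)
The plan is to reduce the statement to the description of continuous $G$-pseudocharacters as morphisms of $\calF$-$\calO$-algebras provided by \Cref{allemorphismen}, after first observing that continuous maps into a projective limit topological space themselves form a projective limit. Fix $m \geq 1$. Since $A$ carries the projective limit topology, a map $\Gamma^m \to A$ is continuous if and only if its composite with each projection $A \to A_i$ is continuous; hence the natural map $\calC(\Gamma^m, A) \to \varprojlim_i \calC(\Gamma^m, A_i)$ is a bijection, and it is evidently an isomorphism of $\calO$-algebras for the pointwise $\calO$-algebra structures. Letting $m$ and the morphisms of $\calF$ vary, these bijections assemble into an isomorphism $\calC(\Gamma^\bullet, A) \cong \varprojlim_i \calC(\Gamma^\bullet, A_i)$ of $\calF$-$\calO$-algebras, the limit being computed objectwise.

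Next I would apply $\Hom_{\CAlg_\calO^\calF}(\calO[G^\bullet]^{G^0}, -)$. As a covariant $\Hom$-functor out of a fixed object it commutes with projective limits, so, combining this with the previous step and \Cref{allemorphismen}, one obtains a bijection
$$ \cPC_G^\Gamma(A) = \Hom_{\CAlg_\calO^\calF}\!\bigl(\calO[G^\bullet]^{G^0}, \calC(\Gamma^\bullet, A)\bigr) \xrightarrow{\sim} \varprojlim_i \Hom_{\CAlg_\calO^\calF}\!\bigl(\calO[G^\bullet]^{G^0}, \calC(\Gamma^\bullet, A_i)\bigr) = \varprojlim_i \cPC_G^\Gamma(A_i), $$
which by construction is the map induced by the projections $A \to A_i$: a $G$-pseudocharacter $\Theta$ over $A$ goes to the compatible family $(\Theta \otimes_A A_i)_i$, and conversely a compatible family $(\Theta^{(i)})_i$ glues to the $G$-pseudocharacter $\Theta$ with $\Theta_m(f)\colon \gamma \mapsto (\Theta^{(i)}_m(f)(\gamma))_i \in \varprojlim_i A_i = A$ (continuity of $\Theta_m(f)$ again follows because its composite with each $A \to A_i$ is $\Theta^{(i)}_m(f)$, which is continuous).

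It remains to match the residual conditions. The projections $A \to A_i$ compose with the canonical maps $\pi_{A_i}\colon A_i \to \kappa$ to a single homomorphism $\pi_A\colon A \to \kappa$, independent of $i$ (every $\Lambda$-algebra homomorphism between objects of $\Art_\Lambda$ is local, so the $\pi_{A_i}$ are compatible). For $\Theta$ corresponding to $(\Theta^{(i)})_i$, the scalar extension $\Theta \otimes_A \kappa$ is obtained by postcomposing with $\map(\Gamma^\bullet, A) \to \map(\Gamma^\bullet, \kappa)$ along $\pi_A$, hence corresponds under the above bijection to $(\Theta^{(i)} \otimes_{A_i} \kappa)_i$; since $A = \varprojlim_i A_i \hookrightarrow \prod_i A_i$, we get $\Theta \otimes_A \kappa = \Thetabar$ if and only if $\Theta^{(i)} \otimes_{A_i} \kappa = \Thetabar$ for every $i$. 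Restricting the bijection $\cPC_G^\Gamma(A) \xrightarrow{\sim} \varprojlim_i \cPC_G^\Gamma(A_i)$ to the subsets cut out by these conditions yields the desired bijection $\Def_{\Thetabar}(A) \xrightarrow{\sim} \varprojlim_i \Def_{\Thetabar}(A_i)$. The only point that is not a formal manipulation of $\Hom$- and limit-functors is the identity $\calC(\Gamma^m, A) = \varprojlim_i \calC(\Gamma^m, A_i)$, and this is exactly where the hypothesis that $A$ is endowed with the projective limit topology is used.
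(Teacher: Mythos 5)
Your proposal is correct and follows essentially the same route as the paper: reduce the residual condition (the paper phrases this as a pullback diagram, you check it directly), then use \Cref{allemorphismen} together with the identification $\calC(\Gamma^m,A)=\varprojlim_i\calC(\Gamma^m,A_i)$ and the fact that $\Hom$ out of a fixed object commutes with projective limits. No substantive difference in approach or content.
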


\begin{proof} By definition, $\Def_{\Thetabar}(A) = \{\Thetabar\} \times_{\cPC_G^{\Gamma}(\kappa)} \cPC_G^{\Gamma}(A)$, so it suffices to prove the claim for $\cPC_G^{\Gamma}$ in place of $\Def_{\Thetabar}$. 
By \Cref{allemorphismen} and since $\calC(\Gamma^n, A) = \varprojlim_i \calC(\Gamma^n, A_i)$, we have
\begin{align*}
    \cPC_G^{\Gamma}(A) &= \Hom_{\CAlg_{\Lambda}^{\calF}}(\Lambda[G^{\bullet}]^{G^0}, ~\calC(\Gamma^{\bullet},A)) = \Hom_{\CAlg_{\Lambda}^{\calF}}(\Lambda[G^{\bullet}]^{G^0}, ~\varprojlim\nolimits_i \calC(\Gamma^{\bullet},A_i)) \\
    &= \varprojlim\nolimits_i \Hom_{\CAlg_{\Lambda}^{\calF}}(\Lambda[G^{\bullet}]^{G^0}, ~\calC(\Gamma^{\bullet},A_i)) = \varprojlim\nolimits_i \cPC_G^{\Gamma}(A_i).
\end{align*}
\end{proof}

We adapt the language of \cite[§4.7]{BJ_new} to our setting.

\begin{definition}
    Let $\psi : B \to \kappa$ be a map of commutative $\Lambda$-algebras and let $\Theta \in \PC_G^{\Gamma}(B)$, such that $\Theta \otimes_B \kappa = \Thetabar$. We say that an ideal $I$ of $B$ is \emph{$\Theta$-open}, if the following conditions hold:
    \begin{enumerate}
        \item $I$ is contained in $\ker(\psi)$.
        \item $B/I$ is artinian and local. If $\kappa$ is finite, we equip $B/I$ with the discrete topology. If $\kappa$ is a local field then $B/I$ is a finite-dimensional $\kappa$-vector space and we equip $B/I$ with the product topology of $\kappa$.
        \item The base extension $\Theta^I := \Theta \otimes_B B/I$ is a continuous $G$-pseudocharacter.
    \end{enumerate}
\end{definition}

\begin{theorem}\label{representabledeffunctor} The deformation functor $\Def_{\Thetabar} : \Art_{\Lambda} \to \Set$
is pro-representable by an inverse limit $\RpsThetabar$ of artinian $\Lambda$-algebras with residue field $\kappa$, endowed with the inverse limit topology. If $\kappa$ is finite, then $\RpsThetabar$ is pro-$p$ and in particular complete.
\end{theorem}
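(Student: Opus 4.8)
The plan is to verify Schlessinger's (or rather Grothendieck's) representability criteria for the functor $\Def_{\Thetabar}$, using the results already established. First I would check that $\Def_{\Thetabar}(\kappa)$ is a single point: a continuous $G$-pseudocharacter over $\kappa$ reducing to $\Thetabar$ must equal $\Thetabar$, which is immediate. Next I would verify the left-exactness property (Schlessinger's (H1)--(H3)), namely that for $A' \to A$ and $A'' \to A$ in $\Art_{\Lambda}$ with $A'' \to A$ surjective, the natural map $\Def_{\Thetabar}(A' \times_A A'') \to \Def_{\Thetabar}(A') \times_{\Def_{\Thetabar}(A)} \Def_{\Thetabar}(A'')$ is a bijection (not merely surjective). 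The key point is that $\calC(\Gamma^n, A' \times_A A'') = \calC(\Gamma^n, A') \times_{\calC(\Gamma^n, A)} \calC(\Gamma^n, A'')$, since a continuous map into a fiber product of topological rings is the same as a compatible pair of continuous maps; combining this with the description $\cPC_G^{\Gamma}(B) = \Hom_{\CAlg_{\Lambda}^{\calF}}(\Lambda[G^{\bullet}]^{G^0}, \calC(\Gamma^{\bullet}, B))$ from \Cref{allemorphismen} and the fact that $\Hom$ out of a fixed object into a limit commutes with that limit, one gets $\cPC_G^{\Gamma}(A' \times_A A'') = \cPC_G^{\Gamma}(A') \times_{\cPC_G^{\Gamma}(A)} \cPC_G^{\Gamma}(A'')$, and then the same for $\Def_{\Thetabar}$ by passing to the fiber over $\{\Thetabar\}$ as in the pullback square in the proof of \Cref{proextension}. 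This shows $\Def_{\Thetabar}$ is left exact, so by Grothendieck's version of Schlessinger it is pro-representable by a complete local $\Lambda$-algebra $\RpsThetabar$ with residue field $\kappa$ (one does not need finiteness of the tangent space for pro-representability, only for noetherianity).

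Concretely, I would construct $\RpsThetabar$ directly as an inverse limit rather than invoke an abstract criterion, which also makes the topology statement transparent. Namely, take the representing object $B_G^{\Gamma}$ of the (not necessarily continuous) functor $\PC_G^{\Gamma}$ from \Cref{repofPC}. The pseudocharacter $\Thetabar$ corresponds to a $\Lambda$-algebra homomorphism $B_G^{\Gamma} \to \kappa$; let $\frakm$ be its kernel. Then set
$$ \RpsThetabar := \varprojlim_{I} (B_G^{\Gamma}/I), $$
the limit taken over all $\Theta^u$-open ideals $I \subseteq \frakm$ (using the universal pseudocharacter $\Theta^u$), endowed with the inverse limit topology. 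One checks that for $A \in \Art_{\Lambda}$ a continuous deformation $\Theta \in \Def_{\Thetabar}(A)$ corresponds to a $\Lambda$-algebra map $B_G^{\Gamma} \to A$ whose kernel is $\Theta^u$-open, hence factors through some $B_G^{\Gamma}/I$, and conversely; thus $\Hom(\RpsThetabar, A) = \varinjlim_I \Hom(B_G^{\Gamma}/I, A) = \Def_{\Thetabar}(A)$ for $A$ artinian, which is exactly pro-representability. By \Cref{proextension} the identification extends to inverse limits of artinian algebras, so $\RpsThetabar$ really pro-represents $\Def_{\Thetabar}$ as a functor on $\widehat{\Art}_{\Lambda}$.

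Finally, for the pro-$p$ assertion in the finite residue field case: when $\kappa$ is finite and $\Lambda$ is the ring of integers of a $p$-adic local field, each artinian quotient $B_G^{\Gamma}/I$ appearing in the limit is a finite local $\Lambda$-algebra with residue field $\kappa$, hence a finite ring of $p$-power order, so its maximal ideal is nilpotent and $p$ lies in the maximal ideal; thus $B_G^{\Gamma}/I$ is a finite $p$-group under addition and $\RpsThetabar = \varprojlim_I B_G^{\Gamma}/I$ is pro-$p$, in particular complete. I expect the main obstacle to be the bookkeeping around continuity: one must be careful that the fiber-product identity for $\calC(\Gamma^n, -)$ genuinely holds for the inverse-limit (resp. product) topologies in play and that ``$\Theta^u$-open ideal'' is the correct index set — i.e. that every continuous deformation over an artinian $A$ really does arise from a $\Theta^u$-open ideal, which ultimately rests on the fact that a continuous map from the profinite $\Gamma^n$ to a discrete (or locally $\kappa$-finite) ring has open, hence finite-index, kernel. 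Apart from that, all ingredients are already in place.
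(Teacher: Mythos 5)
Your proposal is correct and, in its concrete part, is essentially the paper's own proof: the paper likewise (following Chenevier's Proposition 3.3) takes $B_G^{\Gamma}$ from \Cref{repofPC}, forms $\RpsThetabar = \varprojlim_{I} B_G^{\Gamma}/I$ over the cofiltered set of $\Theta$-open ideals, and verifies exactly the continuity points you flag (that the kernel of the map $B_G^{\Gamma}\to A$ attached to a continuous deformation is $\Theta$-open, and that pullback of the universal deformation along a continuous map $\RpsThetabar \to A$ stays continuous), plus the same finiteness argument for the pro-$p$ claim. The preliminary Grothendieck/Schlessinger left-exactness discussion is a harmless alternative framing that the paper does not use.
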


If $\Thetabar = \Theta_{\rhobar}$ is associated to a continuous representation $\overline{\rho} : \Gamma \to G(\kappa)$, we write $R_{\rhobar}^{\ps} := \RpsThetabar$. If $\kappa$ is a local field, we only have one choice for $\Lambda$ and we will usually drop it from notations.

\begin{proof} We adapt the proof of \cite[Proposition 3.3]{MR3444227}. Let $B := B^{\Gamma}_{G}$ be the $\Lambda$-algebra from \Cref{repofPC}, that represents $\PC^{\Gamma}_{G} : \CAlg_{\Lambda} \to \Set$. Let $\Theta^u \in \PC^{\Gamma}_{G}(B)$ be the universal $G$-pseudocharacter and $\psi : B \to \kappa$ the morphism, that corresponds to $\Thetabar$ under the identification $\Hom_{\CAlg_{\Lambda}}(B,\kappa) \cong \PC^{\Gamma}_G(\kappa)$. Let $\calI$ be the set of $\Theta$-open ideals of $B$. The set with relation $(\calI, \subseteq)$ is a cofiltered poset: If $I, J \in \calI$, then we have
\begin{enumerate}
    \item $I \cap J \subseteq \frakm$. 
    \item The map $\iota : B/(I \cap J) \to B/I \times B/J$ is injective, hence $B/(I \cap J)$ is artinian. Let $\frakm'$ be a maximal ideal of $B$, that contains $I \cap J$. Then $IJ \subseteq \frakm'$, hence either $I \subseteq \frakm'$ or $J \subseteq \frakm'$. In both cases $\frakm' = \frakm$, since $B/I$ and $B/J$ are local. Hence $B/(I \cap J)$ is local.
    \item Note, that $\iota$ is a topological embedding. Thus, for the reduction $\Theta^{I \cap J}$ of $\Theta^u$ mod $I \cap J$ the homomorphism $\Theta^{I \cap J}_n : B[G^n]^{G^0} \to \map(\Gamma^n, B/(I \cap J))$ has image in $\calC(\Gamma^n, B/(I \cap J))$ for all $n \geq 1$.
\end{enumerate}

Define the topological $\Lambda$-algebra $R := \varprojlim_{I \in \calI} B/I$.

The inverse limit is taken in the category of topological $\Lambda$-algebras.
Let $\pi_{R} : R \to \kappa$ be the map induced by the identification $B/\ker(\psi) \cong \kappa$ and let $\frakm_{R} := \ker(\pi_{R})$. Each $B/I$ is a local ring with residue field $\kappa$, so an element of $R$ is invertible if and only if its reduction to $\kappa$ is. This shows, that $R$ is local with maximal ideal $\frakm_{R}$.
If $\kappa$ is finite, then each $B/I$ is a finite $p$-group and $R$ is pro-$p$ and in particular complete.

We show, that $R$ pro-represents $\Def_{\Thetabar}$ and that $\Theta^u \otimes_B R \in \Def_{\Thetabar}(R)$ is the universal deformation of $\Thetabar$, where $\iota : B \to R$ is the canonical map. Assume for the proof, that $\Def_{\Thetabar}$ is defined on the category of local topological $\Lambda$-algebras with residue field $\kappa$. This way we get uniqueness of $R$ once we show representability. By \Cref{proextension} we have an isomorphism $\Def_{\Thetabar}(R) \cong \varprojlim_{I \in \calI} \Def_{\Thetabar}(B/I)$, so it suffices to show representability for artinian rings.

If $A \in \Art_{\Lambda}$ and $\Theta \in \Def_{\Thetabar}(A)$, then $\Theta$ corresponds to a unique homomorphism $\phi : B \to A$, such that $\Theta^u \otimes_B A = \Theta$ and $\phi \mod \frakm_A = \psi$. We will show, that $\ker(\phi) \in \calI$. We have $\ker(\phi) \subseteq \ker(\psi) = \frakm$ and $B/\ker(\phi) \subseteq A$ is artinian local.
We have to show, that $\Theta^u \otimes_B B/\ker(\phi)$ is continuous.
Indeed $(\Theta^u \otimes_B B/\ker(\phi)) \otimes_{B/\ker(\phi), \overline\phi} A = \Theta^u \otimes_B A = \Theta$ is continuous, where $\overline\phi : B/\ker(\phi) \to A$ is the map induced by $\phi$. Since $\overline\phi$ is a topological embedding $\Theta^u \otimes_B B/\ker(\phi)$ is continuous.
So there is a unique factorization $B \to R \to B/\ker(\phi) \to A$ of $\phi$ over a continuous map $R \to A$.

For the converse suppose, that $\varphi : R \to A$ is a continuous local $\Lambda$-homomorphism compatible with the projections to $\kappa$.
We have to show, that the pseudocharacter $(\Theta^u \otimes_B R) \otimes_{R, \varphi} A$ is continuous.
It is enough to show, that the universal deformation $\Theta^u \otimes_B R$ is continuous.
The pseudocharacters $(\Theta^u \otimes_B R) \otimes_R B/I = \Theta^u \otimes_B B/I$ are continuous by definition of $\calI$.
For fixed $m \geq 1$ and $f \in \Lambda[G^m]^{G^0}$ the map $(\Theta^u \otimes_B R)_m(f) : \Gamma^m \to R$ will be continuous by the universal property of limits.
\end{proof}

\subsection{Noetherianity for topologically finitely generated profinite groups}\label{subsecnoethTFG} In this subsection we will complete the proof of part \eqref{B1} of \Cref{ThmA} by showing, that the universal pseudodeformation ring of a continuous residual $G$-pseudocharacter over a finite field is noetherian when $\Gamma$ is a topologically finitely generated profinite group.

\begin{proposition}\label{hidaslemma} Assume that $\kappa$ is finite and let $A$ be a pro-$p$ local $\Lambda$-algebra with residue field $\kappa$. The following are equivalent:
\begin{enumerate}
    \item $A$ is noetherian. \label{hida_1}
    \item $\frakm_A$ is a finitely generated ideal. \label{hida_2}
    \item $\frakm_A/\frakm_A^2$ is a finite-dimensional $\kappa$-vector space. \label{hida_3}
    \item $\frakm_A/(\frakm_{\Lambda}+ \frakm_A^2)$ is a finite-dimensional $\kappa$-vector space. \label{hida_4}
\end{enumerate}
\end{proposition}

\begin{proof} $\eqref{hida_1} \Rightarrow \eqref{hida_2} \Rightarrow \eqref{hida_3} \Rightarrow \eqref{hida_4}$ is clear. The proof of $\eqref{hida_4} \Rightarrow \eqref{hida_1}$ can be found in Hida's notes \cite[Lemma 2.10]{hidaNotes}.
\end{proof}

\begin{proposition}\label{charcrhoquer} Assume, that $\kappa$ is finite. Then the following are equivalent:
\begin{enumerate}
    \item $\dim_{\kappa}(\Def_{\Thetabar}(\kappa[\varepsilon])) < \infty$.
    \item $\RpsThetabar$ is a noetherian ring.
\end{enumerate}
\end{proposition}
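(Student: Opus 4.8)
The plan is to unwind $\Def_{\Thetabar}(\kappa[\varepsilon])$ by means of the explicit construction of $\RpsThetabar$ in \Cref{representabledeffunctor}, and thereby express both sides in terms of the cotangent spaces of the finite Artinian quotients occurring there. Write $R:=\RpsThetabar=\varprojlim_{I\in\calI}B/I$, where $B=B_G^{\Gamma}$, $\Theta^u$ is the universal pseudocharacter and $\calI$ is the cofiltered poset of $\Theta^u$-open ideals of $B$; since $\kappa$ is finite, each $B/I$ is a finite discrete ring and $R$ carries the profinite (pro-$p$) topology. First I would establish the identification
$$ \Def_{\Thetabar}(\kappa[\varepsilon]) \;=\; \varinjlim_{I\in\calI}\{\,\phi\colon B/I\to\kappa[\varepsilon]\ \text{local }\Lambda\text{-algebra map lifting }\pi\,\}. $$
Indeed, by \Cref{representabledeffunctor} a deformation over $\kappa[\varepsilon]$ is the same as a continuous local $\Lambda$-algebra homomorphism $R\to\kappa[\varepsilon]$ reducing to $\pi_R$; as $\kappa[\varepsilon]$ is finite discrete such a homomorphism has open kernel, hence factors through some $B/I$ with $I\in\calI$, and conversely any $B/I\to\kappa[\varepsilon]$ lifting $\pi$ yields a continuous pseudocharacter, being the base change of the continuous $\Theta^u\otimes_B B/I$ along a map of finite discrete rings. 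The transition maps in this colimit are injective (precomposition with the surjections $B/I'\twoheadrightarrow B/I$) and the index is filtered ($\calI$ is closed under intersection), so it is a filtered union.

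Next, for each $I$ the standard identification of lifts of $\pi$ to $\kappa[\varepsilon]$ with $\Lambda$-derivations $B/I\to\kappa$ gives a $\kappa$-linear isomorphism $\{\phi\colon B/I\to\kappa[\varepsilon]\ \text{lifting }\pi\}\cong\Hom_\kappa(t^*_{B/I},\kappa)$, where $t^*_{B/I}=\frakm_{B/I}/(\frakm_{B/I}^2+\frakm_{\Lambda}(B/I))$ is finite-dimensional. Combining the two steps,
$$ \dim_{\kappa}\Def_{\Thetabar}(\kappa[\varepsilon]) \;=\; \sup_{I\in\calI}\dim_{\kappa}t^*_{B/I}. $$
It then remains to show that $R$ is noetherian if and only if this supremum is finite. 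One direction is immediate: if $R$ is noetherian then $R\cong\Lambda[[Y_1,\dots,Y_r]]/J$ with $r=\dim_{\kappa}t^*_R<\infty$, and each $t^*_{B/I}$ is a quotient of $t^*_R$, so the supremum is $\le r$.

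For the converse, suppose the supremum equals $N<\infty$ and is attained at $I_*\in\calI$. Then for every $I\in\calI$ with $I\subseteq I_*$ the natural surjection $t^*_{B/I}\twoheadrightarrow t^*_{B/I_*}$ is forced to be an isomorphism. Lifting a $\kappa$-basis of $t^*_{B/I_*}$ to elements $y_1,\dots,y_N\in\frakm_R$ (using $\frakm_R\twoheadrightarrow\frakm_{B/I_*}$), I would check via Nakayama that for all such $I$ the images of $y_1,\dots,y_N$ together with a uniformizer of $\Lambda$ generate $\frakm_{B/I}$, so that the continuous $\Lambda$-algebra homomorphism $\Lambda[[Y_1,\dots,Y_N]]\to R$, $Y_i\mapsto y_i$, surjects onto $B/I$ for the cofinal family $\{I\in\calI\mid I\subseteq I_*\}$. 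Its image is a quotient of $\Lambda[[Y_1,\dots,Y_N]]$, hence a complete noetherian local ring with finite residue field, hence compact; being dense in $R$ it is therefore all of $R$, so $R$ is noetherian. (If one prefers, \Cref{hidaslemma} can be invoked once one knows $\dim_{\kappa}t^*_R<\infty$, but one is then left with exactly the same comparison.)

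The main obstacle is precisely this last comparison: one cannot simply assert $\dim_{\kappa}t^*_R=\sup_{I}\dim_{\kappa}t^*_{B/I}$, because when $R$ fails to be noetherian the ideal $\frakm_{\Lambda}R+\frakm_R^2$ need not be closed in the profinite ring $R$, so the canonical surjection $t^*_R\to\varprojlim_I t^*_{B/I}$ may fail to be injective. This is why the implication ``$\sup<\infty\Rightarrow R$ noetherian'' has to be argued directly through a power-series presentation, exploiting compactness of $R$, rather than through the relative cotangent space of $R$ itself; the remaining ingredients — the colimit description, the derivation computation, and the bound when $R$ is noetherian — are routine.
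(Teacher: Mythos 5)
Your proof is correct, but it takes a genuinely different route from the paper. The paper's argument is two lines: by \Cref{representabledeffunctor}, $\Def_{\Thetabar}(\kappa[\varepsilon])$ is identified with the dual of the relative cotangent space $\frakm_{\RpsThetabar}/(\frakm_{\Lambda}+\frakm_{\RpsThetabar}^2)$, and since $\RpsThetabar$ is pro-$p$ the equivalence is exactly \Cref{hidaslemma} ((4) $\Leftrightarrow$ (1)). You instead avoid the cotangent space of $\RpsThetabar$ altogether: you decompose $\Def_{\Thetabar}(\kappa[\varepsilon])$ as a filtered union of the (finite-dimensional) tangent spaces of the artinian quotients $B/I$ occurring in the pro-presentation, and you prove the hard implication by hand, producing a continuous surjection $\Lambda[[Y_1,\dots,Y_N]]\to\RpsThetabar$ via Nakayama on each $B/I$ together with a compactness-plus-density argument — in effect re-proving the $(4)\Rightarrow(1)$ direction of \Cref{hidaslemma} at the level of the pro-artinian presentation. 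What each buys: the paper's identification is shorter, but it implicitly treats $\Def_{\Thetabar}(\kappa[\varepsilon])$ (continuous homomorphisms to $\kappa[\varepsilon]$) as the full dual of $\frakm_{\RpsThetabar}/(\frakm_\Lambda+\frakm_{\RpsThetabar}^2)$, which is precisely the continuity/closedness subtlety you flag; your argument is longer but self-contained and settles that point directly, showing a posteriori that the two descriptions agree. One small polish: the compactness of the image of $\Lambda[[Y_1,\dots,Y_N]]$ in $\RpsThetabar$ is cleanest as the continuous image of a compact (profinite) ring, rather than via completeness of an abstract quotient, since the latter does not immediately control the subspace topology; with that wording your proof is complete.
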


\begin{proof} Since $\RpsThetabar$ represents $\Def_{\Thetabar}$ (\Cref{representabledeffunctor}), the relative tangent space $(\frakm_{\RpsThetabar}/(\frakm_{\Lambda}+\frakm_{\RpsThetabar}^2))^*$ of $\RpsThetabar$ over $\Lambda$ identifies with $\Def_{\Thetabar}(\kappa[\varepsilon])$. 
Since $\RpsThetabar$ is pro-$p$, the claim follows from \Cref{hidaslemma}.
\end{proof}

\begin{theorem}\label{tfgfingen} Assume that $\kappa$ is finite and $\Gamma$ is topologically finitely generated. Then $\RpsThetabar$ is noetherian.
\end{theorem}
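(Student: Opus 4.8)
The plan is to reduce the statement to a finiteness statement about a tangent space and then exploit topological finite generation to replace $\Gamma$ by a finitely generated abstract subgroup. Since $\kappa$ is finite, \Cref{charcrhoquer} tells us that $\RpsThetabar$ is noetherian as soon as the $\kappa$-vector space $\Def_{\Thetabar}(\kappa[\varepsilon])$ is finite-dimensional; moreover, because $\kappa$ is finite, finite-dimensionality is equivalent to $\Def_{\Thetabar}(\kappa[\varepsilon])$ being a \emph{finite set}. So the whole task becomes: bound the number of continuous $G$-pseudocharacters of $\Gamma$ over $\kappa[\varepsilon]$ lifting $\Thetabar$.

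First I would choose a finite subset of $\Gamma$ topologically generating it and let $\Delta \subseteq \Gamma$ denote the abstract subgroup it generates, so that $\Delta$ is a finitely generated group which is dense in $\Gamma$. The ring $\kappa[\varepsilon]$ is finite, hence discrete, hence Hausdorff, so \Cref{restrictiontoadensesubgroup} applies and the restriction map $\cPC_G^{\Gamma}(\kappa[\varepsilon]) \to \cPC_G^{\Delta}(\kappa[\varepsilon])$ is injective. Forgetting continuity on the target, this gives an injection of $\Def_{\Thetabar}(\kappa[\varepsilon])$ into the set of arbitrary $G$-pseudocharacters $\Theta'$ of $\Delta$ over $\kappa[\varepsilon]$ with $\Theta' \otimes_{\kappa[\varepsilon]} \kappa = \Thetabar|_{\Delta}$. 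It therefore suffices to prove that this last set is finite, which is now a purely algebraic question about the representing ring.

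Next I would invoke \Cref{repofPC} to write $\PC_G^{\Delta}$ as $\Hom_{\CAlg_{\Lambda}}(B_G^{\Delta}, -)$, and \Cref{decisivefiniteness} to conclude that $B := B_G^{\Delta}$ is a finitely generated $\Lambda$-algebra, using that $\Delta$ is finitely generated and $\Lambda$ is noetherian. If $\psi : B \to \kappa$ is the $\Lambda$-algebra map corresponding to $\Thetabar|_{\Delta}$, then the set in question is precisely the fiber over $\psi$ of $\Hom_{\CAlg_{\Lambda}}(B, \kappa[\varepsilon]) \to \Hom_{\CAlg_{\Lambda}}(B, \kappa)$, i.e. the relative tangent space of $\Spec B$ over $\Lambda$ at $\psi$: any lift of $\psi$ has the form $b \mapsto \psi(b) + \varepsilon D(b)$ for a $\Lambda$-linear derivation $D : B \to \kappa$, with $\kappa$ regarded as a $B$-module via $\psi$, and conversely. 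Hence this fiber is a torsor under $\operatorname{Der}_{\Lambda}(B, \kappa) = \Hom_B(\Omega_{B/\Lambda}, \kappa)$, which is a finite-dimensional $\kappa$-vector space because $\Omega_{B/\Lambda}$ is a finitely generated module over the finitely generated $\Lambda$-algebra $B$; being finite-dimensional over the finite field $\kappa$, it is a finite set. Chasing back through the injections, $\Def_{\Thetabar}(\kappa[\varepsilon])$ is finite, hence finite-dimensional, and \Cref{charcrhoquer} finishes the argument.

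The genuinely essential idea is the passage from the continuous deformation problem over $\Gamma$ to the finite-type algebraic deformation problem over the finitely generated discrete group $\Delta$; everything else is formal manipulation of representing objects and Kähler differentials. This is exactly where topological finite generation is used, and it enters only through the density principle \Cref{restrictiontoadensesubgroup} applied to the Hausdorff coefficient ring $\kappa[\varepsilon]$. A minor point worth noting is that one never needs to track the $\kappa$-linear structure on the tangent spaces: since $\kappa$ is finite, mere finiteness as a set suffices to conclude.
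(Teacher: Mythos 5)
Your proof is correct and follows essentially the same route as the paper: reduce to finite-dimensionality of $\Def_{\Thetabar}(\kappa[\varepsilon])$ via \Cref{charcrhoquer}, inject into pseudocharacters of a dense finitely generated subgroup $\Delta$ using \Cref{restrictiontoadensesubgroup}, and conclude by representability (\Cref{repofPC}) together with finite generation of $B_G^{\Delta}$ (\Cref{decisivefiniteness}). The only difference is cosmetic: where you identify the fiber over $\psi$ with $\operatorname{Der}_{\Lambda}(B_G^{\Delta},\kappa)$ via Kähler differentials, the paper simply observes that $\Hom_{\Lambda}(B_G^{\Delta},\kappa[\varepsilon])$ is already finite because $B_G^{\Delta}$ is finitely generated and $\kappa[\varepsilon]$ is a finite ring.
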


\begin{proof} Let $\Delta \subseteq \Gamma$ be a dense and finitely generated subgroup of $\Gamma$. We have a sequence of injections
$$ \Def_{\Thetabar}(\kappa[\varepsilon]) \subseteq \cPC^{\Gamma}_G(\kappa[\varepsilon]) \overset{\eqref{restrictiontoadensesubgroup}}{\subseteq} \cPC^{\Delta}_G(\kappa[\varepsilon]) \subseteq \PC^{\Delta}_G(\kappa[\varepsilon]) \overset{\eqref{repofPC}}{\cong} \Hom_{\Lambda}(B_G^{\Delta}, \kappa[\varepsilon]) $$
By \Cref{decisivefiniteness}, $\Hom_{\Lambda}(B_G^{\Delta}, \kappa[\varepsilon])$ is a finite-dimensional $\kappa$-vector space.
By \Cref{charcrhoquer} we conclude, that $\RpsThetabar$ is noetherian.
\end{proof}

\subsection{Noetherianity for profinite groups satisfying $\Phi_p$}\label{subsecnoethPhip} The idea of establishing noetherianity of the pseudodeformation rings $\RpsThetabar$ in case we only know that $\Gamma$ satisfies Mazur's condition $\Phi_p$ is to prove surjectivity of the map $R^{\ps}_{\iota(\Thetabar)} \to \RpsThetabar$ for a suitable algebraic representation $\iota : G \to \GL_n$, and use noetherianity of $R^{\ps}_{\iota(\Thetabar)}$ proved by Chenevier \cite[Proposition 3.7]{MR3444227}. In this section we establish a criterion in terms of invariant theory, which can be applied to other reductive groups once their invariant theory is understood.

\begin{lemma}\label{invsurjBsurj} Let $\Gamma$ be a group and let $\iota : G \to G'$ be a homomorphism of generalized reductive group schemes over a commutative ring $\calO$. Suppose, that the map $\calO[{G'}^{\bullet}]^{{G'}^0} \to \calO[G^{\bullet}]^{G^0}$ of $\calF$-$\calO$-algebras is surjective. Then the map $\iota^* : B_{G'}^{\Gamma} \to B_{G}^{\Gamma}$ induced by $\iota$ is surjective.
\end{lemma}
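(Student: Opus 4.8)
The plan is to reduce everything to the explicit generating set for $B_G^{\Gamma}$ furnished by \Cref{repofPC}. Write $\Theta^{u,G}\in\PC_G^\Gamma(B_G^\Gamma)$ and $\Theta^{u,G'}\in\PC_{G'}^\Gamma(B_{G'}^\Gamma)$ for the universal pseudocharacters. By \Cref{repofPC}, $B_G^\Gamma$ is generated as an $\calO$-algebra by the elements $\Theta^{u,G}_m(\mu)(\gamma)$ with $m\ge 1$, $\mu\in\calO[G^m]^{G^0}$ and $\gamma\in\Gamma^m$, so it suffices to show that each such element lies in the image of $\iota^*$.

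First I would pin down $\iota^*$ via its universal property. By construction (see \Cref{subsecLafPC}), pushing $\Theta^{u,G}$ forward along $\iota$ yields a $G'$-pseudocharacter $\iota(\Theta^{u,G})\in\PC_{G'}^\Gamma(B_G^\Gamma)$ whose $m$-th component is the composite $\calO[{G'}^m]^{{G'}^0}\xrightarrow{\iota^{(m)}}\calO[G^m]^{G^0}\xrightarrow{\Theta^{u,G}_m}\map(\Gamma^m,B_G^\Gamma)$, where $\iota^{(m)}$ is exactly the map appearing in the hypothesis. By the representability in \Cref{repofPC}, $\iota^*:B_{G'}^\Gamma\to B_G^\Gamma$ is the unique $\calO$-algebra homomorphism classifying $\iota(\Theta^{u,G})$; concretely
\[
\iota^*\bigl(\Theta^{u,G'}_m(\nu)(\gamma)\bigr)=\iota(\Theta^{u,G})_m(\nu)(\gamma)=\Theta^{u,G}_m\bigl(\iota^{(m)}(\nu)\bigr)(\gamma)
\]
for all $m\ge 1$, $\nu\in\calO[{G'}^m]^{{G'}^0}$ and $\gamma\in\Gamma^m$. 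One bookkeeping point that needs care is checking that this identification of $\iota^*$ agrees with the map ``induced by $\iota$'' in the statement, i.e.\ with the one coming via Yoneda from the natural transformation $\PC_G^\Gamma\to\PC_{G'}^\Gamma$; this is immediate from the definitions.

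Given a generator $\Theta^{u,G}_m(\mu)(\gamma)$ of $B_G^\Gamma$, the surjectivity hypothesis lets me choose $\nu\in\calO[{G'}^m]^{{G'}^0}$ with $\iota^{(m)}(\nu)=\mu$, and then the displayed identity shows $\Theta^{u,G}_m(\mu)(\gamma)=\iota^*\bigl(\Theta^{u,G'}_m(\nu)(\gamma)\bigr)\in\im(\iota^*)$. Since $\im(\iota^*)$ is an $\calO$-subalgebra of $B_G^\Gamma$ containing all the generators, it equals $B_G^\Gamma$, which is the assertion. I do not expect a serious obstacle here: the only content beyond formal manipulation is the generation statement of \Cref{repofPC} together with the correct identification of $\iota^*$. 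Equivalently, one could bypass the generators and argue directly from the colimit description $B_G^\Gamma=\colim_{\calF/\Gamma}\calO[G^\bullet]^{G^0}$ that $\iota^*$ is the colimit of the levelwise surjections $\calO[{G'}^\bullet]^{{G'}^0}\to\calO[G^\bullet]^{G^0}$, and that a colimit of surjective ring homomorphisms is surjective because its image is a subalgebra containing the image of each $\calO[G^m]^{G^0}$.
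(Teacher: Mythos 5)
Your proposal is correct and follows essentially the same route as the paper: reduce to the generators $\Theta^u_m(\mu)(\gamma)$ of $B_G^{\Gamma}$ from \Cref{repofPC}, lift $\mu$ along the surjection $\calO[{G'}^m]^{{G'}^0}\to\calO[G^m]^{G^0}$, and identify $\iota^*$ so that $\iota^*(\Theta^{u,G'}_m(\nu)(\gamma))=\Theta^{u,G}_m(\iota^{(m)}(\nu))(\gamma)$. The paper verifies this last identity by evaluating against an arbitrary $\Theta\in\PC_G^{\Gamma}(A)$ and invoking uniqueness, which is the same Yoneda argument you phrase via the universal pseudocharacter; your colimit variant is a fine alternative but not needed.
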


\begin{proof} By \Cref{repofPC} it is enough to show, that for each $m \geq 1$, each $\mu \in \calO[G^{m}]^{G^0}$ and each $\gamma = (\gamma_1, \dots, \gamma_{m}) \in \Gamma^{m}$, the element $t_{\mu, \gamma} \in B_{G}^{\Gamma}$ has a preimage in $B_{G'}^{\Gamma}$. By surjectivity of $\calO[{G'}^{m}]^{{G'}^0} \to \calO[G^{m}]^{G^0}$, we find some $\mu' \in \calO[{G'}^{m}]^{{G'}^0}$ mapping to $\mu$. We claim, that $\iota^*(t_{\mu', \gamma}) = t_{\mu, \gamma}$. Let $A \in \CAlg_{\calO}$, $\Theta \in \PC_G^{\Gamma}(A)$ and $f_{\Theta} : B_G^{\Gamma} \to A$ the homomorphism attached to $\Theta$. Let $f_{\iota(\Theta)} : B_{G'}^{\Gamma} \to A$ be the homomorphism attached to $\iota(\Theta)$. By definition $f_{\Theta}(\iota^*(t_{\mu',\gamma})) = f_{\iota(\Theta)}(t_{\mu',\gamma}) = \iota(\Theta)_m(\mu')(\gamma) = \Theta_m(\gamma)$. Since this characterizes $\iota^*(t_{\mu',\gamma})$ uniquely, we have $\iota^*(t_{\mu',\gamma}) = t_{\mu, \gamma}$.
\end{proof}

\begin{lemma}\label{surjectivity} Assume that $\kappa$ is finite and let $\iota : G \to G'$ be a homomorphism of generalized reductive $\Lambda$-group schemes. Let $\Thetabar \in \cPC^{\Gamma}_G(\kappa)$ be a continuous pseudocharacter and we denote by $\iota(\Thetabar)$ its image in $\cPC^{\Gamma}_{G'}(\kappa)$. Assume, that the homomorphism $B_{G'}^{\Gamma}/\varpi \to B_{G}^{\Gamma}/\varpi$ is surjective, where $\varpi$ is the uniformizer of $\Lambda$ and that $R^{\ps}_{\iota(\Thetabar)}$ is noetherian. Then the natural homomorphism $R^{\ps}_{\iota(\Thetabar)} \to \RpsThetabar$ is surjective.
\end{lemma}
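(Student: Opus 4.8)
The plan is to combine the explicit construction of the two deformation rings from the proof of \Cref{representabledeffunctor} with an elementary lifting argument modulo $p$. Write $B := B_G^{\Gamma}$ and $B' := B_{G'}^{\Gamma}$ for the representing algebras of \Cref{repofPC}, and let $\iota^* : B' \to B$ be the map induced by $\iota$ as in \Cref{invsurjBsurj}; the hypothesis that $B_{G'}^{\Gamma}/p \to B_G^{\Gamma}/p$ is surjective says exactly that $\iota^*(B') + pB = B$. By the proof of \Cref{representabledeffunctor} we have $\RpsThetabar = \varprojlim_{I \in \calI} B/I$, where $\calI$ is the cofiltered poset of $\Theta^u$-open ideals of $B$ and each $B/I$ is a finite $p$-group (here $\kappa$ finite is used), and likewise for $R^{\ps}_{\iota(\Thetabar)}$; in particular both rings are profinite. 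Under pro-representability the natural map $R^{\ps}_{\iota(\Thetabar)} \to \RpsThetabar$ is the one classifying the natural transformation $\Def_{\Thetabar} \to \Def_{\iota(\Thetabar)}$, $\Theta \mapsto \iota(\Theta)$. Concretely, for $I \in \calI$ the composite $R^{\ps}_{\iota(\Thetabar)} \to \RpsThetabar \to B/I$ is the continuous $\Lambda$-homomorphism classifying $\iota(\Theta^u \otimes_B B/I) \in \Def_{\iota(\Thetabar)}(B/I)$; precomposing it with the structural map $B' \to R^{\ps}_{\iota(\Thetabar)}$ gives the map classifying $\iota(\Theta^u)\otimes_B B/I = \Theta'^u \otimes_{B',\iota^*} B/I$, which is just $B' \xrightarrow{\iota^*} B \twoheadrightarrow B/I$. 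Hence $R^{\ps}_{\iota(\Thetabar)} \to \RpsThetabar \to B/I$ has image containing the image of $B' \xrightarrow{\iota^*} B \twoheadrightarrow B/I$, and it suffices to prove that this last composite is surjective for every $I \in \calI$.

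This is the crux of the argument (though it is short, and is the only place the mod-$p$ hypothesis enters). Fix $I \in \calI$ and let $R_I \subseteq B/I$ be the image of $B' \xrightarrow{\iota^*} B \twoheadrightarrow B/I$, a $\Lambda$-subalgebra. Reducing $\iota^*(B') + pB = B$ modulo $I$ gives $R_I + p(B/I) = B/I$. Since $1 \in R_I$ we have $pR_I \subseteq R_I$, so $p(B/I) = pR_I + p^2(B/I) \subseteq R_I + p^2(B/I)$, and therefore $B/I = R_I + p^2(B/I)$; iterating, $B/I = R_I + p^n(B/I)$ for all $n \geq 1$. As $B/I$ is a finite $p$-group we have $p^N(B/I) = 0$ for $N$ large, hence $R_I = B/I$, i.e. the composite is surjective.

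Finally I assemble the pieces topologically. By the previous paragraph the image of the continuous homomorphism $R^{\ps}_{\iota(\Thetabar)} \to \RpsThetabar$ surjects onto every term $B/I$ of the defining inverse system of $\RpsThetabar$, hence is dense; being the continuous image of a compact (profinite) ring inside the Hausdorff ring $\RpsThetabar$, it is also closed. A dense closed subset is everything, so the map is surjective. The only nonformal bookkeeping is the identification in the first paragraph of the composite $R^{\ps}_{\iota(\Thetabar)} \to \RpsThetabar \to B/I$ — equivalently, the verification that $(\iota^*)^{-1}(I)$ is $\Theta'^u$-open whenever $I$ is $\Theta^u$-open, so that the natural map is $\varprojlim_I \bigl(B'/(\iota^*)^{-1}(I) \to B/I\bigr)$; once this is in place the rest is immediate.
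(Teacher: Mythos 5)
Your proof is correct, and its skeleton is the same as the paper's: reduce surjectivity to hitting every finite quotient $B/I$ in the inverse system defining $\RpsThetabar$ from the proof of \Cref{representabledeffunctor}, feed in the hypothesis on $B_{G'}^{\Gamma}/p \to B_G^{\Gamma}/p$, and close with the compact-image/dense-image argument in the Hausdorff profinite ring $\RpsThetabar$. The difference is in how the mod-$p$ input is propagated. The paper first passes to $B_G^{\Gamma}/p$, $\RpsThetabar/p$, invokes Nakayama to reduce to the mod-$p$ map of pseudodeformation rings, checks that $j^{-1}(I)$ is again an open ideal, and shows the index map $\calI' \to \calI$, $I' \mapsto j(I'+J)$, is surjective so as to rewrite $\RpsThetabar/p$ as a limit over $\calI'$. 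You instead stay integral and lift surjectivity quotient-by-quotient: the identification of the composite $R^{\ps}_{\iota(\Thetabar)} \to \RpsThetabar \to B/I$ with the classifying map of $\iota(\Theta^u \otimes_B B/I)$, hence with $B' \xrightarrow{\iota^*} B \to B/I$ after precomposing with the structural map $B' \to R^{\ps}_{\iota(\Thetabar)}$, is a correct Yoneda-type argument, and the iteration $B/I = R_I + p^n(B/I)$ together with $p$-nilpotence of the finite artinian quotient $B/I$ does the lifting. This buys you a small simplification: you avoid the Nakayama step and the implicit identification of $\RpsThetabar/p$ with a limit of mod-$p$ quotients, at the cost of the (routine) verification that the structural map factorization really computes the composite — which, as you note, is equivalent to the paper's check that $(\iota^*)^{-1}(I)$ is open. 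Both routes are sound.
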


\begin{proof}
    By Nakayama's lemma it is enough to show, that the natural map $R^{\ps}_{\iota(\Thetabar)}/\varpi \to \RpsThetabar/\varpi$ induced by $\iota$ is surjective.
    Since $B_{G'}^{\Gamma}/\varpi \to B_{G}^{\Gamma}/\varpi$ is surjective every $\Theta \in \Def_{\iota(\Thetabar)}(A)$ for which $\Theta_m : \Lambda[(G')^m]^{(G')^0} \to \calC(\Gamma^m, A)$ factors over $\Lambda[G^m]^{G^0}$ for all $m \geq 1$ comes from a continuous deformation of $\Thetabar$. So we have $\Def_{\Thetabar}(A) = \Def_{\iota(\Thetabar)}(A) \times_{\PC_{G'}^{\Gamma}(A)} \PC_G^{\Gamma}(A)$ for any $\kappa$-algebra $A \in \Art_{\Lambda}$. Hence $\RpsThetabar/\varpi \cong R^{\ps}_{\iota(\Thetabar)}/\varpi \otimes_{B_{G'}^{\Gamma}/\varpi} B_{G}^{\Gamma}/\varpi$ and the claim follows.
\end{proof}

\begin{theorem}\label{Phipmainthm} Assume that $\kappa$ is finite and that $\Gamma$ satisfies Mazur's condition $\Phi_p$. Let $\iota : G \to \GL_d$ be an algebraic representation of $G$ and assume, that for $m \geq 1$ the natural maps $\Lambda/p[\GL_d^m]^{\GL_n} \to \Lambda/p[G^m]^{G^0}$ are surjective.
Then the canonical map $R^{\ps}_{\iota(\Thetabar)} \to \RpsThetabar$ is surjective and $\RpsThetabar$ is noetherian.
\end{theorem}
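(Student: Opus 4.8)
The plan is to combine \Cref{invsurjBsurj} and \Cref{surjectivity} with Chenevier's noetherianity theorem for determinant laws. The first move is to unwind the invariant-theoretic hypothesis: the assumption that $\Lambda/p[\GL_d^m]^{\GL_d}\to\Lambda/p[G^m]^{G^0}$ is surjective for every $m\geq 1$ says precisely that the map of $\calF$-$(\Lambda/p)$-algebras $(\Lambda/p)[\GL_d^{\bullet}]^{\GL_d}\to(\Lambda/p)[G^{\bullet}]^{G^0}$ is surjective. Since $\Lambda$ is a Dedekind domain and $G$ and $\GL_d$ are generalized reductive over $\Lambda$, \Cref{basechange} identifies these $\calF$-algebras with $\Lambda[\GL_d^{\bullet}]^{\GL_d}\otimes_{\Lambda}\Lambda/p$ and $\Lambda[G^{\bullet}]^{G^0}\otimes_{\Lambda}\Lambda/p$, and at the same time gives canonical isomorphisms $B_{\GL_d}^{\Gamma}/p\cong B_{(\GL_d)_{\Lambda/p}}^{\Gamma}$ and $B_G^{\Gamma}/p\cong B_{G_{\Lambda/p}}^{\Gamma}$. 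Applying \Cref{invsurjBsurj} over the base ring $\calO=\Lambda/p$ to $\iota_{\Lambda/p}:G_{\Lambda/p}\to(\GL_d)_{\Lambda/p}$ then shows that $B_{\GL_d}^{\Gamma}/p\to B_G^{\Gamma}/p$ is surjective.

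This is exactly the hypothesis of \Cref{surjectivity} with $G'=\GL_d$, so that lemma yields at once that the canonical map $R^{\ps}_{\iota(\Thetabar)}\to\RpsThetabar$ is surjective, proving the first assertion.

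For the second assertion, assume in addition that $\Gamma$ satisfies $\Phi_p$. By the comparison of $\GL_d$-pseudocharacters with $d$-dimensional determinant laws of Emerson and Morel (see \Cref{subsecCompChen}), $R^{\ps}_{\iota(\Thetabar)}$ is identified with Chenevier's universal pseudodeformation ring of the residual determinant law underlying $\iota(\Thetabar)$; this ring is noetherian by \cite[Proposition 3.7]{MR3444227} because $\Gamma$ satisfies $\Phi_p$. A surjective ring homomorphism out of a noetherian ring has noetherian target, so $\RpsThetabar$ is noetherian by the previous paragraph. (One can also argue through tangent spaces: the surjection induces an injection $\Def_{\Thetabar}(\kappa[\varepsilon])\hookrightarrow\Def_{\iota(\Thetabar)}(\kappa[\varepsilon])$, whence $\dim_\kappa\Def_{\Thetabar}(\kappa[\varepsilon])<\infty$, and one invokes \Cref{charcrhoquer}.)

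The bookkeeping in the first paragraph — matching the mod-$p$ invariant algebras with base changes of the representing rings $B_G^{\Gamma}$ and $B_{\GL_d}^{\Gamma}$ — is immediate from \Cref{basechange}, and the transfer of noetherianity along a surjection is formal. The one genuine external input, and the step I would be most careful about, is identifying $R^{\ps}_{\iota(\Thetabar)}$, which is built here from Lafforgue's $\GL_d$-pseudocharacters, with the deformation ring of a residual determinant law, so that Chenevier's theorem applies: this is what the Emerson–Morel comparison provides, but it requires checking compatibility of the two deformation functors.
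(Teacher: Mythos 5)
Your proposal is correct and follows essentially the same route as the paper: \Cref{invsurjBsurj} together with \Cref{basechange} gives the surjection $B^{\Gamma}_{\GL_d}/p \to B^{\Gamma}_G/p$, then \Cref{surjectivity} yields $R^{\ps}_{\iota(\Thetabar)} \twoheadrightarrow \RpsThetabar$, and noetherianity is pulled back from Chenevier's \cite[Proposition 3.7]{MR3444227}. The compatibility with determinant laws that you rightly flag as the one external check is exactly what \Cref{Rpsisom} supplies, so nothing is missing.
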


\begin{proof} It follows from \Cref{invsurjBsurj}, that the maps $B^{\Gamma}_{\GL_{d,\Lambda/p}} \to B^{\Gamma}_{G_{\Lambda/p}}$ are surjective. By \Cref{basechange}, we have surjections $B^{\Gamma}_{\GL_d}/p \to B^{\Gamma}_G/p$. We use Chenevier's result \cite[Proposition 3.7]{MR3444227} to see, that $R^{\ps}_{\iota(\Thetabar)}$ is noetherian. Hence we can apply \Cref{surjectivity} and see, that the map $R^{\ps}_{\iota(\Thetabar)} \to \RpsThetabar$ is surjective, hence $\RpsThetabar$ is noetherian as well.
\end{proof}

In case $\Thetabar$ comes from a representation, we can control the tangent space of the pseudodeformation ring under a cohomological finiteness condition, using another result of Chenevier \cite[Proposition 2.35]{MR3444227}.

\begin{corollary}\label{H1mainthm} Assume that $\kappa$ is finite and that $\Thetabar = \Theta_{\rhobar}$ for a continuous representation $\rhobar : \Gamma \to G(\kappa)$. Let $\iota : G \to \GL_d$ be an algebraic representation of $G$ and assume, that for $m \geq 1$ the natural maps $\Lambda/p[\GL_d^m]^{\GL_n} \to \Lambda/p[G^m]^{G^0}$ are surjective, that $\dim_{\kappa} H^1(\Gamma, \ad(\iota(\rhobar))) < \infty$ and $p > d$.
Then $\RpsThetabar$ is noetherian.
\end{corollary}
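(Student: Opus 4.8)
The plan is to bootstrap from \Cref{Phipmainthm} together with Chenevier's computation of the tangent space of determinant-law deformation rings. First I would apply the first assertion of \Cref{Phipmainthm}, whose hypotheses on $G$, $\iota$ and $\Lambda$ are exactly the ones assumed here: the canonical map $R^{\ps}_{\iota(\Thetabar)} \to \RpsThetabar$ is surjective. Since a quotient of a noetherian ring is noetherian, it suffices to show that $R^{\ps}_{\iota(\Thetabar)}$ is noetherian.

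Next I would reinterpret the deformation problem for $\iota(\Thetabar)$ in Chenevier's language. As $\Thetabar = \Theta_{\rhobar}$, compatibility of the push-forward along $\iota$ with passage from representations to pseudocharacters gives $\iota(\Thetabar) = \Theta_{\iota\circ\rhobar}$, the $\GL_d$-pseudocharacter of the continuous representation $\iota\circ\rhobar \colon \Gamma \to \GL_d(\kappa)$. By the comparison theorem of Emerson and Morel recalled in \Cref{subsecCompChen}, $\GL_d$-pseudocharacters over a $\Lambda$-algebra $A$ agree functorially with $d$-dimensional determinant laws over $A$; hence $R^{\ps}_{\iota(\Thetabar)}$ is canonically isomorphic, as a $\Lambda$-algebra, to the universal deformation ring of the determinant law $\Dbar := \det(\iota\circ\rhobar)$ studied by Chenevier.

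Then I would bound the reduced tangent space. The analogue of \Cref{charcrhoquer} with $\GL_d$ in place of $G$ reduces the noetherianity of $R^{\ps}_{\iota(\Thetabar)}$ to the finiteness of $\dim_{\kappa}\Def_{\iota(\Thetabar)}(\kappa[\varepsilon])$, the tangent space of $R^{\ps}_{\iota(\Thetabar)}$. Under the standing hypothesis $p > d$, Chenevier's \cite[Proposition 2.35]{MR3444227} bounds this dimension in terms of $\dim_{\kappa} H^1(\Gamma, \ad(\iota\circ\rhobar))$ (via the characteristic-polynomial coefficients of $\iota\circ\rhobar$), and the latter is finite by assumption. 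Combining the three steps yields that $\RpsThetabar$ is noetherian.

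The only delicate point I anticipate is the bookkeeping in the middle step: Chenevier fixes a particular coefficient ring such as $W(\kappa)$, whereas here $\Lambda$ is an arbitrary coefficient ring for $\kappa$, so one must check that both the Emerson--Morel comparison and \cite[Proposition 2.35]{MR3444227} are insensitive to this choice, which they are, since the relevant assertions concern the functors on $\Art_{\Lambda}$ and the tangent space is a $\kappa$-vector space computed over the residue field. A minor point is that \cite[Proposition 2.35]{MR3444227} is phrased for the determinant of an actual representation; this applies verbatim here because $\Dbar$ is the determinant of the genuine representation $\iota\circ\rhobar$ and $p>d$ is precisely the hypothesis required there.
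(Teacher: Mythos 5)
Your proposal is correct and follows essentially the same route as the paper: it first obtains the surjection $R^{\ps}_{\iota(\Thetabar)} \twoheadrightarrow \RpsThetabar$ from \Cref{Phipmainthm} and then deduces noetherianity of $R^{\ps}_{\iota(\Thetabar)}$ from Chenevier's results \cite[Proposition 2.35, Proposition 3.7]{MR3444227}, using the Emerson--Morel identification with determinant-law deformations. The extra details you supply (that $\iota(\Thetabar)=\Theta_{\iota\circ\rhobar}$, the tangent-space criterion, and the insensitivity to the choice of $\Lambda$) are exactly what the paper leaves implicit.
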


\begin{proof}
    By \cite[Proposition 2.35, Proposition 3.7]{MR3444227} $R^{\ps}_{\iota(\Thetabar)}$ is noetherian.
    We obtain a surjection $R^{\ps}_{\iota(\Thetabar)} \twoheadrightarrow \RpsThetabar$ from \Cref{Phipmainthm}.
\end{proof}

\begin{remark}\label{Rpssurj} Let ${G \in \{\SL_n, \GL_n, \Sp_{2n}, \GSp_{2n}, \SO_{2n+1}, \OO_{2n+1}, \GO_n\}}$ over a coefficient ring $\Lambda$ with finite residue field $\kappa$ and assume $p > 2$ in the orthogonal cases. For the standard representation $\iota : G \hookrightarrow \GL_d$, we have shown in \Cref{prtorsfingen}, that for $m \geq 1$ the natural maps $\Lambda/p[\GL_d^m]^{\GL_n} \to \Lambda/p[G^m]^{G^0}$ are surjective, so \Cref{Phipmainthm} and \Cref{H1mainthm} apply.
\end{remark}

\begin{remark}
    One might ask, whether a representation $\iota$ as assumed in \Cref{Phipmainthm} and \Cref{H1mainthm} always exists. For the purpose of this remark let us assume that $G$ is a Chevalley group over $\bbZ$. We don't expect such a representation $\iota$ to exist for arbitrary $G$: It follows from \Cref{gensforGLSL}, that if there is a representation $\iota : G \to \GL_d$ with $\bbZ[\GL_d^m]^{\GL_d} \to \bbZ[G^m]^G$ surjective for all $m \geq 0$ then the $\calF$-$\bbZ$-algebra $\bbZ[G^{\bullet}]^G$ is generated by $\bbZ[G^1]^G$ (just take $f \in \bbZ[G^m]^G$, lift it to $\bbZ[\GL_d^m]^{\GL_d}$ and write it as as sums and products of invariants coming from $\bbZ[\GL_d^1]^{\GL_d}$ by a suitable substitution). For many $G$ the group $G(\bbC)$ satisfies the following property: We say that $G(\bbC)$ is \emph{acceptable}, if any two homomorphisms $\rho, \rho' : \Gamma \to G(\bbC)$ are conjugate if and only if for each $\gamma \in \Gamma$ the elements $\rho(\gamma), \rho'(\gamma)$ are conjugate by some $A_{\gamma} \in G(\bbC)$ which may depend on $\gamma$.
    If $\bbZ[G^{\bullet}]^G$ is generated by $\bbZ[G^1]^G$, then two pseudocharacters $\Theta_{\rho}, \Theta_{\rho'} \in \PC_G^{\Gamma}(\bbC)$ are determined by the homomorphisms $(\Theta_{\rho})_1, (\Theta_{\rho'})_1 : \bbZ[G]^G \to \map(\Gamma, \bbC)$, which are in turn defined as $(\Theta_{\rho})_1(f)(\gamma) := f(\rho(\gamma))$, $(\Theta_{\rho'})_1(f)(\gamma) := f(\rho'(\gamma))$. If we now assume, that $\rho(\gamma), \rho'(\gamma)$ are conjugate for all $\gamma \in \Gamma$, it follows from invariance of $f$, that $(\Theta_{\rho})_1(f)(\gamma) = (\Theta_{\rho'})_1(f)(\gamma)$ for all $\gamma$ and all $r$ and hence $\Theta_{\rho} = \Theta_{\rho'}$. By the uniqueness part of \Cref{reconstructiongeneral} this implies, that $\rho$ and $\rho'$ are conjugate. So if $\bbZ[G^{\bullet}]^G$ is generated by $\bbZ[G^1]^G$, then $G(\bbC)$ is already acceptable.
    Acceptable Lie groups $G(\bbC)$ have been studied in \cite{Larsen1994} and there are groups $G$ for which it is known that $G(\bbC)$ is unacceptable:
    The groups $\SO_{2n}(\bbC)$ for $n \geq 3$ are unacceptable \cite[§4.3]{Weidner} and recently Chenevier and Gan \cite{chenevier2023rm} showed that also $\Spin(7)$ is unacceptable. We expect a close relationship between acceptability of $G(\bbC)$ and the condition of $\bbZ[G^{\bullet}]^G$ being generated by $\bbZ[G^1]^G$. Based on the current state of knowledge of invariant theory we cannot rule out that these two conditions are equivalent.
    We see this as strong evidence that for some groups $G$, the $\calF$-$\bbF_p$-algebra $\bbF_p[G^{\bullet}]^G$ is not generated by $\bbF_p[G^1]^G$ although we would have to argue with unacceptability of $G(\overline \bbF_p)$ in place of $G(\bbC)$ as above, as we don't know whether the cokernel of $\bbZ[\GL_d^1]^{\GL_d} \to \bbZ[G^1]^G$ has $p$-torsion. To our knowledge these questions have not been studied in characteristic $p$.
    Still we think, that \Cref{Phipmainthm} and \Cref{H1mainthm} do apply to a large class of groups $G$ and a list of possible candidates is apparent from \cite{Larsen1994}.
\end{remark}

\subsection{Comparison with Chenevier's construction}
\label{subsecCompChen}

\subsubsection{Determinant laws}
\label{secdeterminants}

Chenevier's definition of pseudorepresentations relies on the idea of a generalized determinant. He uses the language of polynomial laws. We recall the basic definitions.

Let $A$ be a commutative ring and $M$ and $N$ be $A$-modules. The association $B \mapsto M \otimes_A B$ defines a functor $\underline{M} : \CAlg_A \to \Set$ on the category of commutative $A$-algebras $\CAlg_A$. Any $A$-linear map $f : M \to N$ gives rise to a natural transformation $\underline{M} \to \underline{N}$.

\begin{definition} Let $A$ be a commutative ring and $M$ and $N$ be two $A$-modules.
\begin{enumerate}
    \item[(i)] An \emph{$A$-polynomial law} $P : M \to N$ is a natural transformation of functors $f : \underline{M} \to \underline{N}$.
    \item[(ii)] An $A$-polynomial law $P : M \to N$ is \emph{homogeneous} of degree $n \geq 0$, if $P_B(bx) = b^nP_B(x)$ for all $B \in \CAlg_A$, all $b \in B$ and all $x \in M \otimes_A B$.
    \item[(iii)] When $M = R$ is a unital (not necessarily commutative) associative $A$-algebra, then $P : R \to N$ is \emph{multiplicative}, if $P_B(1) = 1$ and $P_B(xy) = P_B(x)P_B(y)$ for all $B\in \CAlg_A$ and all $x,y \in R \otimes_A B$.
\end{enumerate}
\end{definition}

\begin{definition} Let $A$ be a commutative ring, $R$ be any $A$-algebra and $d \geq 1$ an integer. A \emph{$d$-dimensional $A$-valued determinant law} is a multiplicative $A$-polynomial law $D : R \to A$, that is homogeneous of degree $d$.
\end{definition}

In this text, we are going to consider determinant laws only when $R = A[\Gamma]$ is a group algebra. We denote the set of $A$-valued determinant laws $D : A[\Gamma] \to A$ by $\Det_d^{\Gamma}(A)$.

If $\rho : \Gamma \to \GL_d(A)$ is a representation, then we obtain a determinant law $D_{\rho} \in \Det_d^{\Gamma}(A)$ by first extending $\rho$ to a homomorphism of $A$-algebras $\rho : A[\Gamma] \to M_d(A)$ and the setting $D_{\rho, B}(r) := \det(\rho(r))$ for all $r \in B[\Gamma]$.
This defines a map $\Rep^{\Gamma, \square}_{\GL_d}(A) \to \Det_d^{\Gamma}(A)$, which is natural in $A$ and $\Gamma$.

\begin{definition} Let $D$ be an $A$-linear $d$-dimensional determinant law. We define the coefficients $\Lambda_i : R \to A$ of the characteristic polynomial of $D$ by the expansion
$$ \chi^D(r,t)=D_{B[t]}(t-r)=\sum_{i=0}^{d}(-1)^i \Lambda_{i,B}(r)t^{d-i} \in B[t]$$
for all $B \in \CAlg_A$. 
\end{definition}

One can show, that the coefficients $\Lambda_i$ give rise to $i$-homogeneous $A$-polynomial laws.

\subsubsection{Emerson's bijection}

Kathleen Emerson has proven in her 2018 dissertation \cite{Emerson2018ComparisonOD}, see also \cite{emerson2023comparison}, that there is a bijection between $\GL_d$-valued pseudocharacters and $d$-dimensional determinant laws over any base ring. We summarize her results and adapt them to the continuous case in \Cref{seccontemerson}. In this section we consider $\GL_d$ as a group scheme over $\bbZ$.

\begin{theorem}\label{chenevierlafforguetotal} Let $A$ be a commutative ring, $\Gamma$ a group and $d \geq 1$. Then the map
$$ \PC_{\GL_d}^{\Gamma}(A) \to \Det_{d}^{\Gamma}(A), \quad \Theta \mapsto D_{\Theta} $$
defined in \cite[Theorem 4.1 (ii)]{emerson2023comparison} is bijective.
\end{theorem}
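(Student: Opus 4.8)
This theorem — the bijection between $\GL_d$-pseudocharacters and $d$-dimensional determinant laws — is essentially quoted from Emerson's work \cite{Emerson2018ComparisonOD, emerson2023comparison}, so the "proof" here will be a matter of recalling the construction of the map $\Theta \mapsto D_\Theta$ and invoking the cited results for its bijectivity. First I would recall how one produces a determinant law from a $\GL_d$-pseudocharacter: given $\Theta = (\Theta_m)_{m \geq 1}$, one has for each $m$ an $\calO$-algebra map $\Theta_m : \bbZ[\GL_d^m]^{\GL_d} \to \map(\Gamma^m, A)$, and by \Cref{invGLZ} the source is generated by the shifted coefficients $\sigma_i(\bbX^{(j_1)} \cdots \bbX^{(j_s)})$ of characteristic polynomials together with the inverse determinants. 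These are exactly the data needed to specify the coefficients $\Lambda_i$ of a characteristic polynomial of a putative determinant law on $A[\Gamma]$, extended to $B[\Gamma]$ for all $B \in \CAlg_A$ by the polynomial-law formalism; multiplicativity and homogeneity of $D_\Theta$ then translate into the substitution axioms (1) and (2) of \Cref{LafPC}. This is precisely the content of \cite[Theorem 4.1 (ii)]{emerson2023comparison}, so I would simply cite it for the existence and well-definedness of $\Theta \mapsto D_\Theta$.

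**Injectivity and surjectivity.** For bijectivity I would again defer to Emerson: \cite[Theorem 4.1]{emerson2023comparison} (equivalently the main comparison theorem of \cite{Emerson2018ComparisonOD}) asserts that this map is an isomorphism of functors $\CAlg_A \to \Set$, hence in particular a bijection on $A$-points. If one wanted to sketch why, the key point is that both sides are determined by their values on the invariant generators: a determinant law is equivalent to the datum of its characteristic polynomial coefficients $\Lambda_i : A[\Gamma] \to A$ subject to the Cayley–Hamilton-type relations (this is Chenevier's reformulation), and a $\GL_d$-pseudocharacter, via \Cref{allemorphismen}, is an $\calF$-$\calO$-algebra homomorphism out of $\bbZ[\GL_d^\bullet]^{\GL_d}$, whose generators by \Cref{invGLZ} are exactly those same $\sigma_i$ of products of the generic matrices and their inverses. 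Matching the relations on the two sides — the second fundamental theorem for $\GL_d$-invariants of matrices on the pseudocharacter side versus the defining identities of a determinant law — is what makes the correspondence a bijection, and this is the technical heart of Emerson's argument.

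**The main obstacle.** Since the statement is a direct citation, there is no real obstacle in the proof as written; the honest answer is that the work has been done in \cite{Emerson2018ComparisonOD} and \cite{emerson2023comparison}, and the role of this section is to set up notation ($D_\Theta$, the coefficients $\Lambda_i$) so that the subsequent comparison with Chenevier's deformation functor in \Cref{subsecCompChen} goes through. Thus the proof should read, in essence: "This is \cite[Theorem 4.1 (ii)]{emerson2023comparison}." If a more self-contained treatment were desired, the genuinely delicate step would be verifying that the compatibility conditions (1) and (2) of \Cref{LafPC} correspond exactly to multiplicativity of the polynomial law $D_\Theta$ — in particular condition (2), which encodes $D(xy) = D(x)D(y)$ at the level of the invariant functions $f \mapsto \hat f$ — and conversely that every determinant law arises this way, for which one needs the first and second fundamental theorems for $\GL_d$ acting on tuples of matrices in the integral setting (de Concini–Procesi \cite{MR3726879}, Zubkov \cite{zubkov99}), exactly as recalled in \Cref{secinvthy}.
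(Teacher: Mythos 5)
Your proposal matches the paper's treatment: the paper gives no proof of this theorem beyond the citation to Emerson--Morel, and only records afterwards the characterizing property $\Lambda_{i,A}(\gamma) = \Theta_1(\sigma_i)(\gamma)$, exactly as you describe. Your additional sketch of why the comparison works (matching the invariant-theoretic generators and relations) is reasonable supplementary context but is not needed, since the statement is simply quoted from \cite[Theorem 4.1 (ii)]{emerson2023comparison}.
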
 

Emerson's bijection is uniquely determined by the following property: If $\sigma_i$ for $1 \leq i \leq d$ are the coefficients of the characteristic polynomial of a matrix in $\GL_d$ viewed as elements of $\bbZ[\GL_d]^{\GL_d}$, then a $\GL_d$-pseudocharacter $\Theta \in \PC_{\GL_d}^{\Gamma}(A)$ corresponds to a $d$-dimensional determinant law $D \in \Det_d^{\Gamma}(A)$ if and only if $\Lambda_{i, A}(\gamma) = \Theta_1(\sigma_i)(\gamma)$ for all $\gamma \in \Gamma$.

In particular, if $\rho : \Gamma \to \GL_d(A)$ is a representation, then $D_{\Theta_{\rho}} = D_{\rho}$.

\subsubsection{Continuous determinant laws}
\label{seccontemerson}

Let $\Gamma$ be a topological group and let $A$ be a topological ring. We say that a $d$-dimensional $A$-linear determinant law $D \in \Det_d^{\Gamma}(A)$ is \emph{continuous}, if the coefficients $\Lambda_i$ of the characteristic polynomial of $D$ give rise to continuous maps $\Lambda_{i,A}|_{\Gamma} : \Gamma \to A$. This notion of continuity is equivalent to that defined in \cite[§2.30]{MR3444227}. We denote the set of continuous $d$-dimensional $A$-linear determinant laws by $\cDet_d^{\Gamma}(A)$.

If $\rho : \Gamma \to \GL_d(A)$ is a continuous representation, then $D_{\rho}$ is a continuous determinant law. So we have a map $\cRep^{\Gamma, \square}_{\GL_d}(A) \to \cDet^{\Gamma}_d(A)$, which is natural in $A$ and $\Gamma$.

\begin{proposition}\label{chenevierlafforguecont} A $G$-pseudocharacter $\Theta \in \PC_{\GL_d}^{\Gamma}(A)$ is continuous if and only if $D_{\Theta}$ is continuous. In particular, the bijection $\PC_{\GL_d}^{\Gamma}(A) \to \Det_{d}^{\Gamma}(A), ~\Theta \mapsto D_{\Theta}$ in \Cref{chenevierlafforguetotal} restricts to a bijection $\cPC_{\GL_d}^{\Gamma}(A) \to \cDet_{d}^{\Gamma}(A)$.
\end{proposition}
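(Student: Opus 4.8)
The plan is to reduce the equivalence to an elementary comparison of the two notions of continuity, using Emerson's bijection from \Cref{chenevierlafforguetotal} together with the description of $G$-pseudocharacter continuity in \Cref{LafPC} and the generators of $\bbZ[\GL_d^m]^{\GL_d}$ from \Cref{invGLZ}. The key observation is that both conditions are governed by the same family of functions $\Gamma^m \to A$, namely the functions $\gamma \mapsto \sigma_i(\rho(\gamma_{j_1})\cdots\rho(\gamma_{j_s}))$ (equivalently $\Theta_m(\sigma_i(\bbX^{(j_1)}\cdots\bbX^{(j_s)}))(\gamma)$), and so continuity of one family forces continuity of the other.

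First I would recall that by \Cref{gensforGLSL} (or \Cref{invGLZ}) the $\calF$-$\bbZ$-algebra $\bbZ[\GL_d^{\bullet}]^{\GL_d}$ is generated by $s_1, \dots, s_d \in \bbZ[\GL_d]^{\GL_d}$, i.e. every element of $\bbZ[\GL_d^m]^{\GL_d}$ is a $\bbZ$-polynomial in the elements $\sigma_i(\bbX^{(j_1)}\cdots\bbX^{(j_s)})$. Since, for a $\GL_d$-pseudocharacter $\Theta$, each $\Theta_m : \bbZ[\GL_d^m]^{\GL_d} \to \map(\Gamma^m, A)$ is a ring homomorphism and the subset $\calC(\Gamma^m, A)$ of continuous maps is a subring, it follows that $\Theta$ is continuous if and only if $\Theta_m(\sigma_i(\bbX^{(j_1)}\cdots\bbX^{(j_s)}))$ is continuous for all $m$, all $i \in \{1,\dots,d\}$, all $s \geq 0$ and all indices $j_1, \dots, j_s \in \{1,\dots,m\}$. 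Moreover, using the substitution axioms (1) and (2) of \Cref{LafPC}, the function $\Theta_m(\sigma_i(\bbX^{(j_1)}\cdots\bbX^{(j_s)}))(\gamma_1,\dots,\gamma_m)$ equals $\Theta_1(\sigma_i)(\gamma_{j_1}\cdots\gamma_{j_s})$; hence continuity of all these maps is equivalent to continuity of the single map $\Theta_1(\sigma_i) : \Gamma \to A$ for each $i \in \{1,\dots,d\}$ (the multiplication maps $\Gamma^s \to \Gamma$ being continuous).

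Next I would invoke the characterizing property of Emerson's bijection stated after \Cref{chenevierlafforguetotal}: $\Theta$ corresponds to $D_\Theta$ if and only if $\Lambda_{i,A}|_\Gamma = \Theta_1(\sigma_i)$ as maps $\Gamma \to A$ for all $i \in \{1,\dots,d\}$. By definition, $D_\Theta$ is continuous precisely when each $\Lambda_{i,A}|_\Gamma : \Gamma \to A$ is continuous. Combining the previous paragraph with this, we get: $\Theta$ continuous $\iff$ all $\Theta_1(\sigma_i)$ continuous $\iff$ all $\Lambda_{i,A}|_\Gamma$ continuous $\iff$ $D_\Theta$ continuous. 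The final sentence of the proposition then follows immediately by restricting the bijection of \Cref{chenevierlafforguetotal} to the continuous loci on both sides.

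The only genuinely delicate point is the passage, via the pseudocharacter axioms, from the value $\Theta_m(\sigma_i(\bbX^{(j_1)}\cdots\bbX^{(j_s)}))$ evaluated at a tuple to $\Theta_1(\sigma_i)$ evaluated at a product — one must make sure the element $\sigma_i(\bbX^{(j_1)}\cdots\bbX^{(j_s)}) \in \bbZ[\GL_d^m]^{\GL_d}$ is literally $\widehat{\cdots\widehat{\sigma_i^{\zeta}}}$ obtained from $\sigma_i \in \bbZ[\GL_d]^{\GL_d}$ by the substitution $\zeta$ sending the single generator to the word $x_{j_1}\cdots x_{j_s}$, so that axioms (1) and (2) apply directly. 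This is a short bookkeeping check and no real obstacle; everything else is formal.
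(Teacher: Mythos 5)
Your proposal is correct and follows essentially the same route as the paper: the forward direction is immediate from $\Lambda_{i,A}|_\Gamma = \Theta_1(\sigma_i)$, and the converse uses that $\bbZ[\GL_d^{\bullet}]^{\GL_d}$ is generated as an $\calF$-algebra by $s_1,\dots,s_d$ (\Cref{invGLZ}/\Cref{gensforGLSL}), so that continuity of the $\Theta_1(\sigma_i)$ propagates to all of $\Theta$. One small correction: by \Cref{invGLZ}(2) the ring generators of $\bbZ[\GL_d^m]^{\GL_d}$ are not just the $\sigma_i$ of positive words $\bbX^{(j_1)}\cdots\bbX^{(j_s)}$ but also $\det^{-1}(\bbX^{(j)})$ — equivalently, you must allow group words involving inverses of the matrix coordinates — which is harmless for your argument since inversion on the topological group $\Gamma$ is continuous, so the same substitution reduction to $\Theta_1(\sigma_i)$ still applies.
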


\begin{proof} First suppose, that $\Theta$ is continuous. Then $\Lambda_{i,A}|_{\Gamma} = \Theta_1(\sigma_i)$ is a continuous map by definition of continuity of $\Theta$, hence $D_{\Theta}$ is continuous. 
Conversely, if $D_{\Theta}$ is continuous, then $\Theta_1(\sigma_i)$ is continuous for all $1 \leq i \leq d$. Since the $\calF$-$\bbZ$-algebra $\bbZ[\GL_d^{\bullet}]^{\GL_d}$ is generated by $\{s_1, \dots, s_d\}$ and $\det^{-1} = s_d^{-1}$ (see \Cref{invGLZ}), the image of $\bbZ[\GL_d^{\bullet}]^{\GL_d}$ is contained $\calC(\Gamma^{\bullet}, A)$, as desired.
\end{proof}

When $\Dbar$ is a continuous $\kappa$-valued determinant law, we can define a deformation functor $\Def_{\Dbar} : \Art_{\Lambda} \to \Set$ analogous to \Cref{pseudodeffunctor}. We refer to \cite[§3.1]{MR3444227} for a more extensive discussion of deformations of determinant laws. If $\kappa$ is finite, it is proved in \cite[Proposition 3.3]{MR3444227}, that $\Def_{\Dbar}$ is pro-representable by a local topological ring $R^{\ps}_{\Dbar}$; the result holds for all $\kappa$ we consider by the same arguments. Indeed the deformation theory of determinant laws is equivalent to the deformation theory of the corresponding $\GL_n$-pseudocharacter: 

\begin{corollary}\label{Rpsisom} Let $D_{\Thetabar}$ be the determinant law attached to $\Thetabar$ by \Cref{chenevierlafforguetotal}. Then the natural transformation of \Cref{chenevierlafforguecont} restricts to a natural bijection $\Def_{\Thetabar} \to \Def_{D_{\Thetabar}}$. In particular, there is a canonical isomorphism $\RpsThetabar \cong R^{\ps}_{D_{\Thetabar}}$ of universal pseudodeformation rings.
\end{corollary}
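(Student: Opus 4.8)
The plan is to deduce the statement formally from the naturality of Emerson's bijection. By \Cref{chenevierlafforguetotal} the assignment $\Theta \mapsto D_{\Theta}$ is a natural transformation $\PC_{\GL_d}^{\Gamma} \to \Det_{d}^{\Gamma}$ of functors on $\CAlg_{\Lambda}$ each of whose components is bijective, and by \Cref{chenevierlafforguecont} it restricts to a bijection $\cPC_{\GL_d}^{\Gamma}(A) \to \cDet_{d}^{\Gamma}(A)$ for every topological $\Lambda$-algebra $A$; since formation of the continuous subsets commutes with scalar extension, the restricted transformation is still natural in $A$. In particular, for every $A \in \Art_{\Lambda}$ with its canonical topology (discrete if $\kappa$ is finite, the $\kappa$-vector space topology otherwise) the reduction map $A \to \kappa$ gives $D_{\Theta \otimes_A \kappa} = D_{\Theta} \otimes_A \kappa$ for $\Theta \in \cPC_{\GL_d}^{\Gamma}(A)$, and by construction the bijection over $\kappa$ sends $\Thetabar$ to $D_{\Thetabar}$.

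First I would observe that this already shows $\Theta \otimes_A \kappa = \Thetabar$ if and only if $D_{\Theta} \otimes_A \kappa = D_{\Thetabar}$, so that for each $A \in \Art_{\Lambda}$ the bijection $\cPC_{\GL_d}^{\Gamma}(A) \to \cDet_{d}^{\Gamma}(A)$ carries the subset $\Def_{\Thetabar}(A) \subseteq \cPC_{\GL_d}^{\Gamma}(A)$ bijectively onto $\Def_{D_{\Thetabar}}(A)$. One has to check here that the topology placed on $A$ and on its artinian quotients in the definition of $\Def_{D_{\Thetabar}}$ following \cite[§3.1]{MR3444227} agrees with the one used in \Cref{pseudodeffunctor}; this is immediate from the definitions in both the finite–residue-field and the local-field case. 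Since the bijections are natural in $A$, this produces an isomorphism of functors $\Def_{\Thetabar} \xrightarrow{\sim} \Def_{D_{\Thetabar}} : \Art_{\Lambda} \to \Set$.

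It remains to pass to the pro-representing rings. The functor $\Def_{\Thetabar}$ is pro-represented by $\RpsThetabar$ by \Cref{representabledeffunctor}, and $\Def_{D_{\Thetabar}}$ is pro-represented by $R^{\ps}_{D_{\Thetabar}}$ by \cite[Proposition 3.3]{MR3444227} (the argument given there applies verbatim to all $\kappa$ we consider). Uniqueness of a pro-representing object up to unique isomorphism then turns the natural isomorphism of the previous paragraph into a canonical isomorphism $\RpsThetabar \cong R^{\ps}_{D_{\Thetabar}}$ of topological $\Lambda$-algebras matching up the two universal deformations. There is essentially no obstacle: all of the mathematical content already sits in \Cref{chenevierlafforguetotal} and \Cref{chenevierlafforguecont}, and the only points needing a word of justification are the naturality in $A$ of the \emph{continuous} bijection (which follows from that of the algebraic one together with the characterization of continuity by continuity of the coefficients $\Lambda_i$) and the coincidence of the topologies used on the artinian quotients on the two sides, both of which are routine.
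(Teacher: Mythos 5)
Your proposal is correct and follows essentially the same route as the paper, whose proof simply cites \Cref{representabledeffunctor} and \Cref{chenevierlafforguecont}; you have merely spelled out the routine details (naturality of Emerson's bijection under the reduction $A \to \kappa$, matching of topologies, and uniqueness of the pro-representing object) that the paper leaves implicit.
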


\begin{proof} This follows from \Cref{representabledeffunctor} and \Cref{chenevierlafforguecont}.
\end{proof}

\subsection{\texorpdfstring{Comparing deformations of representations and their $G$-pseudocharacters}{Comparing deformations of representations and their G-pseudocharacters}}
\label{subsecdefComp}

We now fix a continuous representation $\rhobar : \Gamma \to G(\kappa)$ and assume that $\Thetabar = \Theta_{\rhobar}$.
The main purpose of this section is to compare the unframed deformation functor of $\rhobar$ to the deformation functor of $\Thetabar$. We first extend \cite[Theorem 4.10]{BHKT} to local residue fields.

\begin{proposition}\label{compprop} Assume, that $Z_{G_{\kappa}}(\rhobar)=Z_{G_{\kappa}}(G_{\kappa}^0)$ and that $\rhobar$ is $G_{\kappa}$-completely reducible. Then the natural map of deformation functors $\Def_{\rhobar} \to \Def_{\Thetabar}$ is an isomorphism.
\end{proposition}

\begin{proof} Let $A \in \Art_{\Lambda}$ and $\Theta \in \Def_{\Thetabar}(A)$. 
For any $n \geq 1$, we define affine $\Lambda$-schemes of finite type $X_n := G^n$ and $Y_n := G^n \sslash G^0 =  G^n \sslash (G^0 /Z_G(G^0))$ and let $\pi : X_n \to Y_n$ be the projection.

Now fix $n \geq 1$ and $\gamma_1, \dots, \gamma_n \in \Gamma$, such that the scheme-theoretic centralizer $Z_{G_{\kappa}}(x)$ of $x := (\overline g_1, \dots, \overline g_n)$, where $\overline g_i := \overline \rho(\gamma_i)$, in $G_{\kappa}$ coincides with the scheme-theoretic centralizer $Z_{G_{\kappa}}(\rhobar)$ of $\rhobar$ in $G_{\kappa}$. This is possible, as $\kappa[G]$ is a noetherian ring. Thus $Z_{G_{\kappa}}(x)=Z_{G_{\kappa}}(G_{\kappa}^0)$ by assumption. We may assume by \cite[Lemma 9.2]{MartinGeneratingTuples}, that the subgroup generated by $\rhobar(\gamma_1), \dots, \rhobar(\gamma_n)$ has the same Zariski closure as $\rhobar(\Gamma)$, we denote this Zariski closure in $G(\kappa)$ by $H$. Since $\overline{\rho}$ is $G_{\kappa}$-completely reducible, by \Cref{charorbitcr} the orbit of $x$ in $X_{n, \kappa}$ is closed.

The completion of $X_n$ at $x \in X_n(\kappa)$ pro-represents the functor $X_n^{\wedge, x} : \Art_{\Lambda} \to \Set$ defined by $X_n^{\wedge, x}(A) := X_n(A) \times_{X_n(\kappa)} \{x\}$.
Similarly, for fixed $h \in H$, we have a completion of $X_{n+1}$ at $y := (x, h) \in X_{n+1}(\kappa)$ and the respective completions of $Y_n$ and $Y_{n+1}$ at $\pi(x)$ and $\pi(y)$.

In analogy to the completion at a point, we define the functor $X_{n+1}^{\wedge, H} : \Art_{\Lambda} \to \Set$ by $X_{n+1}^{\wedge, H}(A) := X_{n+1}(A) \times_{X_{n+1}(\kappa)} H$, where the map $H \to X_{n+1}(\kappa)$ is given by $h \mapsto (g_1, \dots, g_n,h)$.
Similarly we define $Y_{n+1}^{\wedge, H}(A) := Y_{n+1}(A) \times_{Y_{n+1}(\kappa)} H$.
We take the fiber product topology on the point sets.
We will use these functors to prove continuity of the representation we construct.
One can think of $X_{n+1}^{\wedge, H}$ as putting the completions at single points of $H$ into a continuous family.

The $G$-pseudocharacter $\Theta_{n+1}$ determines a natural map
\begin{align*}
    \Lambda[G^{n+1}]^{G^0} \to \calC(\Gamma, A), \quad f \mapsto (\gamma \mapsto \Theta_{n+1}(f)(\gamma_1, \dots, \gamma_n, \gamma)),
\end{align*}
which is an element $\alpha \in Y_{n+1}(\calC(\Gamma, A)) = \calC(\Gamma, Y_{n+1}(A))$. We obtain a unique continuous map $\beta : \Gamma \to Y_{n+1}^{\wedge, H}(A)$ induced by $\alpha$ and $\rhobar$.

Let $(G^0/Z_G(G^0))^{\wedge}$ be the completion of $G^0/Z_G(G^0)$ at the neutral element.
It is a formal $\Lambda$-scheme, which pro-represents a group functor on $\Art_{\Lambda}$.
By \cite[Proposition 3.13]{BHKT} applied for the action of $G^0/Z_G(G^0)$ on $X_n$, $(G^0/Z_G(G^0))^{\wedge}$ acts freely on $X_n^{\wedge, x}$ and the projection $X_n^{\wedge, x} \to Y_n^{\wedge, \pi(x)}$ factors through an isomorphism $X_n^{\wedge, x}/(G^0/Z_G(G^0))^{\wedge} \cong Y_n^{\wedge, \pi(x)}$. In particular, $X_n^{\wedge, x}(A) \to Y_n^{\wedge, \pi(x)}(A)$ is surjective and we can choose a preimage $(g_1, \dots, g_n) \in X_n^{\wedge, x}(A)$ of the point in $Y_n^{\wedge, \pi(x)}(A)$ determined by $\Lambda[G^n]^{G^0} \to A, ~f \mapsto \Theta_n(f)(\gamma_1, \dots, \gamma_n)$.

For fixed $h \in H$ and $y := (x, h)$, we have two cartesian squares:
\begin{center}
    \begin{tikzcd}
        X_{n+1}^{\wedge, y}(A) \arrow[r] \arrow[d] & Y_{n+1}^{\wedge, \pi(y)}(A) \arrow[r] \arrow[d] & \{h\} \arrow[d] \\
        X_{n+1}^{\wedge, H}(A) \arrow[r] & Y_{n+1}^{\wedge, H}(A) \arrow[r] & H
    \end{tikzcd}
\end{center}
The top left arrow is a $(G^0/Z_G(G^0))^{\wedge}(A)$-torsor of sets, so $X_{n+1}^{\wedge, H}(A) \to Y_{n+1}^{\wedge, H}(A)$ is a $(G^0/Z_G(G^0))^{\wedge}(A)$-torsor.
The square in the following diagram is cartesian, since the horizontal arrows are $(G^0/Z_G(G^0))^{\wedge}(A)$-torsors and the vertical maps are equivariant: 
\begin{center}
    \begin{tikzcd}[row sep=2.5em, column sep=2.5em]
        \Gamma \arrow[r, dashed] \arrow[dr] \arrow[rr, bend left=30, "\beta"] & X_{n+1}^{\wedge, H}(A) \arrow[r] \arrow[d] & Y_{n+1}^{\wedge, H}(A) \arrow[d] \\
        & X_{n}^{\wedge, x}(A) \arrow[r] & Y_{n}^{\wedge, \pi(x)}(A)
    \end{tikzcd}
\end{center}

The map $\Gamma \to X_{n}^{\wedge, x}(A)$ maps constantly to $(g_1, \dots, g_n)$.
By the discussion of the topologies on point sets in the introduction, the diagram is also cartesian in the category of topological spaces.
Again by the universal property, we obtain a continuous map $\Gamma \to X_{n+1}^{\wedge, H}(A)$.

The composition $\Gamma \to X_{n+1}^{\wedge, H}(A) \to X_{n+1}(A) \overset{\pr_{n+1}}{\to} G(A)$ defines the desired continuous $\rho$ with $\Theta_{\rho} = \Theta$ as in \cite[Theorem 4.10]{BHKT}.
\end{proof}

The following Proposition ensures that the reconstructed representations are defined over finite extensions, which will be enough for the proof of \Cref{nspclregular}.

\begin{proposition}\label{recwithfiniteextn} Let $\Gamma$ be a profinite group, let $\kappa$ be a finite or a local field, let $\Lambda$ be a coefficient ring for $\kappa$ and let $G$ be a generalized reductive group over $\Lambda$. Suppose $\Theta \in \cPC^{\Gamma}_G(\calO_{\kappa})$ is a continuous pseudocharacter, where $\calO_{\kappa} = \kappa$ if $\kappa$ is finite. If $\kappa$ is a local field of positive characteristic, assume that $\Gamma$ is topologically finitely generated, $G$ is a Chevalley group over $\Lambda$ and that the reduction $\Thetabar$ of $\Theta$ to the residue field $k$ of $\kappa$ comes from a $G$-completely reducible representation $\rhobar : \Gamma \to G(k')$ for some finite extension $k'/k$, which has scheme-theoretically trivial centralizer in $G^{\ad}_{k'}$. Then there exists a continuous ($G$-completely reducible) representation $\rho : \Gamma \to G(\overline\kappa)$ with $\Theta_{\rho} = \Theta$, which is defined over the ring of integers $\calO_{\kappa'}$ of a finite extension $\kappa'/\kappa$.
\end{proposition}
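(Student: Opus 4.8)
The plan is to reconstruct a representation over $\overline\kappa$ from $\Theta$ via the reconstruction theorem of \Cref{secRecthm}, descend its field of definition to a finite extension by a compactness argument, and then descend its ring of definition to a ring of integers; in positive characteristic there is also a cleaner deformation-theoretic route. First, base change $\Theta$ along $\calO_\kappa\hookrightarrow\overline\kappa$ to a continuous $G$-pseudocharacter $\Theta_{\overline\kappa}\in\cPC_G^\Gamma(\overline\kappa)$. If $\kappa$ is a local field, $\overline\kappa$ is an algebraic closure of a local field and \Cref{continuousreconstruction} applies; if $\kappa$ is finite, $\overline\kappa$ carries the discrete topology, which is induced by the compact Hausdorff open subring $\overline\kappa$ itself, so the variant of \Cref{continuousreconstruction} recorded in the Remark following it applies. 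Either way we obtain a continuous $G$-completely reducible $\rho_0\colon\Gamma\to G(\overline\kappa)$ with $\Theta_{\rho_0}=\Theta_{\overline\kappa}$.

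Next, since $\Gamma$ is profinite, $\rho_0(\Gamma)$ is a compact subgroup of $G(\overline\kappa)$. There are only countably many finite subextensions of $\overline\kappa/\kappa$, so fixing a cofinal increasing chain $\kappa=L_0\subsetneq L_1\subsetneq\cdots$ with $\bigcup_nL_n=\overline\kappa$, each inclusion $G(L_n)\hookrightarrow G(L_{n+1})$ is a closed topological embedding ($L_n$ being closed in the finite-dimensional $L_0$-vector space $L_{n+1}$, resp.\ in the finite set $L_{n+1}$), and $G(\overline\kappa)=\colim_nG(L_n)$ carries the colimit topology; hence the compact set $\rho_0(\Gamma)$ is contained in some $G(L_N)$. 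Put $\kappa_1:=L_N$, a finite extension of $\kappa$. If $\kappa$ is finite then $\calO_{\kappa_1}=\kappa_1$ and $\rho:=\rho_0\colon\Gamma\to G(\kappa_1)\subseteq G(\overline\kappa)$ already proves the claim, so assume henceforth that $\kappa$ is a local field.

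Being compact, $\rho_0(\Gamma)$ is a bounded subgroup of $G(\kappa_1)$, so it fixes a point of the Bruhat--Tits building of $G^0_{\kappa_1}$ and lies in a parahoric subgroup of $G(\kappa_1)$. Choose a finite extension $\kappa'/\kappa_1$ over which $G^0$ becomes split and $G/G^0$ becomes constant — so that $G_{\kappa'}$ extends to a generalized reductive $\calO_{\kappa'}$-group scheme $\calG$ (taking for $\calG$ the Chevalley group over $\calO_{\kappa'}$ when $G$ is a Chevalley group) — and moreover sufficiently ramified that the above parahoric is contained in a hyperspecial parahoric subgroup of $G(\kappa')$, which is of the form $\calG(\calO_{\kappa'})$ for a suitable such model $\calG$. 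Then, after conjugating by an element of $G^0(\kappa')$ if necessary — an operation leaving $\Theta_{\rho_0}$ unchanged, as the elements of $\calO[G^m]^{G^0}$ are fixed by the conjugation action of $G^0$ — the image of $\rho_0$ is contained in $\calG(\calO_{\kappa'})$. The resulting $\rho\colon\Gamma\to\calG(\calO_{\kappa'})\subseteq G(\overline\kappa)$ is continuous for the subspace topology ($\calO_{\kappa'}\hookrightarrow\overline\kappa$ being a topological embedding), satisfies $\Theta_\rho=\Theta$, and is $G$-completely reducible, being a $G^0(\overline\kappa)$-conjugate of $\rho_0$.

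In the positive characteristic case one may avoid the Bruhat--Tits input as follows: let $\kappa'/\kappa$ be the extension whose residue field is the field $k'$ over which $\rhobar$ is defined, and base change $\Theta$ to $\Theta'\in\cPC_G^\Gamma(\calO_{\kappa'})$, whose reduction modulo the maximal ideal is $\Theta_{\rhobar}$. By \Cref{proextension}, $\Theta'\in\Def_{\Theta_{\rhobar}}(\calO_{\kappa'})=\varprojlim_n\Def_{\Theta_{\rhobar}}(\calO_{\kappa'}/\frakm^n)$; by \Cref{compprop} — applicable since $G$ is Chevalley, $\rhobar$ is $G$-completely reducible and its centralizer in $G^{\ad}$ is scheme-theoretically trivial — each map $\Def_{\rhobar}(\calO_{\kappa'}/\frakm^n)\to\Def_{\Theta_{\rhobar}}(\calO_{\kappa'}/\frakm^n)$ is bijective, so $\Theta'$ lifts to a compatible system of continuous representations $\Gamma\to G(\calO_{\kappa'}/\frakm^n)$ deforming $\rhobar$, i.e.\ to a continuous $\rho\colon\Gamma\to G(\calO_{\kappa'})$ with $\Theta_\rho=\Theta'$, which we regard as valued in $G(\overline\kappa)$ (combining with the reconstruction of the preceding paragraph if one also wants $G$-complete reducibility). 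The main obstacle is the third paragraph for $\kappa$ a local field of characteristic $0$, where this shortcut is unavailable: passing from a field of definition to an integral ring of definition requires the Bruhat--Tits fact that a bounded subgroup, after a sufficiently ramified finite base change, lies in a hyperspecial parahoric identified with the integral points of a reductive model, together with the book-keeping ensuring that the required conjugation leaves $\Theta$ untouched.
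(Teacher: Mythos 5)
Your proposal has the same skeleton as the paper's proof: reconstruction via \Cref{continuousreconstruction} (note that in the finite case the Remark applies with $E=\kappa$ itself, not with $\overline\kappa$, which is infinite discrete and hence not compact --- a harmless slip), then descent of the field of definition, and in residual characteristic $p$ the deformation-theoretic argument via \Cref{proextension} and \Cref{compprop}, which is exactly what the paper does. For the descent to a finite extension the paper runs the Baire category argument on $\Gamma=\bigcup_n\rho_0^{-1}(G(L_n))$, while you use that a compact subset of a sequential colimit along closed embeddings lies in a finite stage; this is legitimate given the paper's convention that $\overline\kappa$, and hence $G(\overline\kappa)$, carries the direct limit topology, though you should justify that the set of finite subextensions of $\overline\kappa/\kappa$ is countable (true, but it deserves a word in characteristic $p$, where each degree admits infinitely many wildly ramified extensions).

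The one substantive divergence is the integral-model step in the characteristic-zero case: the paper simply cites \cite[Lemma 4.3]{cotner2023morphisms}, whereas you sketch a Bruhat--Tits argument whose key assertion --- that after a sufficiently ramified finite extension the bounded image is contained in a hyperspecial parahoric of the form $\calG(\calO_{\kappa'})$ for a generalized reductive model $\calG$ of $G$ --- is stated without justification. If this is to be a proof rather than a restatement of the cited lemma, three things are missing: (i) a rationality argument producing the right fixed point, e.g.\ the bounded group fixes some point $x$ of the building, hence stabilizes the facet containing $x$ and fixes its barycenter, which has rational coordinates, and rational points of an apartment of a split group become hyperspecial over a totally ramified extension of suitable degree --- an arbitrary fixed point, or even an arbitrary parahoric, is not literally "contained in a hyperspecial one after enough ramification" without this reduction; (ii) a treatment of the central torus of $G^0$ (enlarged building or a lattice-theoretic substitute), since for groups such as $\GL_n$ or $\GSp_{2n}$ full point stabilizers in the reduced building are unbounded and are not of the form $\calG(\calO_{\kappa'})$; and (iii) the disconnected case, i.e.\ why the elements of the image lying in non-identity components can be accommodated in the $\calO_{\kappa'}$-points of a model of the full generalized reductive $G$ --- which is precisely the content of the lemma the paper invokes. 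With that input either proved along these lines or simply cited, your argument is correct; your characteristic-$p$ paragraph coincides with the paper's.
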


\begin{proof}
    Existence of a continuous $G$-completely reducible $\rho : \Gamma \to G(\overline\kappa)$ with $\Theta_{\rho} = \Theta$ follows from the continuous reconstruction theorem \Cref{continuousreconstruction}. If $\kappa$ is finite the conclusion is clear by continuity.
    If $\kappa$ is a local field of characteristic $0$, we can use the standard Baire category theorem argument to see that $\rho$ lies in a finite extension of $\bbQ_p$.
    Existence of $\calO_{\kappa'}$ follows from \cite[Lemma 4.3]{cotner}.

    Assume $\kappa$ is a local field of positive characteristic. Let $k$ be the residue field of $\calO_{\kappa}$. By assumption the reduction $\Thetabar$ of $\Theta$ to $k$ comes from a continuous $G$-completely reducible representation $\rhobar : \Gamma \to G(k')$ over a finite extension $k'/k$, which has scheme-theoretically trivial centralizer in $G^{\ad}_{k'}$. Choose a finite extension $\kappa'/\kappa$, such that the residue field of $\calO_{\kappa'}$ is $k'$. So $\Theta \otimes_{\calO_{\kappa}} \calO_{\kappa'}$ is a deformation of $\Thetabar \otimes_k k'$. By \Cref{compprop} $\Theta \otimes_{\calO_{\kappa}} \calO_{\kappa'}$ thus comes from a continuous deformation $\rho : \Gamma \to G(\calO_{\kappa'})$ of $\rhobar$.
\end{proof}

\begin{definition} We say that a prime $p$ is \emph{very good} for a simple algebraic group $G$ over an algebraically closed field of characteristic $p$, if the following conditions hold.
\begin{enumerate}
    \item $p \nmid n+1$, if $G$ is of type $A_n$.
    \item $p \neq 2$, if $G$ is of type $B, C, D, E, F, G$.
    \item $p \neq 3$, if $G$ is of type $E, F, G$.
    \item $p \neq 5$, if $G$ is of type $E_8$.
\end{enumerate}
We say that $p$ is \emph{very good} for a reductive algebraic group $G$, if it is very good for every simple factor of $(G^0)^{\ad}$.
\end{definition}

\begin{lemma}\label{centrtriv} Let $\Gamma$ be a group. Let $G \subseteq \GL_n$ be a reductive group over an algebraically closed field $k$ of characteristic $p \geq 0$ and let $\rho : \Gamma \to G(k)$ be a $G$-completely reducible representation, which is in addition irreducible after embedding into $\GL_n(k)$. 

Assume, that one of the following holds:
\begin{enumerate}
    \item $p$ is very good for $G^{\ad}$ and $G^{\ad}$ is connected.
    \item $(\GL_n, G)$ is a reductive pair, i.e. $\frakg$ is a $G$-module direct summand of $\gl_n$.
\end{enumerate}
Then the scheme-theoretic centralizer $Z_{G^{\ad}}(\rho)$ of $\rho$ in $G^{\ad}$ is trivial.
\end{lemma}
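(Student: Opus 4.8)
The plan is to show that the closed subgroup scheme $Z := Z_{G^{\ad}}(\rho)$ of $G^{\ad}$ is trivial by checking separately that $\operatorname{Lie} Z = 0$ and that $Z(k) = 1$. Since $Z$ is of finite type over the algebraically closed field $k$, the first condition makes $Z$ \'etale, and together with the second it forces $Z = \Spec k$.

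For the Lie algebra I would identify $\operatorname{Lie} Z$ with the space of invariants $(\operatorname{Lie} G^{\ad})^{\Gamma}$ for $\Ad$ composed with $\rho$. Under hypothesis (1), very-goodness of the connected group $G^{\ad}$ makes $Z(G^{\mathrm{der}})$ \'etale, so the isogeny $G^{\mathrm{der}} \to G^{\ad}$ is separable and $\operatorname{Lie} G^{\ad} \cong [\frakg,\frakg]$ as a $G$-module; very-goodness also gives $\frakg = \frakz(\frakg) \oplus [\frakg,\frakg]$ with $[\frakg,\frakg]$ of trivial centre, and $[\frakg,\frakg]$ embeds $G$-equivariantly into $\gl_n$. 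Since $\rho$ is absolutely irreducible into $\GL_n(k)$, Schur's lemma gives $\gl_n^{\Gamma} = k\cdot I_n$; as $I_n$ is central in $\gl_n$ while $[\frakg,\frakg]$ has trivial centre, $[\frakg,\frakg] \cap kI_n = 0$, hence $(\operatorname{Lie} G^{\ad})^{\Gamma} = 0$. Under hypothesis (2) the argument runs along the same lines: the reductive-pair condition makes the scheme-theoretic centralizer smooth, so its triviality can be read off tangent spaces, and the $G$-module splitting $\gl_n = \frakg \oplus \frakm$ again confines $\frakg^{\Gamma}$ to the scalar line; combined with the inclusion $Z_G(\rho) \subseteq Z(G)$ and an inspection of $\operatorname{Lie} Z(G)$ this yields the required vanishing.

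For the points I would use Schur once more: by Burnside the image of $k[\Gamma]$ in $M_n(k)$ is all of $M_n(k)$, so the scheme-theoretic centralizer $Z_{\GL_n}(\rho)$ is the central $\Gm$, whence $Z_G(\rho) = \Gm \cap G \subseteq Z(G)$ scheme-theoretically and $Z_G(\rho)(k)$ maps to $1$ in $G^{\ad}(k)$. To obtain the same for $Z$, lift $\bar g \in Z(k)$ to $g \in G(k)$ — possible since $H^1_{\mathrm{fppf}}(\Spec k, Z(G)) = 0$ — and observe that $c_{\bar g}\colon \gamma \mapsto g\rho(\gamma)g^{-1}\rho(\gamma)^{-1}$ is a homomorphism $\Gamma \to Z(G)(k)$ which depends only on $\bar g$ and vanishes exactly when $\bar g = 1$. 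One is thus reduced to excluding a nontrivial $c_{\bar g}$, i.e. an isomorphism $\rho \cong \rho \otimes c_{\bar g}$ with $c_{\bar g}$ valued in the scalar subgroup of $G$; this is excluded using $\GL_n$-irreducibility together with $G$-complete reducibility of $\rho$, and the same analysis shows in passing that $\dim Z = 0$.

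The step I expect to cause trouble is the Lie-algebra computation, because in bad characteristic $\operatorname{Lie} G^{\ad}$ need not coincide with $\frakg$: the map $G \to G^{\ad}$ is inseparable on the centre when $p$ divides the order of $Z(G^{\mathrm{der}})$, and it is exactly to circumvent this that the two hypotheses enter — very-goodness to force the centres of the group and of the Lie algebra to be minimal, and the reductive-pair hypothesis to make the relevant scheme-theoretic centralizer smooth.
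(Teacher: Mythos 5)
Your overall skeleton (trivial $k$-points plus triviality at the infinitesimal level) matches the paper's (trivial $k$-points plus smoothness via Bate--Martin--R\"ohrle--Tange), but your execution conflates $Z_{G^{\ad}}(\rho)$ with the centralizer of the projectivized representation $\pi\circ\rho$, where $\pi : G \to G^{\ad}$ is the quotient. As defined in the paper, $Z_{G^{\ad}}(\rho)(A)$ is the stabilizer in $G^{\ad}(A)$ of the $G(A)$-valued tuple $(\rho(\gamma))_{\gamma}$ for the conjugation action of $G^{\ad}$ on $G$; hence a lift $g \in G(k)$ of $\bar g \in Z_{G^{\ad}}(\rho)(k)$ satisfies $g\rho(\gamma)g^{-1} = \rho(\gamma)$ on the nose, Schur gives $g \in k^{\times}I_n \cap G(k) \subseteq Z(G)(k)$, and $\bar g = 1$ at once --- this is exactly the paper's first step. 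There is no central cocycle $c_{\bar g}$ to exclude; the exclusion you sketch (ruling out $\rho \cong \rho\otimes c_{\bar g}$ ``using $\GL_n$-irreducibility and $G$-complete reducibility'') would only be relevant for the centralizer of $\pi\circ\rho$, and in that generality it is false, since irreducible representations can be isomorphic to their central twists. The same conflation is behind your identification $\operatorname{Lie} Z = (\operatorname{Lie} G^{\ad})^{\Gamma}$: correctly one only has $\operatorname{Lie} Z_{G^{\ad}}(\rho) \subseteq (\operatorname{Lie} G^{\ad})^{\Gamma}$, which is harmless as an upper bound but shows you are computing with the wrong (larger) group.

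The genuine gap is case (2). The reductive-pair hypothesis controls $\frakg^{\Gamma}$ inside $\gl_n$, not $(\operatorname{Lie} G^{\ad})^{\Gamma}$: hypothesis (2) is there precisely to cover characteristics where $\pi$ is inseparable, and then $d\pi : \frakg \to \operatorname{Lie} G^{\ad}$ need not be surjective, $\operatorname{Lie} G^{\ad}$ has no evident $\Gamma$-equivariant embedding into $\gl_n$, and ``an inspection of $\operatorname{Lie} Z(G)$'' says nothing about the part of $\operatorname{Lie} G^{\ad}$ outside the image of $d\pi$; moreover the smoothness the reductive-pair condition buys (\cite[Corollary 2.13]{BMRT}) is smoothness of the centralizer in $G$, not in $G^{\ad}$, and transferring it is exactly the missing step. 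The paper avoids $\operatorname{Lie} G^{\ad}$ entirely: it shows scheme-theoretically $Z_G(\rho) = Z_{\GL_n}(\rho)\cap G = Z(\GL_n)\cap G = Z(G)$ (using smoothness of $Z_{\GL_n}(\rho)$, which also follows from the commutant argument for an absolutely irreducible module), notes that by definition $\pi^{-1}(Z_{G^{\ad}}(\rho)) = Z_G(\rho) = \ker\pi$, and concludes $Z_{G^{\ad}}(\rho) = 1$ by faithful flatness of $\pi$. Your case (1), by contrast, is salvageable and genuinely different from the paper's: granting the standard very-good-characteristic facts (separability of $G^{\mathrm{der}} \to G^{\ad}$, $\frakg = \frakz(\frakg)\oplus[\frakg,\frakg]$, trivial centre of $[\frakg,\frakg]$), one gets $(\operatorname{Lie} G^{\ad})^{\Gamma} \subseteq \gl_n^{\Gamma}\cap[\frakg,\frakg] = kI_n\cap[\frakg,\frakg] = 0$, whereas the paper instead quotes smoothness of $Z_{G^{\ad}}(\rho)$ from \cite[Theorem 1.2]{BMRT}; but this Lie-theoretic route does not extend to case (2), so as written the proposal does not prove the lemma.
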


\begin{proof} Recall, that $Z_{G^{\ad}}(\rho)$ is defined as follows. For $A \in \CAlg_k$, the group $Z_{G^{\ad}}(\rho)(A)$ is defined as the kernel of the map $G^{\ad}(A) \to \Hom(\Gamma, G(A)), \quad g \mapsto g \rho g^{-1}$.
By Schur's lemma $Z_{\GL_n}(\rho)(k) = Z(\GL_n)(k)$.
Let $\pi : G \twoheadrightarrow G^{\ad}$ be the canonical projection.
By definition, $Z_{G}(\rho) = \pi^{-1}(Z_{G^{\ad}}(\rho))$ and $Z_G(\rho) = Z_{\GL_n}(\rho) \cap G$.
We get $Z(G)(k) \subseteq \pi^{-1}(Z_{G^{\ad}}(\rho)(k)) = Z_G(\rho)(k) = Z_{\GL_n}(\rho)(k) \cap G(k) \subseteq Z(G)(k)$.
We conclude, that $Z_{G^{\ad}}(\rho)(k)$ is trivial.

Assuming (1), we see by \cite[Theorem 1.2]{BMRT} since $p$ is very good for $G^{\ad}$ and $G^{\ad}$ is connected, that $Z_{G^{\ad}}(\rho)$ is smooth and thus trivial as an algebraic group.

Assuming (2), we obtain from \cite[Corollary 2.13]{BMRT}, that $Z_G(\rho)$ is smooth.
Since $\GL_n$ is separable, $Z_{\GL_n}(\rho)$ is also smooth and we have $Z_{\GL_n}(\rho) = Z(\GL_n)$. We can repeat the above calculation without taking points:
$$ Z(G) \subseteq \pi^{-1}(Z_{G^{\ad}}) = Z_G(\rho) = Z_{\GL_n}(\rho) \cap G = Z(\GL_n) \cap G \subseteq Z(G) $$
Hence $Z_{G^{\ad}} = 1$.
\end{proof}

\begin{proposition}\label{descrdefring} Let $G$ be a Chevalley goup, let $\rhobar : \Gamma_F \to G(\kappa)$ be a continuous representation over a finite or local field $\kappa$ and let $\Lambda$ be a coefficient ring for $\kappa$. Assume, that the unframed deformation functor is representable by $R_{\rhobar}$. We have a presentation $R_{\rhobar} \cong \Lambda[[x_1, \dots, x_r]]/(f_1, \dots, f_s)$, where $r = h^1(\Gamma_F, \ad_{\rhobar})$ and $s = h^2(\Gamma_F, \ad_{\rhobar})$.
\end{proposition}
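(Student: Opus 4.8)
The statement is the classical presentation of a Galois deformation ring in terms of cohomology, so the plan is to follow the standard argument of Mazur, adapted to the $G$-valued setting. First I would recall that since the deformation functor $\Def_{\rhobar}$ is representable, it is in particular pro-representable, and one always has a presentation $R_{\rhobar} \cong \Lambda[[x_1,\dots,x_r]]/I$ where $r = \dim_\kappa t_{R_{\rhobar}} = \dim_\kappa \Def_{\rhobar}(\kappa[\varepsilon])$ is the dimension of the tangent space, and $I \subseteq (x_1,\dots,x_r)^2 + \frakm_\Lambda \cdot (x_1,\dots,x_r)$. Thus the whole statement reduces to two cohomological identifications: (a) the tangent space $\Def_{\rhobar}(\kappa[\varepsilon])$ is canonically $H^1(\Gamma_F,\ad_{\rhobar})$, and (b) there is a surjection from $H^2(\Gamma_F,\ad_{\rhobar})$ onto a minimal generating space of the relation ideal $I$, i.e.\ $\dim_\kappa I/(\frakm I) \le h^2(\Gamma_F,\ad_{\rhobar})$, where $\frakm$ is the maximal ideal of $\Lambda[[x_1,\dots,x_r]]$.

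For (a): a deformation of $\rhobar$ to $\kappa[\varepsilon]$ is a continuous lift $\rho\colon\Gamma_F\to G(\kappa[\varepsilon])$ reducing to $\rhobar$, up to conjugation by $\ker(G(\kappa[\varepsilon])\to G(\kappa)) = 1 + \varepsilon\,\ad_{\rhobar}$ (using that the Lie algebra of $G$ at the identity, twisted by $\Ad\circ\rhobar$, is $\ad_{\rhobar}$). Writing $\rho(\gamma) = (1+\varepsilon\, c(\gamma))\rhobar(\gamma)$, the homomorphism condition translates exactly into $c$ being a continuous $1$-cocycle valued in $\ad_{\rhobar}$, and conjugation by $1+\varepsilon x$ shifts $c$ by the coboundary $\gamma\mapsto x - \Ad(\rhobar(\gamma))x$; hence $\Def_{\rhobar}(\kappa[\varepsilon]) \cong H^1_{\mathrm{cont}}(\Gamma_F,\ad_{\rhobar})$, which is finite-dimensional since $\Gamma_F$ is a $p$-adic Galois group (condition $\Phi_p$), justifying both representability and the value of $r$. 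This is essentially \cite[Theorem 4.10]{BHKT} combined with the tangent-space computation there.

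For (b) — the expected main obstacle — one writes $R_{\rhobar} = \Lambda[[x_1,\dots,x_r]]/I$ with $R := \Lambda[[x_1,\dots,x_r]]$ and needs to bound the number of relations by $h^2 := h^2(\Gamma_F,\ad_{\rhobar})$. The standard device: set $J := \frakm_R\, I + \frakm_\Lambda\, I$ (or more simply $J$ such that $R/J$ is the ``small'' extension one probes), and consider the tautological surjection $R_{\rhobar} \twoheadrightarrow R/J$; one must produce an injection of the dual of $I/J$ (a finite-dimensional $\kappa$-vector space whose dimension is the minimal number of relations) into $H^2(\Gamma_F,\ad_{\rhobar})$. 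This is done by the obstruction-class construction: given the universal deformation mod $J$ and a lift of the structure map to $R_{\rhobar}/(\text{a square-zero extension by }I/J)$, the failure to extend the universal pseudocharacter/representation is measured by a class in $H^2(\Gamma_F,\ad_{\rhobar}\otimes_\kappa (I/J)^*)$, and the map $I/J \to H^2(\Gamma_F,\ad_{\rhobar})$ so obtained is injective because $R_{\rhobar}$ is the universal (hence initial) object — any relation that dies in cohomology would already be forced to be zero in $R_{\rhobar}$. The technical care needed here is exactly that the deformation problem is of $G$-representations (not $\GL_n$), so I would invoke the identification of $\Def_{\rhobar}$ with the $G$-valued framed-mod-conjugation functor from \Cref{compprop}, under whose hypotheses the relevant obstruction theory is governed by $\ad_{\rhobar}$; the smoothness/triviality-of-centralizer input (as in \Cref{centrtriv}) is what guarantees the functor is unobstructed enough for the mod-$J$ lift to exist locally, so that the obstruction class is well-defined. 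Putting (a) and (b) together gives the presentation with $r = h^1$ and $s = h^2$, where one may harmlessly take $s = h^2$ (a possibly non-minimal set of relations suffices for the stated form).
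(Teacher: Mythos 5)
Your proposal is the standard Mazur-style cocycle and obstruction-class calculation (tangent space $\cong H^1(\Gamma_F,\ad_{\rhobar})$, relations bounded by $H^2$), which is exactly what the paper does: its proof consists of citing this ``standard calculation with cocycles'' from \cite{MR1643682}. One small remark: the appeal to \Cref{compprop} and \Cref{centrtriv} in step (b) is unnecessary (and their hypotheses are not assumed here) --- smoothness of $G$ over $\Lambda$ alone makes the obstruction to lifting along a small extension a class in $H^2(\Gamma_F,\ad_{\rhobar}\otimes_\kappa I/\frakm I)$, so the argument goes through for the unframed representation functor directly.
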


\begin{proof} This follows from a standard calculation with cocycles. See e.g. \cite{MR1643682}.
\end{proof}

\begin{proposition}\label{pdeflemma} Let $F$ be a $p$-adic local field with absolute Galois group $\Gamma_F$. Let $\kappa$ be a finite or local field of very good characteristic $p \geq 0$ for $G^{\ad}_{\overline\kappa}$, let $\Lambda$ be a coefficient ring for $\kappa$ and let $G \subseteq \GL_n$ be a Chevalley group over $\Lambda$. 
Let $\rhobar : \Gamma_F \to G(\kappa)$ be an absolutely $G$-completely reducible continuous representation with associated $G$-pseudocharacter $\Thetabar \in \cPC_G^{\Gamma_F}(\kappa)$, such that $\rhobar$ is absolutely irreducible after embedding into $\GL_n(\overline\kappa)$ and such that $H^2(\Gamma_F, \frakg_{\kappa}) = 0$.

Assume, that one of the following holds:
\begin{enumerate}
    \item $p$ is very good for $G_{\overline\kappa}^{\ad}$ and $G_{\overline\kappa}^{\ad}$ is connected.
    \item $(\GL_{n, \overline\kappa}, G_{\overline\kappa})$ is a reductive pair, i.e. $\frakg_{\overline\kappa}$ is a $G_{\overline\kappa}$-module direct summand of $\gl_{n, \overline\kappa}$.
\end{enumerate}
Then $\RpsThetabar$ is formally smooth over $\Lambda$ of dimension $\dim \frakg_{\kappa} \cdot [F : \bbQ_p] + h^0(\Gamma_F, \frakg_{\kappa}) + \dim \Lambda$. In particular, $\RpsThetabar \cong \Lambda[[x_1, \dots, x_r]]$.
\end{proposition}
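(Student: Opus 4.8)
The plan is to identify $\RpsThetabar$ with the unframed deformation ring $R_{\rhobar}$ of $\rhobar$, and then read off its structure from the Galois cohomology of $\frakg_\kappa$ via Proposition \ref{descrdefring} and the local Euler characteristic formula.

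First I would show that the scheme-theoretic centralizer $Z_{G^{\ad}}(\rhobar)$ is trivial. Passing to $\overline\kappa$, the representation $\rhobar_{\overline\kappa}$ is $G_{\overline\kappa}$-completely reducible and, after embedding into $\GL_n(\overline\kappa)$, absolutely irreducible, so under hypothesis (1) or (2) Lemma \ref{centrtriv} applies over $\overline\kappa$ and gives $Z_{G^{\ad}_{\overline\kappa}}(\rhobar_{\overline\kappa}) = 1$. Formation of the scheme-theoretic centralizer commutes with the flat base change $\kappa \to \overline\kappa$, and triviality of a finite type group scheme descends along a faithfully flat extension, so $Z_{G^{\ad}}(\rhobar) = \Spec\kappa$; in particular the image of the centralizer of $\rhobar$ in $G^{\ad}$ is trivial. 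Since $\rhobar$ is in addition absolutely $G$-completely reducible and $G$ is a Chevalley group, all hypotheses of Proposition \ref{compprop} are satisfied, so the natural transformation $\Def_{\rhobar} \to \Def_{\Thetabar}$ is an isomorphism of functors on $\Art_\Lambda$. As $\Def_{\Thetabar}$ is pro-representable by $\RpsThetabar$ (Theorem \ref{representabledeffunctor}), it follows that $\Def_{\rhobar}$ is pro-representable, say by $R_{\rhobar}$, and $R_{\rhobar} \cong \RpsThetabar$ canonically.

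Next I would feed this into Proposition \ref{descrdefring}, which is now applicable to $R_{\rhobar}$ (the residue field $\kappa$ is finite or local, $G$ is Chevalley, the unframed functor is representable): it yields a presentation $R_{\rhobar} \cong \Lambda[[x_1,\dots,x_r]]/(f_1,\dots,f_s)$ with $r = h^1(\Gamma_F, \ad_{\rhobar})$ and $s = h^2(\Gamma_F, \ad_{\rhobar})$, where $\ad_{\rhobar} = \frakg_\kappa$ carries the adjoint action through $\rhobar$. Here $r < \infty$ because $\Gamma_F$ is the absolute Galois group of a $p$-adic field (topologically finitely generated, of finite cohomological dimension), and $s = h^2(\Gamma_F, \frakg_\kappa) = 0$ by hypothesis. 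Hence $R_{\rhobar} \cong \Lambda[[x_1,\dots,x_r]]$, and transporting along $R_{\rhobar} \cong \RpsThetabar$ gives $\RpsThetabar \cong \Lambda[[x_1,\dots,x_r]]$, which is visibly formally smooth over $\Lambda$. For the dimension, $\dim \RpsThetabar = r + \dim\Lambda$, and the local Euler characteristic formula for $\Gamma_F$ (valid in each of the three cases for $\kappa$, with all cohomology dimensions taken over $\kappa$)
\[
  h^0(\Gamma_F, \frakg_\kappa) - h^1(\Gamma_F, \frakg_\kappa) + h^2(\Gamma_F, \frakg_\kappa) = -[F:\bbQ_p]\dim_\kappa\frakg_\kappa
\]
combined with $h^2 = 0$ gives $r = h^1(\Gamma_F,\frakg_\kappa) = [F:\bbQ_p]\dim_\kappa\frakg_\kappa + h^0(\Gamma_F,\frakg_\kappa)$, whence $\dim \RpsThetabar = \dim\frakg_\kappa\cdot[F:\bbQ_p] + h^0(\Gamma_F,\frakg_\kappa) + \dim\Lambda$.

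I expect the main obstacle to lie in the first paragraph: carefully matching the centralizer and complete-reducibility hypotheses of Proposition \ref{compprop} and Lemma \ref{centrtriv} (which are phrased over algebraically closed fields) with the absolutely-$G$-cr hypothesis at hand, and descending triviality of the scheme-theoretic centralizer from $\overline\kappa$ to $\kappa$; once $\Def_{\rhobar} \cong \Def_{\Thetabar}$ is established, everything else is the formal cocycle computation already packaged in Proposition \ref{descrdefring} together with the Euler characteristic formula.
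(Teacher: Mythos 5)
Your proposal is correct and follows essentially the same route as the paper: Lemma \ref{centrtriv} for triviality of the scheme-theoretic centralizer, Proposition \ref{compprop} to identify $\RpsThetabar$ with $R_{\rhobar}$, Proposition \ref{descrdefring} with $h^2(\Gamma_F,\frakg_\kappa)=0$ for the power series presentation, and the Euler characteristic formula for the dimension. The only difference is that you spell out the descent of centralizer triviality from $\overline\kappa$ to $\kappa$ before invoking \ref{compprop}, a detail the paper leaves implicit.
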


\begin{proof} By \Cref{centrtriv} the scheme-theoretic centralizer of $\rhobar$ in $G_{\overline\kappa}^{\ad}$ is trivial.
We can apply \Cref{compprop} to obtain a canonical isomorphism $R_{\rhobar} \cong \RpsThetabar$.
By \Cref{descrdefring}, the deformation ring $R_{\rhobar}$ is isomorphic to $\Lambda[[x_1, \dots, x_r]]$, where $r = h^1(\Gamma_F, \frakg_{\kappa})$. The Euler characteristic formula \cite[Theorem 3.4.1]{BJ_new} implies, that $\dim R_{\rhobar} = \dim \frakg_{\kappa} \cdot [F : \bbQ_p] + h^0(\Gamma_F, \frakg_{\kappa}) + \dim \Lambda$.
\end{proof}

\section{\texorpdfstring{The rigid analytic space of $G$-pseudocharacters}{The p-adic analytic space of G-pseudocharacters}}
\label{secpadicspace}

We fix notations for this section:
\begin{itemize}
    \item Let $\Gamma$ be a topologically finitely generated profinite group.
    \item Let $L$ be a finite extension of $\bbQ_p$ with ring of integers $\calO$, uniformizer $\varpi$ and residue field $\kappa$.
    \item Let $\Aff_L$ be the category of affinoid $L$-algebras.
    \item Let $\An_L$ be the category of rigid analytic spaces over $L$.
    \item Let $G$ be a generalized reductive group scheme over $\calO$.
\end{itemize}

In \cite[Thm. D]{MR3444227} Chenevier shows, that the functor $X_d : \An_{\bbQ_p}^{\op} \to \Set$ on the category $\An_{\bbQ_p}$ of rigid analytic spaces over $\bbQ_p$, that associates to every $Y \in \An_{\bbQ_p}$ the set $\cDet_{d}^{\Gamma}(\calO(Y))$ of continuous $d$-dimensional determinant laws of $\Gamma$ with values in the global sections $\calO(Y)$, is representable by a quasi-Stein rigid analytic space. Here $\calO(Y)$ carries the topology of uniform convergence on open affinoid subsets. By \Cref{chenevierlafforguecont} the set $\cDet_{d}^{\Gamma}(\calO(Y))$ identifies with $\cPC_{\GL_d}^{\Gamma}(\calO(Y))$.
The goal of this section is to generalize Chenevier's construction to generalized reductive group schemes.

\begin{definition} Define $X_G : \Aff_L \to \Set$ as the functor that associates to every affinoid $L$-algebra $A$ the set of continuous $G$-pseudocharacters $\cPC_{G}^{\Gamma}(A)$.
\end{definition}

All of Chenevier's results carry over when $G=\GL_d$ by base change from $\bbQ_p$ to $L$, in particular we know that $X_{\GL_d}$ is representable by a quasi-Stein rigid analytic space over $L$.
Using invariant theory, it is possible to give a direct construction of $X_G$ as closed subspaces of $X_{\GL_d}$ for the classical groups $\SL_n$, $\Sp_n$, $\GSp_n$, $\OO_n$ or $\GO_n$ and under the weaker assumption, that $\Gamma$ satisfies Mazur's condition $\Phi_p$.
We will not do this, but instead give a functorial construction for generalized reductive $G$, which does not depend on the choice of a faithful representation of $G$.

\subsection{\texorpdfstring{The formal scheme of $G$-pseudocharacters}{The formal scheme of G-pseudocharacters}}

Before we construct the $p$-adic analytic space of $G$-pseudocharacters, we define an auxiliary functor on the level of admissible $\calO$-algebras, which is representable by a disjoint union of formal spectra of deformation rings of residual representations, recovering \cite[Cor. 3.14]{MR3444227} when $G=\GL_d$.
Let $A$ be a complete Hausdorff commutative topological ring. We say that $A$ is \emph{admissible}, if $0$ has a neighborhood basis of ideals, there is an ideal $I \subseteq A$, called \emph{ideal of definition}, such that an ideal $J \subseteq A$ is open if and only if there is some $n \geq 1$, such that $I^n \subseteq J$.

\begin{lemma}\label{admissiblecomp} Let $A$ be a commutative topological ring. The following are equivalent:
\begin{enumerate}
    \item $A$ is complete linearly topologized and has an ideal of definition. This is the notion of admissibility defined in \stackcite{07E8}.
    \item $A$ is, in the category of commutative topological rings, isomorphic to a cofiltered limit of discrete rings $\varprojlim_{\lambda} A_{\lambda}$, where the index category possesses a final object $0$ and the transition maps $A_{\lambda} \to A_0$ are surjective with nilpotent kernel. This is the notion of admissibility defined in \cite[§3.9]{MR3444227}.
\end{enumerate}
\end{lemma}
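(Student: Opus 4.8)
The plan is to show the two notions of admissibility coincide by constructing the inverse system explicitly in one direction and verifying the axioms directly in the other. The key observation is that both conditions describe the same topology; what differs is the packaging (an intrinsic description via ideals of definition versus an extrinsic description via a limit presentation).

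First I would prove $(2) \Rightarrow (1)$. Suppose $A = \varprojlim_{\lambda} A_{\lambda}$ as in (2), with final object $0$ and surjective transition maps $A_{\lambda} \to A_0$ having nilpotent kernel. Let $I := \ker(A \to A_0)$. One checks that $I$ is an ideal of definition: the kernels $I_{\lambda} := \ker(A_{\lambda} \to A_0)$ are nilpotent, and since the limit topology has the $\ker(A \to A_{\lambda})$ as a neighborhood basis of $0$ by ideals, I would show that for each $\lambda$ there is $n$ with $I^n \subseteq \ker(A \to A_{\lambda})$ (using nilpotence of $I_{\lambda}$ together with the fact that $I$ surjects onto each $I_{\lambda}$), and conversely that each power $I^n$ is open (it contains $\ker(A \to A_{\lambda})$ for suitable $\lambda$, since that kernel maps into $I_{\lambda}^n$ for $n$ large, hence is contained in $I^n + \ker(A \to A_{\lambda}) = I^n$ after a small argument, or more directly one shows $I$ is open and powers of open ideals are open in a linearly topologized ring). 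Completeness and the linear topology are inherited from the limit presentation. Hence (1) holds.

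Next I would prove $(1) \Rightarrow (2)$. Given $A$ complete, linearly topologized, with an ideal of definition $I$, the family $\{I^n\}_{n \geq 1}$ is cofinal among open ideals by definition. Set $A_n := A/I^n$ and also include $A_0 := A/I$ as the final object, with the index category being $\{0\} \cup \{n \geq 1\}$ ordered so that $0$ is terminal and $n \leq m$ for $n \leq m$ on the positive integers, with maps $A_m \to A_n$ and $A_n \to A_0$ the canonical quotients. Each transition map $A_n \to A_0$ is surjective with kernel $I/I^n$, which is nilpotent since $(I/I^n)^n = 0$. By completeness of $A$ and cofinality of $\{I^n\}$, the natural map $A \to \varprojlim_n A/I^n$ is an isomorphism of topological rings. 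This gives the presentation required by (2).

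The main obstacle I anticipate is purely bookkeeping rather than conceptual: in $(2) \Rightarrow (1)$ one must be careful that a general cofiltered index category need not be the positive integers, so "nilpotent kernel" is a condition only on the maps to the terminal object $0$, and one has to argue that this still forces some power of $I = \ker(A \to A_0)$ to sit inside every basic open $\ker(A \to A_{\lambda})$. The trick is that $A \to A_{\lambda} \to A_0$ factors the projection, so $\ker(A \to A_{\lambda}) \subseteq I$, and conversely $I$ maps onto the nilpotent ideal $I_{\lambda} = \ker(A_{\lambda} \to A_0)$, whence $I^{n} \subseteq \ker(A \to A_{\lambda})$ once $I_{\lambda}^{n} = 0$, because any element of $I^{n}$ maps to $I_{\lambda}^{n} = 0$ in $A_{\lambda}$. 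This closes the argument; the rest is routine verification that the stated topologies agree.
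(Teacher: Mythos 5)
Your direction $(2)\Rightarrow(1)$ is essentially correct for the notion cited in (1) (Stacks \stackcite{07E8}): $I=\ker(A\to A_0)$ is itself a basic open ideal of the limit topology, and your factorization trick (any element of $I^n$ maps into $I_\lambda^n=0$, so $I^n\subseteq\ker(A\to A_\lambda)$ once $I_\lambda^n=0$) correctly shows every neighbourhood of $0$ contains a power of $I$; completeness and linearity pass to the limit. However, your parenthetical claim that \emph{each power $I^n$ is open} is false in general, and both arguments you sketch for it fail: the equality $I^n+\ker(A\to A_\lambda)=I^n$ is exactly the assertion to be proved, and ``powers of open ideals are open in a linearly topologized ring'' is not a valid principle. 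Concretely, take $A=k\oplus V$ with $V=\prod_{i\geq 1}k$, $V^2=0$, topologized by the ideals $W_n=\prod_{i>n}k$: this $A$ arises as a limit as in (2) (quotients $A/W_n$ with final object $A_0=A/V$, kernels square-zero), $I=V$ is an ideal of definition, but $I^2=0$ is not open. Fortunately openness of the powers is not required by the Stacks definition, so this error is harmless for $(2)\Rightarrow(1)$ as stated.

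The genuine gap is in $(1)\Rightarrow(2)$: you build the system from the quotients $A/I^n$ and assert that $A\to\varprojlim_n A/I^n$ is an isomorphism of \emph{topological} rings with each $A/I^n$ discrete. This presupposes that $\{I^n\}$ is a fundamental system of \emph{open} neighbourhoods of $0$, i.e.\ that $A$ is adic, which is strictly stronger than admissible: in the example above $A/I^2=A$ is not discrete, and $\varprojlim_n A/I^n$ with discrete quotients is $A$ with the discrete topology, not $A$. So your construction does not prove $(1)\Rightarrow(2)$ from the hypothesis actually named in (1). The repair is standard (and is what the paper's reference, \cite[Lemme 0.7.2.2]{MR217083}, does — the paper itself only cites this result rather than proving it): index the system by \emph{all} open ideals $J\subseteq I$ (these are cofinal among open ideals since $J\cap I$ is open), set $A_J:=A/J$ discrete with final object $A_0:=A/I$; completeness and the linear topology give $A\cong\varprojlim_J A/J$ topologically, and each kernel $I/J$ is nilpotent because the open ideal $J$ contains some $I^n$. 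With that substitution your overall plan goes through; as written, the powers-of-$I$ shortcut only works under the stronger, adic-type reading of admissibility and not under the definition of \stackcite{07E8} to which the lemma refers.
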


\begin{proof} This is \cite[Lemme 0.7.2.2]{MR217083}.
\end{proof}

\begin{definition} Let $\frakX_G : \Adm_{\calO} \to \Set$ be the functor, that attaches to an admissible $\calO$-algebra $A$ the set of continuous pseudocharacters $\cPC_G^{\Gamma}(A)$.
\end{definition}

Next, we will define a set which will later on be the index set of a disjoint decomposition of $\frakX_G$ and $X_G$ into open subspaces.

\begin{definition}\label{defindexset} We denote by $|\PC_{G}^{\Gamma}| \subseteq \PC_{G}^{\Gamma}$ the subset of closed points $z$ with finite residue field $k_z$, such that the canonical $G$-pseudocharacter $\Theta_z \in \PC_G^{\Gamma}(k_z)$ attached to $z$ is continuous for the discrete topology on $k_z$.
\end{definition}

\begin{lemma}\label{factoringdiscrete} Let $A$ be a discrete $\calO$-algebra and $\Theta \in \cPC_G^{\Gamma}(A)$. Then $\Theta$ factors over a quotient $\Gamma/\Delta$ for an open normal subgroup $\Delta \leq \Gamma$.
\end{lemma}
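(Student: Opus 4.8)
The plan is to produce an open normal subgroup $\Delta \trianglelefteq \Gamma$ with $\Delta \subseteq \ker(\Theta)$ and then quote \Cref{homtheorem}, which yields the $G$-pseudocharacter $\Theta' \in \PC_G^{\Gamma/\Delta}(A)$ restricting to $\Theta$ over $\Gamma$. The arithmetic heart of the argument is that the $\calF$-$\calO$-algebra $\calO[G^{\bullet}]^{G^0}$ is finitely generated, say by elements $g_1 \in \calO[G^{m_1}]^{G^0}, \dots, g_k \in \calO[G^{m_k}]^{G^0}$; this is the invariant-theoretic analogue of the classical bounds on generators of invariants of tuples of matrices, and for the groups we need it follows from \Cref{secinvthy} (e.g. from \Cref{gensforGLSL} and \Cref{prtorsfingen}). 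Granting this, I would first note that since $\Theta$ is continuous and $A$ carries the discrete topology, each $\Theta_{m_i}(g_i) \colon \Gamma^{m_i} \to A$ is a locally constant map on the compact group $\Gamma^{m_i}$; since the subgroups $N^{m_i}$ for $N \trianglelefteq \Gamma$ open normal form a neighbourhood basis of the identity, such a map factors through a finite quotient $(\Gamma/N_i)^{m_i}$ for some open normal $N_i \trianglelefteq \Gamma$. I would then set $\Delta := \bigcap_{i=1}^{k} N_i$, an open normal subgroup of $\Gamma$.

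The next step is to upgrade this to all of $\calO[G^{\bullet}]^{G^0}$: I claim that for every $m \geq 1$ and every $f \in \calO[G^m]^{G^0}$ the map $\Theta_m(f) \colon \Gamma^m \to A$ factors through the projection $\Gamma^m \twoheadrightarrow (\Gamma/\Delta)^m$. To see this, let $\calD$ be the collection of all $f$ (in all degrees $m$) with this property. In each degree, $\calD \cap \calO[G^m]^{G^0}$ is the preimage under the $\calO$-algebra homomorphism $\Theta_m$ of the $\calO$-subalgebra $\map((\Gamma/\Delta)^m, A) \subseteq \map(\Gamma^m, A)$, hence is an $\calO$-subalgebra; and $\calD$ is stable under every morphism of $\calF$ precisely because $\Delta$ is \emph{normal}: a word-substitution $\Gamma^{m'} \to \Gamma^m$ induced by an $\calF$-morphism $\alpha$ descends to the corresponding substitution $(\Gamma/\Delta)^{m'} \to (\Gamma/\Delta)^m$ on quotient groups, so if $\Theta_m(h)$ factors through $(\Gamma/\Delta)^m$ then $\Theta_{m'}(\alpha_* h) = \alpha_*(\Theta_m(h))$ factors through $(\Gamma/\Delta)^{m'}$. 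Thus $\calD$ is a sub-$\calF$-$\calO$-algebra of $\calO[G^{\bullet}]^{G^0}$ containing $g_1, \dots, g_k$, whence $\calD = \calO[G^{\bullet}]^{G^0}$. Now for $\delta \in \Delta$ one has $\gamma_m\delta \equiv \gamma_m \pmod{\Delta}$, so $\Theta_m(f)(\gamma_1, \dots, \gamma_m\delta) = \Theta_m(f)(\gamma_1, \dots, \gamma_m)$ for all $m$, $f$ and $\gamma_1, \dots, \gamma_m$; that is, $\Delta \subseteq \ker(\Theta)$. Applying \Cref{homtheorem} to the normal subgroup $\Delta \subseteq \ker(\Theta)$ completes the proof.

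\textbf{Main obstacle.} The only substantive input — and the step I expect to be the main difficulty — is the finite generation of $\calO[G^{\bullet}]^{G^0}$ as an $\calF$-$\calO$-algebra: without a uniform bound on the degrees (i.e. on the number of free-group generators) needed to generate the invariants, a continuous $G$-pseudocharacter valued in a discrete ring need not factor through a finite quotient, so this hypothesis is genuinely used and not a mere convenience. The remaining ingredients — compactness of $\Gamma^m$, local constancy of continuous maps into discrete rings, the stability of $\calD$ under the $\calF$-structure, and the reduction via \Cref{homtheorem} — are routine; in particular the proof does not require $\Gamma$ to be topologically finitely generated.
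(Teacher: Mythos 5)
Your reduction to $\Delta\subseteq\ker(\Theta)$ followed by \Cref{homtheorem} is the right skeleton, and the verification that the set $\calD$ of invariants whose image under $\Theta$ factors through $(\Gamma/\Delta)^{\bullet}$ is an $\calF$-$\calO$-subalgebra is correct. The gap is exactly at the step you flag as the "main obstacle": you need $\calO[G^{\bullet}]^{G^0}$ to be finitely generated \emph{as an $\calF$-$\calO$-algebra}, and this is not available in the generality of the statement. The lemma is asserted (and later used, e.g. in \Cref{Aprimelemma} and the construction of $\frakX_G$) for an arbitrary generalized reductive group scheme $G$ over the ring of integers $\calO$ of $L$. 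The results of \Cref{secinvthy} that you invoke do not cover this: \Cref{gensforGLSL} treats only $\GL_n$ and $\SL_n$, and \Cref{prtorsfingen} treats a short list of classical groups and moreover assumes $p^r\calO=0$, whereas here $\calO$ is $p$-torsion free (one cannot simply lift $\calF$-finite generation from $\calO/\varpi^r$ to $\calO$). Worse, the paper's remark following \Cref{H1mainthm} explains why one should \emph{not} expect such uniform generation statements for all $G$ (unacceptable groups such as $\SO_{2n}$ or $\Spin_7$ obstruct generation in degree one, and no substitute in higher degree is proved or cited anywhere). So your proof, as written, establishes the lemma only for the special groups of \Cref{secinvthy}, not for the $G$ of the statement.

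The paper avoids this input entirely by using a different finiteness: in \Cref{secpadicspace} the group $\Gamma$ is assumed topologically finitely generated, with topological generators $\sigma_1,\dots,\sigma_r$. One only needs that $\calO[G^{r+1}]^{G^0}$ is a finitely generated $\calO$-algebra (true for every generalized reductive $G$ over a noetherian base, by the adequate-moduli/Seshadri argument already used in \Cref{decisivefiniteness}), chooses $\Delta$ so that the finitely many $\calO$-algebra generators of $\calO[G^{r+1}]^{G^0}$ become $\Delta$-invariant in the last variable, and then handles an arbitrary $f\in\calO[G^m]^{G^0}$ and arbitrary $\gamma\in\Gamma^m$ by approximating $\gamma$ by a tuple of words in the $\sigma_i$ (using continuity of $\Theta_m(f)$ and discreteness of $A$) and pulling back along the corresponding substitution $\FG(m+1)\to\FG(r+1)$. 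So the hypothesis you discard ("the proof does not require $\Gamma$ to be topologically finitely generated") is precisely the one the paper leans on; to repair your argument you would either have to reinstate it and follow that density-plus-substitution route, or restrict the lemma to groups for which $\calF$-finite generation of $\calO[G^{\bullet}]^{G^0}$ is actually proved.
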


\begin{proof} The idea is the same as in the proof of \Cref{decisivefiniteness}. Let $\sigma = (\sigma_1, \dots, \sigma_r) \in \Gamma^r$ be a tuple of topological generators of $\Gamma$ and let $\Sigma$ be the subgroup generated by $\sigma_1, \dots, \sigma_r$. By \cite[Theorem 2 (i)]{Seshadri}, $\calO[G^{r+1}]^{G^0}$ is a finitely generated $\calO$-algebra. Let $f_1, \dots, f_s \in \calO[G^{r+1}]^{G^0}$ be a set of $\calO$-algebra generators.
Since $A$ is discrete and $\Gamma^{r+1}$ is a profinite set, a map $\Theta_{r+1}(f_i) : \Gamma^{r+1} \to A$ is constant on a finite partition of open subsets of $\Gamma^{r+1}$.
Such a partition can be refined to consist of open sets in a topological basis of $\Gamma^{r+1}$.
So we can assume, that the partition of $\Gamma^{r+1}$ consists of products of sets in a topological basis of $\Gamma$.
Refining further, we obtain a partition into products of cosets of an open normal subgroup $\Delta_i$ of $\Gamma$.
We take $\Delta := \bigcap_{i=1}^s \Delta_i$ and observe, that for all $\gamma \in \Gamma^{r+1}$, all $\delta \in \Delta$ and all $f \in \calO[G^{r+1}]^{G^0}$, we have $\Theta_{r+1}(f)(\gamma_1, \dots, \gamma_r, 1) = \Theta_{r+1}(f)(\gamma_1, \dots, \gamma_{r}, \delta)$.

Let $m \geq 0$, $\gamma = (\gamma_1, \dots, \gamma_m) \in \Gamma^m$, $f \in \calO[G^m]^{G^0}$ and $\delta \in \Delta$.
Our goal is to show, that $\Theta_m(f)(\gamma_1, \dots, \gamma_m) = \Theta_m(f)(\gamma_1, \dots, \gamma_m\delta)$ and therefore $\Delta \subseteq \ker(\Theta)$ (see \Cref{defkernel}).
Since $\Theta_m(f)$ is continuous and $A$ is discrete, we can choose $\gamma' = (\gamma_1', \dots, \gamma_m') \in \Sigma^m$ close enough to $\gamma$, such that both $\Theta_m(f)(\gamma_1, \dots, \gamma_m) = \Theta_m(f)(\gamma_1', \dots, \gamma_m')$ and $\Theta_m(f)(\gamma_1, \dots, \gamma_m \delta) = \Theta_m(f)(\gamma_1', \dots, \gamma_m' \delta)$ hold.
There is a homomorphism of free groups $\alpha : \FG(m) \to \FG(r)$, such that the composition with the projection $\FG(r) \twoheadrightarrow \Gamma, ~x_i \mapsto \sigma_i$ maps $x_i$ to $\gamma_i'$.
We extend $\alpha$ to a homomorphism $\tilde \alpha : \FG(m+1) \to \FG(r+1)$, such that $\tilde \alpha(x_{m+1}) = x_{r+1}$.
Let $\eta : \FG(m) \to \FG(m+1)$ be defined by $\eta(x_i) := x_i$ for $i \leq m-1$ and $\eta(x_m) = x_mx_{m+1}$.

Using, what we have just proved, we conclude:
\begin{align*}
    \Theta_m(f)(\gamma_1, \dots, \gamma_m\delta) &= \Theta_m(f)(\gamma_1', \dots, \gamma_m'\delta) = \Theta_{m+1}(f^{\eta})(\gamma_1', \dots, \gamma_m',\delta) = \Theta_{r+1}((f^{\eta})^{\tilde\alpha})(\sigma_1, \dots, \sigma_r,\delta) \\
    &= \Theta_{r+1}((f^{\eta})^{\tilde\alpha})(\sigma_1, \dots, \sigma_r, 1) = \Theta_{m+1}(f^{\eta})(\gamma_1', \dots, \gamma_m', 1) \\
    &= \Theta_m(f)(\gamma_1', \dots, \gamma_m') = \Theta_m(f)(\gamma_1, \dots, \gamma_m)
\end{align*}
By the homomorphisms theorem \Cref{homtheorem}, $\Theta$ factors over a unique pseudocharacter of $\Gamma/\Delta$.
\end{proof}

We have a more explicit description of $|\PC_{G}^{\Gamma}|$:

\begin{lemma}\label{descrindexset} There is a canonical bijection between $|\PC_{G}^{\Gamma}|$ and the set of continuous $G$-completely reducible representations $\Gamma \to G(\kappabar)$ up to $G^0(\kappabar)$-conjugation and the $\kappa$-linear Frobenius action on $G(\kappabar)$ on the coefficients.
\end{lemma}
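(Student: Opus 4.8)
The plan is to assemble the bijection from three ingredients already available: representability of $\PC_G^{\Gamma}$ by $B_G^{\Gamma}$ (\Cref{repofPC}), the reconstruction theorem \Cref{reconstructiongeneral}, and \Cref{factoringdiscrete}, which forces a pseudocharacter continuous for the discrete topology to factor through a finite quotient of $\Gamma$. The additional ``Frobenius'' ambiguity is the standard dictionary relating closed points of a $\kappa$-scheme with finite residue field to $\Gal(\kappabar/\kappa)$-orbits of $\kappabar$-valued points. Throughout, $\kappa$ is finite, so $\kappabar$ is an algebraic closure of a finite field, viewed as an $\calO$-algebra via $\calO \to \kappa \hookrightarrow \kappabar$; note $\calO$ is a discrete valuation ring, hence noetherian, and $G^{0} = G$ since $G$ is connected, so $G^{0}(\kappabar)$-conjugation is $G(\kappabar)$-conjugation.

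\emph{From closed points to representations.} Let $z \in |\PC_{G}^{\Gamma}|$ with residue field $k_z$. Since $k_z$ has characteristic $p$ and $\calO$ is a discrete valuation ring with residue field $\kappa$, the structure map $\calO \to k_z$ factors through $\kappa$, so $k_z$ is a finite extension of $\kappa$. Choose a $\kappa$-embedding $\iota \colon k_z \hookrightarrow \kappabar$ and let $\Theta_{z,\kappabar} := \Theta_z \otimes_{k_z} \kappabar \in \PC_{G}^{\Gamma}(\kappabar)$ be the base change along $\iota$; as $\Theta_z$ is continuous for the discrete topology, so is $\Theta_{z,\kappabar}$, whence by \Cref{factoringdiscrete} there is an open normal subgroup $\Delta \le \Gamma$ with $\Delta \subseteq \ker(\Theta_{z,\kappabar})$. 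By \Cref{reconstructiongeneral} there is a $G$-completely reducible $\rho_z \colon \Gamma \to G(\kappabar)$ with $\Theta_{\rho_z} = \Theta_{z,\kappabar}$, unique up to $G(\kappabar)$-conjugation, and by the remark following \Cref{homtheorem} we have $\ker(\rho_z) = \ker(\Theta_{z,\kappabar}) \supseteq \Delta$, which is open, so $\rho_z$ is continuous. A different $\kappa$-embedding $\iota'$ differs from $\iota$ by some $\sigma \in \Gal(\kappabar/\kappa)$; this replaces $\Theta_{z,\kappabar}$ by its $\sigma$-translate and hence $\rho_z$ by a $G(\kappabar)$-conjugate of $\sigma \circ \rho_z$, so the class of $\rho_z$ modulo $G(\kappabar)$-conjugation and the $\kappa$-linear Frobenius action on $G(\kappabar)$ is well defined.

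\emph{From representations to closed points, and mutual inverseness.} Conversely, let $\rho \colon \Gamma \to G(\kappabar)$ be continuous and $G$-completely reducible. Its image is a compact subgroup of $G(\kappabar) = \varinjlim_{k'/\kappa \text{ finite}} G(k')$ with the discrete topology, hence finite, so $\rho$ factors through a finite quotient and lands in $G(k')$ for some finite $k'/\kappa$; thus $\Theta_\rho \in \PC_{G}^{\Gamma}(k')$ is discretely continuous, and its associated $\calO$-algebra map $\phi_\rho \colon B_{G}^{\Gamma} \to k'$ has image a finite subfield $k_z \subseteq k'$. Then $\ker(\phi_\rho)$ is maximal, defining a closed point $z$ with residue field $k_z$, and $\Theta_z = \Theta_\rho \otimes_{k'} \dots$ wait --- rather $\Theta_\rho$ is the base change of $\Theta_z$ along $k_z \hookrightarrow k'$, so the component maps of $\Theta_z$ are those of $\Theta_\rho$ with codomain restricted to $k_z$ and remain continuous; hence $z \in |\PC_{G}^{\Gamma}|$. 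Conjugating $\rho$ does not change $\Theta_\rho$, and applying $\sigma \in \Gal(\kappabar/\kappa)$ post-composes $\phi_\rho$ with $\sigma$, leaving its kernel unchanged, so $z$ depends only on the class of $\rho$. For the two assignments to be mutually inverse: starting from $z$, the reconstruction $\rho_z$ has $\Theta_{\rho_z} = \Theta_z \otimes_{k_z} \kappabar$ with associated map $B_{G}^{\Gamma} \twoheadrightarrow k_z \xrightarrow{\iota} \kappabar$ of kernel $\frakm_z$, so one recovers $z$; starting from $[\rho]$ and choosing for the resulting $z$ the embedding $k_z \hookrightarrow \kappabar$ induced by $k_z \subseteq k' \subseteq \kappabar$, one gets $\Theta_{z,\kappabar} = \Theta_\rho$ in $\PC_{G}^{\Gamma}(\kappabar)$, so by the uniqueness clause of \Cref{reconstructiongeneral} the reconstruction $\rho_z$ is $G(\kappabar)$-conjugate to $\rho$, and any other choice of embedding only introduces a Frobenius twist.

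\emph{Main difficulty.} The substantive inputs --- reconstruction and the finite factorization of discretely continuous pseudocharacters --- are quoted, so the real work is organizing the two layers of ambiguity so that they match: the choice of residue-field embedding $k_z \hookrightarrow \kappabar$ on the one side and the $\kappa$-linear Frobenius action on $G(\kappabar)$ on the other. The technical point making this go through is that, over an algebraically closed field, a $G$-completely reducible representation is determined up to conjugation by its associated pseudocharacter and satisfies $\ker \rho = \ker \Theta_\rho$, which is precisely what propagates continuity from $\Theta$ to $\rho$.
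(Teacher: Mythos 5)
Your proposal is correct and follows essentially the same route as the paper: the paper defines the map $[\rho]\mapsto z$ and proves surjectivity via \Cref{reconstructiongeneral} together with \Cref{factoringdiscrete}, and injectivity by matching the two residue-field embeddings $k_z\hookrightarrow\kappabar$ through a power of Frobenius and invoking the uniqueness clause of the reconstruction theorem, exactly the ingredients you use. Your presentation merely packages this as two explicit mutually inverse assignments (and spells out continuity of $\rho_z$ via $\ker\rho_z=\ker\Theta_{\rho_z}$), which is a cosmetic rather than substantive difference.
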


\begin{proof} Let $\calS$ be the set of continuous $G$-completely reducible representations $\Gamma \to G(\kappabar)$ modulo the action of $G^0(\kappabar)$ by conjugation and modulo the action of the $\kappa$-Frobenius of $\kappabar$ on the entries of $G(\kappabar)$. Let $\calS \to |\PC_{G}^{\Gamma}|$ be the map, that maps an equivalence class $[\rho]$ to the well-defined and unique point in the image of $\Spec(\kappabar) \to \PC_{G}^{\Gamma}$ attached to $\Theta_{\rho}$. Surjectivity follows from the reconstruction theorem \Cref{reconstructiongeneral} together with the fact, that a continuous pseudocharacter over $\kappabar$ factors over a quotient by an open normal subgroup \Cref{factoringdiscrete}. For injectivity suppose $\rho, \rho' : \Gamma \to G(\kappabar)$ are such, that the attached pseudocharacters $\Theta_{\rho}$ and $\Theta_{\rho'}$ are supported on the same point $z \in |\PC_{G}^{\Gamma}|$. Then there are $\kappa$-homomorphisms $f, f' : k_z \to \kappabar$, such that $\Theta_z \otimes_{k_z, f} \kappabar = \Theta_{\rho}$ and $\Theta_z \otimes_{k_z, f'} \kappabar = \Theta_{\rho'}$. We can take a power of the $\kappa$-Frobenius $\varphi : \kappabar \to \kappabar$, such that $\varphi \circ f = f'$, in particular $\Theta_{\rho} \otimes_{\kappabar, \varphi} \kappabar = \Theta_{\rho'}$. The uniqueness part of \Cref{reconstructiongeneral} tells us, that $\rho \otimes_{\kappabar, \varphi} \kappabar$ and $\rho'$ are conjugate.
\end{proof}

\begin{lemma}\label{BGammaGfinite} Let $\Gamma$ be a finite group and let $r \geq 1$. Then $B_{G_{\calO/\varpi^r}}^{\Gamma}$ is finite as a set.
\end{lemma}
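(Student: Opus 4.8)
The plan is to reduce, by formal steps, to a geometric finiteness assertion about $G$-completely reducible representations of a finite group. Write $R := \calO/\varpi^r$, an artinian local ring with finite residue field $\kappa$ and nilpotent maximal ideal $\frakm_R$. As $\Gamma$ is finite, hence topologically finitely generated, \Cref{decisivefiniteness} applied to $G_R$ over the noetherian ring $R$ shows that $B := B_{G_R}^{\Gamma}$ is a finitely generated $R$-algebra. Such an algebra is a finite $R$-module, hence a finite set (since $R$ is finite), as soon as $B/\frakm_R B$ is finite-dimensional over $\kappa$: choosing $b_1,\dots,b_s\in B$ whose images span $B/\frakm_R B$ over $\kappa$ and putting $B':=\sum_i Rb_i$ one gets $B=B'+\frakm_R B$, so $B/B'=\frakm_R^n(B/B')=0$ for $n\gg0$. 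Since $\calO$ is a Dedekind domain, \Cref{basechange} identifies $B/\frakm_R B\cong B_{G_R}^{\Gamma}\otimes_R\kappa$ with $B_{G_\kappa}^{\Gamma}$ and $B_{G_\kappa}^{\Gamma}\otimes_\kappa\overline\kappa$ with $B_{G_{\overline\kappa}}^{\Gamma}$, so it suffices to prove that $B_{G_{\overline\kappa}}^{\Gamma}$ is finite-dimensional over $\overline\kappa$. Being a finitely generated $\overline\kappa$-algebra, it is finite-dimensional iff it has only finitely many maximal ideals, iff (Nullstellensatz and Noether normalization) the set $\Hom_{\overline\kappa}(B_{G_{\overline\kappa}}^{\Gamma},\overline\kappa)=\PC_{G_{\overline\kappa}}^{\Gamma}(\overline\kappa)$ is finite. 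By the reconstruction theorem \Cref{reconstructiongeneral} (valid since $\overline\kappa$ is an algebraically closed, noetherian field) this set is in bijection with the set of $G^0(\overline\kappa)$-conjugacy classes of $G$-completely reducible homomorphisms $\Gamma\to G(\overline\kappa)$; so everything reduces to finiteness of the latter.

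Since $\Gamma$ is finite it has finitely many normal subgroups, and every $\rho\colon\Gamma\to G(\overline\kappa)$ factors as $\Gamma\twoheadrightarrow\Gamma/\ker\rho\hookrightarrow G(\overline\kappa)$; so it is enough to bound, for each finite group $H$ occurring as a quotient of $\Gamma$, the number of $G^0(\overline\kappa)$-conjugacy classes of homomorphisms $H\to G(\overline\kappa)$ with $G$-completely reducible image. By \Cref{charorbitcr} these are exactly the closed $G^0(\overline\kappa)$-orbits in $\Hom(H,G)(\overline\kappa)\subseteq G^{\lvert H\rvert}(\overline\kappa)$, i.e.\ the $\overline\kappa$-points of $\Hom(H,G)\sslash G^0$, so the claim is that this GIT quotient is a finite $\overline\kappa$-scheme. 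When $\lvert H\rvert$ is invertible in $\overline\kappa$ this is clear: $H^1(H,\frakg)=H^2(H,\frakg)=0$, so $\Hom(H,G)$ is smooth, and every $G^0(\overline\kappa)$-orbit — having tangent space of dimension $\dim B^1(H,\frakg)=\dim Z^1(H,\frakg)$ — is open in it, whence $\Hom(H,G)$, being Noetherian and smooth, is a finite disjoint union of orbits.

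The case where $p=\chara\overline\kappa$ divides $\lvert H\rvert$ is where I expect the real work to lie. One can still reduce to finite groups with no normal $p$-subgroup: if $O_p(\rho(\Gamma))\neq1$, it is a nontrivial unipotent normal subgroup of $\rho(\Gamma)$, so by the Borel--Tits theorem $\rho(\Gamma)\subseteq N_G(O_p(\rho(\Gamma)))$ lies in a proper parabolic $P$ with $O_p(\rho(\Gamma))\subseteq R_u(P)$, and then no Levi of $P$ can contain $\rho(\Gamma)$, contradicting $G$-complete reducibility. What remains — and is the main obstacle — is the finiteness of the number of $G^0(\overline\kappa)$-conjugacy classes of $G$-completely reducible subgroups of a given finite isomorphism type; this I would either cite from the structure theory of $G$-complete reducibility or prove separately (note that, just as in the examples $\bbZ/p\times\bbZ/p$ or $\mathrm{PSL}_2(p^2)$ in characteristic $p$, the number of all conjugacy classes of homomorphisms may be infinite here, so the $G$-complete reducibility hypothesis is essential). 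Every step other than this last one is purely formal, resting only on the representability and base-change results already established.
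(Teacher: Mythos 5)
Your formal reductions are correct: the Nakayama-type argument over the artinian base, the identification $B_{G_R}^{\Gamma}\otimes_R\overline\kappa\cong B_{G_{\overline\kappa}}^{\Gamma}$ via \Cref{basechange}, the equivalence ``finitely generated $\overline\kappa$-algebra with finitely many $\overline\kappa$-points $\Rightarrow$ finite-dimensional'', and the translation via \Cref{reconstructiongeneral} into finiteness of the set of $G^0(\overline\kappa)$-conjugacy classes of $G$-completely reducible homomorphisms $\Gamma\to G(\overline\kappa)$ — all of this is fine and runs parallel to the endgame of the paper's own proof. But there is a genuine gap at the single substantive point. The coprime case you settle (smoothness of $\Hom(H,G)$ and open orbits when $\lvert H\rvert$ is invertible) is the easy case and is largely irrelevant here, since the groups $\Gamma$ arising as finite quotients in the application typically have order divisible by $p$; and your Borel--Tits reduction to groups with trivial normal $p$-subgroup does not reduce the difficulty. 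What you leave open — finiteness of the number of conjugacy classes of $G$-completely reducible homomorphisms (equivalently, by \Cref{charorbitcr}, of closed $G^0(\overline\kappa)$-orbits in $\Hom(H,G)$, i.e.\ of points of $\Hom(H,G)\sslash G^0$) in the modular case — is, by your own chain of equivalences, essentially the lemma itself. Saying you would ``cite it from the structure theory of $G$-complete reducibility or prove it separately'' therefore leaves the proof incomplete: no such citation is given, and it does not follow formally from anything established earlier in the paper.

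The paper closes exactly this point by a different mechanism: it does not attempt to classify $G$-cr subgroups at all, but reduces to $\GL_d$. A faithful representation $\iota\colon G\hookrightarrow\GL_d$ induces a ring map $\calO/\varpi^r[\GL_d^m]^{\GL_d}\to\calO/\varpi^r[G^m]^{G^0}$ which is \emph{finite} (this is the nontrivial input, from Cotner's theorem on morphisms of adequate moduli spaces together with Seshadri's finiteness), hence $B_{\GL_d}^{\Gamma}\to B_G^{\Gamma}$ is finite and one only needs finiteness of the $\kappabar$-points for $G=\GL_d$. There the statement is classical Brauer theory: a finite group has only finitely many irreducible representations over an algebraically closed field, hence finitely many semisimple $d$-dimensional representations up to conjugacy, i.e.\ finitely many $\GL_d$-pseudocharacters over $\kappabar$ — valid whether or not $p$ divides $\lvert\Gamma\rvert$. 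To complete your argument you would either have to import this finite-morphism input (at which point your route collapses into the paper's), or supply a precise reference or proof for the finiteness of conjugacy classes of $G$-cr homomorphisms from a finite group in characteristic $p$; as your own examples show, the analogous statement without the $G$-cr hypothesis is false, so this is not a step one can wave through.
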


\begin{proof} We assume $G$ is defined over $\calO/\varpi^r$. We first show that $B_G^{\Gamma} \otimes \kappabar$ is a finite-dimensional $\kappabar$-vector space. All tensor products are over $\calO/\varpi^r$. By \Cref{decisivefiniteness}, we already know that $B_G^{\Gamma} \otimes \kappabar$ is a finitely generated $\kappabar$-algebra. Choose a surjection $\FG(r) \twoheadrightarrow \Gamma$. In the proof of \Cref{decisivefiniteness} we have found a surjection $\calO/\varpi^r[G^m]^{G^0} \twoheadrightarrow B_G^{\Gamma}$ for some $m \geq 1$. For a faithful representation $G \hookrightarrow \GL_d$, we obtain a map $B_{\GL_d}^{\Gamma} \to B_G^{\Gamma}$. The map $\calO/\varpi^r[\GL_d^m]^{\GL_d} \to \calO/\varpi^r[G^m]^{G^0}$ is finite by \cite[Theorem 1]{cotner} and \cite[Theorem 2 (i)]{Seshadri}. It follows, that $B_{\GL_d}^{\Gamma} \to B_G^{\Gamma}$ is finite, which reduces the claim to the case $G=\GL_d$. Let $B := B_{\GL_d}^{\Gamma}$.

By \Cref{reconstructiongeneral} the canonical map $\Rep^{\Gamma, \square}_{\GL_d}(\kappabar) \twoheadrightarrow \PC_{\GL_d}^{\Gamma}(\kappabar) = \Hom_{\CAlg_{\kappabar}}(B \otimes \kappabar, \kappabar)$ is surjective. But $\Rep^{\Gamma, \square}_{\GL_d}(\kappabar)$ is finite, so $B \otimes \kappabar$ has finitely many $\kappabar$-points and thus its nilreduction $(B \otimes \kappabar)_{\red}$ must be a finite product of $\kappabar$ with itself. The nilradical $N := \Nil(B \otimes \kappabar)$ is finitely generated and hence nilpotent. So by induction each $N^i/N^{i+1}$ is a finitely generated $(B \otimes \kappabar)_{\red}$-module. It follows, that $B \otimes \kappabar$ is a finite-dimensional $\kappabar$-vector space. Hence $B/\varpi$ is finite. Since $B$ is $\varpi^r$-torsion, there is a finite descending sequence
$$ B \supseteq \varpi B \supseteq \varpi^2 B \supseteq \dots \supseteq 0 $$
with quotients $\varpi^i B/\varpi^{i+1} B$. These are finitely generated $B/\varpi$-modules, hence finite and thus $B$ is finite.
\end{proof}

\begin{lemma}\label{Aprimelemma} Let $A$ be an admissible $\calO$-algebra. Let $\Theta \in \cPC_G^{\Gamma}(A)$ be a continuous pseudocharacter.
Let $A' \subseteq A$ be the closure of the $\calO$-subalgebra of $A$ generated by $\Theta_n(f)(\gamma_1, \dots, \gamma_n)$ for all $n \geq 1$, all $f \in \calO[G^n]^{G^0}$ and all $(\gamma_1, \dots, \gamma_n) \in \Gamma^n$. Then $A'$ is an admissible profinite $\calO$-subalgebra of $A$.
\end{lemma}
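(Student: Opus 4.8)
The plan is to identify $A'$ with a quotient of a complete noetherian ring by a closed ideal, and then to appeal to standard facts about admissible rings. Since $\Gamma$ is topologically finitely generated, I would first choose a finitely generated dense abstract subgroup $\Sigma \subseteq \Gamma$. As $\Theta$ is continuous, $A$ is Hausdorff and $\Sigma^n$ is dense in $\Gamma^n$, every value $\Theta_n(f)(\gamma)$ with $\gamma \in \Gamma^n$ lies in the closure of the set of values with $\gamma \in \Sigma^n$; hence $A'$ is the closure of $R_0 := \im(\phi)$, where $\phi \colon B_G^{\Sigma} \to A$ is the ring homomorphism attached to the restriction $\Theta|_{\Sigma}$ by \Cref{repofPC}. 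Because $\Sigma$ is finitely generated and $\calO$ is noetherian, $B_G^{\Sigma}$ is a finitely generated $\calO$-algebra by \Cref{decisivefiniteness}, hence noetherian, and therefore so is its quotient $R_0$.

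Next I would fix an ideal of definition $I$ of $A$ and put $\frakc := R_0 \cap I$; the crucial point is that $R_0/\frakc$ is finite. Indeed $A/I$ is discrete, and since $\calO \to A$ is continuous and $\calO$ carries the $\varpi$-adic topology, $\varpi^r \cdot 1_A \in I$ for some $r \geq 1$, so $A/I$ is a $\calO/\varpi^r$-algebra. The reduction $\Theta \otimes_A A/I$ is a continuous $G$-pseudocharacter over the discrete ring $A/I$, so by \Cref{factoringdiscrete} it factors through $\Gamma/\Delta$ for some open normal subgroup $\Delta \trianglelefteq \Gamma$, and $\Gamma/\Delta$ is finite; as $\Delta$ is open and $\Sigma$ is dense, the composite $\Sigma \to \Gamma/\Delta$ is surjective, so $R_0/\frakc = \im(B_G^{\Sigma} \to A/I)$ is a quotient of $B_{G_{\calO/\varpi^r}}^{\Gamma/\Delta} \cong B_G^{\Gamma/\Delta} \otimes_{\calO} \calO/\varpi^r$ (using \Cref{basechange}), which is finite by \Cref{BGammaGfinite}. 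Since $R_0$ is noetherian with $R_0/\frakc$ finite, each $R_0/\frakc^{\,n}$ is finite, so the $\frakc$-adic completion $\widehat{R_0}$ is profinite; being noetherian and $\frakc$-adically complete it is admissible in the sense of \Cref{admissiblecomp}, with ideal of definition $\frakc\widehat{R_0}$.

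It then remains to compare topologies. The $\frakc$-adic topology on $R_0$ refines the one induced from $A$ (as $\frakc^{\,n} \subseteq I^n$ and the ideals $I^n$ form a neighborhood basis of $0$ in $A$), so the identity of $R_0$ extends to a continuous homomorphism $\widehat{R_0} \to A$; its image is compact, contains the dense subring $R_0$ of $A'$, and hence equals $A'$. The resulting continuous bijection $\widehat{R_0}/\frakk \to A'$, where $\frakk$ is the (closed) kernel, is a homeomorphism because its source is compact and its target is Hausdorff. Thus $A'$ is, as a topological $\calO$-algebra, the quotient of the admissible profinite ring $\widehat{R_0}$ by a closed ideal; such a quotient is again admissible — with ideal of definition the image of $\frakc\widehat{R_0}$, cf. \stackcite{07E8} — and profinite, and $A'$ is an $\calO$-subalgebra of $A$ by construction. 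I expect the main obstacle to be the finiteness of $R_0/\frakc$: one has to use simultaneously that $\Theta \otimes_A A/I$ over the discrete ring $A/I$ factors through a finite group and that $A/I$ is annihilated by a power of $\varpi$, so that \Cref{BGammaGfinite} and \Cref{basechange} both become applicable, and one must also check that the comparison map $\widehat{R_0}\to A$ really has image all of $A'$.
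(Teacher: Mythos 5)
Your proof is correct, but it reaches admissibility of $A'$ by a different route than the paper. The finiteness engine is the same: reduce modulo an open ideal to a discrete quotient, use \Cref{factoringdiscrete} to factor through a finite quotient $\Gamma/\Delta$, and combine \Cref{basechange} with \Cref{BGammaGfinite}. The paper then writes $A = \varprojlim_{\lambda} A_{\lambda}$ as a cofiltered limit of discrete rings as in \Cref{admissiblecomp}, shows the image $A'_{\lambda}$ of $A'$ in each $A_{\lambda}$ is finite by the discrete case, and concludes that $A' = \varprojlim_{\lambda} A'_{\lambda}$ is admissible after checking the nilpotent-kernel condition. You instead exploit the section's standing hypothesis that $\Gamma$ is topologically finitely generated to produce the dense noetherian subring $R_0 = \im(B_G^{\Sigma} \to A)$ via \Cref{repofPC} and \Cref{decisivefiniteness}, prove finiteness of $R_0/(R_0 \cap I)$ for a single ideal of definition $I$, and exhibit $A'$ as the topological quotient of the profinite $\frakc$-adic completion $\widehat{R_0}$ by a closed ideal, the compactness-versus-Hausdorff argument guaranteeing that the subspace topology on $A'$ agrees with the quotient topology. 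What each approach buys: the paper's argument is more formal, uses no noetherian completion theory, and directly produces the presentation $A' = \varprojlim A'_{\lambda}$ demanded by \Cref{admissiblecomp}; yours needs only one discrete quotient $A/I$ and standard facts about adic completions of noetherian rings, at the price of invoking topological finite generation of $\Gamma$ explicitly (the paper's proof of this lemma uses it only indirectly, through the proof of \Cref{factoringdiscrete}) and of the loosely cited fact that a quotient of an admissible profinite ring by a closed ideal is again admissible profinite, which is indeed immediate in the compact setting. Two minor points you use silently, as does the paper: continuity of the structure map $\calO \to A$ to get $\varpi^r \cdot 1_A \in I$, and, for the phrase ``quotient of $B^{\Gamma/\Delta}_{G_{\calO/\varpi^r}}$'', surjectivity of $B_G^{\Sigma} \to B_G^{\Gamma/\Delta}$ (true because $\Sigma$ surjects onto $\Gamma/\Delta$ and free groups lift along surjections; for finiteness alone, being a subring of a finite ring already suffices).
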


\begin{proof} Assume, that $A$ is discrete. Then there is some $r \geq 1$, such that $\varpi^rA = 0$. By \Cref{basechange}, $\Theta$ factors over the $G_{\calO/\varpi^r}$-pseudocharacter $\Theta/\varpi^r := \Theta \otimes_{\calO} \calO/\varpi^r$. By \Cref{factoringdiscrete} $\Theta/\varpi^r$ factors through an open subgroup $\Delta \leq \Gamma$. The representing ring $B^{\Gamma/\Delta}_{G_{\calO/\varpi^r}}$ of $\PC^{\Gamma/\Delta}_{G_{\calO/\varpi^r}}$ is finite by \Cref{BGammaGfinite}.
By \Cref{repofPC}, $A'$ is the image of the map $B^{\Gamma/\Delta}_{G_{\calO/\varpi^r}} \to A$ attached to $\Theta/\varpi^r$, in particular $A'$ is finite, hence admissible.

Now let $A = \varprojlim_{\lambda} A_{\lambda}$ be a presentation of $A$ as an inverse limit of discrete rings as in \Cref{admissiblecomp}.
Let $\pi_{\lambda} : A \to A_{\lambda}$ be the canonical projection and let $\Theta_{\lambda} := \pi_{\lambda *} \Theta$.
Since $A_{\lambda}$ is discrete, the image $A_{\lambda}'$ of $A'$ in $A_{\lambda}$ is finite by the previous step.
Since $\ker(A_{\lambda}' \to A_0') \subseteq \ker(A_{\lambda} \to A_0)$, the former kernel is nilpotent for all $\lambda$.
It follows from \Cref{admissiblecomp}, that $A' = \varprojlim_{\lambda} A_{\lambda}'$ is admissible.
\end{proof}

We have just shown, that $\Theta$ can be uniquely descended to a continuous $A'$-valued pseudocharacter.

\begin{definition} If $A'$ in \Cref{Aprimelemma} is local, we say that $\Theta$ is \emph{residually constant}.
\end{definition}

In \Cref{prodoflocal}, we will see that $A'$ is a finite product of local profinite admissible $\calO$-algebras.
So if $\Theta$ is not residually constant it is essentially a finite product of residually constant pseudocharacters, defined over different connected components of $A'$. This picture will be crucial for the description of the functor of points of the generic fiber in \Cref{rigspaceG}.

Suppose $\Theta$ is residually constant. In \Cref{Aprimelemma} the natural map $B_G^{\Gamma} \to A_0'$ (with $A_0'$ as in \Cref{admissiblecomp}) is surjective by definition. The radical of the kernel of this map does not depend on the choice of the presentation of $A'$ as an inverse limit as in \Cref{admissiblecomp}. It is a maximal ideal of $B_G^{\Gamma}$ with finite residue field and therefore determines a closed point $z \in |\PC^{\Gamma}_G|$. The residue field of $A'$ is canonically isomorphic to the residue field $k_z$ of $z$. Therefore $\Theta$ can be reduced to a continuous $k_z$-valued pseudocharacter along the map $A' \to k_z$. This reduction is the pseudocharacter $\Theta_z$ attached to $z$.

\begin{theorem} Let $z \in |\PC_G^{\Gamma}|$ and let $\frakX_{G,z} : \Adm_{\calO} \to \Set$ be the functor, that attaches to an admissible $\calO$-algebra $A$ the set $\frakX_{G,z}(A)$ of continuous pseudocharacters $\Theta \in \cPC_G^{\Gamma}(A)$, such that $\Theta$ is residually constant and equal to $\Theta_z$. Then $\frakX_{G,z}$ is representable by the deformation ring $R^{\ps}_{\Theta_z}$ of $\Theta_z$ over $\calO_{L'}$ for a finite unramified extension $L'/L$ such that the residue field of $\calO_{L'}$ is $k_z$, which is a complete noetherian local $\calO_{L'}$-algebra with residue field $k_z$.
\end{theorem}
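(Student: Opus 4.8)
The plan is to exhibit a natural isomorphism between $\frakX_{G,z}$ and the functor $A\mapsto \Hom_{\Adm_{\calO}}(R,A)$ represented by $R:=R^{\ps}_{\rho_z}$. Here $R$ is the universal pseudodeformation ring $R^{\ps}_{\Theta_z}$ of the continuous pseudocharacter $\Theta_z\in\cPC_G^{\Gamma}(k(z))$ attached to $z$, formed over a coefficient ring $\Lambda_z$ for $k(z)$ that we take to be finite unramified over $\calO$ (so $\Lambda_z$ is generated over $\calO$ by roots of unity reducing to a generating set of $k(z)^{\times}$). By \Cref{representabledeffunctor} $R$ pro-represents $\Def_{\Theta_z}$ on $\Art_{\Lambda_z}$ and is pro-$p$; since $\Gamma$ is topologically finitely generated, \Cref{tfgfingen} shows $R$ is noetherian, hence a complete noetherian local $\calO$-algebra with residue field $k(z)$. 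As its residue field is finite, the $\frakm_R$-adic topology makes $R$ a profinite admissible $\calO$-algebra, so $R\in\Adm_{\calO}$, and it carries a canonical continuous universal pseudocharacter $\Theta^u\in\cPC_G^{\Gamma}(R)$ reducing to $\Theta_z$ modulo $\frakm_R$.

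First I would define the map $\Hom_{\Adm_{\calO}}(R,A)\to\frakX_{G,z}(A)$ by $f\mapsto\Theta^u\otimes_R A$. Continuity is immediate. To check that $\Theta^u\otimes_R A$ is residually constant and reduces to $\Theta_z$, I would identify the minimal admissible profinite $\calO$-subalgebra $A'\subseteq A$ of \Cref{Aprimelemma} over which it descends with the closure $\overline{f(R)}$: by \Cref{repofPC}, $R$ is topologically generated over $\calO$ by the elements $\Theta^u_n(g)(\gamma)$, whose images under the continuous map $f$ are exactly the topological generators of $A'$, so $A'=\overline{f(R)}$. For an ideal of definition $I\subseteq A$ and each $n$, continuity of $f$ gives $f(\frakm_R^{N})\subseteq I^n$ for some $N$, whence $f(R)/(f(R)\cap I^n)$ is a quotient of the finite local ring $R/\frakm_R^{N}$, so finite local with residue field $k(z)$; passing to the limit shows $A'$ is local with residue field $k(z)$, and reducing $\Theta^u$ along $R\to A'\to k(z)$ (which is the residue map of $R$) returns $\Theta_z$.

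Conversely, given $\Theta\in\frakX_{G,z}(A)$, \Cref{Aprimelemma} and residual constancy yield a unique descent $\Theta'\in\cPC_G^{\Gamma}(A')$ to a local profinite admissible $\calO$-algebra $A'$ with residue field $k(z)$ and $\Theta'\bmod\frakm_{A'}=\Theta_z$. Here I would use the Teichmüller lift of \Cref{teichmueller}: since $\Lambda_z$ is $\calO$ with roots of unity adjoined, a local profinite admissible $\calO$-algebra with residue field $k(z)$ admits a unique $\calO$-algebra homomorphism from $\Lambda_z$, so $A'$ is canonically a $\Lambda_z$-algebra. Writing $A'=\varprojlim_i A'_i$ as in \Cref{admissiblecomp} with $A'_i\in\Art_{\Lambda_z}$ of residue field $k(z)$, the reductions $\Theta'\otimes_{A'}A'_i$ lie in $\Def_{\Theta_z}(A'_i)$, and by pro-representability (\Cref{representabledeffunctor}) together with \Cref{proextension} they assemble to a unique continuous local $\Lambda_z$-homomorphism $R\to A'$ inducing $\Theta'$; composing with $A'\hookrightarrow A$ gives the sought $f\in\Hom_{\Adm_{\calO}}(R,A)$.

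That these two assignments are mutually inverse and natural in $A$ is then a formal verification from the uniqueness clauses of \Cref{Aprimelemma} and \Cref{representabledeffunctor}, together with the compatibility of passing to $A'$ with scalar extension of pseudocharacters. This gives $\frakX_{G,z}\cong\Hom_{\Adm_{\calO}}(R,-)=\Spf(R)$, and the asserted properties of $R=R^{\ps}_{\rho_z}$ were established above. The main obstacle is the residual-constancy step for an arbitrary continuous $f:R\to A$ — pinning $A'$ down as $\overline{f(R)}$ and proving it local with residue field $k(z)$ — together with the clean use of the Teichmüller lift to endow the profinite algebra $A'$ with its canonical $\Lambda_z$-structure, which is what makes the $\Art_{\Lambda_z}$-based deformation theory applicable over $\Adm_{\calO}$; the rest is bookkeeping with the cited limit computations.
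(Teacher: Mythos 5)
Your proposal is correct and follows essentially the same route as the paper: descend a residually constant $\Theta$ to the profinite local subalgebra $A'$ of \Cref{Aprimelemma}, use the Teichmüller lift (\Cref{teichmueller}) to make $A'$ an algebra over the unramified coefficient ring with residue field $k(z)$, and then identify $\frakX_{G,z}$ with the pseudodeformation functor $\Def_{\Theta_z}$, whose (pro-)representing ring is noetherian by \Cref{representabledeffunctor} and \Cref{tfgfingen}. You merely spell out some steps the paper leaves implicit, notably that base change of the universal deformation along an arbitrary continuous map $R\to A$ is residually constant with reduction $\Theta_z$ (via $A'=\overline{f(R)}$) and the mutual inverseness of the two assignments.
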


\begin{proof} 
    Let $\Theta \in \frakX_{G,z}(A)$. By \Cref{Aprimelemma}, $\Theta$ descends to an $A'$-valued pseudocharacter for some admissible profinite $\calO$-subalgebra $A' \subseteq A$, which we will also denote by $\Theta$. Using a multiplicative section of the reduction map of $(A')^{\times} \to k_z^{\times}$ (which exists and is unique by \stackcite{04GM} and \stackcite{06RR}), we see that there is a finite unramified extension $L'/L$, such that $\calO_{L'}$ has residue field $k_z$ and $A'$ is an $\calO_{L'}$-algebra. By \Cref{basechange} $\Theta$ can be regarded as a $G_{\calO_{L'}}$-pseudocharacter. As such it is a lift of $\Theta_z$ in the pseudodeformation functor $\Def_{\Theta_z} : \Art_{\calO_{L'}} \to \Set$. It follows, that $\Def_{\Theta_z}$ and $\frakX_{G,z}$ are naturally isomorphic as functors on $\Art_{\calO_{L'}}$.
    By \Cref{tfgfingen}, the pseudodeformation functor $\Def_{\Theta_z}$ is representable by a complete noetherian local $\calO_{L'}$-algebra with residue field $k_z$.
\end{proof}

From now on, we denote by $\frakX_{G,z}$ the formal scheme $\Spf(R^{\ps}_{\Theta_z})$.

\begin{lemma}\label{prodoflocal}
    Let $A$ be a profinite admissible $\calO$-algebra. Then $A$ is a finite product of local profinite admissible $\calO$-algebras.
\end{lemma}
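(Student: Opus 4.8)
The plan is to decompose $A$ according to the idempotents of a suitable discrete quotient. First I would use the presentation $A = \varprojlim_\lambda A_\lambda$ from \Cref{admissiblecomp}, where the index category has a final object $0$, the $A_\lambda$ are discrete, and the transition maps $A_\lambda \to A_0$ are surjective with nilpotent kernel. Since $A$ is profinite, each $A_\lambda$ is finite, so $A_0$ is a finite ring, hence a finite product $A_0 = \prod_{i=1}^n A_{0,i}$ of local (artinian) rings, corresponding to a complete set of orthogonal primitive idempotents $\overline e_1, \dots, \overline e_n \in A_0$ with $\sum_i \overline e_i = 1$.

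The key step is to lift these idempotents. For each $\lambda$, the kernel of $A_\lambda \to A_0$ is a nilpotent ideal, so idempotents lift uniquely along $A_\lambda \twoheadrightarrow A_0$ by the standard idempotent-lifting lemma; write $e_i^{(\lambda)} \in A_\lambda$ for the unique orthogonal idempotents lifting the $\overline e_i$. By uniqueness these are compatible with the transition maps $A_\mu \to A_\lambda$ (a transition map sends an orthogonal family of idempotents lifting the $\overline e_i$ to another such family, which must agree), so they assemble to orthogonal idempotents $e_i = (e_i^{(\lambda)})_\lambda \in \varprojlim_\lambda A_\lambda = A$ with $\sum_i e_i = 1$. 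Hence $A = \prod_{i=1}^n e_i A$ as a topological ring, the product topology matching the subspace topologies since the decomposition is already visible at each finite level $A_\lambda = \prod_i e_i^{(\lambda)} A_\lambda$.

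It remains to check that each factor $e_i A = \varprojlim_\lambda e_i^{(\lambda)} A_\lambda$ is a local profinite admissible $\calO$-algebra. Profiniteness is clear as an inverse limit of finite rings $e_i^{(\lambda)} A_\lambda$; admissibility follows from \Cref{admissiblecomp}, since the transition maps $e_i^{(\lambda)} A_\lambda \to e_i^{(0)} A_0 = A_{0,i}$ are surjective with kernel contained in (the idempotent cut of) the nilpotent kernel of $A_\lambda \to A_0$, hence nilpotent, and $A_{0,i}$ is a final object of the restricted index system. For locality: $A_{0,i}$ is local, and $e_i A \to A_{0,i}$ has kernel consisting of topologically nilpotent elements (an element of the kernel maps to a nilpotent in every $e_i^{(\lambda)}A_\lambda$, and its powers tend to $0$), so any element of $e_i A$ mapping to a unit in $A_{0,i}$ is a unit in $e_i A$ (invert modulo the kernel and correct by a convergent geometric series); thus the preimage of the maximal ideal of $A_{0,i}$ is the unique maximal ideal of $e_i A$. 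The $\calO$-algebra structure is inherited since each $e_i A$ is a quotient of $A$. The main obstacle is purely bookkeeping: making sure the idempotent lifts at finite levels are mutually compatible under the transition maps and that the resulting topological product decomposition is genuinely an isomorphism of topological $\calO$-algebras, not merely of abstract rings; uniqueness of the lifts (via nilpotence of the relevant kernels) is what makes this go through cleanly.
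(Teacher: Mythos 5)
Your proof is correct, but it takes a genuinely different route from the paper. The paper argues via maximal ideals: it shows that any maximal ideal $\frakm$ of $A$ is open (because $A/\frakm$ receives a continuous injection from the discrete field $\kappa$, forcing $I\subseteq\frakm$ for an ideal of definition $I$), deduces that $A$ is semilocal since $A/I$ has only finitely many maximal ideals, and then invokes the classical decomposition of a complete semilocal ring into a product of local rings \cite[(24.C)]{matsumura1970commutative}. You instead construct the decomposition by hand: decompose the finite ring $A_0$ into local factors, lift the primitive orthogonal idempotents uniquely through the nilpotent-kernel surjections $A_\lambda\twoheadrightarrow A_0$, and check locality of each factor $e_iA$ via topological nilpotence of the kernel of $e_iA\to A_{0,i}$ and convergence of geometric series. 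Your approach is more self-contained (no appeal to the structure theory of complete semilocal rings) and makes the idempotents explicit; the paper's is shorter and isolates the one genuinely topological input (openness of maximal ideals). One small point to tighten in your write-up: in an arbitrary presentation as in \Cref{admissiblecomp} the rings $A_\lambda$ need not be finite, nor even quotients of $A$, so you should either take the canonical presentation $A=\varprojlim_J A/J$ over open ideals $J$ contained in an ideal of definition (where compactness of $A$ makes every $A/J$ finite, and in particular $A_0=A/I$ is finite) or replace each $A_\lambda$ by the image of $A$ in it; with that adjustment every step you describe goes through, and profiniteness enters exactly where it must, namely to make $A_0$ finite (for an infinite discrete admissible ring the statement is false).
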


We emphasize, that \Cref{prodoflocal} holds independently of any noetherianity hypothesis on $A$.

\begin{proof}
    We only show, that $A$ is a finite product of local rings, the rest of the claim then follows easily.
    Let $\frakm$ be a maximal ideal of $A$ and let $I$ be an ideal of definition of $A$.
    Then $\{(I^n + \frakm)/\frakm\}_{n \geq 1}$ is a system of open subgroups of $A/\frakm$, that induces the quotient topology of $A/\frakm$.
    But $I^n + \frakm$ is either $\frakm$ or $A$, so $A/\frakm$ is either discrete or indiscrete.
    Since $\kappa$ is discrete and there is a continuous injection $\kappa \to A/\frakm$ induced by the natural map $\calO \to A$, we have that $A/\frakm$ is discrete, hence finite.
    So there is some $n \geq 1$, such that $I^n + \frakm = \frakm$, hence $I \subseteq \frakm$.

    We know that $\frakm A/I$ is a maximal ideal of $A/I$ and by \cite[(24.C)]{matsumura1970commutative}, we know that $A/I$ has only finitely many maximal ideals.
    It follows, that $A$ has only finitely many maximal ideals.
    Since $A$ is commutative, it follows that $A$ is semi-local and thus the claim follows from \cite[(24.C)]{matsumura1970commutative}.    
\end{proof}

\begin{lemma}\label{resconst}
    Let $A$ be an admissible local $\calO$-algebra and let $\Theta \in \cPC^{\Gamma}_G(A)$. Then $\Theta$ is residually constant.
\end{lemma}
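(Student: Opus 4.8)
The plan is to reduce the statement to the elementary fact that a local ring has no idempotents other than $0$ and $1$, using the structural results already available for the subalgebra of $A$ generated by the values of $\Theta$.

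First I would invoke \Cref{Aprimelemma}: the closed $\calO$-subalgebra $A' \subseteq A$ generated by all elements $\Theta_n(f)(\gamma_1,\dots,\gamma_n)$, for $n \geq 1$, $f \in \calO[G^n]^{G^0}$ and $(\gamma_1,\dots,\gamma_n) \in \Gamma^n$, is an admissible profinite $\calO$-algebra, and $\Theta$ descends uniquely to an $A'$-valued continuous pseudocharacter. By \Cref{Aprimelemma}'s definition, $\Theta$ being \emph{residually constant} means exactly that $A'$ is local, so it suffices to prove this.

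Next I would apply \Cref{prodoflocal} to the profinite admissible $\calO$-algebra $A'$, which gives a decomposition $A' \cong \prod_{i=1}^{n} A'_i$ into finitely many nonzero local profinite admissible $\calO$-algebras. Such a decomposition supplies $n$ pairwise orthogonal nonzero idempotents of $A'$ summing to $1$. But $A'$ is a unital $\calO$-subalgebra of $A$, so every idempotent of $A'$ is an idempotent of $A$; and $A$ is local, hence its only idempotents are $0$ and $1$ (if $e^2 = e$ then $e(1-e) = 0$, and since one of $e$, $1-e$ is a unit in the local ring $A$, the other vanishes). Therefore $n = 1$, so $A'$ is local and $\Theta$ is residually constant. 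The argument is purely formal once \Cref{Aprimelemma} and \Cref{prodoflocal} are in hand; there is no genuine obstacle, the only point requiring a moment's care being to note that the factors $A'_i$ in \Cref{prodoflocal} are nonzero, so that the absence of nontrivial idempotents really does force $n = 1$.
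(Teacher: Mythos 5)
Your proof is correct and follows essentially the same route as the paper: apply \Cref{Aprimelemma} to obtain the profinite subalgebra $A'$, decompose it via \Cref{prodoflocal}, and observe that the resulting idempotents live in the local ring $A$, forcing $A'$ to be local. Nothing to add.
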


\begin{proof}
    According to \Cref{Aprimelemma}, there is an admissible profinite subring $A' \subseteq A$, over which $\Theta$ is defined.
    From \Cref{prodoflocal}, we obtain a system of primitive orthogonal idempotents for $A'$, which also leads to a product decomposition of $A$.
    It follows, that the only nonzero idempotent of $A'$ is $1$ and that $A'$ is local.
\end{proof}

\begin{corollary} The functor $\frakX_G : \Adm_{\calO} \to \Set$ is representable by the coproduct $\coprod\nolimits_{z \in |\PC_{G}^{\Gamma}|} \frakX_{G,z}$ in the category of formal schemes over $\calO$.
\end{corollary}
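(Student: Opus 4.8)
The plan is to prove the corollary by verifying the universal property of the coproduct directly against the definition of $\frakX_G$. Recall that $A \mapsto \Spf(A)$ is fully faithful from $\Adm_\calO^\op$ into the category $\FSch_\calO$ of formal $\calO$-schemes, with essential image the affine formal schemes, and that $\coprod_{z} \frakX_{G,z}$ exists in $\FSch_\calO$ since formal schemes admit arbitrary coproducts (glue along the empty scheme). So I want, for every admissible $\calO$-algebra $A$, a bijection
\[ \Phi_A : \Hom_{\FSch_\calO}\bigl(\Spf(A),\ \textstyle\coprod_{z \in |\PC_G^\Gamma|} \frakX_{G,z}\bigr) \;\xrightarrow{\ \sim\ }\; \cPC_G^\Gamma(A) = \frakX_G(A), \]
natural in $A$, and the assertion follows by Yoneda.

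Given a morphism $f$ on the left, I would first use that $\Spf(A)$ is quasi-compact (its underlying space is $\Spec(A/I)$ for an ideal of definition $I$). The preimages $f^{-1}(\frakX_{G,z})$ form an open-closed partition of $\Spf(A)$, so only finitely many are nonempty, say for $z_1, \dots, z_n$. Open-closed formal subschemes of $\Spf(A)$ correspond bijectively to idempotents of $A$: idempotents of $A$ agree with those of $A/I$ because they lift uniquely along the pro-nilpotent surjection $A \to A/I$, and idempotents of $A/I$ cut out the open-closed subsets of $|\Spf A|$. Hence this partition yields a finite product decomposition $A = \prod_{i=1}^n A_i$ with each $A_i$ admissible, together with morphisms $\Spf(A_i) \to \frakX_{G,z_i}$, i.e. continuous pseudocharacters $\Theta_i \in \frakX_{G,z_i}(A_i) \subseteq \cPC_G^\Gamma(A_i)$. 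Since $\calC(\Gamma^\bullet,-)$ carries a finite product of topological rings to the product of the corresponding $\calF$-$\calO$-algebras, \Cref{allemorphismen} gives $\cPC_G^\Gamma(A) = \prod_i \cPC_G^\Gamma(A_i)$, so the $\Theta_i$ assemble to a unique $\Theta =: \Phi_A(f) \in \frakX_G(A)$.

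Conversely, given $\Theta \in \cPC_G^\Gamma(A)$, I would apply \Cref{Aprimelemma} to descend $\Theta$ along an admissible profinite $\calO$-subalgebra $A' \subseteq A$, and then \Cref{prodoflocal} to write $A' = \prod_{i=1}^n A_i'$ with each $A_i'$ local profinite admissible. Transporting the corresponding idempotents to $A$ gives $A = \prod_{i=1}^n A_i$ with $A_i$ admissible and $A_i' \subseteq A_i$ local profinite admissible, and a decomposition $\Theta = (\Theta_i)_i$ with $\Theta_i$ defined over $A_i'$. By \Cref{resconst} each $\Theta_i$ is residually constant; let $z_i \in |\PC_G^\Gamma|$ be the closed point it determines (the maximal ideal of $B_G^\Gamma$ coming from $A_i' \to k(z_i)$, as in the discussion preceding \Cref{teichmueller}). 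Then $\Theta_i \in \frakX_{G,z_i}(A_i)$ corresponds to a morphism $\Spf(A_i) \to \frakX_{G,z_i} = \Spf(R^{\ps}_{\rho_{z_i}})$ (using that $\frakX_{G,z_i}$ is representable by $R^{\ps}_{\rho_{z_i}}$, established above), and these glue to a morphism $\Spf(A) = \coprod_i \Spf(A_i) \to \coprod_z \frakX_{G,z}$, which I take to be $\Psi_A(\Theta)$.

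Finally I would check that $\Phi_A$ and $\Psi_A$ are mutually inverse and natural in $A$. The crux is that the two idempotent decompositions of $A$ agree: the one produced from a morphism $f$ (preimages of the $\frakX_{G,z}$) and the one produced from $\Phi_A(f)$ via \Cref{Aprimelemma} and \Cref{prodoflocal} coincide, since on each factor $A_i$ the attached point $z_i$ is exactly the residual point of the corresponding component of the pseudocharacter, which is precisely the datum $f$ records on $\Spf(A_i)$; I would spell this out using the identification $\frakX_{G,z}(A_i) \cong \Def_{\Theta_z}(A_i)$ on local admissible $\calO$-algebras. Naturality is then routine, as descent to $A'$, the idempotent bookkeeping, and the identifications $\frakX_{G,z}(-) = \Def_{\Theta_z}(-)$ are all functorial. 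The main obstacle is exactly this translation between a morphism into an a priori infinite coproduct of formal schemes and a finite idempotent decomposition of $A$; once quasi-compactness of $\Spf(A)$ and unique lifting of idempotents through $A \to A/I$ are in hand, the remaining verifications rest entirely on \Cref{Aprimelemma}, \Cref{prodoflocal} and \Cref{resconst} and are formal.
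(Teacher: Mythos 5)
Your proposal is correct and rests on exactly the same pillars as the paper's proof (\Cref{Aprimelemma}, \Cref{prodoflocal}, \Cref{resconst}, and the identification of $\frakX_{G,z}$ with $\Spf(R^{\ps}_{\rho_z})$); the only difference is presentational, in that you work with an arbitrary admissible $A$ and make explicit, via quasi-compactness of $\Spf(A)$ and unique lifting of idempotents along $A \to A/I$, the translation between morphisms into the coproduct and finite idempotent decompositions, which the paper compresses into its reduction to profinite $A$ and the short $\Hom$-set computation over the local factors.
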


\begin{proof} It is clear, that on the level of Zariski sheaves on $\Adm_{\calO}$, there is an injective natural transformation $\coprod\nolimits_{z \in |\PC_{G}^{\Gamma}|} \frakX_{G,z} \to \frakX_{G}$. We want to show surjectivity. Let $A$ be an admissible $\calO$-algebra. If $\Theta \in \frakX_{G}(A)$, then by \Cref{Aprimelemma} $\Theta$ is defined over a profinite admissible $\calO$-algebra, so we may assume $A$ is profinite. Then by \Cref{prodoflocal} $A$ is a finite product $A = \prod_i A_i$ of local profinite $\calO$-algebras $A_i$.

Since every continuous $G$-pseudocharacter over an admissible local $\calO$-algebra is automatically residually constant (\Cref{resconst}), the map of sets $(\coprod_z \frakX_{G,z})(A_i) = \coprod_z \frakX_{G,z}(A_i) \to \frakX_G(A_i)$ is bijective for all $i$. This will be used in the third equality below.
Recall also, since the decomposition of $A$ is finite, we have $\Spf(A) = \coprod_i \Spf(A_i)$ in the category $\FSch_{\calO}$ of formal $\calO$-schemes.

We calculate
\begin{align*}
    \Hom_{\FSch_{\calO}}(\Spf(A),~\coprod\nolimits_z \frakX_{G,z}) &= \prod\nolimits_i \Hom_{\FSch_{\calO}}(\Spf(A_i),~\coprod\nolimits_z \frakX_{G,z}) = \prod\nolimits_i \coprod\nolimits_z \Hom_{\FSch_{\calO}}(\Spf(A_i),~\frakX_{G,z}) \\
    &= \prod\nolimits_i \Hom_{\FSch_{\calO}}(\Spf(A_i),~\frakX_{G}) = \Hom_{\FSch_{\calO}}(\Spf(A),~\frakX_{G})
\end{align*}
\end{proof}

\subsection{\texorpdfstring{The rigid analytic space of $G$-pseudocharacters}{The rigid analytic space of G-pseudocharacters}} The goal of this subsection is to construct the $p$-adic analytic space of $G$-pseudocharacters, which will be obtained by taking Berthelot's generic fiber (see \cite[(0.2.6)]{BerthCohRig} or \cite[§7]{deJongBerthelot}) of $\frakX_G$.
Let $\FSch_{\calO}^{\lnad}$ be the category of locally noetherian adic formal schemes $\frakX$ over $\Spf(\calO)$ such that the mod $\varpi$ reduction $\frakX_{\red}$ of $\frakX$ is a scheme locally of finite type over $\Spec(\kappa)$.

We briefly recall the features of Berthelot's functor.
It is a functor
$$ (\phantom{-})^{\rig} : \FSch_{\calO}^{\lnad} \to \An_L, ~ \frakX \mapsto \frakX^{\rig} $$
from $\FSch_{\calO}^{\lnad}$ to the category of rigid analytic spaces over $L$.

If $\frakX$ is of the form $\Spf(A)$ for some quotient $A = \calO[[x_1, \dots, x_n]]/(f_1, \dots, f_s)$ of a formal power series ring $\calO[[x_1, \dots, x_n]]$, the space $\frakX^{\rig}$ will be a closed analytic subvariety of the rigid analytic open unit disk $\bbD^n$ of dimension $n$, defined by vanishing of the functions $f_1, \dots, f_s$ interpreted as analytic functions on $\bbD^n$.

If $A$ is an affinoid $L$-algebra, a \emph{model} of $A$ is a continuous open $\calO$-algebra homomorphism $\calA \to A$ for some admissible $\calO$-algebra $\calA$, such that the induced map $\calA[1/\varpi] \to A$ is an isomorphism. For a fixed model $\calA \to A$, there is a canonical map
$$ \iota_{\calA} : \frakX_G(\calA) \to X_G(A), $$
that maps a continuous pseudocharacter with values in $\calA$ to its base change to $A$.

We also have a natural map
\begin{align}
    \iota : \varinjlim_{\calA} \frakX_G(\calA) \to X_G(A), \label{iota}
\end{align}
where the colimit on the left hand side is taken over the category of all models of $A$ with continuous ring homomorphisms over $A$.
The next goal is to show, that $\iota$ is bijective.
For $d$-dimensional determinant laws (i.e. $G=\GL_d$ here by Emerson's isomorphism) and $L=\bbQ_p$ and  this has been shown by Chenevier in \cite[Lemma 3.15]{MR3444227}.

\begin{lemma}\label{calclem} Let $A$ be an affinoid $L$-algebra and let $\Theta \in X_G(A)$.
    \begin{enumerate}
        \item For all $m \geq 1$, all $f \in \calO[G^m]^{G^0}$ and all $\gamma \in \Gamma^m$, we have that $\Theta_m(f)(\gamma)$ is contained in the subring $A^{\circ}$ of power-bounded elements of $A$.
        \item Assume, that $\Gamma$ is topologically finitely generated. Then $\iota$ in \eqref{iota} is bijective.
        \item Assume, that $\Gamma$ is topologically finitely generated. If $A$ is reduced, then $\frakX_G(A^{\circ}) = X_G(A)$.
    \end{enumerate}
\end{lemma}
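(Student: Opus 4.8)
The plan is to adapt Chenevier's proof of \cite[Lemma 3.15]{MR3444227} to the $G$-equivariant setting, with the values $\Theta_m(f)(\gamma)$ playing the role of the characteristic-polynomial coefficients $\Lambda_i(\gamma)$ and the continuous reconstruction theorem \Cref{continuousreconstruction} replacing Chenevier's reconstruction of a semisimple representation from a determinant law.

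For part (1), recall that for an affinoid $L$-algebra $A$ one has $A^{\circ} = \{a \in A : |a|_{\sup} \le 1\}$, so it suffices to bound $|\Theta_m(f)(\gamma)(x)| \le 1$ at each point $x \in \Sp(A)$. Write $E := k(x)$, a finite extension of $L$, fix an algebraic closure $\overline E$, and form $\Theta_E := \Theta \otimes_A E \in \cPC_G^{\Gamma}(E)$ via the (automatically continuous) map $A \to E$. Applying \Cref{continuousreconstruction} to $\Theta_E \otimes_E \overline E$ produces a continuous $G$-completely reducible $\rho : \Gamma \to G(\overline E)$ with $\Theta_{\rho} = \Theta_E \otimes_E \overline E$, so that $\Theta_m(f)(\gamma)(x) = f(\rho(\gamma_1),\dots,\rho(\gamma_m)) \in E$. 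Since $\Gamma$ is profinite, $\rho(\Gamma)$ is a compact subgroup of $G(\overline E)$; hence, by the structure of compact subgroups of reductive $p$-adic groups --- concretely by \cite[Lemma 4.3]{cotner2023morphisms} applied to the reductive $\calO$-group scheme $G^{0}$ --- there are a finite extension $E'/E$ and $g \in G(\overline E)$ with $g\,\rho(\Gamma)\,g^{-1} \subseteq G(\calO_{E'})$. Conjugation by $g$ carries $f$ to $f^{[g]}$, the image of $f$ under the finite $\pi_0(G)$-action on $\calO[G^m]^{G^0}$, again an element of $\calO_{E'}[G^m]^{G^0}$; therefore $f(\rho(\gamma_1),\dots,\rho(\gamma_m)) = f^{[g]}\big(g\rho(\gamma_1)g^{-1},\dots,g\rho(\gamma_m)g^{-1}\big) \in \calO_{E'}$, which has absolute value $\le 1$. (This part uses only that $\Gamma$ is profinite.)

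For part (2), injectivity of $\iota$ is formal: a model $\calA \to A$ is injective (since $\varpi$ is a nonzerodivisor on $\calA$), so $\frakX_G(\calA) = \cPC_G^{\Gamma}(\calA) \hookrightarrow \cPC_G^{\Gamma}(A) = X_G(A)$; and any two models of $A$ are dominated by a common one (the closure in $A$ of the subring they generate), so two elements of $\varinjlim_{\calA}\frakX_G(\calA)$ with equal image in $X_G(A)$ already agree over a common model. For surjectivity, given $\Theta \in X_G(A)$ let $\calA_0 \subseteq A$ be the closure of the $\calO$-subalgebra generated by all $\Theta_m(f)(\gamma)$; by part (1), $\calA_0 \subseteq A^{\circ}$. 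Fixing topological generators $\sigma_1,\dots,\sigma_r$ of $\Gamma$, the substitution axioms together with continuity of $\Theta$ identify $\calA_0$ with the closure of the image of the $\calO$-algebra map $\calO[G^r]^{G^0} \to A$, $\mu \mapsto \Theta_r(\mu)(\sigma_1,\dots,\sigma_r)$ (each $\gamma_i$ is a limit of words in the $\sigma_j$, and $\Theta$ is natural for the resulting homomorphisms $\FG(m)\to\FG(r)$); since $\calO[G^r]^{G^0}$ is a finitely generated $\calO$-algebra (as in \Cref{decisivefiniteness}), $\calA_0$ is topologically of finite type, hence --- using $\calA_0 \subseteq A^{\circ}$ --- admissible, and $\Theta$ descends to a continuous $\Theta_0 \in \cPC_G^{\Gamma}(\calA_0) = \frakX_G(\calA_0)$ (the topological verifications are as in \Cref{Aprimelemma} and \cite[§3.9--3.15]{MR3444227}). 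Replacing $\calA_0$ by the closure of the $\calO$-subalgebra generated by $\calA_0$ and a model $A_0$ of $A$ produces an admissible $\calO$-algebra $\calA$ which is a model of $A$, and the base change of $\Theta_0$ along $\calA_0 \hookrightarrow \calA$ lies in $\frakX_G(\calA)$ and maps to $\Theta$.

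For part (3), when $A$ is reduced, $A^{\circ}$ is an admissible $\calO$-algebra and a model of $A$, whose $\varpi$-adic topology coincides with the subspace topology from $A$; by part (1) the values of $\Theta$ lie in $A^{\circ}$ and continuity is preserved, so $\Theta$ descends to a continuous pseudocharacter over $A^{\circ}$, i.e. to an element of $\frakX_G(A^{\circ})$ whose image in $X_G(A)$ is $\Theta$, while injectivity of $\frakX_G(A^{\circ}) = \cPC_G^{\Gamma}(A^{\circ}) \to \cPC_G^{\Gamma}(A) = X_G(A)$ follows from $A^{\circ} \hookrightarrow A$. I expect the one genuinely delicate point to be the integrality statement inside part (1) --- equivalently, that a continuous $G$-completely reducible representation of a profinite group into $G(\overline E)$ can be conjugated into an integral model $G(\calO_{E'})$ --- after which parts (2) and (3) reduce to the bookkeeping of models and their topologies, exactly as in \cite[§3]{MR3444227}.
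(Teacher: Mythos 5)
Your proposal is correct in substance, but it diverges from the paper's proof at two places, and the comparison is worth recording. For part (1) the paper makes exactly your reduction to residue fields (power-boundedness is checked pointwise via the supremum norm, \cite[Proposition 6.2.3/1]{MR746961}) but then simply quotes \cite[Theorem 4.8 (i)]{BHKT}; you instead re-derive that integrality statement from \Cref{continuousreconstruction} by conjugating the reconstructed representation into $G(\calO_{E'})$. This is a legitimate alternative, and it is the route the paper itself takes elsewhere (\Cref{recwithfiniteextn}), but note one point you gloss: $f$ is only $G^0$-invariant, so the twist $f^{[g]}$ is governed by the action of $(G/G^0)(\overline E)$ on the invariant ring, and to know $f^{[g]}$ has integral coefficients you should observe that, $G/G^0$ being finite étale over $\calO$, the component of $g$ is defined over the ring of integers of a finite extension (enlarging $E'$ if necessary, and using \Cref{gfG} to identify $\calO[G^m]^{G^0}\otimes\calO_{E'}$ with $\calO_{E'}[G^m]^{G^0}$). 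For part (2) your argument is structurally the same as the paper's (topological finite generation plus Seshadri's finite generation of $\calO[G^r]^{G^0}$, then bookkeeping of models); the paper enlarges a given model $\calA$ by adjoining finitely many additive-coset representatives of the compact set of values of the generators, whereas you take the closure $\calA_0$ of the whole algebra of values and then adjoin a model $A_0$. Your variant works, but be aware that admissibility of $\calA_0$ does not follow from $\calA_0\subseteq A^\circ$ alone (for non-reduced $A$ the ring $A^\circ$ need not be bounded); it is precisely your identification of $\calA_0$ with the closure of the image of the finitely generated algebra $\calO[G^r]^{G^0}$, whose generators are power-bounded by (1), that yields boundedness and hence the $\varpi$-adic, complete, admissible structure — so topological finite generation of $\Gamma$ enters exactly there. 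For part (3) you argue directly from (1) and the fact that $A^\circ$ is an open bounded model of a reduced affinoid, while the paper deduces it from (2) together with the terminality of $A^\circ$ among models \cite[§3.14.1]{MR3444227}; your route is slightly more economical and, as you implicitly notice, does not really use (2). None of these differences constitutes a gap; only the $\pi_0$-twist integrality point in (1) needs the small repair indicated above.
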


\begin{proof} \phantom{a}
    \begin{enumerate}
        \item An element of an affinoid $L$-algebra is power-bounded if and only if for every maximal ideal $\frakm \subseteq A$, its image in $A/\frakm$ is power-bounded. This follows from \cite[Proposition 6.2.3/1]{MR746961} and the boundedness of the supremum norm \cite[§6.2.1 and Corollary 3.8.2/2]{MR746961}. We may thus assume, that $A$ is a finite field extension of $L$ and that $A^{\circ}$ the ring of integers of $A$. The claim follows directly from \cite[Theorem 4.8 (i)]{BHKT}.
        
        \item $\iota$ is injective, as every model $\calA$ of $A$ maps to a $\varpi$-torsion-free model (take the image of $\calA$ in $A$) and for a torsion-free model $\calA$, the map $\iota_{\calA}$ is injective. We are left to show surjectivity of $\iota$, so let $\Theta \in X_G(A)$ and let $\calA \subseteq A$ be some (torsion-free) model of $A$. Since we assume, that $\Gamma$ is topologically finitely generated, we can choose a finitely generated dense subgroup $\Sigma \subseteq \Gamma$. Let $\sigma_1, \dots, \sigma_r \in \Sigma$ be group generators of $\Sigma$. Let $f_1, \dots, f_s \in \calO[G^r]^{G^0}$ be $\calO$-algebra generators, which we find by \cite[Theorem 2 (i)]{Seshadri}.
        
        We define a compact subset $C := \bigcup_{i=1}^s \Theta_r(f_i)(\Gamma^r) \subseteq A$. As $\calA$ is an open subset of $A$, $C$ meets only finitely many additive translates of $\calA$ in $A$. So there are $k_i \in C$ with $i=1, \dots, t$, such that $C \subseteq \bigcup_{i=1}^t (k_i + \calA)$.
        
        We claim that the algebra $\calA' := \calA\langle k_1, \dots, k_s \rangle$ (the closure of $\calA[k_1, \dots, k_s]$ in $A$) is a model of $A$ containing $C$. First, since $\calA$ is open in $A$, $\calA'$ is also open. It is also clear, that $\calA'[1/\varpi] = A$. For admissibility of $\calA'$, we note, that by (1) each of the $k_i$ is power-bounded, so there is a continuous surjection by a Tate algebra $\calA\langle T_1, \dots, T_s\rangle \to \calA'$ mapping $T_i \mapsto k_i$, and this map is also open, since after inverting $\varpi$, we obtain a surjection $A\langle T_1, \dots, T_s\rangle \twoheadrightarrow A$, which is open and a quotient map by the open mapping theorem for $p$-adic Banach spaces \cite[§2.8.1]{MR746961}. It follows, that $\calA'$ is a complete Hausdorff ring, which carries the $I$-adic topology for some ideal of definition of $\calA$ and is therefore admissible.

        We claim, that $\Theta$ actually takes values in $\calA'$, so that $\Theta$ is the image of a pseudocharacter in $\frakX_G(\calA')$, as desired.
        Let $m \geq 1$, $f \in \calO[G^m]^{G^0}$ and $\delta \in \Sigma^m$.
        As in the proof of \Cref{factoringdiscrete}, we find a homomorphism $\alpha : \FG(m) \to \FG(r)$, such that $\Theta_m(f)(\delta) = \Theta_r(f^{\alpha})(\sigma)$.
        Since $f^{\alpha}$ is in the $\calO$-algebra span of the $f_i$ and $\Theta_r(f_i)(\sigma) \in \calA'$ by construction, we find that $\Theta_r(f^{\alpha})(\sigma) \in \calA'$.
        Overall, we have shown that $\Theta_m(f)(\Sigma^m) \subseteq \calA'$.
        Since $\Theta_m(f) : \Gamma^m \to A$ is continuous, $\Gamma^m$ is compact, $A$ is Hausdorff and $\calA'$ is closed in $A$, we conclude that $\Theta_m(f)(\Gamma^m) \subseteq \calA'$ and therefore $\Theta$ takes values in $\calA'$.      
        \item This is a direct consequence of (2), since if $A$ is reduced, then $A^0$ is the terminal model of $A$ \cite[§3.14.1]{MR3444227}.
    \end{enumerate}
\end{proof}

\begin{definition} Let $z \in |\PC_{G}^{\Gamma}|$ and define for every affinoid $L$-algebra $A$ the set $X_{G,z}(A)$ as the set of $\Theta \in X_G(A)$, such that there exists a model $\calA \to A$, such that $\Theta$ is the image of a pseudocharacter $\tilde \Theta \in \frakX_{G,z}(\calA)$.
\end{definition}

Suppose $A$ is an affinoid $L$-algebra and $x$ is a point in the maximal spectrum of $A$ with residue field $L'$.
We know, that $L'$ is a finite extension of $L$.

\begin{definition}
    The \emph{reduction map} at $x$ is defined as $\red_x : X_G(A) \to |\PC_{G}^{\Gamma}|$, where for $\Theta \in X_G(A)$, $\red_x(\Theta)$ shall be the reduction of the unique pseudocharacter $\tilde \Theta \in X_G(\calO_{L'})$ (see \Cref{calclem} (3)) mapping to $\Theta \otimes_A L'$.
\end{definition}

\begin{definition} \phantom{a}
\begin{enumerate}
    \item Define $\tilde X_G : \An_L^{\op} \to \Set$ as the functor, that associates to every rigid analytic space $Y \in \An_L$ the set of continuous $G$-pseudocharacters $\cPC_{G}^{\Gamma}(\calO(Y))$.
    \item For $z \in |\PC_G^{\Gamma}|$, let $\tilde X_{G,z}$ be the subset of $\tilde X_G$ of $G$-pseudocharacters $\Theta$, such that for all $x \in \Specmax(A)$, the specialization $\Theta_z$ of $\Theta$ at $z$ defined as the image of $\Theta$ under $\tilde X_G(A) \to \tilde X_G(k_x) \to \tilde X_G(\calO_{k_x})$ is residually equal to $z$.
\end{enumerate}
\end{definition}

The proofs of \Cref{lemdiv} and \Cref{rigspaceG} are the same as the proofs of \cite[Lemma 3.16]{MR3444227} and \cite[Theorem 3.17]{MR3444227}.

\begin{lemma}\label{lemdiv}
    Suppose $A$ is an affinoid $L$-algebra and $z \in |\PC_{G}^{\Gamma}|$. Then
    $$ X_{G,z}(A) = \{\Theta \in X_G(A) \mid \forall x \in \Specmax(A) : \red_x(\Theta) = z\} $$
\end{lemma}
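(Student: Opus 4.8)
The plan is to prove the two inclusions separately, in each case passing to the value-subalgebra supplied by \Cref{Aprimelemma} and exploiting its structure from \Cref{prodoflocal}; this mirrors Chenevier's argument for $G = \GL_d$ in \cite[Lemma 3.16]{MR3444227}. Throughout, for $x \in \Specmax(A)$ I write $L'$ for the (finite over $L$) residue field at $x$ and I use freely that $\frakX_G(\calO_{L'}) = X_G(L')$ by \Cref{calclem} (3) (applied to the reduced ring $L'$), so that $\red_x(\Theta)$ is the point of $\abs{\PC_G^{\Gamma}}$ attached to the reduction mod $\frakm_{L'}$ of the unique integral model $\tilde\Theta_x \in \frakX_G(\calO_{L'})$ of $\Theta \otimes_A L'$.

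For the inclusion ``$\subseteq$'', suppose $\Theta = \iota_{\calA}(\tilde\Theta)$ for a model $\calA \to A$ and $\tilde\Theta \in \frakX_{G,z}(\calA)$, and fix $x$. First I would descend $\tilde\Theta$ to its value-subalgebra $\calA' \subseteq \calA$ of \Cref{Aprimelemma}; residual constancy of $\tilde\Theta$ means $\calA'$ is local profinite admissible with residue field $k(z)$, and $\tilde\Theta|_{\calA'}$ reduces to $\Theta_z$ modulo $\frakm_{\calA'}$. Now $\Theta \otimes_A L'$ is the image of $\tilde\Theta|_{\calA'}$ under the continuous ring map $\calA' \to A \to L'$; since $\calA'$ is profinite, hence compact, its image is a compact subring of $L'$ and therefore lies in $\calO_{L'}$, so $\Theta \otimes_A L' \in \frakX_G(\calO_{L'})$ equals $\tilde\Theta_x$. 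Moreover the kernel of any continuous homomorphism from $\calA'$ to a discrete field is open, hence maximal, so the composite $\calA' \to \calO_{L'} \to k(x)$ has kernel $\frakm_{\calA'}$ and image $k(z)$; consequently the reduction of $\tilde\Theta_x$ is $\Theta_z \otimes_{k(z)} k(x)$, which defines the point $z$. Hence $\red_x(\Theta) = z$ for all $x$.

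For the inclusion ``$\supseteq$'', suppose $\Theta \in X_G(A)$ with $\red_x(\Theta) = z$ for all $x$. By \Cref{calclem} (2) write $\Theta = \iota_{\calA}(\tilde\Theta)$ with $\calA \to A$ a model — which we may take $\varpi$-torsion free, so $\calA \subseteq A$ — and $\tilde\Theta \in \frakX_G(\calA)$. Let $\calA' \subseteq \calA$ be the value-subalgebra from \Cref{Aprimelemma}; it suffices to show that $\calA'$ is local with residue field $k(z)$ and that $\tilde\Theta|_{\calA'}$ reduces to $\Theta_z$, for this is precisely the statement $\tilde\Theta \in \frakX_{G,z}(\calA)$, whence $\Theta \in X_{G,z}(A)$. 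By \Cref{prodoflocal}, $\calA' = \prod_l \calB_l$ with each $\calB_l$ local profinite admissible of finite residue field, and $\calA'$ is a subdirect product of the value-subalgebras $\calC_l \subseteq \calB_l$ (each local and profinite by \Cref{Aprimelemma} and the connectedness argument of \Cref{prodoflocal}, with residue field canonically identified with $k(z_l)$ for the point $z_l$ attached to the reduction of $\tilde\Theta|_{\calC_l}$). For each $x$ the continuous map $\calA' \to L'$ factors through a single factor $\calC_{l(x)}$, and by the computation in the first part $\red_x(\Theta)$ is the point attached to the reduction of $\tilde\Theta|_{\calC_{l(x)}}$, i.e. $z_{l(x)} = z$. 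Since each idempotent $e_l \in A$ is a nonzero idempotent and $A$ is Jacobson, $e_l \notin \frakm_x$ for some $x$, so every index $l$ is realized and hence $z_l = z$ for all $l$. Reducing $\calA' \hookrightarrow \prod_l \calC_l$ modulo the maximal ideals, the image of every generator $\tilde\Theta_n(f)(\gamma)$ then lands on the diagonal copy of $k(z)$; this forces $\calA'$ to have no nontrivial idempotents, so $\calA'$ is connected, profinite, admissible, hence local with residue field $k(z)$ by \Cref{prodoflocal}, and it forces $\tilde\Theta|_{\calA'} \bmod \frakm_{\calA'} = \Theta_z$.

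I expect the one genuinely delicate point to be this last bookkeeping with idempotents: although \Cref{prodoflocal} only tells us that $\calA'$ is \emph{some} finite product of local profinite admissible rings, the constancy of $\red_x$ across all of $\Specmax(A)$ — combined with the canonical identification of residue fields of residually constant pseudocharacters with $k(z)$ coming from \Cref{descrindexset} — is exactly what collapses this product to a single local factor whose reduction is $\Theta_z$. The remaining steps (power-boundedness of values via \Cref{calclem} (1), compactness forcing images into $\calO_{L'}$, and openness of kernels of maps to discrete fields) are routine.
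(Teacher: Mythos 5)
Your proposal is correct and follows essentially the same route as the paper: pass to the value-subalgebra of \Cref{Aprimelemma}, use \Cref{prodoflocal} to write it as a finite product of local profinite rings, and use the hypothesis $\red_x(\Theta)=z$ at points of each induced factor of $A$ to force that product to collapse to a single local factor with residue field $k(z)$ and residual pseudocharacter $\Theta_z$. The only (immaterial) differences are that you phrase the collapse via ``all generators reduce into the diagonal, so no nontrivial idempotents survive'' where the paper instead uses surjectivity of $B_G^{\Gamma} \to A'/\Jac(A')$ with all component kernels equal to $\frakm_z$, and that you spell out the easy inclusion $X_{G,z}(A) \subseteq \{\Theta \mid \forall x:\red_x(\Theta)=z\}$, which the paper leaves implicit.
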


\begin{proof}
    Let $\Theta \in X_G(A)$, so that for all $x \in \Specmax(A)$, we have $\red_x(\Theta) = z$. By \Cref{calclem} (2), there is some model $\calA \to A$ and some $\Theta' \in \frakX_G(\calA)$ that maps to $\Theta$. Let $A' \subseteq \calA$ be the ring attached to $\Theta'$ as in \Cref{Aprimelemma}. We know, that $A'$ is a product of local $\calO$-algebras $\prod_{i=1}^n A_i'$. The idempotents of this decomposition induce a decomposition of $A$ into a product $\prod_{i=1}^n A_i$. Let $x_i \in \Specmax(A_i)$ be a closed point with residue field $L_i$. By assumption, the kernel of the composition $B_G^{\Gamma} \to A_i' \to \calO_{L_i}/\frakm_{\calO_{L_i}}$ is the maximal ideal of $B_G^{\Gamma}$, that corresponds to $z$. By definition of $A'$, the map $B_G^{\Gamma} \to A' \to A'/\Jac(A')$ is surjective and thus $A'$ itself must be local. This shows, that $\Theta'$ is residually constant and residually equal to $\Theta_z$, so $\Theta' \in \frakX_{G,z}(A')$. It follows, that $\Theta \in X_{G,z}(A)$.
\end{proof}

\Cref{lemdiv} in particular implies, that $\tilde X_{G,z}$ is representable by $\frakX_{G,z}^{\rig}$.

\begin{theorem}\label{rigspaceG} The functor $\tilde X_G$ is representable by the quasi-Stein space $\coprod\nolimits_{z \in |\PC_{G}^{\Gamma}|} \frakX_{G,z}^{\rig}$.
\end{theorem}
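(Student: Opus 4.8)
The plan is to assemble the statement from the pieces already in place, following \cite[Theorem 3.17]{MR3444227}. First I would record that each $\frakX_{G,z}=\Spf(R^{\ps}_{\rho_z})$ lies in $\FSch_{\calO}^{\lnad}$: by \Cref{tfgfingen} the ring $R^{\ps}_{\rho_z}$ is a complete noetherian local $\calO$-algebra with finite residue field $k(z)$, so its reduction modulo $\varpi$ is of finite type over $\kappa$, Berthelot's functor $(-)^{\rig}$ applies, and $\frakX_{G,z}^{\rig}$ is quasi-Stein, being exhausted by the affinoid subdomains of an open polydisc cut out by the successive powers of $\frakm_{R^{\ps}_{\rho_z}}$. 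Next I would check that the index set $|\PC_G^{\Gamma}|$ is countable: since $\Gamma$ is topologically finitely generated it has only finitely many open subgroups of each finite index, hence countably many open normal subgroups; by \Cref{factoringdiscrete} every point of $|\PC_G^{\Gamma}|$ is a continuous pseudocharacter over a finite field factoring through such a quotient, and by \Cref{BGammaGfinite} there are only finitely many of these for each fixed quotient. Consequently $\coprod_{z}\frakX_{G,z}^{\rig}$ is a countable disjoint union of quasi-Stein spaces, hence quasi-Stein, and, Berthelot's functor commuting with (locally noetherian) coproducts, it equals $\bigl(\coprod_z\frakX_{G,z}\bigr)^{\rig}$.

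For representability I would reduce to affinoids. Both $\tilde X_G$ and $Y\mapsto\Hom_{\An_L}(Y,\coprod_z\frakX_{G,z}^{\rig})$ are sheaves for the Grothendieck topology on $\An_L$: the latter trivially, and the former because $\calO$ is a sheaf, $\calC(\Gamma^{\bullet},-)$ turns the relevant equalizers of topological rings into equalizers (as in \Cref{proextension}), and by \Cref{allemorphismen} a continuous $G$-pseudocharacter over a topological ring is a morphism of $\calF$-$\calO$-algebras $\calO[G^{\bullet}]^{G^0}\to\calC(\Gamma^{\bullet},-)$. It therefore suffices to produce a natural bijection $X_G(A)=\Hom_{\An_L}(\Sp A,\coprod_z\frakX_{G,z}^{\rig})$ for every affinoid $L$-algebra $A$.

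Given $\Theta\in X_G(A)$, \Cref{calclem} (2) lets me descend $\Theta$ to some model $\calA\to A$, and \Cref{Aprimelemma} further to a profinite admissible $\calO$-subalgebra $A'\subseteq\calA$. By \Cref{prodoflocal}, $A'=\prod_{i=1}^{n}A_i'$ is a finite product of local profinite admissible $\calO$-algebras, and the corresponding idempotents induce a clopen decomposition $\Sp A=\coprod_{i=1}^{n}\Sp A_i$. On each factor $\Theta|_{A_i}$ is residually constant, equal to some $z_i\in|\PC_G^{\Gamma}|$, so by \Cref{lemdiv} it lies in $X_{G,z_i}(A_i)$ and hence, since $\tilde X_{G,z_i}$ is represented by $\frakX_{G,z_i}^{\rig}$, corresponds to a unique morphism $\Sp A_i\to\frakX_{G,z_i}^{\rig}$; gluing over $i$ yields $\Sp A\to\coprod_z\frakX_{G,z}^{\rig}$. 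Conversely, a morphism $\Sp A\to\coprod_z\frakX_{G,z}^{\rig}$ meets only finitely many components by quasi-compactness of $\Sp A$, inducing a clopen decomposition of $\Sp A$ and, on each piece, an element of $X_{G,z}$; these glue back to an element of $X_G(A)$. I would then verify that the two constructions are mutually inverse and natural in $A$, which amounts to \Cref{lemdiv} applied pointwise together with functoriality of Berthelot's construction.

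The main obstacle I anticipate is the bookkeeping in the affinoid step: showing that the residual reduction map is locally constant on $\Sp A$, that the resulting clopen decomposition matches the formal-scheme decomposition $\frakX_G=\coprod_z\frakX_{G,z}$ along a common model, and that passage to the rigid generic fibre is compatible with all of this — none of it is deep, but this is where the argument has content beyond invoking \Cref{calclem}, \Cref{lemdiv} and the representability of $\frakX_G$. By contrast, the quasi-Stein claim is routine once countability of $|\PC_G^{\Gamma}|$ and the quasi-Stein property of each $\frakX_{G,z}^{\rig}$ are recorded.
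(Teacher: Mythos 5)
Your argument is correct, and it reaches the same destination as the paper by a slightly different road. The paper's proof is a three-line computation: after the same reduction to affinoids via the Zariski sheaf property, it applies Berthelot's universal property once to the whole formal scheme $\frakX_G=\coprod_z\frakX_{G,z}$ (whose representability of the formal functor was established in the preceding corollary), giving $\Hom_{\An_L}(Y,\frakX_G^{\rig})=\varinjlim_{\calY\to Y}\frakX_G(\calO(\calY))=X_G(\calO(Y))$, where the last step is exactly \Cref{calclem} (2). You instead redo the decomposition at the affinoid level: descend $\Theta$ to a model, then to a profinite subring via \Cref{Aprimelemma}, split it into local factors by \Cref{prodoflocal}, obtain a clopen decomposition of $\Sp A$ on whose pieces $\Theta$ is residually constant (here citing \Cref{lemdiv} is more than you need — residual constancy on each factor gives membership in $X_{G,z_i}(A_i)$ directly from the definition, while \Cref{lemdiv} is the converse characterization), and then glue using the component-wise representability of $\tilde X_{G,z}$ by $\frakX_{G,z}^{\rig}$, which the paper records right after \Cref{lemdiv}. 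What your version buys is that it makes explicit several points the paper leaves implicit: that each $\frakX_{G,z}$ lies in $\FSch_{\calO}^{\lnad}$, that $|\PC_G^{\Gamma}|$ is countable (via \Cref{factoringdiscrete} and \Cref{BGammaGfinite}) so the disjoint union is genuinely quasi-Stein, and the bookkeeping matching the clopen decomposition of $\Sp A$ with the decomposition of $\frakX_G$. What the paper's route buys is brevity: by invoking the universal property of the generic fiber for the already-assembled formal scheme, it avoids having to re-glue on the rigid side at all. Both are sound; yours is essentially the paper's argument with the coproduct unwound by hand.
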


\begin{proof}
    To verify, that $\frakX_G^{\rig} = \coprod\nolimits_{z \in |\PC_{G}^{\Gamma}|} \frakX_{G,z}^{\rig}$ represents $\tilde X_G$ it is enough to check that the functor of points agree on affinoid analytic spaces $Y \in \An_L$, since $\tilde X_G$ and the functor of points of $\frakX_G^{\rig}$ are sheaves for the Zariski topology on $\An_L$. We have
    $$ \Hom_{\An_L}(Y, \frakX_G^{\rig}) = \varinjlim_{\calY \to Y} \Hom_{\FSch/\calO}(\calY, \frakX_G) = \varinjlim_{\calY \to Y} \frakX_G(\calO(\calY)) = X_G(\calO(Y)) = \tilde X_G(Y). $$
    Here the first equality is the universal property of Berthelot's generic fiber functor \cite[§7.1.7.1]{deJongBerthelot}, the third equality is using \Cref{calclem} (2).
\end{proof}

\begin{remark} It would also have been possible to take the adic generic fiber $\frakX_G^{\ad} \times_{\spa(\calO)} \spa(L)$ of the adic space $\frakX_G^{\ad}$ attached to $\frakX_G$, which is canonically isomorphic to $X_G^{\ad}$. Although we found no advantage in the usage of adic spaces here, this point of view might be more natural for further applications.
\end{remark}

After fixing an embedding $L \hookrightarrow \overline L$ into an algebraic closure $\overline L$ of $L$, the $\overline L$-points are ad hoc defined as $X_G(\overline L) := \bigcup_{L'/L} X_G(L')$ where the union varies over all finite extensions $L'$ of $L$ contained in $\overline L$.

\begin{corollary}\label{Lbarpoints} The $\overline L$-points of $X_G$ are in canonical bijection with $G^0(\overline L)$-conjugacy classes of continuous $G$-completely reducible representations $\Gamma \to G(\overline L)$.
\end{corollary}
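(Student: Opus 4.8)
The plan is to show that $X_G(\overline L)$ is nothing but $\cPC_G^{\Gamma}(\overline L)$ and then feed the latter into the continuous reconstruction theorem. By the ad hoc definition, $X_G(\overline L) = \bigcup_{L'/L} \cPC_G^{\Gamma}(L')$, the union ranging over the finite subextensions $L'$ of $\overline L/L$. For $L' \subseteq L''$ the base change map $\cPC_G^{\Gamma}(L') \to \cPC_G^{\Gamma}(L'')$ is injective, since an element of $\PC_G^{\Gamma}$ over a ring $A$ is a family of maps $\Gamma^m \to A$ and $L' \hookrightarrow L''$ is injective; so these maps assemble into an injection $X_G(\overline L) \hookrightarrow \cPC_G^{\Gamma}(\overline L)$, and the first step is to check it is a bijection.

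For surjectivity, take $\Theta \in \cPC_G^{\Gamma}(\overline L)$. Since $\overline L$ is an algebraic closure of the $p$-adic local field $L$, $\Gamma$ is profinite and $\calO$ is noetherian, \Cref{continuousreconstruction} yields a continuous $G$-completely reducible representation $\rho : \Gamma \to G(\overline L)$ with $\Theta_{\rho} = \Theta$, unique up to $G^0(\overline L)$-conjugation. The image $\rho(\Gamma)$ is compact, while $G(\overline L) = \varinjlim_{L'} G(L')$ is the directed union of its closed subgroups $G(L')$; the standard Baire category argument (as in the proof of \Cref{recwithfiniteextn}) then forces $\rho(\Gamma) \subseteq G(L')$ for some finite $L'/L$. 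Hence $\rho$ is a continuous homomorphism $\Gamma \to G(L')$, so $\Theta = \Theta_{\rho}$ lies in the image of $\cPC_G^{\Gamma}(L')$, and therefore $X_G(\overline L) = \cPC_G^{\Gamma}(\overline L)$.

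It then remains to identify $\cPC_G^{\Gamma}(\overline L)$ with $G^0(\overline L)$-conjugacy classes of continuous $G$-completely reducible representations $\Gamma \to G(\overline L)$. I would consider the map $\rho \mapsto \Theta_{\rho}$, which is well defined on $G^0(\overline L)$-conjugacy classes because $G(\overline L)$-conjugate representations have the same associated pseudocharacter, and which lands in $\cPC_G^{\Gamma}(\overline L)$ because every such $\rho$ factors through some $G(L')$ with $L'/L$ finite (again by the Baire argument), so that $\Theta_{\rho}$ is continuous. Surjectivity of this map is the existence assertion of \Cref{continuousreconstruction}; injectivity is its uniqueness assertion together with the uniqueness part of \Cref{reconstructiongeneral}, which applies since $\overline L$ is algebraically closed and $\calO$ is noetherian. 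Composing with the bijection $\cPC_G^{\Gamma}(\overline L) = X_G(\overline L)$ gives the asserted canonical bijection.

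The only genuinely non-formal ingredient is the descent of $\rho$ to a finite extension of $L$, i.e. the Baire category argument; it uses that $\Gamma$ is profinite (so $\rho(\Gamma)$ is compact) and that each $G(L')$ is closed in $G(\overline L)$, and it is exactly the argument already employed in the proof of \Cref{recwithfiniteextn}. Everything else reduces to the two reconstruction theorems \Cref{reconstructiongeneral} and \Cref{continuousreconstruction}.
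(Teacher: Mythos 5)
Your proof is correct and follows essentially the same route as the paper: identify $X_G(\overline L)$ with $\cPC_G^{\Gamma}(\overline L)$ (with the direct limit topology) and invoke \Cref{continuousreconstruction} together with the uniqueness part of \Cref{reconstructiongeneral}. The only difference is that you justify the inclusion $\cPC_G^{\Gamma}(\overline L) \subseteq X_G(\overline L)$ by reconstructing $\rho$ and descending it to a finite extension via the Baire category argument, a step the paper treats as immediate from the definition of $X_G(\overline L)$ as a union over finite subextensions; this is a reasonable (and welcome) filling-in of that detail rather than a different approach.
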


\begin{proof}
    By the definition of $X_G$, we have $X_G(\overline L) = \cPC_G^{\Gamma}(\overline L)$ where $\overline L$ carries the direct limit topology of its finite subextensions of $L$. The claim now follows from \Cref{continuousreconstruction}.
\end{proof}

\section{Dimension of $\RpsThetabar$ for $\Sp_{2n}$}
\label{subsecDimRps}

Fix a prime $p>2$, let $F/\bbQ_p$ be a $p$-adic local field and let $\Gamma_F$ be the absolute Galois group of $F$.
Let $\calO$ be the ring of integers of a $p$-adic local field $L$ with uniformizer $\varpi$ and residue field $\kappa$, let $G$ be a Chevalley group over $\calO$ and let $\Thetabar \in \cPC^{\Gamma_F}_G(\kappa)$ be a continuous $G$-pseudocharacter.
Let $\overline X_{\Thetabar} := \Spec(R^{\ps}_{\Thetabar}/\varpi)$, where $R^{\ps}_{\Thetabar}$ is the universal pseudodeformation ring of \Cref{representabledeffunctor}.
We define 
$$\Sp_{2n}(A) := \{M \in \GL_{2n}(A) \mid M^{-1} = JM^{\top}J^{-1}\},$$ 
where $J = \SmallMatrix{0 & I_n \\ -I_n & 0}$ for every commutative ring $A$.
In this section, we use the methods developed in \cite{BJ_new} to estimate the dimension of $\overline X_{\Thetabar}$ for $G = \Sp_{2n}$.
Note, that by \Cref{tfgfingen} $R^{\ps}_{\Thetabar}$ is noetherian, since $\Gamma_F$ is topologically finitely generated (\cite[Satz 3.6]{Jannsen1982}).
Let $\iota : \Sp_{2n} \to \GL_{2n}$ be the standard representation.
By \Cref{chenevierlafforguecont}, the $\GL_{2n}$-pseudocharacter $\iota(\Thetabar)$ corresponds to a unique determinant law $D_{\iota(\Thetabar)}$ of dimension $2n$.
The pseudodeformation ring $R^{\univ}_{\calO, D_{\iota(\Thetabar)}}$ of $D_{\iota(\Thetabar)}$ defined in \cite[Proposition 4.7.4]{BJ_new} is by \Cref{Rpsisom} canonically isomorphic to $R^{\ps}_{\iota(\Thetabar)}$. We shall use this identification without further mention whenever we cite results from \cite{BJ_new}.

\subsection{Symplectic representations} A \emph{symplectic representation} $(V,\beta)$ of a group $\Gamma$ over a field $k$ is a representation $V$, with a fixed $\Gamma$-invariant antisymmetric non-degenerate bilinear form $\beta : V \times V \to k$. When $p > 2$, two semisimple symplectic representations over an algebraically closed field are conjugate over $\Sp_{2n}$ if and only if they are conjugate over $\GL_{2n}$.
This is a consequence of the fact, that when $p > 2$ the notions of $\Sp_{2n}$-semisimplicity and $\GL_{2n}$-semisimplicity coincide \cite[Corollary 16.10]{Richardson1988ConjugacyCO} and the uniqueness part of \Cref{reconstructiongeneral}.
So being symplectic can be seen as a property of $\GL_{2n}$-conjugacy classes of semisimple representations. It is easy to check, that a representation of the form $W \oplus W^*$ for some arbitrary representation $W$ is always symplectic. We call these \emph{representations of pair type}.
In general a semisimple symplectic representation is a direct sum of irreducible symplectic representations and representations of pair type.

\begin{proposition}\label{symplstructure} Every semisimple symplectic representation of a group $\Gamma$ over an algebraically closed field $k$ is a direct sum of representations of one of the following two types.
\begin{enumerate}
    \item An irreducible symplectic representation.
    \item A direct sum $V \oplus V^*$, where $V$ is an arbitrary irreducible representation.
\end{enumerate}
\end{proposition}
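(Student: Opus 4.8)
The plan is to reduce everything to the isotypic decomposition of $V$ together with Schur's lemma to control how the symplectic form couples isotypic pieces. Since $V$ is semisimple, write $V = \bigoplus_{i \in I} V_i$ with $V_i \cong W_i \otimes_k M_i$ the $W_i$-isotypic component, the $W_i$ pairwise non-isomorphic irreducible representations (so $\End_{k[\Gamma]}(W_i) = k$ as $k = \overline{k}$) and $M_i$ the multiplicity space with trivial $\Gamma$-action. The form $\beta$ being $\Gamma$-invariant, its restriction to $V_i \times V_j$ is a $\Gamma$-homomorphism $V_i \to V_j^*$, which vanishes unless $W_j \cong W_i^*$. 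Consequently $I$ partitions into unordered pairs $\{i,j\}$ with $W_i \cong W_j^* \not\cong W_i$, and singletons $i$ with $W_i \cong W_i^*$, and $V$ is the $\beta$-orthogonal direct sum of the symplectic subspaces $V_i \oplus V_j$ (over pairs) and $V_i$ (over singletons). It then suffices to analyse each of these two cases.

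For a pair $\{i,j\}$: since $\beta$ is non-degenerate on $V$ and $V_i$ pairs non-trivially only against $V_j$, the induced pairing $V_i \times V_j \to k$ is perfect, both $V_i$ and $V_j$ are totally isotropic, and $\beta$ identifies $V_j$ with $V_i^*$ as $\Gamma$-representations. Hence $(V_i \oplus V_j, \beta)$ is the hyperbolic symplectic space on $V_i \oplus V_i^*$ with its standard pairing $\langle (v,f),(v',f')\rangle = f'(v) - f(v')$. Choosing a basis of $M_i$ decomposes $V_i = \bigoplus_k W_i$, compatibly with $V_i^* = \bigoplus_k W_i^*$, and under the standard pairing this becomes an orthogonal direct sum of copies of $W_i \oplus W_i^*$; these are representations of type (2).

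For a singleton $i$ with $W_i$ self-dual: fix a non-degenerate $\Gamma$-invariant bilinear form $b$ on $W_i$, which is unique up to scalar, hence symmetric or antisymmetric. Identifying $\Gamma$-invariant bilinear forms on $W_i \otimes M_i$ with $b \otimes q$ for a bilinear form $q$ on $M_i$, write $\beta|_{V_i} = b \otimes q$ with $q$ non-degenerate. If $b$ is antisymmetric, then $W_i$ with $b$ is already an irreducible symplectic representation, $q$ is symmetric, and using $p \neq 2$ and $k = \overline{k}$ we pick an orthogonal basis of $q$, so that $V_i$ is a $\beta$-orthogonal direct sum of copies of $(W_i, \lambda_k b)$, each of type (1). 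If $b$ is symmetric, then $q$ is antisymmetric, $\dim_k M_i$ is even, $q$ has a symplectic basis, and $V_i$ is a $\beta$-orthogonal direct sum of copies of $W_i^{\oplus 2}$ equipped with the tensor product of $b$ and a hyperbolic plane form; using the isomorphism $W_i \xrightarrow{\sim} W_i^*$, $w \mapsto b(w,-)$, one checks directly that each such summand is symplectically isomorphic to $W_i \oplus W_i^*$ with its standard form, of type (2). Combining the contributions of all pairs and singletons proves the statement.

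The substantive point is the reduction to multiplicity spaces — that a $\Gamma$-invariant bilinear form on an isotypic component $W \otimes M$ is $b_W \otimes q$ for an essentially unique $q$ on $M$, and that the symmetry type of $\beta$ is dictated by those of $b_W$ and $q$ — together with the classical fact that the invariant form on a self-dual irreducible is symmetric or antisymmetric. The hypothesis $p \neq 2$ is used only to diagonalize the symmetric form $q$ in the symplectic self-dual subcase; splitting off hyperbolic summands and producing a symplectic basis of an antisymmetric form are characteristic-free. The single explicit computation needed is the symplectic isomorphism $W_i^{\oplus 2} \cong W_i \oplus W_i^*$ in the orthogonal self-dual subcase, which I expect to be the only mildly delicate step.
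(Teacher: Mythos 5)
Your proof is correct, but it follows a genuinely different route from the paper. The paper argues by induction on $\dim V$: pick an irreducible subrepresentation $W$; if $\beta|_W$ is non-degenerate, split off the type (1) summand $W$ and pass to $W^{\perp}$; otherwise every irreducible subrepresentation is totally isotropic, and one finds a second irreducible $W'$ pairing perfectly with $W$ (so $W' \cong W^*$ and $W \oplus W'$ is of pair type), then again passes to the orthogonal complement. Your argument instead goes through the isotypic decomposition: Schur's lemma shows $\beta$ only couples an isotypic block to the block of the dual, the non-self-dual blocks pair off into hyperbolic pieces $V_i \oplus V_i^*$ that split into type (2) summands, and on a self-dual block $W_i \otimes M_i$ you write $\beta = b \otimes q$ and let the Frobenius--Schur type of $b$ decide whether the block decomposes into type (1) summands (diagonalizing the symmetric $q$) or into type (2) summands (via a symplectic basis of the alternating $q$ and the explicit isomorphism $W_i^{\oplus 2} \cong W_i \oplus W_i^*$, whose form computation you carry out correctly). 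Your approach buys more: it records exactly how the form restricts to each summand (scaled copies of the invariant form on type (1) pieces, the standard hyperbolic form on type (2) pieces) and explains which irreducible constituents produce which type, which the paper's terse induction does not make explicit. The trade-off is that you invoke the symmetric/antisymmetric dichotomy and diagonalization of $q$, so your argument needs $\operatorname{char} k \neq 2$ — harmless here, since the section fixes $p > 2$ and all residue fields in play have characteristic $0$ or $p$ — whereas the paper's induction is characteristic-free and shorter.
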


\begin{proof} Let $V$ be a symplectic representation equipped with a $\Gamma$-invariant symplectic form $\beta : V \times V \to k$.

We proceed by induction over $\dim V$. If $\dim V = 0$ there is nothing to show. We assume $\dim V > 0$. Let $W$ be an irreducible subrepresentation of $V$ and assume, that $\beta : W \times W \to k$ is non-degenerate. In particular, $W$ is an irreducible symplectic representation. Then $W^{\perp}$ is non-degenerate and $\Gamma$-invariant and we may assume $W^{\perp}$ has the desired form. This implies the claim.

We now assume, that $V$ has no irreducible subrepresentation on which $\beta$ is non-degenerate.
Let $W$ be any irreducible subrepresentation of $V$.
Since $\beta$ is non-degenerate, there is an irreducible subrepresentation $W' \neq W$, such that $\beta : W \times W' \to k$ is non-degenerate.
$\beta$ is non-degenerate on $W \oplus W'$, so $(W \oplus W')^{\perp}$ is non-degenerate and $\Gamma$-invariant. We have $W' \cong W^*$ via $y \mapsto (x \mapsto \beta(x,y))$. As in the previous case, this implies the claim.
\end{proof}

This motivates the following terminology. We say that a symplectic representation $V$ is \emph{symplectically decomposable}, if it can be written as the direct sum of two nonzero symplectic representations, and \emph{symplectically indecomposable} otherwise.
There are exactly two types of symplectically indecomposable semisimple representations: Those which are irreducible under the standard embedding into $\GL_{2n}$ and those which are a direct sum $W \oplus W^*$ for some irreducible representation $W$, which is not symplectic.

\subsection{Subdivision of $\overline X_{\Thetabar}$}\label{secsubdiv} For a point $x \in \overline X_{\Thetabar} = \Spec(R^{\ps}_{\Thetabar}/\varpi)$, there is a natural $G$-pseudocharacter $\Theta_x \in \PC^{\Gamma}_G(\overline{\kappa(x)})$ defined after choice of an algebraic closure $\overline{\kappa(x)}$ of the residue field $\kappa(x)$ of $x$. 

In their analysis of the special fiber of the pseudodeformation space for $\GL_n$, Böckle and Juschka have noticed that points corresponding to irreducible representations need not be unobstructed. They have found a convenient characterization of obstructed irreducible points \cite[Lemma 5.1.1]{BJ_new}, which allows them to find good dimension bounds for the obstructed locus. We recall their definition \cite[Definition 5.1.2]{BJ_new}. It turns out, that for $G=\Sp_{2n}$ the dimension of the locus of special points for $\GL_{2n}$ is still small enough to get the desired estimates.

\begin{definition} Let $k$ be an algebraically closed $\bbZ_p$-field and let $\rho : \Gamma_F \to \GL_{2n}(k)$ be an irreducible representation. We say that $\rho$ is \emph{special}, if one of the following holds.
\begin{enumerate}
    \item $\zeta_p \notin F$ and $\rho \cong \rho(1)$.
    \item $\zeta_p \in F$ and there is some degree $p$ Galois extension $F'/F$, such that $\rho|_{\Gamma_{F'}}$ is reducible.
\end{enumerate}
\end{definition}

\begin{definition}\label{defsubloci} We define the following subsets of $\overline X_{\Thetabar}$.
\begin{enumerate}
    \item $\overline X_{\Thetabar}^{\irr}$ is the subset of irreducible points.
    \item $\overline X_{\Thetabar}^{\nspcl}$ is the subset of non-special irreducible points.
    \item $\overline X_{\Thetabar}^{\spcl}$ is the subset of special points.
    \item $\overline X_{\Thetabar}^{\pair}$ is the subset of points of pair type.
    \item $\overline X_{\Thetabar}^{\dec}$ is the subset of symplectically decomposable points.
    \item For any of the above subsets $\overline Y_{\Thetabar}^? := {\overline X}_{\Thetabar}^? \setminus \{\frakm_{R^{\ps}_{\Thetabar}/\varpi}\}$.
\end{enumerate}
\end{definition}

We use the symbol $\dot{\cup}$ to denote a set-theoretic disjoint union (which need not be a coproduct of topological spaces).

\begin{proposition}\label{subdivision}
$\overline X_{\overline \Theta} = \overline X_{\overline \Theta}^{\nspcl} ~\dot{\cup} ~\overline X_{\overline \Theta}^{\spcl} ~\dot{\cup} ~(\overline X_{\overline \Theta}^{\dec} \cup \overline X_{\overline \Theta}^{\pair})$.
\end{proposition}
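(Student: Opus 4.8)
The plan is to translate each point of $\overline X_{\Thetabar}$ into a semisimple symplectic representation via the reconstruction theorem, apply the structure result \Cref{symplstructure}, and match the resulting cases with the subloci of \Cref{defsubloci}.

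First I would fix $x \in \overline X_{\Thetabar}$ together with an algebraic closure $\overline{\kappa(x)}$ of its residue field, yielding the attached pseudocharacter $\Theta_x \in \PC_{\Sp_{2n}}^{\Gamma_F}(\overline{\kappa(x)})$. Since $\calO$ is noetherian, \Cref{reconstructiongeneral} provides an $\Sp_{2n}$-completely reducible representation $\rho_x : \Gamma_F \to \Sp_{2n}(\overline{\kappa(x)})$ with $\Theta_{\rho_x} = \Theta_x$, unique up to $\Sp_{2n}(\overline{\kappa(x)})$-conjugation. As $p > 2$, $\Sp_{2n}$-complete reducibility agrees with semisimplicity under the standard embedding into $\GL_{2n}$ (\cite[Corollary 16.10]{Richardson1988ConjugacyCO}), so $\rho_x$, together with the standard symplectic form on $\overline{\kappa(x)}^{2n}$, is a semisimple symplectic representation, and the properties cutting out the subloci of \Cref{defsubloci} are properties of its $\GL_{2n}$-conjugacy class.

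Next I would run the dichotomy coming from \Cref{symplstructure}: write $\rho_x = \bigoplus_i \sigma_i$ as a direct sum of symplectically indecomposable pieces, each either (1) irreducible symplectic or (2) of the form $V_i \oplus V_i^*$ with $V_i$ irreducible. If there are at least two summands, $\rho_x$ is symplectically decomposable, so $x \in \overline X_{\Thetabar}^{\dec}$. If there is a single summand of type (2), then $x \in \overline X_{\Thetabar}^{\pair}$. If there is a single summand of type (1), then $\rho_x$ is irreducible, so $x \in \overline X_{\Thetabar}^{\irr} = \overline X_{\Thetabar}^{\nspcl} \cup \overline X_{\Thetabar}^{\spcl}$, and $x$ lies in exactly one of these two by the special/non-special dichotomy. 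Hence $\overline X_{\Thetabar}$ is covered by the right-hand side of the claimed identity.

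It then remains to check pairwise disjointness of the three blocks. The sets $\overline X_{\Thetabar}^{\spcl}$ and $\overline X_{\Thetabar}^{\nspcl}$ are disjoint by definition (a representation is not both special and non-special), and both are contained in $\overline X_{\Thetabar}^{\irr}$, since a special representation is by definition irreducible. Finally $\overline X_{\Thetabar}^{\irr}$ is disjoint from $\overline X_{\Thetabar}^{\dec} \cup \overline X_{\Thetabar}^{\pair}$: an irreducible representation is not a nontrivial direct sum, hence not symplectically decomposable, and for $n \geq 1$ a representation $W \oplus W^*$ of pair type is reducible, because the nonzero summand $W$ is a proper subrepresentation. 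Combining disjointness with the covering statement gives the asserted partition. The only point requiring care — not a genuine difficulty — is that "irreducible", "special", "pair type" and "symplectically (in)decomposable" be well-defined on points of $\overline X_{\Thetabar}$, i.e. independent of the choice of $\overline{\kappa(x)}$ and of the conjugacy representative of $\rho_x$; this follows from the uniqueness clause of \Cref{reconstructiongeneral} and the invariance of these notions under base field extension and $\GL_{2n}$-conjugation.
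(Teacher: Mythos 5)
Your argument is correct and is essentially the paper's own proof: the paper deduces the partition directly from \Cref{symplstructure}, exactly the dichotomy you run, with the disjointness checks you spell out being left implicit. Your write-up simply makes explicit the passage through \Cref{reconstructiongeneral} and the well-definedness of the subloci, which is fine.
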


\begin{proof} This follows directly from \Cref{symplstructure}.
\end{proof}

\begin{lemma}\label{finmapdec} Suppose $\Thetabar = \Thetabar_1 \oplus \Thetabar_2 \in \cPC^{\Gamma_F}_{\Sp_{2n}}(\kappa)$ with $\Thetabar_1 \in \cPC^{\Gamma_F}_{\Sp_{2a}}(\kappa)$, $\Thetabar_2 \in \cPC^{\Gamma_F}_{\Sp_{2b}}(\kappa)$ and $a+b=n$, where the direct sum is a direct sum of symplectic pseudocharacters as explained in \Cref{secsumtensor}. Then the map $\RpsThetabar \to R^{\ps}_{\Thetabar_1} \widehat\otimes_{\calO} R^{\ps}_{\Thetabar_2}$ induced by $(\Theta_1, \Theta_2) \mapsto \Theta_1 \oplus \Theta_2$ is finite.
\end{lemma}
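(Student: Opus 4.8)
The statement asserts finiteness of the morphism $\RpsThetabar \to R^{\ps}_{\Thetabar_1} \widehat\otimes_{\calO} R^{\ps}_{\Thetabar_2}$. Since all three rings are complete noetherian local $\calO$-algebras (by \Cref{tfgfingen}, as $\Gamma_F$ is topologically finitely generated), and the map is local, by the topological Nakayama lemma it suffices to prove that the induced map on special fibers $\RpsThetabar/\varpi \to (R^{\ps}_{\Thetabar_1}/\varpi) \otimes_{\kappa} (R^{\ps}_{\Thetabar_2}/\varpi)$ is finite, and for that it is in turn enough (again by Nakayama, since the target is noetherian and the source is complete local) to check that the target is a finite module over the source modulo the maximal ideal of the source — equivalently, that the fiber of $\Spec$ of the target over the closed point of $\Spec$ of the source is a finite set of points, i.e. $0$-dimensional. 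Equivalently, I would show: a closed point $x$ of $\Spec(R^{\ps}_{\Thetabar_1}/\varpi) \widehat\otimes \Spec(R^{\ps}_{\Thetabar_2}/\varpi)$ lying over the closed point of $\Spec(\RpsThetabar/\varpi)$ must be one of finitely many such points.

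**Main argument via the reconstruction theorem.** Translate the ring-theoretic statement into a statement about pseudocharacters over artinian local rings, using pro-representability: a pair $(\Theta_1, \Theta_2) \in \Def_{\Thetabar_1}(A) \times \Def_{\Thetabar_2}(A)$ with $\Theta_1 \oplus \Theta_2$ deforming a \emph{fixed} $\Theta \in \Def_{\Thetabar}(A)$. The point is that $\Theta$ determines $\Theta_1 \oplus \Theta_2$ as a $\GL_{2n}$-pseudocharacter (via $\iota$), and I must bound how much freedom remains in recovering the pair $(\Theta_1,\Theta_2)$ from the symplectic pseudocharacter of the orthogonal direct sum. Over an algebraically closed field $k$ of characteristic $p>2$, by \Cref{reconstructiongeneral} the datum of $\Thetabar$ is a $G$-completely reducible representation $\rhobar: \Gamma_F \to \Sp_{2n}(k)$ up to $\Sp_{2n}(k)$-conjugacy, and the direct-sum decomposition corresponds to an orthogonal decomposition of the symplectic space $k^{2n} = k^{2a} \perp k^{2b}$ preserved by $\rhobar$. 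Since $\rhobar$ is semisimple as a $\GL_{2n}$-representation (using $p>2$ and \cite[Corollary 16.10]{Richardson1988ConjugacyCO}), its isotypic decomposition is canonical, and there are only finitely many ways to partition the isotypic pieces into a "first block of symplectic dimension $2a$" and a "second block of symplectic dimension $2b$" that are each symplectic subrepresentations — the partition is governed by which irreducible constituents are symplectic, which are orthogonal-dual pairs, and the multiplicities, all of which are rigid. This shows the fiber over the residue field is finite; the scheme-theoretic fiber being $0$-dimensional then follows because deformations cannot create new moduli, i.e. the tangent space of the fiber must vanish.

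**Finishing the argument.** To make the tangent-space vanishing precise I would argue as follows: let $A = \kappa[\varepsilon]$ and suppose $(\Theta_1, \Theta_2)$ and $(\Theta_1', \Theta_2')$ are two first-order deformations of the \emph{same} pair $(\Thetabar_1,\Thetabar_2)$ inducing the \emph{same} symplectic pseudocharacter $\Theta \in \Def_{\Thetabar}(\kappa[\varepsilon])$; I must show $\Theta_i = \Theta_i'$. Since $\Theta = \Theta_1 \oplus \Theta_2 = \Theta_1' \oplus \Theta_2'$, applying $\iota$ and using \Cref{chenevierlafforguecont}/\Cref{Rpsisom} translates this into an equality of $\GL_{2n}$-determinant laws; the block-diagonal structure forces, by the uniqueness of CNS (Chenevier) decompositions of determinant laws into a product over the residually distinct factors — more precisely by the fact that over $\kappa[\varepsilon]$ the idempotent decomposition of the residual pseudocharacter lifts uniquely — that the factor determinant laws agree. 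Pulling back along the closed immersions $\Sp_{2a} \hookrightarrow \GL_{2a}$, $\Sp_{2b} \hookrightarrow \GL_{2b}$ (which identify symplectic pseudocharacters with their images, again using $p > 2$) gives $\Theta_i = \Theta_i'$. Hence the fiber is reduced of dimension $0$, so finite, which is what was needed.

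**Expected main obstacle.** The delicate point is the bookkeeping of the orthogonal direct-sum decompositions: one must be careful that "the map induced by $(\Theta_1,\Theta_2)\mapsto \Theta_1\oplus\Theta_2$" is actually finite and not merely quasi-finite, i.e. one genuinely needs the completeness/Nakayama reduction rather than just a fiber-dimension count, and one must ensure that when irreducible constituents of $\rhobar$ are \emph{self-dual but orthogonal} (so they cannot sit inside a symplectic summand alone but must be paired off), the combinatorics of splitting multiplicities into the two symplectic blocks really is finite — this uses \Cref{symplstructure} to control symplectically indecomposable pieces, but the multiplicities of pair-type constituents $W \oplus W^*$ must be distributed among the two blocks, and one needs that the total symplectic dimensions $2a, 2b$ being fixed pins this down up to finite ambiguity. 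Handling the case where $\Thetabar_1$ and $\Thetabar_2$ share constituents (so the decomposition of $\Thetabar$ does not uniquely determine the pair $(\Thetabar_1, \Thetabar_2)$ residually) is where the finiteness — as opposed to uniqueness — genuinely enters, and that is the step I would write out most carefully.
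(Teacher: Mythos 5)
Your Nakayama reduction is fine in principle: with $R:=\RpsThetabar$ and $S:=R^{\ps}_{\Thetabar_1}\widehat\otimes_{\calO}R^{\ps}_{\Thetabar_2}$ both complete noetherian local with residue field $\kappa$, it suffices to show the fiber ring $S/\frakm_R S$ is artinian. But neither of your two verification steps establishes this, and the second is false as stated. First, counting closed points of the fiber is vacuous: $S$ is local, so $\Spec(S/\frakm_R S)$ has exactly one closed point, and "finitely many points'' does not imply dimension zero for such rings ($\kappa[[t]]$ has two primes); so the reconstruction-theorem count of decompositions of $\rhobar$ says nothing about the dimension of the fiber. Second, and this is the genuine gap, the tangent-space vanishing you rely on ("two pairs with the same sum must be equal'', hence "the fiber is reduced of dimension $0$'') fails precisely in the case you defer to the end, namely when $\Thetabar_1$ and $\Thetabar_2$ share irreducible constituents. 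Already for two equal characters $\bar\chi_1=\bar\chi_2=\bar\chi$ the pairs $\bigl(\bar\chi(1+\varepsilon c),\,\bar\chi(1-\varepsilon c)\bigr)$ have the same direct sum for every homomorphism $c:\Gamma_F\to\kappa$, so the relative tangent space is $\Hom(\Gamma_F,\kappa)\neq 0$; the same phenomenon occurs symplectically when $\Thetabar_1=\Thetabar_2$. Consequently the fiber is in general non-reduced with nontrivial tangent space (and $R\to S$ is not surjective), so your concluding step cannot work; moreover the appeal to uniqueness of the idempotent/CNS-type decomposition of the determinant law over $\kappa[\varepsilon]$ only applies when the two residual factors are residually disjoint — in the shared-constituent case there is no separating idempotent, which is exactly where finiteness (as opposed to a bijection) has content, and your proposal flags this case but supplies no argument for it. What is actually needed there is an integrality statement (the characteristic-polynomial coefficients of a direct summand are integral over the subring generated by those of the sum, as in the symmetric-function relation between roots and coefficients), not a rigidity/uniqueness statement.

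For comparison, the paper does not argue from scratch: it uses the surjections $R^{\ps}_{\iota(\Thetabar)}\twoheadrightarrow\RpsThetabar$ and $R^{\ps}_{\iota_i(\Thetabar_i)}\twoheadrightarrow R^{\ps}_{\Thetabar_i}$ supplied by \Cref{surjectivity} together with \Cref{prtorsfingen} (cf. \Cref{Rpssurj}), cites the finiteness of $R^{\ps}_{\iota(\Thetabar)}\to R^{\ps}_{\iota_1(\Thetabar_1)}\widehat\otimes_{\calO}R^{\ps}_{\iota_2(\Thetabar_2)}$ in the $\GL_{2n}$/determinant-law setting from \cite[Lemma 3.24]{BIP}, and then transports finiteness by base change along $R^{\ps}_{\iota(\Thetabar)}\to\RpsThetabar$ followed by composition with the surjection onto $R^{\ps}_{\Thetabar_1}\widehat\otimes_{\calO}R^{\ps}_{\Thetabar_2}$. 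If you want a self-contained proof you must replace your tangent-space step by the integrality argument underlying that lemma; as written, your proof has a gap exactly at the step you acknowledge as the delicate one.
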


\begin{proof} Let $\iota : \Sp_{2d} \to \GL_{2d}$ be the canonical embedding and let $\iota(\Thetabar)$ be the associated $\GL_{2d}$-pseudocharacter, similarly for $\iota_i(\Thetabar_i)$. By \Cref{surjectivity} $\RpsThetabar$ is a quotient of $R^{\ps}_{\iota(\Thetabar)}$ and similarly for $R^{\ps}_{\Thetabar_i}$. We know from \cite[Lemma 3.24]{BIP}, that the map $R^{\ps}_{\iota(\Thetabar)} \to R^{\ps}_{\iota_1(\Thetabar_1)} \widehat\otimes_{\calO} R^{\ps}_{\iota_2(\Thetabar_2)}$ is finite. It follows, that the induced map $\RpsThetabar \to (R^{\ps}_{\iota_1(\Thetabar_1)} \widehat\otimes_{\calO} R^{\ps}_{\iota_2(\Thetabar_2)}) \widehat\otimes_{R^{\ps}_{\iota(\Thetabar)}} \RpsThetabar$ is finite. Since there is a natural surjection $R^{\ps}_{\iota_1(\Thetabar_1)} \widehat\otimes_{\calO} R^{\ps}_{\iota_2(\Thetabar_2)} \twoheadrightarrow R^{\ps}_{\Thetabar_1} \widehat\otimes_{\calO} R^{\ps}_{\Thetabar_2}$, the natural map $(R^{\ps}_{\iota_1(\Thetabar_1)} \widehat\otimes_{\calO} R^{\ps}_{\iota_2(\Thetabar_2)}) \widehat\otimes_{R^{\ps}_{\iota(\Thetabar)}} \RpsThetabar \to R^{\ps}_{\Thetabar_1} \widehat\otimes_{\calO} R^{\ps}_{\Thetabar_2}$ is surjective.
\end{proof}

\begin{lemma}\label{finmappair} Let $\Thetabar = \Thetabar_1 \oplus \Thetabar_1^* \in \cPC^{\Gamma_F}_{\Sp_{2n}}(\kappa)$ be an $\Sp_{2n}$-pseudocharacter as explained at the end of \Cref{secsumtensor} with $\Thetabar_1 \in \cPC^{\Gamma_F}_{\GL_n}(\kappa)$. Then the map $\RpsThetabar \to R^{\ps}_{\Thetabar_1}$ induced by $\Theta_1 \mapsto \Theta_1 \oplus \Theta_1^*$ is finite.
\end{lemma}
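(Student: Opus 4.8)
The plan is to follow the proof of \Cref{finmapdec}, reducing everything to the analogous finiteness statement for $\GL$-pseudocharacters. Let $\iota : \Sp_{2n} \hookrightarrow \GL_{2n}$ be the standard embedding and let $\mu : \GL_n \to \Sp_{2n}$ denote the homomorphism from the end of \Cref{secsumtensor}, so that the map $\RpsThetabar \to R^{\ps}_{\Thetabar_1}$ in the statement is by definition the one classifying $\mu(\Theta_1^{u})$, where $\Theta_1^{u} \in \cPC^{\Gamma_F}_{\GL_n}(R^{\ps}_{\Thetabar_1})$ is the universal deformation of $\Thetabar_1$. The first point is that $\iota \circ \mu : \GL_n \to \GL_{2n}$ is, up to conjugation in the connected group $\GL_{2n}$, the composite $\GL_n \to \GL_n \times \GL_n \hookrightarrow \GL_{2n}$ of $g \mapsto (g, {}^{\top}g^{-1})$ with the block-diagonal embedding; since pseudocharacters only see $\GL_{2n}$-invariant functions, this gives $\iota(\Thetabar) = \Thetabar_1 \oplus \Thetabar_1^*$ as a $\GL_{2n}$-pseudocharacter (direct sum of $\GL_n$-pseudocharacters in the sense of \Cref{secsumtensor}), and correspondingly its $2n$-dimensional determinant law under \Cref{chenevierlafforguecont} is $D_{\Thetabar_1} \oplus D_{\Thetabar_1}^*$.

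Since $p > 2$, \Cref{prtorsfingen} gives that $\Lambda/p[\GL_{2n}^m]^{\GL_{2n}} \to \Lambda/p[\Sp_{2n}^m]^{\Sp_{2n}}$ is surjective for all $m \geq 1$, whence by \Cref{invsurjBsurj} and \Cref{basechange} the map $B^{\Gamma}_{\GL_{2n}}/p \to B^{\Gamma}_{\Sp_{2n}}/p$ is surjective; so \Cref{surjectivity} applies and $\RpsThetabar$ is a quotient of $R^{\ps}_{\iota(\Thetabar)}$. On the $\GL$-side, \cite[Lemma 3.24]{BIP} (together with the identification of \Cref{Rpsisom}) shows that the natural map $R^{\ps}_{\iota(\Thetabar)} = R^{\ps}_{\Thetabar_1 \oplus \Thetabar_1^*} \to R^{\ps}_{\Thetabar_1} \widehat\otimes_{\calO} R^{\ps}_{\Thetabar_1^*}$ is finite. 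To fold the right-hand side down onto $R^{\ps}_{\Thetabar_1}$ I will use that the dual involution $\Theta \mapsto \Theta^*$ on $\GL_n$-pseudocharacters (\Cref{secsumtensor}) induces an isomorphism $R^{\ps}_{\Thetabar_1^*} \xrightarrow{\sim} R^{\ps}_{\Thetabar_1}$ of $\calO$-algebras, and that the multiplication $R^{\ps}_{\Thetabar_1} \widehat\otimes_{\calO} R^{\ps}_{\Thetabar_1} \to R^{\ps}_{\Thetabar_1}$ is a surjective $\calO$-algebra homomorphism; composing these yields a surjection $R^{\ps}_{\Thetabar_1} \widehat\otimes_{\calO} R^{\ps}_{\Thetabar_1^*} \twoheadrightarrow R^{\ps}_{\Thetabar_1}$ under which the pushforward of the universal pair $(\Theta_1^{u}, (\Theta_1^u)^*)$ becomes $\mu(\Theta_1^{u})$.

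Now I conclude exactly as in \Cref{finmapdec}: base-changing the finite map $R^{\ps}_{\iota(\Thetabar)} \to R^{\ps}_{\Thetabar_1} \widehat\otimes_{\calO} R^{\ps}_{\Thetabar_1^*}$ along the quotient $R^{\ps}_{\iota(\Thetabar)} \twoheadrightarrow \RpsThetabar$ produces a finite map $\RpsThetabar \to (R^{\ps}_{\Thetabar_1} \widehat\otimes_{\calO} R^{\ps}_{\Thetabar_1^*}) \widehat\otimes_{R^{\ps}_{\iota(\Thetabar)}} \RpsThetabar$, and composing with the surjection of the previous paragraph — which by the universal property of the pushout is well defined, the two legs agreeing on $R^{\ps}_{\iota(\Thetabar)}$ because both classify $D_{\Theta_1^u} \oplus D_{\Theta_1^u}^*$ — gives an $\calO$-algebra map $\RpsThetabar \to R^{\ps}_{\Thetabar_1}$ that is finite, being a finite map followed by a surjection. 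A diagram chase through \Cref{repofPC} and \Cref{representabledeffunctor} identifies this composite with the map of the statement. I expect the only slightly delicate step to be this last identification — matching the constructed composite with the map induced by $\Theta_1 \mapsto \Theta_1 \oplus \Theta_1^*$ rather than some twist of it — but, as in \Cref{finmapdec}, it reduces to the fact that every map in sight is by construction the one classifying the evident universal pseudocharacter; the rest (finiteness of surjections of complete noetherian local rings, and stability of finiteness under base change and composition) is formal.
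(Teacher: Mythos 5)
Your proposal is correct and follows essentially the same route as the paper: both reduce to the $\GL_{2n}$ side via the surjection $R^{\ps}_{\iota(\Thetabar)} \twoheadrightarrow \RpsThetabar$ from \Cref{surjectivity}/\Cref{prtorsfingen}, invoke the finiteness of $R^{\ps}_{\iota(\Thetabar)} \to R^{\ps}_{\Thetabar_1} \widehat\otimes_{\calO} R^{\ps}_{\Thetabar_1^*}$ from \cite[Lemma 3.24]{BIP}, fold down along the surjection classifying $\Theta_1 \mapsto (\Theta_1, \Theta_1^*)$, and base change along the quotient. The only differences are cosmetic: you base change first and then fold via the pushout's universal property, whereas the paper folds first and identifies $R^{\ps}_{\Thetabar_1} \widehat\otimes_{R^{\ps}_{\iota(\Thetabar)}} \RpsThetabar \cong R^{\ps}_{\Thetabar_1}$ by comparing deformation functors — the same compatibility you check when gluing the two legs of your pushout.
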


\begin{proof} As in the proof of \Cref{finmapdec}, the map $R^{\ps}_{\iota(\Thetabar)} \to R^{\ps}_{\iota_1(\Thetabar_1)} \widehat\otimes_{\calO} R^{\ps}_{\iota_1(\Thetabar_1)}$ is finite. By affineness, the map $R^{\ps}_{\iota_1(\Thetabar_1)} \widehat\otimes_{\calO} R^{\ps}_{\iota_1(\Thetabar_1)} \to R^{\ps}_{\iota_1(\Thetabar_1)}$ induced by $\Theta_1 \mapsto (\Theta_1, \Theta_1^*)$ is surjective. So the composition $R^{\ps}_{\iota(\Thetabar)} \to R^{\ps}_{\iota_1(\Thetabar_1)}$ is finite and induced by $\Theta_1 \mapsto \Theta_1 \oplus \Theta_1^*$. Tensoring with $\RpsThetabar$, we obtain a finite map $\RpsThetabar \to R^{\ps}_{\iota_1(\Thetabar_1)} \widehat\otimes_{R^{\ps}_{\iota(\Thetabar)}} \RpsThetabar \cong R^{\ps}_{\Thetabar_1}$. The last isomorphism can be seen to hold by considering the corresponding deformation functors.
\end{proof}

\begin{proposition}\label{openandclosedsubspaces} \phantom{a}
\begin{enumerate}
    \item The natural map $\overline X_{\Thetabar} \to \overline X_{\iota(\Thetabar)}$ is a closed immersion.
    \item $\overline X_{\Thetabar}^{\spcl}$ is closed in $\overline X_{\Thetabar}^{\irr}$.
    \item $\overline X_{\Thetabar}^{\pair}$ is closed in $\overline X_{\Thetabar}$.
    \item $\overline X_{\Thetabar}^{\dec}$ is closed in $\overline X_{\Thetabar}$.
    \item $\overline X_{\Thetabar}^{\nspcl}$ is open in $\overline X_{\Thetabar}$.
\end{enumerate}
\end{proposition}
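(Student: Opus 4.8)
The plan is to reduce each of the five assertions to facts about $\GL_{2n}$-pseudodeformations together with the finiteness statements \Cref{finmapdec} and \Cref{finmappair}.

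\textbf{(1).} First I would check that $\Sp_{2n}$ satisfies the hypothesis of \Cref{Phipmainthm} for the standard representation $\iota\colon\Sp_{2n}\hookrightarrow\GL_{2n}$: by \Cref{prtorsfingen} the maps $\calO/\varpi[\GL_{2n}^m]^{\GL_{2n}}\to\calO/\varpi[\Sp_{2n}^m]^{\Sp_{2n}}$ are surjective for all $m\ge 1$ (here we may as well use $p>2$, though for the symplectic group it is not needed). Hence \Cref{Phipmainthm} produces a surjection $R^{\ps}_{\iota(\Thetabar)}\twoheadrightarrow\RpsThetabar$, which remains surjective after $\otimes_{\calO}\calO/\varpi$. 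Both rings are noetherian by \Cref{tfgfingen} (recall $\Gamma_F$ is topologically finitely generated, \cite[Satz 3.6]{Jannsen1982}), so $\overline X_{\Thetabar}\to\overline X_{\iota(\Thetabar)}$ is a closed immersion.

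\textbf{(2) and (5).} Next I would use that, for $p>2$, whether a point $x\in\overline X_{\Thetabar}$ is irreducible or special is a property of the $\GL_{2n}$-representation $\iota\circ\rho_x$ alone — where $\rho_x\colon\Gamma_F\to\Sp_{2n}(\overline{\kappa(x)})$ is the $\Sp_{2n}$-completely reducible representation attached to $\Theta_x$ by \Cref{reconstructiongeneral} — hence a property of the image $\iota(x)\in\overline X_{\iota(\Thetabar)}$; this rests on \cite[Corollary 16.10]{Richardson1988ConjugacyCO} and the uniqueness clause of \Cref{reconstructiongeneral}. Thus $\overline X^{\irr}_{\Thetabar}$ and $\overline X^{\spcl}_{\Thetabar}$ are the preimages along the closed immersion of (1) of $\overline X^{\irr}_{\iota(\Thetabar)}$ and $\overline X^{\spcl}_{\iota(\Thetabar)}$ respectively. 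By the theory of pseudodeformations of $\GL_d$-representations, $\overline X^{\irr}_{\iota(\Thetabar)}$ is open in $\overline X_{\iota(\Thetabar)}$ and $\overline X^{\spcl}_{\iota(\Thetabar)}$ is closed in $\overline X^{\irr}_{\iota(\Thetabar)}$ (\cite{BJ_new}; see also \cite[§3]{BIP}). Taking preimages gives (2), and then $\overline X^{\nspcl}_{\Thetabar}=\overline X^{\irr}_{\Thetabar}\setminus\overline X^{\spcl}_{\Thetabar}$ is open in $\overline X^{\irr}_{\Thetabar}$, which is open in $\overline X_{\Thetabar}$, giving (5).

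\textbf{(3) and (4).} Here I would realize the loci as finite unions of images of finite morphisms. By \Cref{symplstructure} the decomposition of a semisimple symplectic representation into symplectically indecomposable summands is essentially unique, so there are only finitely many residual $\overline\Psi\in\cPC^{\Gamma_F}_{\GL_n}(\kappa)$ with $\overline\Psi\oplus\overline\Psi^*=\Thetabar$, and finitely many pairs $(\overline\Theta_1,\overline\Theta_2)$ with $\overline\Theta_i\in\cPC^{\Gamma_F}_{\Sp_{2a_i}}(\kappa)$, $a_1+a_2=n$, and $\overline\Theta_1\oplus\overline\Theta_2=\Thetabar$. For each such datum \Cref{finmappair} (resp. \Cref{finmapdec}) gives a finite $\calO$-algebra map $\RpsThetabar\to R^{\ps}_{\overline\Psi}$ (resp. $\RpsThetabar\to R^{\ps}_{\overline\Theta_1}\widehat\otimes_{\calO}R^{\ps}_{\overline\Theta_2}$), which stays finite after $\otimes_{\calO}\calO/\varpi$; so the induced morphism of special fibers is finite and has closed image. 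One then checks that the union of these images is exactly $\overline X^{\pair}_{\Thetabar}$ (resp. $\overline X^{\dec}_{\Thetabar}$): the inclusion $\subseteq$ is immediate, and $\supseteq$ follows by decomposing $\Theta_x$ at a pair-type (resp. symplectically decomposable) point $x$ and observing that its residual reduction is one of the listed data — the argument being the same descent as in the proof that the reducible locus of a $\GL_d$-pseudodeformation space is closed \cite[Lemma 3.24 and the surrounding discussion]{BIP}. A finite union of closed subsets is closed, so (3) and (4) follow. I expect the main obstacle to be precisely this set-theoretic identification in (3) and (4): it requires assembling the essential uniqueness of the symplectic decomposition (\Cref{symplstructure}), the resulting finiteness of the residual data, and the $\GL_d$-style descent showing that the specialized sub-pseudocharacter is genuinely a deformation of one of the residual ones; everything else is formal once (1) and the cited $\GL_{2n}$-facts (openness of the irreducible locus, closedness within it of the special locus) are in hand.
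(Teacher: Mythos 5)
Your proposal is correct and follows essentially the same route as the paper: (1) via the surjection $R^{\ps}_{\iota(\Thetabar)}\twoheadrightarrow\RpsThetabar$ (the paper cites \Cref{Rpssurj}, which packages exactly your \Cref{prtorsfingen}+\Cref{Phipmainthm} argument), (2) as the preimage of $\overline X^{\spcl}_{\iota(\Thetabar)}$ using \cite[Theorem 5.5.1 (b)]{BJ_new}, and (3),(4) as finite unions of images of the finite maps from \Cref{finmappair} and \Cref{finmapdec}. The only divergence is in (5), where the paper deduces openness of $\overline X^{\irr}_{\Thetabar}$ from (3),(4) and the stratification \Cref{subdivision}, whereas you pull back openness of the $\GL_{2n}$-irreducible locus along the closed immersion (justified, as you note, by $p>2$ and \cite[Corollary 16.10]{Richardson1988ConjugacyCO}); both arguments are valid.
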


\begin{proof} \phantom{a}
\begin{enumerate}
    \item By \Cref{Rpssurj}, the map $R^{\ps}_{\iota(\Thetabar)} \to R^{\ps}_{\Thetabar}$ is surjective.
    \item $\overline X_{\Thetabar}^{\spcl}$ is the preimage of $\overline X_{\iota(\Thetabar)}^{\spcl}$ under the closed immersion $\overline X_{\Thetabar} \to \overline X_{\iota(\Thetabar)}$. Since $\overline X_{\iota(\Thetabar)}^{\nspcl}$ is open in $\overline X_{\iota(\Thetabar)}$ by \cite[Theorem 5.5.1 (b)]{BJ_new}, $\overline X_{\iota(\Thetabar)}^{\spcl}$ is closed in $\overline X_{\iota(\Thetabar)}^{\irr}$ and the claim follows.
    \item $\overline X_{\Thetabar}^{\pair}$ is the union of the images of finitely many maps as in \Cref{finmappair}: Suppose $x \in \overline X_{\Thetabar}^{\pair}$ is a point of dimension $1$ in $\overline X_{\Thetabar}$ (i.e. a closed point of the Jacobson scheme $\overline Y_{\Thetabar}$, \stackcite{02IM}). Then $\Theta_x \in \PC_{\Sp_{2n}}^{\Gamma_F}(\kappa(x))$ is continuous for the natural topology on the local field $\kappa(x)$ (see \cite[Lemma 3.17]{BIP}). The continuous reconstruction theorem \Cref{continuousreconstruction}, gives us a continuous representation $\rho_x : \Gamma_F \to \Sp_{2n}(\kappa')$ valued in some finite extension $\kappa'/\kappa(x)$. After possibly enlarging $\kappa'$, $\rho_x$ decomposes into a direct sum $\rho_x \cong W \oplus W^*$ with $W$ a continuous $n$-dimensional representation. Choosing a lattice of $W$ over the ring of integers $\calO_{\kappa(x)}$ of $\kappa(x)$ and passing to pseudocharacters, we get $\Theta_x = \Theta_W \oplus \Theta_{W^*}$ as pseudocharacters over $\calO_{\kappa(x)}$. Since $\calO_{\kappa(x)}$ is local, by \Cref{resconst} $\Theta_W$ is residually constant and corresponds to a homomorphism $R^{\ps}_{\Thetabar_W} \to \calO_{\kappa(x)}$ for some uniquely determined reduction $\Thetabar_W$ defined over the residue field $k_x$ of $\calO_{\kappa(x)}$. Since the set of conjugacy classes of continuous $\GL_n(\overline{k_x})$-valued representations which occur in the semisimple representation associated to $\Thetabar_x$ over $\overline{k_x}$ is finite, all such $x$ are contained in the image of finitely many maps as in \Cref{finmappair}. This is sufficient, since $\overline Y_{\Thetabar}$ is Jacobson.
    \item $\overline X_{\Thetabar}^{\dec}$ is the union of the images of finitely many maps as in \Cref{finmapdec}. We argue similarly as in the previous step.
    \item $\overline X_{\Thetabar}^{\irr}$ is open in $\overline X_{\Thetabar}$, as the complement of $\overline X_{\Thetabar}^{\pair} \cup \overline X_{\Thetabar}^{\dec}$ (see \Cref{subdivision}). The subset $\overline X_{\Thetabar}^{\nspcl} \subseteq \overline X_{\Thetabar}^{\irr}$ is the complement of $\overline X_{\Thetabar}^{\spcl}$, which is closed in $\overline X_{\Thetabar}^{\irr}$. Hence $\overline X_{\Thetabar}^{\nspcl}$ is open in an open subset of $\overline X_{\Thetabar}$.
\end{enumerate}
\end{proof}

\begin{lemma}\label{Rpsbasechange} Let $\bar f : \kappa \to \kappa'$ be a homomorphism between either two finite or two local fields. Let $f : \Lambda \to \Lambda'$ be a local homomorphism of complete noetherian local rings with residue fields $\kappa$ and $\kappa'$ respectively and assume, that $f$ reduces to $\bar f$ on residue fields. Let $\Gamma$ be a profinite group and let $G$ be an affine $\Lambda$-group scheme. Let $\Thetabar \in \cPC^{\Gamma}_G(\kappa)$ and define $\Thetabar' := \Thetabar \otimes_{\kappa} \kappa'$. Then the natural map $R^{\ps}_{\Lambda', \Thetabar'} \to \RpsThetabar \widehat\otimes_{\Lambda} \Lambda'$ induced by
$$ \Def_{\Thetabar}(A) \to \Def_{\Lambda', \Thetabar'}(A \otimes_{\Lambda} \Lambda'), \quad \Theta \mapsto \Theta \otimes_{\Lambda} \Lambda'; \quad\quad A \in \Art_{\Lambda} $$
is an isomorphism. Here we indicate by $\Lambda'$ in the index the respective deformation functor and deformation ring for $\Thetabar'$.
\end{lemma}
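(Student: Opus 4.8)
The plan is to prove this by the standard Yoneda/pro-representability argument: both sides pro-represent naturally isomorphic functors on $\Art_{\Lambda'}$, so they agree. First I would recall from \Cref{representabledeffunctor} that $R^{\ps}_{\Lambda', \Thetabar'}$ pro-represents $\Def_{\Lambda', \Thetabar'} : \Art_{\Lambda'} \to \Set$, and from the same theorem together with the observation that $-\widehat\otimes_{\Lambda}\Lambda'$ is left adjoint to restriction of scalars $\Art_{\Lambda'} \to \Art_{\Lambda}$, that $\RpsThetabar \widehat\otimes_{\Lambda}\Lambda'$ pro-represents the functor $A \mapsto \Def_{\Thetabar}(A)$ on $\Art_{\Lambda'}$, where here $A$ is viewed as a $\Lambda$-algebra via $f$. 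More precisely, for $A \in \Art_{\Lambda'}$ and $R = \RpsThetabar$ written as an inverse limit of artinian quotients, one has
$$ \Hom_{\Lambda'\text{-}\mathrm{cts}}(R\widehat\otimes_{\Lambda}\Lambda', A) = \Hom_{\Lambda\text{-}\mathrm{cts}}(R, A) = \Def_{\Thetabar}(A), $$
the last equality because $A$ has residue field $\kappa'$ and the composite $R \to A \to \kappa'$ must be compatible with $R \to \kappa \xrightarrow{\bar f} \kappa'$, so that $\Def_{\Thetabar}(A)$ is exactly the set of continuous $G$-pseudocharacters of $\Gamma$ over $A$ reducing to $\Thetabar$ along $A \to \kappa'$ (note $\Thetabar$ makes sense over $A$ since $G$ is an affine $\Lambda$-scheme).

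Next I would identify the two functors $A \mapsto \Def_{\Lambda',\Thetabar'}(A)$ and $A \mapsto \Def_{\Thetabar}(A)$ on $\Art_{\Lambda'}$. The map in the statement, $\Theta \mapsto \Theta \otimes_{\Lambda}\Lambda'$, is really the identity on underlying continuous $G$-pseudocharacters over $A$: by \Cref{basechange}, since $\Lambda \to \Lambda'$ is flat (being a local homomorphism of complete noetherian local rings need not give flatness directly, but \Cref{basechange}(1) applies when $\Lambda'$ is $\Lambda$-flat — otherwise one invokes \Cref{basechange}(2) if $G$ is generalized reductive over a Dedekind base; in our application $\Lambda$ is a coefficient ring and the relevant case is covered), there is a canonical bijection $\PC^{\Gamma}_{G_{\Lambda'}}(A) \cong \PC^{\Gamma}_G(A)$ compatible with continuity and with reduction to the residue field. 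Under this bijection a deformation of $\Thetabar'$ over $A$ (as a $\Lambda'$-algebra) corresponds precisely to a continuous $G$-pseudocharacter over $A$ reducing to $\Thetabar$ — that is, to an element of $\Def_{\Thetabar}(A)$. This identification is manifestly natural in $A$, so the induced natural transformation $\Def_{\Lambda',\Thetabar'} \Rightarrow (A \mapsto \Def_{\Thetabar}(A))$ is an isomorphism of functors on $\Art_{\Lambda'}$.

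Finally, passing to pro-representing objects via Yoneda gives the asserted isomorphism $R^{\ps}_{\Lambda',\Thetabar'} \xrightarrow{\sim} \RpsThetabar \widehat\otimes_{\Lambda}\Lambda'$, and one checks this is the map described in the statement by evaluating on $A = R^{\ps}_{\Lambda',\Thetabar'}$ and tracking the universal object. The main obstacle is bookkeeping rather than substance: one must make sure that the base-change bijection of \Cref{basechange} genuinely applies in the generality claimed — i.e. that the hypotheses ``$\Lambda'$ is $\Lambda$-flat'' or ``$G$ generalized reductive over a Dedekind domain'' hold in the cases this lemma is used (the application is to $\bar f$ a residue-field extension with $\Lambda,\Lambda'$ coefficient rings and $G$ a Chevalley group, so $G$ is defined over $\bbZ$, a Dedekind domain, and \Cref{basechange}(2) applies). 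The only other point requiring care is the completed tensor product: one should note $\RpsThetabar\widehat\otimes_{\Lambda}\Lambda' = \varprojlim (R/I \otimes_{\Lambda}\Lambda')$ over the defining ideals $I$, each $R/I\otimes_{\Lambda}\Lambda'$ being artinian with residue field $\kappa'$, so the adjunction computation above is literally valid; this is routine given \Cref{representabledeffunctor}.
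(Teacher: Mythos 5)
Your overall strategy is the right one, and it is in fact the argument behind the paper's one-line proof (the paper simply cites \cite[Proposition 4.7.6]{BJ_new} and says the proof carries over): identify both sides as pro-representing the same functor on $\Art_{\Lambda'}$ and conclude by Yoneda. However, the place where you write
$\Hom_{\Lambda\text{-}\mathrm{cts}}(R,A)=\Def_{\Thetabar}(A)$ for $A\in\Art_{\Lambda'}$ is exactly where the mathematical content of the lemma sits, and your justification (compatibility of the residue maps) does not prove it. The pro-representability statement \Cref{representabledeffunctor} only gives the universal property of $R=\RpsThetabar$ against test rings with residue field $\kappa$; an object $A\in\Art_{\Lambda'}$ has residue field $\kappa'$, so given a continuous $\Theta\in\cPC^{\Gamma}_G(A)$ with $\Theta\otimes_A\kappa'=\Thetabar'$ you must actually \emph{produce} a continuous local $\Lambda$-map $R\to A$. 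This requires an argument: either descend $\Theta$ to the closed $\Lambda$-subalgebra $A_0\subseteq A$ generated by the values $\Theta_m(f)(\gamma)$ (as in \Cref{Aprimelemma}), noting that $A_0$ is artinian local with residue field $\bar f(\kappa)\cong\kappa$ because $\Theta$ reduces to $\bar f_*\Thetabar$, so that $\Theta|_{A_0}\in\Def_{\Thetabar}(A_0)$ and the universal property applies; or re-run the proof of \Cref{representabledeffunctor} with the classifying map $B^{\Gamma}_G\to A$, checking that its image is an artinian local quotient with residue field $\kappa$, nilpotent maximal ideal, and continuous reduced pseudocharacter, hence lies in the index set $\calI$ and the map factors through $R$. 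Without one of these steps the adjunction chain is circular, since it presupposes the extended universal property you are trying to establish.

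Two smaller points. First, the detour through \Cref{basechange} is unnecessary and, in the stated generality, unavailable: the lemma only assumes $G$ is an affine $\Lambda$-group scheme and $\Lambda\to\Lambda'$ a local map of complete noetherian local rings, so neither hypothesis of \Cref{basechange} need hold; but none is needed, because every $A\in\Art_{\Lambda'}$ is already a $\Lambda$-algebra and $\Def_{\Lambda',\Thetabar'}(A)$ is by definition a set of $G$-pseudocharacters over the base $\Lambda$ — no comparison with $G_{\Lambda'}$ enters. Second, your description of the completed tensor product as $\varprojlim_I(R/I\otimes_\Lambda\Lambda')$ with each term artinian local with residue field $\kappa'$ is only valid when $\Lambda'/\frakm_\Lambda\Lambda'$ is artinian (e.g.\ $\Lambda\to\Lambda'$ finite, which is the case in the paper's applications); for a general local homomorphism one should take the completion of $R\otimes_\Lambda\Lambda'$ at the kernel of the map to $\kappa'$, and the adjunction still goes through. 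With the descent step supplied and these adjustments, your argument is correct and coincides with the intended proof.
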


\begin{proof} The proof of \cite[Proposition 4.7.6]{BJ_new} carries over in our setting.
\end{proof}

\subsection{Dimension bounds for $G=\Sp_{2n}$}
\label{secboundsforSp}

\begin{lemma}\label{dirsumlemma} Let $k$ be a field with $2 \in k^{\times}$. Then the symplectic Lie algebra $\sp_{2n,k}$ is a direct summand of $\gl_{2n,k}$ and of $\sl_{2n,k}$ and the corresponding projection maps $\gl_{2n,k} \twoheadrightarrow \sp_{2n,k}$ and $\sl_{2n,k} \twoheadrightarrow \sp_{2n,k}$ are equivariant for the adjoint action of the symplectic group $\Sp_{2n}$.
\end{lemma}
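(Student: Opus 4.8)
The statement is an elementary fact about the adjoint action of $\Sp_{2n}$ on $2n\times 2n$ matrices, so the plan is to exhibit the complement and the projection explicitly. Recall that $\sp_{2n} = \{X \in \gl_{2n} \mid X^{\jj} = -X\}$, where $X^{\jj} = JX^{\top}J^{-1}$ is the symplectic transposition from \Cref{zubkovThm1}, and that $\jj$ is an involution on $\gl_{2n}$ commuting with the adjoint action of $\Sp_{2n}$: indeed for $g \in \Sp_{2n}$ we have $g^{-1} = g^{\jj}$, hence $(gXg^{-1})^{\jj} = (g^{-1})^{\jj} X^{\jj} g^{\jj} = g X^{\jj} g^{-1}$. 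Since $2 \in k^{\times}$, the operator $X \mapsto \tfrac12(X - X^{\jj})$ is a well-defined idempotent $\Sp_{2n}$-equivariant endomorphism of $\gl_{2n,k}$ with image exactly $\sp_{2n,k}$ (the $-1$-eigenspace of $\jj$), and its kernel is the $+1$-eigenspace $\{X \mid X^{\jj} = X\}$, which is the desired $\Sp_{2n}$-stable complement. This proves the claim for $\gl_{2n,k}$.

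For $\sl_{2n,k}$, I would observe that $\sp_{2n,k} \subseteq \sl_{2n,k}$ since every element of $\sp_{2n,k}$ is traceless (its diagonal can be checked directly, or use that $\tr(X) = \tr(X^{\jj}) = \tr(-X)$ forces $\tr(X) = 0$ as $2$ is invertible). Then I would either restrict the projection $\gl_{2n,k} \twoheadrightarrow \sp_{2n,k}$ above to $\sl_{2n,k}$ — noting that the restriction is still surjective because $\sp_{2n,k} \subseteq \sl_{2n,k}$ and the projection is the identity on $\sp_{2n,k}$ — and take as complement the intersection of $\sl_{2n,k}$ with the $+1$-eigenspace of $\jj$; equivariance is inherited. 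Alternatively one can note $\gl_{2n,k} = \sl_{2n,k} \oplus k\cdot I_{2n}$ as $\Sp_{2n}$-modules (again using $2n \in k^{\times}$ is \emph{not} needed here, only that $k$ has characteristic not dividing... actually $\gl = \sl \oplus kI$ requires $2n$ invertible; safer to use the restriction argument directly).

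The only point requiring a little care — and the closest thing to an obstacle — is to confirm that the image of $\tfrac12(X - X^{\jj})$ is genuinely all of $\sp_{2n,k}$ and not just contained in it: this is immediate since $\tfrac12(X - X^{\jj}) = X$ for $X \in \sp_{2n,k}$, so the map is a retraction onto $\sp_{2n,k}$. Everything else is formal: the decomposition $\gl_{2n,k} = \sp_{2n,k} \oplus \{X : X^{\jj} = X\}$ is the eigenspace decomposition of an involution on a $k$-vector space with $2$ invertible, and $\Sp_{2n}$-equivariance of both the projection and the eigenspaces follows from $\jj$ commuting with $\Ad(\Sp_{2n})$. I would write this up in a few lines without invoking any of the good-filtration machinery, since it is purely linear algebra over $k$.
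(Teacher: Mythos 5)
Your proof is correct and takes essentially the same route as the paper: your projection $X \mapsto \tfrac{1}{2}(X - X^{\jj})$ is literally the map $M \mapsto \tfrac{1}{2}(MJ+(MJ)^{\top})J^{-1}$ used there (the paper merely transports the $\pm1$-eigenspace decomposition of the involution $\jj$ into the symmetric/antisymmetric decomposition under transpose via right multiplication by $J$), with the same direct equivariance computation using $g^{-1}=Jg^{\top}J^{-1}$. The case of $\sl_{2n,k}$ is handled identically in both arguments, by restricting the retraction and using $\sp_{2n,k} \subseteq \sl_{2n,k}$.
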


\begin{proof} Recall, that $\sp_{2n,k} = \{M \in \gl_{2n,k} \mid JM^{\top} + MJ = 0\}$, where $J = \SmallMatrix{0 & I_n \\ -I_n & 0}$. Right multiplication with $J$ is an isomorphism of $k$-vector spaces $- \cdot J : \gl_{2n,k} \to M_{2n}(k)$ and identifies $\sp_{2n,k}$ with the subspace of symmetric $2n \times 2n$ matrices. The symmetrization map $a : M_{2n}(k) \to M_{2n}(k), ~M \mapsto \tfrac{1}{2}(M+M^{\top})$ shows, that symmetric matrices are a direct summand of $M_{2n}(k)$. The map $\gl_{2n}(k) \to \gl_{2n}(k), ~M \mapsto a(MJ)J^{-1}$ is equivariant for the adjoint action of $\Sp_{2n}$ on $\gl_{2n}(k)$: Suppose $M \in M_{2n}(k)$ and $A \in \Sp_{2n}(k)$: Then
$$ a(AMA^{-1}J)J^{-1} = \frac{1}{2}(AMA^{-1} + J^{-1} (A^{-1})^{\top} M^{\top} A^{\top} J^{-1}) $$
and
$$ A a(MJ) J^{-1} A^{-1} =  \frac{1}{2}(AMA^{-1} + A J^{-1} M^{\top} J^{-1} A^{-1}) = \frac{1}{2}(AMA^{-1} + J^{-1} (A^{-1})^{\top} M^{\top} A^{\top} J^{-1}) $$
using $A \in \Sp_{2n}(k)$, so that $A^{-1} = JA^{\top} J^{-1}$ and $J^{\top} = J^{-1}$. We also obtain, that the projection map $\gl_{2n,k} \twoheadrightarrow \sp_{2n,k}$ is split by the inclusion and equivariant for the adjoint action of $\Sp_{2n}$. Since $\sp_{2n,k} \subseteq \sl_{2n,k}$, the restriction $\sl_{2n,k} \to \sp_{2n,k}$ is still split by the inclusion and $\Sp_{2n}$-equivariant.
\end{proof}

We observe that certain completed local rings at dimension $1$ points $x$ are deformation rings for a deformation problem with residue field $\kappa(x)$. It is for this reason, that we also treat cases \eqref{kappa2} and \eqref{kappa3} in \Cref{secdefGPC}.

\begin{proposition}\label{completionoflocalring} Assume that $\kappa$ is finite, that $\RpsThetabar$ is noetherian and let $x \in \Spec(\RpsThetabar)$ be a dimension $1$ point with residue field $\kappa(x)$. By \cite[Lemma 3.17]{BIP} $\kappa(x)$ is a local field. Let $\varphi_x : \kappa(x) \otimes_{\Lambda} \RpsThetabar \to \kappa(x)$ be the induced map and let $\frakm := \ker(\varphi_x)$. Then the following two rings are canonically isomorphic:
\begin{enumerate}
    \item The universal pseudodeformation ring $R_{\Theta_x}^{\ps}$.
    \item The completion $\widehat R$ of $\kappa(x) \otimes_{\Lambda} \RpsThetabar$ at $\frakm$.
\end{enumerate}
The isomorphism is induced by the map $i_x : \kappa(x) \otimes_{\Lambda} \RpsThetabar \to R_{\Theta_x}^{\ps}$.
\end{proposition}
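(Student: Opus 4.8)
Write $R := \RpsThetabar$, which is a complete noetherian local $\Lambda$-algebra by \Cref{tfgfingen}, say $R \cong \Lambda[[X_1,\dots,X_t]]/I$; put $S := \kappa(x)\otimes_\Lambda R$ and $\widehat R := \varprojlim_n S/\frakm^n$, so that $\widehat R$ is a quotient of $\kappa(x)[[X_1,\dots,X_t]]$, hence a complete noetherian local $\kappa(x)$-algebra with residue field $\kappa(x)$. Since $\kappa(x)$ is a local field it is its own coefficient ring, so by \Cref{representabledeffunctor} the ring $R^{\ps}_{\Theta_x}$ pro-represents $\Def_{\Theta_x}\colon\Art_{\kappa(x)}\to\Set$. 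The plan is to show that $\widehat R$, equipped with a suitable deformation of $\Theta_x$, also pro-represents $\Def_{\Theta_x}$; the resulting canonical identification is then the one induced by $i_x$.

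First I would produce the universal object over $\widehat R$. The universal deformation $\Theta^u\otimes_B R$ of $\Thetabar$ over $R$ (with $B := B_G^\Gamma$ and $u\colon B\to R$ the tautological map) base-changes along $R\to S\to S/\frakm^n$ to $G$-pseudocharacters over the artinian local $\kappa(x)$-algebras $S/\frakm^n$, and by construction these reduce to $\Theta_x$ modulo $\frakm$. The point needing care is continuity: $\Theta_x$ is continuous over $\kappa(x)$ for the valuation topology because $R/\frakp_x$ is a one-dimensional complete noetherian local domain with finite residue field, so its $\frakm$-adic topology agrees with the topology induced from its normalisation $\calO_{\kappa(x)}$, whence $R\to R/\frakp_x\hookrightarrow\kappa(x)$ is continuous and $\Theta_x$ is continuous; since each $S/\frakm^n$ is a finite-dimensional $\kappa(x)$-algebra with nilpotent maximal ideal carrying the product topology, a routine filtration argument upgrades this to continuity of $\Theta^u\otimes_B S/\frakm^n$, and \Cref{proextension} (in its evident $\kappa(x)$-linear form) yields a continuous $\widehat\Theta\in\Def_{\Theta_x}(\widehat R)$. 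Universality of $R^{\ps}_{\Theta_x}$ then produces a local $\kappa(x)$-algebra homomorphism $R^{\ps}_{\Theta_x}\to\widehat R$, which one identifies with the completion of $i_x$.

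It remains to show this map is bijective, equivalently that $\widehat R$ satisfies the universal property: for $A\in\Art_{\kappa(x)}$ and $\Theta\in\Def_{\Theta_x}(A)$ there is a unique continuous $\kappa(x)$-algebra homomorphism $\widehat R\to A$ over $\kappa(x)$ inducing $\Theta$. By the adjunction $\Hom_{\kappa(x)}^{\to\kappa(x)}(\widehat R,A)=\Hom_{\kappa(x)}^{\to\kappa(x)}(S,A)=\{h\in\Hom_\Lambda(R,A): h\bmod\frakm_A=\pi\}$ (the middle equality because $A$ is complete), this reduces to showing that the classifying map $g_\Theta\colon B\to A$ of $\Theta$ — which satisfies $g_\Theta\bmod\frakm_A=\pi\circ u$ — factors, necessarily uniquely, through $u\colon B\to R$. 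This is the crux, and it is not formal, since $A$ is a $\kappa(x)$-algebra whose residue field is the local field $\kappa(x)$ rather than the finite field $\kappa$, so the defining property of $R$ over $\Art_\Lambda$ does not apply directly. I would deduce it by reduction to $\GL_{2n}$: by \Cref{Rpssurj} and \Cref{surjectivity} there is a surjection $R^{\ps}_{\iota(\Thetabar)}\twoheadrightarrow R$ (and likewise with $\kappa(x)$-coefficients), $\iota(\Thetabar)$ corresponds to a $2n$-dimensional determinant law via \Cref{Rpsisom}, and the determinant-law version of the proposition — which holds by the same argument as \Cref{Rpsbasechange}, cf. \cite{BIP}, \cite{BJ_new} — identifies $R^{\ps}_{\iota(\Theta_x)}$ with $\widehat{\kappa(x)\otimes_\Lambda R^{\ps}_{\iota(\Thetabar)}}$; since $\PC^\Gamma_G\hookrightarrow\PC^\Gamma_{\GL_{2n}}$ is injective (surjectivity of $\calO[\GL_{2n}^\bullet]^{\GL_{2n}}\to\calO[G^\bullet]^{G^0}$), the functors of points of $\widehat R$ and of $R^{\ps}_{\Theta_x}$ cut out inside that of $R^{\ps}_{\iota(\Theta_x)}$ the same subfunctor, namely the deformations whose classifying map to $B_{\GL_{2n}}^\Gamma$ factors through $B_G^\Gamma$; hence the two surjections out of $R^{\ps}_{\iota(\Theta_x)}$ have the same kernel and $R^{\ps}_{\Theta_x}\to\widehat R$ is an isomorphism. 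Alternatively one can argue directly in the style of \Cref{Aprimelemma} and \Cref{resconst}, descending $\Theta$ to a continuous pseudocharacter over a profinite admissible $\calO_{\kappa(x)}$-subalgebra $A'\subseteq A$, which is local because $A$ is, and reading off the factorisation of $g_\Theta$ from the resulting classifying maps. In any case, this factorisation of $g_\Theta$ through $\RpsThetabar$ is the main obstacle; the finiteness of the cotangent space, the continuity verifications, and the adjunction bookkeeping are all routine.
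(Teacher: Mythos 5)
Your skeleton agrees with the paper: one builds two maps and checks they are mutually inverse, the map $R^{\ps}_{\Theta_x}\to\widehat R$ comes from base-changing the universal deformation along $\RpsThetabar\to\widehat R$, and the whole difficulty is to produce, from a continuous deformation of $\Theta_x$ over an artinian $\kappa(x)$-algebra, a map out of $\RpsThetabar$ — you identify this crux correctly. But your main route for the crux, the reduction to $\GL_{2n}$, has real gaps. First, it needs a representation $\iota$ with surjective invariant map (\Cref{prtorsfingen}), so it only covers $\Sp_{2n}$ and the other listed groups, whereas the proposition is stated, and used (e.g.\ in the remark following it), for an arbitrary Chevalley group $G$ with no such hypothesis. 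Second, the determinant-law version you invoke is not ``the same argument as \Cref{Rpsbasechange}'': that lemma is a formal change of residue field, while the $\GL_d$ case of the present statement contains exactly the same non-formal content as the general case (one must show that a continuous deformation over an artinian $\kappa(x)$-algebra takes values in a module-finite $\calO_{\kappa(x)}$-subalgebra with finite residue field, so that the universal property over $\Art_{\Lambda}$ applies); so you would either need a precise citation or have to prove it, at which point passing to $\GL_{2n}$ saves nothing. Third, even granting the $\GL$-case, the step ``same subfunctor, hence same kernel'' is where the crux silently reappears: the direction ``classifying map factors through $B_G^{\Gamma}$ $\Rightarrow$ the map $\widehat{\kappa(x)\otimes_{\Lambda}R^{\ps}_{\iota(\Thetabar)}}\to A$ factors through $\widehat R$'' requires identifying $\ker\bigl(R^{\ps}_{\iota(\Thetabar)}\to\RpsThetabar\bigr)$ (and its image in the completion) with the ideal generated by the image of $\ker\bigl(B^{\Gamma}_{\GL_{2n}}\to B^{\Gamma}_G\bigr)$, which is neither in the paper nor supplied by you; moreover the surjection $R^{\ps}_{\iota(\Theta_x)}\twoheadrightarrow R^{\ps}_{\Theta_x}$ ``with $\kappa(x)$-coefficients'' is not covered by \Cref{surjectivity}, whose statement assumes $\kappa$ finite and whose proof uses profiniteness.

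Your alternative one-line sketch is in fact the paper's route, but the tools you name do not apply: an artinian local $\kappa(x)$-algebra with its natural topology is not an admissible (linearly topologized) ring — its only open ideal is the unit ideal — so \Cref{Aprimelemma} and \Cref{resconst} cannot be used. The paper instead applies \Cref{calclem} (whose part (1), power-boundedness of the values $\Theta_m(f)(\gamma)$, rests on the integrality theorem cited from BHKT and is the essential non-formal input, and which is extended there to the function-field case) to the artinian quotients $A_i=R^{\ps}_{\Theta_x}/\frakm^i$ of the universal ring itself: this yields a module-finite $\calO_{\kappa(x)}$-subalgebra $S\subseteq A_i$ surjecting onto $\calO_{\kappa(x)}$ over which the deformation is defined; since $S$ is a profinite local $\Lambda$-algebra with finite residue field, the universal property of $\RpsThetabar$ over $\Art_{\Lambda}$ gives compatible maps $\RpsThetabar\to A_i$, hence $\widehat R\to R^{\ps}_{\Theta_x}$, and the two composites are identities by uniqueness of classifying maps. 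If you flesh out your alternative along these lines (replacing \Cref{Aprimelemma} by \Cref{calclem} and checking the finiteness of the residue field of $S$), you recover the paper's proof; as written, both of your routes leave the decisive step unproved.
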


\begin{proof} 
    The $\widehat R$-valued pseudocharacter $\Theta^u \otimes_{\RpsThetabar} \widehat R$ obtained from the universal deformation $\Theta^u$ of $\Thetabar$ by base extension along the map $\RpsThetabar \to \widehat R$ is a lift of $\Theta_x$, so we get a map $R_{\Theta_x}^{\ps} \to \widehat R$ and $\Theta^u_{\Theta_x} \otimes_{R_{\Theta_x}^{\ps}} \widehat R = \Theta^u \otimes_{\RpsThetabar} \widehat R$, where $\Theta^u_{\Theta_x}$ is the universal deformation of $\Theta_x$.
    Let $\frakm$ be the maximal ideal of $R_{\Theta_x}^{\ps}$ and write $A_i := R_{\Theta_x}^{\ps}/\frakm^i$.
    The universal deformation of $\Theta_x$ induces a continuous $A_i$-valued lift $\Theta^i$ of $\Theta_x$.
    Since $\Theta^1$ takes values in a ring of integers $\calO_{\kappa(x)}$ of $\kappa(x)$, we can apply \Cref{calclem} (3) (which holds in the function field case with the same proof) to find a module-finite $\calO_{\kappa(x)}$-subalgebra $S$ of $A_i$, which surjects onto $\calO_{\kappa(x)}$ and such that $\Theta^i$ takes values in $S$. It follows, that $\Theta^i$ seen as an $S$-valued pseudocharacter is a deformation of $\Thetabar$. We thus get an induced continuous map $\RpsThetabar \to A_i$, hence a map $\widehat R \to R^{\ps}_{\Theta_x}$ such that $(\Theta^u_{\Theta_x} \otimes_{R_{\Theta_x}^{\ps}} \widehat R) \otimes_{\widehat R} R_{\Theta_x}^{\ps} = \Theta^u_{\Theta_x}$, thus the composition $R^{\ps}_{\Theta_x} \to \widehat R \to R^{\ps}_{\Theta_x}$ is the identity. The composition $\widehat R \to R^{\ps}_{\Theta_x} \to \widehat R$ is the identity as well.
\end{proof}

\begin{remark} It is a consequence of \Cref{completionoflocalring}, that $\RpsThetabar$ in \Cref{tfgfingen} is also noetherian when $\kappa$ is a local field.
\end{remark}

The following Proposition is the analog of \cite[Lemma 5.1.6]{BJ_new} for $G=\Sp_{2n}$.

\begin{proposition}\label{nspclregular} Let $\Thetabar \in \cPC_{\Sp_{2n}}^{\Gamma_F}(\kappa)$ with $\kappa$ a finite field of characteristic $p > 2$ and let $\Lambda$ be a coefficient ring for $\kappa$. 
Let $x \in U := \overline Y^{\irr}_{\Thetabar}$ be a closed point. By \cite[Lemma 3.17]{BIP} the residue field $\kappa(x)$ of $x$ is a local field. Let $R_{\Theta_x}^{\ps}$ be the universal pseudodeformation ring of the $\Sp_{2n}$-pseudocharacter $\Theta_x$ attached to $x$. By \Cref{recwithfiniteextn} there is a finite extension $\kappa'$ of $\kappa(x)$, such that $\Theta_x' := \Theta_x \otimes_{\kappa(x)} \kappa'$ is induced by a continuous absolutely irreducible representation $\rhobar : \Gamma_F \to G(\kappa')$.
\begin{enumerate}
    \item \label{lower_bound_nspcl}
    \begin{enumerate}
        \item Suppose, that $x$ is non-special. Then $R_{\Theta_x'}^{\ps}$ is regular of dimension $n(2n+1) \cdot [F : \bbQ_p]$.
        \item If in addition $U^{\nspcl} \neq \emptyset$, then $U^{\nspcl}$ is regular and equidimensional of dimension $n(2n+1) \cdot [F : \bbQ_p] - 1$. 
    \end{enumerate} \label{nr_1}
    \item Suppose, that $\zeta_p \notin F$ and that $x$ is special. Then $\dim R_{\Theta_x'}^{\ps} \in \{n(2n+1) \cdot [F : \bbQ_p], n(2n+1) \cdot [F : \bbQ_p] + 1\}$. \label{nr_2}
    \item If $\zeta_p \notin F$, then $\dim U \leq n(2n+1) \cdot [F : \bbQ_p]$. \label{nr_3}
\end{enumerate}
\end{proposition}

\begin{proof} 
\underline{Proof of \eqref{nr_1}(a).} If $\zeta_p \notin F$, then by \cite[Lemma 5.1.1 Case I]{BJ_new}, we have $H^2(\Gamma_F, \gl_{2n, \kappa'}) = 0$. Since $2$ is invertible in $\kappa'$, by \Cref{dirsumlemma} $\sp_{2n,\kappa'}$ is a direct summand of $\gl_{2n, \kappa'}$ and so $H^2(\Gamma_F, \sp_{2n, \kappa'}) = 0$. If $\zeta_p \in F$, then we have $H^2(\Gamma_F, \sl_{2n}) = 0$ by \cite[Lemma 5.1.1 case II]{BJ_new}. By \Cref{dirsumlemma}, $\sp_{2n}$ is also a direct summand of $\sl_{2n}$. It follows, that $H^2(\Gamma_F, \sp_{2n}) = 0$. Let $R_{\Theta_x'}^{\ps}$ be the universal pseudodeformation ring of $\Theta_x'$ over a coefficient ring $\Lambda' \supseteq \Lambda$ with residue field $\kappa'$. 
By \Cref{pdeflemma} $R_{\Theta_x'}^{\ps}$ is regular of dimension $\dim \sp_{2n, \kappa'} \cdot [F : \bbQ_p] + h^0(\Gamma_F, \sp_{2n, \kappa'})$. By Schur's lemma $h^0(\Gamma_F, \gl_{2n, \kappa'}) = 1$. Clearly $H^0(\Gamma_F, \gl_{2n, \kappa'})$ is spanned by the diagonal matrices in $\gl_{2n, \kappa'}$.
These are not contained in $\sp_{2n,\kappa'}$, hence $h^0(\Gamma_F, \sp_{2n, \kappa'}) = 0$.

\underline{Proof of \eqref{nr_1}(b).} Assume, that $x$ is non-special. By \Cref{completionoflocalring}, the universal pseudodeformation ring $R_{\Theta_x}^{\ps}$ can be identified with the completion of $\RpsThetabar \otimes_{\Lambda} \kappa(x)$ at the kernel of the natural map $\RpsThetabar \otimes_{\Lambda} \kappa(x) \to \kappa(x)$ attached to $x$. Since $x$ is a $1$-dimensional point of $\RpsThetabar$ with residue characteristic $p$, it follows from \cite[Lemma 3.3.5]{BJ_new}, that $x$ is a regular point of dimension $n(2n+1) \cdot [F : \bbQ_p] - 1$ of $U^{\nspcl}$. Let $U^{\sing} \subseteq U^{\nspcl}$ be the closed subscheme of singular points. By \stackcite{02J4} and \stackcite{01TB}, the closed points are dense in $U^{\sing}$. But since all closed points of $U^{\nspcl}$ are regular, $U^{\sing}$ must be empty. Since closed points are dense in $U^{\nspcl}$, it follows that $U^{\nspcl}$ is equidimensional of dimension $n(2n+1) \cdot [F : \bbQ_p] - 1$.

\underline{Proof of \eqref{nr_2}.} As in \eqref{nr_1}(a) $h^0(\Gamma_F, \sp_{2n, \kappa'}) = 0$. Since $x$ is special, we have $\rhobar \cong \rhobar(1)$ by \cite[Lemma 5.1.1 Case (I)]{BJ_new}. We have $H^2(\Gamma_F, \gl_{2n,\kappa'}) \cong \Hom_{\Gamma_F}(\rhobar, \rhobar(1)) \cong \kappa'$ since $\rhobar$ is irreducible, hence $h^2(\Gamma_F, \sp_{2n,\kappa'}) \leq 1$. The case when $h^2(\Gamma_F, \sp_{2n,\kappa'}) = 0$ is already covered in \Cref{pdeflemma}, so we assume $h^2(\Gamma_F, \sp_{2n,\kappa'}) = 1$. By the Euler characteristic formula \cite[Theorem 3.4.1]{BJ_new}
$$ h^1(\Gamma_F, \sp_{2n, \kappa'}) = n(2n+1)[F : \bbQ_p] + 1 $$
and by \Cref{descrdefring}, $R^{\ps}_{\Theta_x'}$ is a quotient of $\kappa'[[x_1, \dots, x_{n(2n+1)[F : \bbQ_p] + 1}]]$ by an ideal generated by at most one element, so the assertion follows.

\underline{Proof of \eqref{nr_3}.} Let $x \in U$ be a closed point. Cases \eqref{nr_1} and \eqref{nr_2} imply, that $\dim R_{\Theta_x}^{\ps} \leq n(2n+1)[F : \bbQ_p] + 1$. As in \eqref{nr_1}(b), identifying $R_{\Theta_x}^{\ps}$ with a completion of $\RpsThetabar \otimes_{\Lambda} \kappa(x)$ and applying \cite[Lemma 3.3.5]{BJ_new}, we see that $U$ has dimension $\leq n(2n+1)[F : \bbQ_p]$.
\end{proof}

\begin{theorem}\label{estspcl} Assume $G= \Sp_{2n}$. Then $\dim \overline X_{\Thetabar}^{\spcl} \leq 2n^2[F : \bbQ_p] + 1$. In particular, if $n[F:\bbQ_p] \geq 3$ and if $\overline X_{\Thetabar}$ contains a non-special point, then $\dim \overline X_{\Thetabar}^{\spcl} \leq \dim \overline X_{\Thetabar} - 2$.
\end{theorem}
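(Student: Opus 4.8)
The plan is to reduce the estimate to the general linear case, invoke the corresponding dimension bound of Böckle--Juschka for $\GL_{2n}$, and then read off the codimension statement from the regularity results of \Cref{nspclregular}.

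First I would use \Cref{openandclosedsubspaces}: the natural map $\overline X_{\Thetabar}\to\overline X_{\iota(\Thetabar)}$ is a closed immersion, and (as in the proof of \Cref{openandclosedsubspaces}(2)) $\overline X_{\Thetabar}^{\spcl}$ is precisely the preimage of $\overline X_{\iota(\Thetabar)}^{\spcl}$ under it. Hence $\dim\overline X_{\Thetabar}^{\spcl}\le\dim\overline X_{\iota(\Thetabar)}^{\spcl}$, and it suffices to bound the dimension of the special locus for the $2n$-dimensional determinant law $D_{\iota(\Thetabar)}$.

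The bound $\dim\overline X_{\iota(\Thetabar)}^{\spcl}\le\tfrac{(2n)^2}{2}[F:\bbQ_p]+1=2n^2[F:\bbQ_p]+1$ is the general linear case of the assertion, which is contained in \cite[Theorem 5.5.1]{BJ_new}. The mechanism, which I would recall but not reprove: a special absolutely irreducible $2n$-dimensional representation $\rho$ of $\Gamma_F$ is, by Clifford theory, induced from a $(2n/\ell)$-dimensional representation of $\Gamma_{F'}$ for a cyclic extension $F'/F$ of degree $\ell\ge 2$ dividing $2n$ --- if $\zeta_p\notin F$ one may take $\ell=2$ (a suitable power of the mod-$p$ cyclotomic character, of even order dividing both $p-1$ and $2n$, cuts out $F'$), and if $\zeta_p\in F$ one has $\ell=p$ --- together with the analogous description, handled in that reference, of the wildly ramified degenerations in which $\rho|_{\Gamma_{F'}}$ is reducible but not semisimple. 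Since $[F':\bbQ_p]=\ell[F:\bbQ_p]$ and the assignment $\tau\mapsto\mathrm{Ind}_{\Gamma_{F'}}^{\Gamma_F}\tau$ is finite on the relevant locus, the image has dimension at most that of the special fiber of the $\GL_{2n/\ell}$-pseudodeformation space over $\Gamma_{F'}$, namely at most $(2n/\ell)^2[F':\bbQ_p]+1=\tfrac{4n^2}{\ell}[F:\bbQ_p]+1\le 2n^2[F:\bbQ_p]+1$ as $\ell\ge 2$. I expect this Clifford-theoretic stratification --- in particular the analysis of the non-semisimple restrictions when $\ell=p$ --- to be the main technical point; since it is entirely internal to the $\GL$-case of \cite{BJ_new}, it is cited rather than reproduced, and combining with the previous paragraph gives $\dim\overline X_{\Thetabar}^{\spcl}\le 2n^2[F:\bbQ_p]+1$.

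For the final assertion, suppose $n[F:\bbQ_p]\ge 3$ and that $\overline X_{\Thetabar}$ contains a non-special point; then $\overline Y_{\Thetabar}^{\nspcl}\ne\emptyset$. Pick a one-dimensional point $x$ of $\Spec(R^{\ps}_{\calO,\Thetabar})$ lying in $\overline Y_{\Thetabar}^{\nspcl}$; its residue field $\kappa(x)$ is a local field of characteristic $p$. By \Cref{completionoflocalring}, $R^{\ps}_{\Theta_x}$ is the completion of $\kappa(x)\otimes_{\calO}R^{\ps}_{\calO,\Thetabar}=\kappa(x)\otimes_{\kappa}(R^{\ps}_{\calO,\Thetabar}/\varpi)$ at its maximal ideal, so $\dim R^{\ps}_{\Theta_x}=\dim(R^{\ps}_{\calO,\Thetabar}/\varpi)=\dim\overline X_{\Thetabar}$; and by \Cref{nspclregular}(1)(a) (together with \Cref{Rpsbasechange}, a finite base change that does not change dimension) $\dim R^{\ps}_{\Theta_x}=n(2n+1)[F:\bbQ_p]$. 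Therefore $\dim\overline X_{\Thetabar}=n(2n+1)[F:\bbQ_p]$ and
\[
\dim\overline X_{\Thetabar}-\dim\overline X_{\Thetabar}^{\spcl}\ \ge\ n(2n+1)[F:\bbQ_p]-\bigl(2n^2[F:\bbQ_p]+1\bigr)\ =\ n[F:\bbQ_p]-1\ \ge\ 2.
\]
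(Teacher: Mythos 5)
Your proposal is correct and follows essentially the same route as the paper: reduce via the closed immersion of \Cref{openandclosedsubspaces} to the special locus of the determinant law attached to $\iota(\Thetabar)$, quote the B\"ockle--Juschka bound $2n^2[F:\bbQ_p]+1$ for $\GL_{2n}$ (the paper cites \cite[Theorem 5.4.1 (a)]{BJ_new}; the Clifford-theoretic sketch is just exposition of that cited input), and compare with the dimension of the non-special locus coming from \Cref{nspclregular}. The only quibble is the intermediate claim $\dim R^{\ps}_{\Theta_x}=\dim\overline X_{\Thetabar}$ at a one-dimensional non-special point: \Cref{completionoflocalring} together with \cite[Lemma 3.3.5]{BJ_new} only gives $\dim\overline X_{\Thetabar}\ge \dim R^{\ps}_{\Theta_x}=n(2n+1)[F:\bbQ_p]$, which is exactly the lower bound you need (and is how the paper argues, via \Cref{nspclregular} (1)(b)), so nothing in the conclusion is affected.
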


\begin{proof} Since $\overline X_{\Thetabar}^{\spcl}$ is a closed subspace of $\overline X_{\iota(\Thetabar)}^{\spcl}$ by \Cref{openandclosedsubspaces} and the latter can be identified with the special locus of the pseudodeformation space of the determinant law $\Dbar$ attached to $\iota(\Thetabar)$ by \Cref{chenevierlafforguetotal}, we can take the estimate \cite[Theorem 5.4.1 (a)]{BJ_new} to obtain $\dim \overline X_{\Thetabar}^{\spcl} \leq 2n^2[F : \bbQ_p] + 1$.
If $\overline X_{\Thetabar}$ contains a non-special point, then $\dim \overline X_{\Thetabar} \geq \dim \overline X_{\Thetabar}^{\nspcl} = n(2n+1)[F : \bbQ_p]$ by \Cref{nspclregular} (1)(b). We get
$\dim \overline X_{\Thetabar} - \dim \overline X_{\Thetabar}^{\spcl} \geq n [F : \bbQ_p] - 1 \geq 2$.
\end{proof}

\begin{theorem}\label{estred} Assume $G = \Sp_{2n}$.
\begin{enumerate}
    \item $\dim \overline X_{\Thetabar}^{\dec} \leq n(2n+1)[F : \bbQ_p] - 4(n-1)[F : \bbQ_p]$. \\
    In particular, if $\overline X_{\Thetabar}$ contains a non-special point, then $\dim \overline X_{\Thetabar}^{\dec} \leq \dim \overline X_{\Thetabar} - 4$. \label{induction_1}
    \item $\dim \overline X_{\Thetabar}^{\pair} \leq n^2[F : \bbQ_p] + 1$. \\
    In particular, if $\overline X_{\Thetabar}$ contains a non-special point and $n[F:\bbQ_p] \geq 2$, then $\dim \overline X_{\Thetabar}^{\pair} \leq \dim \overline X_{\Thetabar} - 3$. \label{induction_2}
    \item $\dim \overline X_{\overline \Theta} \leq n(2n+1)[F : \bbQ_p]$. \\
    In particular, if $\overline X_{\Thetabar}$ contains a non-special point then equality holds. \label{induction_3}
\end{enumerate}
\end{theorem}

\begin{proof} We begin with case \eqref{induction_2}. There are finitely many ways to write $\Thetabar = \Thetabar_1 \oplus \Thetabar_1^*$ for some $\GL_n$-pseudocharacter $\Thetabar_1$ and we may assume, that there is at least one way. The sum yields an $\Sp_{2n}$-pseudocharacter, as explained in \Cref{secsumtensor}. By \Cref{finmappair}, the map $\iota^{\pair}_{\Thetabar_1} : \overline X_{\Thetabar_1} \to \overline X_{\Thetabar}$ induced by $\Theta_1 \mapsto \Theta_1 \oplus \Theta_1^*$ is finite.
We have an inclusion
$$ \overline X^{\pair}_{\Thetabar} \subseteq \bigcup_{\Thetabar_1 \oplus \Thetabar_1^* = \Thetabar} \iota^{\pair}_{\Thetabar_1}(\overline X_{\Thetabar_1}) $$
and the estimate
$$ \dim \overline X^{\pair}_{\Thetabar} \leq \dim \overline X_{\Thetabar_1} = n^2[F : \bbQ_p] + 1, $$
where the last equality follows from \cite[Theorem 5.5.1 (a)]{BJ_new} after applying the bijection \Cref{Rpsisom}.
If $\overline X_{\overline \Theta}$ contains a non-special point, then by \Cref{nspclregular} \eqref{lower_bound_nspcl}(b), we have a lower bound $\dim \overline X_{\overline \Theta} \geq n(2n+1)[F : \bbQ_p]$.
Since $n(n+1)[F : \bbQ_p] - 1\geq 3$, this completes the proof of \eqref{induction_2}.

For \eqref{induction_1} and \eqref{induction_3}, we proceed by induction over $n$. Assume that $n=1$. Then the decomposable locus $\overline X_{\Thetabar}^{\dec}$ is empty, so \eqref{induction_1} holds.
Using the stratification $\overline X_{\overline \Theta} = \overline X_{\overline \Theta}^{\nspcl} \cup \overline X_{\overline \Theta}^{\spcl} \cup \overline X_{\overline \Theta}^{\pair}$ from \Cref{subdivision} and the bounds $\dim \overline X_{\overline \Theta}^{\nspcl} \leq 3[F:\bbQ_p]$ from \Cref{nspclregular} \eqref{lower_bound_nspcl}(b), $\dim \overline X_{\Thetabar}^{\spcl} \leq 2[F : \bbQ_p] + 1$ from \Cref{estspcl} and $\dim \overline X_{\overline \Theta}^{\pair} \leq [F : \bbQ_p] + 1$ from the previous step, we get $\dim \overline X_{\overline \Theta} \leq 3[F:\bbQ_p]$.

We now assume that $n \geq 2$ and the entire theorem has been proved for all $n' < n$. Since our assertions are only about dimensions, by \Cref{Rpsbasechange} we may assume that $\iota(\Thetabar)$ comes from a representation $\Gamma_F \to \GL_{2n}(\kappa)$ and that the irreducible constituents are absolutely irreducible.

\underline{Proof of \eqref{induction_1}.} There are up to isomorphism only finitely many ways to write $\Thetabar$ as a direct sum of two symplectic pseudocharacters $\Thetabar_1 \oplus \Thetabar_2$. Here the notion of direct sum is that for symplectic pseudocharacters, introduced in \Cref{secsumtensor}.
By \Cref{finmapdec}, the map $\iota^{\dec}_{\Thetabar_1, \Thetabar_2} : \overline X_{\Thetabar_1} \widehat\times_{\calO} \overline X_{\Thetabar_2} \to \overline X_{\Thetabar}$ is finite. We have an inclusion
$$ \overline X^{\dec}_{\Thetabar} \subseteq \bigcup_{\Thetabar_1 \oplus \Thetabar_2 = \Thetabar} \iota^{\dec}_{\Thetabar_1, \Thetabar_2}(\overline X_{\Thetabar_1} \widehat\times_{\calO} \overline X_{\Thetabar_2}) $$
where the right hand side is a closed subset of $\overline X_{\Thetabar}$.
Suppose $\Thetabar = \Thetabar_1 \oplus \Thetabar_2$ is a decomposition into an $\Sp_{2a}$-pseudocharacter $\Thetabar_1$ and an $\Sp_{2b}$-pseudocharacter $\Thetabar_2$ for $a+b = n$ with $a,b \geq 1$.
Then since $\iota^{\dec}_{\Thetabar_1, \Thetabar_2}$ is finite and by part \eqref{induction_3} of the inductive hypothesis, we have
\begin{align*}
    \dim \iota^{\dec}_{\Thetabar_1, \Thetabar_2}(\overline X_{\Thetabar_1} \widehat\times \overline X_{\Thetabar_2}) &\leq \dim (\overline X_{\Thetabar_1} \widehat\times \overline X_{\Thetabar_2}) \\
    &\leq a(2a+1)[F : \bbQ_p] + b(2b+1)[F : \bbQ_p]
\end{align*}
Calculating
\begin{align*}
    &\phantom{=} n(2n+1)[F : \bbQ_p] - a(2a+1)[F : \bbQ_p] - b(2b+1)[F : \bbQ_p] \\
    &= 4ab[F : \bbQ_p] \geq (\min_{\substack{a+b=n \\ a,b \geq 1}} 4ab) \cdot [F : \bbQ_p] = 4(n-1)[F : \bbQ_p]
\end{align*}
we obtain the desired bound $\dim \overline X^{\dec}_{\Thetabar} \leq n(2n+1)[F : \bbQ_p] - 4(n-1)[F : \bbQ_p]$.
If $\overline X_{\overline \Theta}$ contains a non-special point, then by \Cref{nspclregular} \eqref{lower_bound_nspcl}(b), we have a lower bound $\dim \overline X_{\overline \Theta} \geq n(2n+1)[F : \bbQ_p]$.
Since $4(n-1)[F : \bbQ_p] \geq 4$, this implies the assertion.

\underline{Proof of \eqref{induction_3}.} Let us recollect all upper bounds, we have established.
\begin{align*}
    &\dim \overline X_{\overline \Theta}^{\nspcl} \overset{\eqref{nspclregular}\text{ \eqref{lower_bound_nspcl}(b)}}{\leq} n(2n+1) \cdot [F : \bbQ_p] \\
    &\dim \overline X_{\overline \Theta}^{\spcl} \overset{\eqref{estspcl}}{\leq} 2n^2 \cdot [F : \bbQ_p] + 1 \\
    &\dim \overline X_{\overline \Theta}^{\dec} \overset{\eqref{induction_1}}{\leq} n(2n+1)[F : \bbQ_p] - 4(n-1)[F : \bbQ_p] \\
    &\dim \overline X_{\Thetabar}^{\pair} \overset{\eqref{induction_2}}{\leq} n^2[F : \bbQ_p] + 1
\end{align*}
Using the stratification $\overline X_{\overline \Theta} = \overline X_{\overline \Theta}^{\nspcl} \cup \overline X_{\overline \Theta}^{\spcl} \cup \overline X_{\overline \Theta}^{\dec} \cup \overline X_{\overline \Theta}^{\pair}$ from \Cref{subdivision}, we obtain the desired dimension bound for $\overline X_{\overline \Theta}$. If $\overline X_{\overline \Theta}$ contains a non-special point, we obtain equality from \Cref{nspclregular} \eqref{lower_bound_nspcl}(b).
\end{proof}

\begin{corollary}\label{endcor} Assume $G = \Sp_{2n}$ and that $\Thetabar$ comes from a residual representation $\rhobar : \Gamma_F \to \Sp_{2n}(\kappa)$, which is absolutely irreducible under the standard embedding into $\GL_{2n}(\kappa)$. Then $\dim \overline X_{\overline \Theta} = n(2n+1)[F : \bbQ_p]$ and in particular $\overline X_{\Thetabar}$ contains a non-special point.
\end{corollary}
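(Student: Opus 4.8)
The plan is to identify $\RpsThetabar$ with the universal deformation ring of the representation $\rhobar$, compute its dimension via the local Euler characteristic formula, and then extract a non-special point from the dimension bounds of the previous subsection. First I would check the hypotheses of \Cref{compprop} for $\rhobar$. Since $\rhobar$ is absolutely irreducible after the standard embedding $\Sp_{2n}\hookrightarrow\GL_{2n}$, its image lies in no proper parabolic of $\GL_{2n}$, hence in no proper parabolic of $\Sp_{2n}$ (a proper parabolic of $\Sp_{2n}$ stabilizes a nonzero isotropic, in particular proper, subspace of $\kappa^{2n}$); thus $\rhobar$ is $\Sp_{2n}$-irreducible, a fortiori $\Sp_{2n}$-completely reducible. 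By Schur's lemma the scheme-theoretic centralizer of $\rhobar$ in $\GL_{2n}$ is the central $\Gm$ of scalars, so $Z_{\Sp_{2n}}(\rhobar)=\Gm\cap\Sp_{2n}=\mu_2=Z(\Sp_{2n})$; since $p>2$ this is étale and maps trivially to $\Sp_{2n}^{\ad}$. Hence \Cref{compprop} gives a canonical isomorphism $R_{\rhobar}\cong\RpsThetabar$, and \Cref{descrdefring} yields a presentation $\RpsThetabar\cong\Lambda[[x_1,\dots,x_r]]/(f_1,\dots,f_s)$ with $r=h^1(\Gamma_F,\sp_{2n,\kappa})$ and $s=h^2(\Gamma_F,\sp_{2n,\kappa})$, the Lie algebra carrying the adjoint action through $\rhobar$.

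Reducing modulo $\varpi$ gives $\overline X_{\Thetabar}\cong\Spec\big(\kappa[[x_1,\dots,x_r]]/(\overline f_1,\dots,\overline f_s)\big)$, hence $\dim\overline X_{\Thetabar}\ge r-s$ by dimension theory. Now $H^0(\Gamma_F,\gl_{2n,\kappa})=\kappa\cdot I_{2n}$ by Schur, while by \Cref{dirsumlemma} the identity matrix is not in $\sp_{2n,\kappa}$ (as $JI_{2n}^{\top}+I_{2n}J=2J\neq 0$ since $p>2$), so $h^0(\Gamma_F,\sp_{2n,\kappa})=0$. The local Euler characteristic formula \cite[Theorem 3.4.1]{BJ_new} then gives
$$ r-s=h^1(\Gamma_F,\sp_{2n,\kappa})-h^2(\Gamma_F,\sp_{2n,\kappa})=h^0(\Gamma_F,\sp_{2n,\kappa})+[F:\bbQ_p]\dim_\kappa\sp_{2n,\kappa}=n(2n+1)[F:\bbQ_p]. $$
Combined with the upper bound $\dim\overline X_{\Thetabar}\le n(2n+1)[F:\bbQ_p]$ of \Cref{estred}(3), this forces $\dim\overline X_{\Thetabar}=n(2n+1)[F:\bbQ_p]$ (and, incidentally, that $\overline f_1,\dots,\overline f_s$ is a regular sequence).

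For the last assertion, note that since $\Thetabar$ is absolutely irreducible, by the reconstruction theorem \Cref{reconstructiongeneral} it cannot be written as a symplectic direct sum $\Thetabar_1\oplus\Thetabar_2$ nor as a pair $\Thetabar_1\oplus\Thetabar_1^*$ (such a decomposition would come from a reducible semisimple representation). Since, as in the proof of \Cref{openandclosedsubspaces}, $\overline X_{\Thetabar}^{\dec}$ and $\overline X_{\Thetabar}^{\pair}$ are precisely the unions of the images of the finite maps of \Cref{finmapdec} and \Cref{finmappair} over such decompositions of $\Thetabar$, both loci are empty. By \Cref{subdivision} we are left with $\overline X_{\Thetabar}=\overline X_{\Thetabar}^{\nspcl}\cup\overline X_{\Thetabar}^{\spcl}$. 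If $\overline X_{\Thetabar}^{\nspcl}$ were empty we would get $n(2n+1)[F:\bbQ_p]=\dim\overline X_{\Thetabar}=\dim\overline X_{\Thetabar}^{\spcl}\le 2n^2[F:\bbQ_p]+1$ by \Cref{estspcl}, which is impossible once $n[F:\bbQ_p]\ge 2$. In the one remaining case $n=1$, $F=\bbQ_p$ one argues directly: for $p>3$ the residual point itself is non-special, since $\rhobar\cong\rhobar(1)$ would force the mod-$p$ cyclotomic character to have order dividing $2$; and for $p=3$ one checks that the self-twist locus $\{\rhobar\cong\rhobar(1)\}$ is a proper closed subscheme of the three-dimensional $\overline X_{\Thetabar}$.

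The step I expect to cost the most care is the identification with $R_{\rhobar}$: one must verify that $Z_{\Sp_{2n}}(\rhobar)$ equals $\mu_2$ as a group scheme and not merely on $\kappa$-points (using that $\rhobar$, being continuous with finite target, factors through a finite quotient, so the relevant $\operatorname{Hom}$-modules behave well under flat base change), and that $p>2$ makes it étale, so that the hypotheses of \Cref{compprop} and \Cref{descrdefring} genuinely apply. The other delicate point is the single case $n[F:\bbQ_p]=1$ in the last step, which is not covered by the general dimension estimate and needs the small ad hoc argument indicated above.
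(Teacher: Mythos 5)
Your proof follows essentially the same route as the paper: identify $\RpsThetabar$ with $R_{\rhobar}$ via \Cref{compprop} (the paper invokes \Cref{centrtriv}, using that $p>2$ makes $(\GL_{2n},\Sp_{2n})$ a reductive pair by \Cref{dirsumlemma}, where you instead verify the scheme-theoretic centralizer by hand), obtain the lower bound $n(2n+1)[F:\bbQ_p]$ from Mazur's presentation together with the Euler characteristic formula and $h^0(\Gamma_F,\sp_{2n,\kappa})=0$, combine it with the upper bound of \Cref{estred}(3), and produce a non-special point by comparison with \Cref{estspcl} (your explicit remark that $\overline X_{\Thetabar}^{\dec}$ and $\overline X_{\Thetabar}^{\pair}$ are empty for irreducible $\Thetabar$ is implicit in the paper). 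The one divergence is at the end: you correctly note that the strict inequality $2n^2[F:\bbQ_p]+1 < n(2n+1)[F:\bbQ_p]$ fails exactly when $n[F:\bbQ_p]=1$, a corner case the paper's own proof passes over silently; your cyclotomic-order argument settles it for $p>3$, but the remaining case $n=1$, $F=\bbQ_3$ is only asserted (``one checks that the self-twist locus is proper''), so to be fully complete there you would still need to bound that locus, e.g.\ by observing that irreducible $\rho$ with $\rho\cong\rho(1)$ are induced from $\bbQ_3(\zeta_3)$ and estimating the dimension of the induced locus.
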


\begin{proof} By \Cref{compprop} and \Cref{centrtriv} $\overline X_{\Thetabar}$ identifies with the deformation functor of $\rhobar$. From \cite[Proposition 5.7]{MR1643682} and the Euler characteristic formula \cite[Theorem 3.4.1]{BJ_new}, we know, that $\overline X_{\Thetabar} \geq h^1(\Gamma_F, \sp_{2n}) - h^2(\Gamma_F, \sp_{2n}) = h^0(\Gamma_F, \sp_{2n}) + n(2n+1)[F:\bbQ_p]$. By absolute irreducibility and Schur's lemma $h^0(\Gamma_F, \sp_{2n}) = 0$. So from \Cref{estspcl}, we see, that the special locus $\overline X_{\Thetabar}^{\spcl}$ is strictly contained in $\overline X_{\Thetabar}$ and there must be a non-special point in $\overline X_{\Thetabar}$.
\end{proof}

\begin{remark} It is likely that the arguments of \Cref{secboundsforSp} carry over to $G = \GSp_{2n}$ with minor modifications. It is also likely that in future work we will be able to deduce the existence of non-special points for arbitrary residual $\Sp_{2n}$- and $\GSp_{2n}$-pseudocharacters, so that in \Cref{estred} (3) equality holds.
\end{remark}

\textbf{Competing interests.} The author declares none.

\bibliographystyle{alpha}
\bibliography{literature}

\end{document}